\documentclass[12pt,oneside]{book}

\usepackage[utf8]{inputenc} 
\usepackage[T1]{fontenc} 

\usepackage{color}
\usepackage{longtable}
\usepackage{supertabular}

\usepackage[a4paper]{geometry}
\setlength{\hoffset }{-.8cm}
\setlength{\textwidth}{16.6cm}




\usepackage{cite}
\usepackage{graphicx}
\usepackage{amsthm, amsmath, amssymb, amsfonts, stmaryrd, multirow}
\usepackage{float}
\usepackage{array}
\usepackage{cases}
\usepackage{hyperref}
\newtheorem{proposition}{Proposition}[chapter]
\newtheorem{theorem}[proposition]{Theorem}
\newtheorem{lemma}[proposition]{Lemma}
\newtheorem{definition}[proposition]{Definition}
\theoremstyle{remark}
\newtheorem{remark}[proposition]{Remark}
\newtheorem{corollary}[proposition]{Corollary}
\newtheorem{assumption}[proposition]{Assumption}
\newtheorem{example}[proposition]{\textit{Example}}
\newtheorem{conjecture}[proposition]{Conjecture}
\usepackage{rotating}
\usepackage{tablefootnote}
\usepackage{algorithm}
\usepackage[noend]{algpseudocode}
\usepackage{fixltx2e}
\usepackage{stfloats}

\hyphenation{permet-tent}

\numberwithin{equation}{chapter}

\usepackage{epsfig} 
\usepackage{epstopdf}

\newcommand{\qcqp}{\mbox{QCQP-$\mathbb{C}$}}
\newcommand{\csdpr}{\mbox{CSDP-$\mathbb{R}$}}
\newcommand{\sdpr}{\mbox{SDP-$\mathbb{R}$}}
\newcommand{\sdpc}{\mbox{SDP-$\mathbb{C}$}}
\newcommand{\socpc}{\mbox{SOCP-$\mathbb{C}$}}
\newcommand{\socpr}{\mbox{SOCP-$\mathbb{R}$}}
\newcommand{\csocpr}{\mbox{CSOCP-$\mathbb{R}$}}
\newcommand{\matpower}{M{\sc atpower}}
\newcommand\NoDo{\renewcommand\algorithmicdo{}}
\allowdisplaybreaks

\usepackage{cite}
\usepackage{url}
\usepackage{tablefootnote}

\begin{document}

\begin{titlepage}{\bfseries

\begin{center}

\includegraphics[height=2cm]{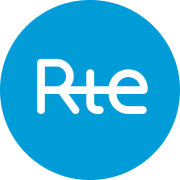}
\hspace{3cm}~
\includegraphics[height=2cm]{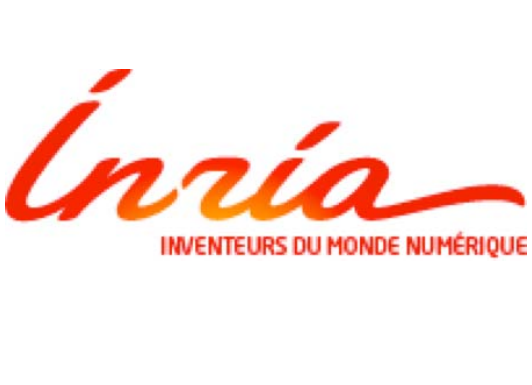}
\hspace{3cm}~
\includegraphics[height=2cm]{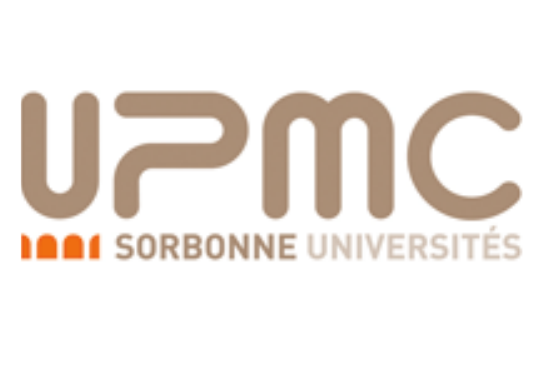}
\vspace{0.3cm}
\text{}
\vspace{0.1cm}
\textcolor{blue}{Dissertation in fulfillment of the degree of}\\
\vspace{0.2cm}
{{\scshape Doctor of Philosophy in Applied Mathematics}}\\
{{\scshape University Paris VI}}\\
\vspace{0.1cm}
{\itshape Universit\'e Pierre et Marie Curie}\\
{\itshape \'Ecole Doctorale de Sciences Math\'ematiques de Paris Centre}\\
\vspace{0.6cm}
\hrule~\\
\vspace{0.3cm}
{\LARGE Application of Polynomial Optimization to Electricity Transmission Networks}\\
\vspace{0.3cm}
\hrule~\\
\vspace{0.20cm}
\vspace{0.20cm}
{C\'edric {\scshape Josz}}\\
\vspace{0.20cm}
\textcolor{blue}{under the supervision of}\\
\vspace{0.20cm}
{ Jean Charles {\scshape Gilbert}}\\
French Institute for Research in Computer Science and Automation\\
\textit{Institut National de Recherche en Informatique et en Automatique}\\
\vspace{0.20cm}
{ Jean {\scshape Maeght}} and {Patrick {\scshape Panciatici}}\\
French Transmission System Operator\\
\textit{R\'eseau de Transport d'\'Electricit\'e}\\
\vspace{0.20cm}
\textcolor{blue}{funded by}\\
\vspace{0.20cm}
French Transmission System Operator\\
\vspace{0.20cm}
French Ministry of Higher Education and Research\\
CIFRE ARNT contract 2013/0179\\
\vspace{0.3cm}
\textcolor{blue}{examined on July 13\textsuperscript{th} 2016 by}\\
\vspace{0.35cm}
\begin{tabular}{ll}
Patrick Combettes & University Paris VI\\
St\'ephane Gaubert & Ecole Polytechnique \\
Jean Charles Gilbert & INRIA Paris \\
Jean Bernard Lasserre & CNRS Toulouse \\
Patrick Panciatici & RTE Versailles\\
Mihai Putinar & UC Santa Barbara \\
Markus Schweighofer & University of Konstanz \\
Pascal Van Hentenryck & University of Michigan
\end{tabular}
\end{center}

}\end{titlepage}
\thispagestyle{empty}
\text{ }
\newpage
\thispagestyle{empty}
\vspace*{0.475\textheight}

\noindent {\itshape « Cherche et tu trouveras. » } \bigbreak

\hfill Ang\'elique Haenecour Josz

\newpage
\thispagestyle{empty}
\section*{\centering Acknowledgements}
\text{}
\indent 
I wish to thank my thesis advisor Jean Charles Gilbert for his support and guidance throughout my doctoral project. He has given me an invaluable insight into optimization and the world of research. He encouraged me to pursue my ideas and to publish them, as well to attend conferences and to meet with the scientific community. It was a pleasure to be his teaching assistant at the University of Paris-Saclay, ENSTA ParisTech, for two consecutive years. It is thanks to his teachings that I developed a strong background in optimization when I was a master's student at ENSTA ParisTech. Jean Charles mentionned the Lasserre hierarchy at the beginning of our first meeting three years ago. At the time, I didn't understand its relevance for electricity, but it turned out to be the cornerstone of this dissertation!

I would like to thank Jean Maeght, my scientific advisor at RTE, for mentoring me over the three years of a half years I spent at RTE. I was fortunate to share my office with someone who works on the optimal power flow problem in a European project and who has a very strong background in mathematics. He invested a lot of time in me to teach me about the company, its various projects, its key people, its history, and the way it functions on a daily basis. He always made sure that I was in the loop concerning events and meetings at RTE. I would have liked to spend more time sharing the office, but as he said so often after a long day of day of work, ``\textit{toute bonne chose ayant une fin ...}''.

I am grateful to Patrick Panciatici, the chief scientific advisor of R\&D at RTE, for being the main promoter of my doctoral project. Thanks to his daring vision, I was able to work on new and exciting ideas in the field of power systems. He gave me the opportunity to participate in a workshop in Dublin at IBM Research Ireland and to meet with top researchers in various universities and laboratories accross the world. These include ETH Zürich, Berkeley, Caltech, and Tokyo Tech. It was thrilling to work with someone so passionate and knowledgeable about energy.

I thank my three advisors for the trust they put in me during the entire project. I was given a great amount freedom to work on what pleased me most and to start collaborations with researchers in my field.

I wish to thank Stéphane Fliscounakis, research engineer at RTE, for his collaboration and assistance during my doctoral project. Thanks to his expertise on the issues and complexity of modeling transmission networks, we successfully made a large-scale representation of the European network easily available to the public for the first time. He taught me a great deal about the applications of optimization to power systems, and about the problems that arise when dealing with data in industry. I truly hope to pursue our joint work in the future.

I would like to thank Robert Gonzalez, optimization expert at RTE, for allowing me to participate in the optimization workshops he organizes. It was a great opportunity to learn about how optimization is used by RTE, as well as a chance to present my work in detail. That allowed me to get valuable feedback on my research.

I am grateful to my managers Fr\'ed\'erique Verrier and Lucian Bal\'ea at RTE for supporting my project. They always made sure that I was well integrated within my group at RTE. They gave me the opportunity to share my work with the R\&D through various presentations.

I wish to thank Fran\c{c}oise Sericola, Lydie Pendu and Nathalie Lucazeau from HR at RTE for being very helpful on numerous occasions. They were crucial in setting up my contract as well as organizing my travels for RTE. I also wish to thank Nathalie Bonte from HR at INRIA for helping me get started quickly at INRIA.

I wish to thank Jean-Pierre Restoux, head of IT at RTE Versailles, as well as Gauthier Plouvier, L\'eon Amirkhanian, and Yacine Chaoui at IT support from RTE for being a great help throughout my years at RTE. Thanks to them, that I was able to work efficiently on my computer and use the latest versions of the softwares I needed.

I would like to thank all my colleagues at RTE and INRIA for creating an energetic environment to work in. I wish to thank Alexandre Debetencourt for his relentless support during my doctoral project. He stunned me by his natural curiosity and interest in my work. 

I am fortunate to have collaborated with Professor Didier Henrion from the University of Toulouse. Working with a leader in the domain of optimization and control was an enriching experience. 
I am also fortunate to have collaborated with post-doctoral fellow Daniel K. Molzahn (now a research engineer at Argonne National Laboratory) and Professor Ian A. Hiskens from the University of Michigan. Many of the results in this dissertation were found thanks to our close collaboration. It was a very exciting experience that I hope to build on in the future. 

I wish to thank Professors Pascal Gourdel and Bruno Nazaret for giving me the opportunity to teach at the University of Paris I Sorbonne-Tolbiac. It was a great opportunity for me to learn how to teach mathematics. Also from that university, I would like to thank Professor Jean-Bernard Baillon for his kindness and his help on some tough mathematics. 

Many thanks to Professor Mihai Putinar at the University of California, Santa Barbara, for his encouragements, his valuable advice, and his generous help with my work. 


Last but not least, many thanks to my brothers Tanguy and J\'er\^ome, to my parents, and to my friends for their support. 

%
%

\bigbreak
\newpage

\thispagestyle{empty}
\vspace*{0.4\textheight}
\section*{\centering Abstract}
\text{}
\indent 
Transmission system operators need to adapt their decision-making tools to the technological evolutions of the twenty first century. A computation inherent to most tools seeks to find alternating-current power flows that minimize power loss or generation cost. Mathematically, it consists in an optimization problem that can be described using only addition and multiplication of complex numbers. The objective of this thesis is to find global solutions, in other words the best solutions to the problem. One of the outcomes of this highly collaborative doctoral project is to use recent results from algebraic geometry to compute globally optimal power flows in the European high-voltage transmission network.
\\\\
\textbf{Keywords:}
polynomial optimization, semidefinite optimization, optimal power flow, Lasserre hierarchy
\bigbreak
\newpage
\thispagestyle{empty}
\section*{\centering Summary}
\text{}
\indent This dissertation is motivated by an encouraging discovery made in the field of power systems during the first decade of the twenty-first century. Numerical experiments on several benchmark transmission networks showed that it is possible to find global solutions to the optimal power flow problem using semidefinite optimization. The optimal power flow problem seeks to find a steady-state operating point of an alternating-current transmission network that is optimal under some criteria such as power loss or generation costs. After five decades of research on this highly nonconvex problem, a method for finding global solutions was thought to be out of reach. The concept used was to omit nonconvexities and solve a convex problem instead. This is known as the Shor relaxation, in reference to the Ukrainian mathematician Naum Zuselevich Shor. However, the Shor relaxation does not provide global solutions to many networks of interest. Bridging this gap is the starting point of this dissertation.

The first step that was achieved (cf. Chapter \ref{sec:Lasserre hierarchy for small-scale networks}) was to show that low orders of the Lasserre hierarchy find the global solution to small-scale networks that the Shor relaxation cannot solve. To do so, we realized that the optimal power flow problem is a particular instance of polynomial optimization. Thankfully, any polynomial optimization problem with a bounded feasible set can be approximated as close as desired by a sequence of semidefinite optimization problems. This sequence is called the Lasserre hierarchy, in reference to the French mathematician Jean Bernard Lasserre. This is remarkable because polynomial optimization problems encompass many non-deterministic polynomial-time hard problems such as quadratically-constrainted quadratic programming, mixed-integer linear programming, and in particular the traveling salesman problem. 

To further prove the numerical applicability of the Lasserre hierarchy, we proved that there is zero duality gap in each semidefinite optimization problem in the hierarchy in the case of the optimal power flow problem (cf. Chapter \ref{sec:Zero duality gap in the Lasserre hierarchy}). This property is essential for efficient solvers to work. More generally, we proved that for any polynomial optimization problem containing a ball constraint, there is no duality gap. Adding a redundant ball constraint to a problem with a bounded feasible set guarantees the global convergence of the Lasserre hierarchy, hence the relevance of our result.

Having shown the applicability of the Lasserre hierarchy to small instances, the next task was to be able to tackle large-scale problems. However, there were few large-scale benchmark networks on which to test new approaches. Network are considered large-scale if they contain several thousand buses. We filled this gap by providing data for the entire European synchronous grid, with a little over 9,000 buses. To make it possible to work progressively on the data, we provided four instances corresponding to larger and larger parts of the European network (cf. Chapter \ref{sec:Data of European high-voltage transmission network}). The data stems from a European project involving many transmission system operators whose purpose was to develop new tools for the pan-European grid.

Since it had been discovered in 2000, the Lasserre hierarchy had never been able to solve practical problems with more than several dozens of variables. This changed when Daniel K. Molzahn and Ian A. Hiskens at the University of Michigan developed an algorithm to exploit sparsity in the Lasserre hierarchy for the optimal power flow problem. This enabled them to solve networks with several hundred buses. At around the same time, Ramtin Madani, Morteza Ashraphijuo and Javad Lavaei at the University of Columbia showed that the Shor relaxation succeeded on some large-scale networks provided two penalty terms were added to the objective function. We proposed to combine both approaches to systematically provide nearly global solutions to large-scale networks (cf. Chapter \ref{sec:Penalized Lasserre hierarchy for large-scale networks}). This work was carried out in collaboration with the University of Michigan. In the combined approach, only one penalty parameter has to be specified, instead of two. In the case of active power loss minimization, the objective function is convex (in function of the voltage variables) and we observed that no penalization term is needed. This means that the approach finds the global solution. In the case of generation cost minimization, the objective is not a convex function and a penalty parameter must be specified, yielding a nearly global solution.

Specifying a penalization parameter is problematic because there is no general method for doing so. To overcome this, we realized that successful penalizations of the optimal power flow were related to the Laplacian matrix of the graph of the power network. We thus proposed a Laplacian-based Shor relaxation to obtain nearly global solutions without the need to specify any parameter (cf. Chapter \ref{sec:Laplacian matrix gets rid of penalization parameter}). An issue that emerged while trying to solve large-scale optimal power flow problems is that the data are ill-conditionned. Some power lines have very low impedance, i.e., opposition to current, while others have up to one thousand times larger impedance. As a result, in all large-scale numerical experiments in this dissertation, the data is preprocessed to have more homogenous line characteristics.

Having shown the applicability of the Lasserre hierarchy to large-scale networks, we next enhanced its tractability by transposing it from real to complex numbers (cf. Chapter \ref{sec:Complex hierarchy for enhanced tractability}). What prompt us to do so is that the optimal power flow problem is written using complex numbers. They are used to model an oscillatory phenomena, namely alternating-current. We realized that omitting nonconvexities and converting from complex to real numbers are two non-commutative operations. This lead us to propose a general approach for finding global solutions to polynomial optimization problems with bounded feasible sets where variables and data are complex numbers. It is based on recent results in algebraic geometry concerning positive polynomials with complex indeterminates. By exploiting sparsity, it succeeds in finding global solutions to problems with several thousand complex variables. In addition to the operation and planning of future power systems, the complex moment/sum-of-squares hierarchy we developed can be applied to signal processing, imaging science, automatic control, and quantum mechanics.
\\\\
The dissertation is organized as follows.
\\\\
Chapter \ref{sec:Background and motivations} describes the optimal power flow problem and the underlying mathematical concepts.\\\\
Chapter \ref{sec:Lasserre hierarchy for small-scale networks} numerically illustrates that low orders of the Lasserre hierarchy find the global solution to small-scale networks. Associated publication: 
{\scshape C. Josz, J. Maeght, P. Panciatici, and J.C. Gilbert}, \textit{Application of the Moment-SOS
Approach to Global Optimization of the OPF Problem}, Institute of Electrical and Electronics Engineers, Transactions on Power Systems, 30, pp. 463–470, May 2014.  \href{http://dx.doi.org/10.1109/TPWRS.2014.2320819}{[doi]} \href{http://arxiv.org/pdf/1311.6370v1.pdf}{[preprint]}
\\\\
Chapter \ref{sec:Zero duality gap in the Lasserre hierarchy} proves that there is no duality gap between the primal and dual versions of an instance of the Lasserre hierarchy in the presence of a ball constraint in the original polynomial problem. Associated publication: 
{\scshape C. Josz and D. Henrion}, \textit{Strong Duality in Lasserre’s Hierarchy for Polynomial Optimization},
Springer Optimization Letters, February 2015. \href{http://dx.doi.org/10.1007/s11590-015-0868-5}{[doi]} \href{https://docs.google.com/viewer?a=v&pid=sites&srcid=ZGVmYXVsdGRvbWFpbnxjZWRyaWNqb3N6fGd4OjY1M2E5NDAyMjg2M2U2Y2Q}{[preprint]}
\\\\
Chapter \ref{sec:Data of European high-voltage transmission network} provides data of large-scale networks representing the European high-voltage transmission network. Associated public data: 
{\scshape C. Josz, S. Fliscounakis, J. Maeght, and P. Panciatici}, \textit{Power Flow Data of the European High-Voltage Transmission Network: 89, 1354, 2869, and 9241-bus PEGASE Systems}, MATPOWER 5.1, March 2015. \href{http://www.pserc.cornell.edu//matpower/}{[link]}\\\\
Chapter \ref{sec:Penalized Lasserre hierarchy for large-scale networks} computes nearly global solutions to large-scale networks using the Lasserre hierarchy and a penalization parameter. Associated publication: 
{\scshape D.K. Molzahn, C. Josz, I.A. Hiskens, and P. Panciatici}, \textit{Solution of Optimal Power Flow Problems using Moment Relaxations Augmented with Objective Function Penalization}, 54th Annual Conference on Decision and Control, Osaka, December 2015. \href{http://arxiv.org/pdf/1508.05037v1.pdf}{[preprint]} \\\\ 
Chapter \ref{sec:Laplacian matrix gets rid of penalization parameter} computes nearly global solutions to large-scale networks using Laplacian matrices instead of a penalization parameter. Associated preprint:
{\scshape D.K. Molzahn, C. Josz, I.A. Hiskens, and P. Panciatici}, \textit{A Laplacian-Based Approach for Finding Near Globally Optimal Solutions to OPF Problems}, submitted to Institute of Electrical and Electronics Engineers, Transactions on Power Systems. \href{http://arxiv.org/pdf/1507.07212v1.pdf}{[preprint]}
\\\\
Chapter \ref{sec:Complex hierarchy for enhanced tractability} transposes the Lasserre hierarchy to complex numbers to enhance its tractability when dealing with complex variables instead of real ones. Associated preprint: 
{\scshape C. Josz, D. K. Molzahn}, \textit{Moment/Sum-of-Squares Hierarchy for Complex Polynomial Optimization}, submitted to Society for Industrial and Applied Mathematics, Journal on Optimization. \href{http://arxiv.org/pdf/1508.02068v1.pdf}{[preprint]}\\\\
Chapter \ref{sec:Conclusion and perspectives} suggests future research directions and is followed by references.\\\\
The abstract and summary are translated in French in the following pages.

\newpage
\thispagestyle{empty}
\vspace*{0.4\textheight}
\section*{\centering Abstract}
\text{}
\indent 
Les gestionnaires des réseaux de transport d'électricité doivent adapter leurs outils d'aide à la décision aux avancées technologiques du XXI\textsuperscript{i\`eme} si\`ecle. Une opération sous-jacente à beaucoup d'outils est de calculer les flux en actif/r\'eactif qui minimisent les pertes ou les co\^uts de production. Math\'ematiquement, il s'agit d'un probl\`eme d'optimisation qui peut \^etre d\'ecrit en utilisant seulement l'addition et la multiplication de nombres complexes. L'objectif de cette th\`ese est de trouver des solutions globales. Un des aboutissements de ce projet doctoral hautement collaboratif est d'utiliser des r\'esultats r\'ecents en g\'eom\'etrie alg\'ebrique pour calculer des flux optimaux dans le r\'eseau Europ\'een à haute tension.
\\\\
\textbf{Mots-clefs:}
hi\'erarchie de Lasserre, r\'eseau de transport d'\'electricit\'e, optimisation polynomiale, optimisation semid\'efinie
\bigbreak
\newpage
\thispagestyle{empty}
\section*{\centering R\'esum\'e}
\text{}
\indent Cette th\`ese est motiv\'ee par une d\'ecouverte encourageante faite dans le domaine des r\'eseaux \'electriques durant la premi\`ere d\'ecennie du XXI\textsuperscript{i\`eme} si\`ecle. Des exp\'eriences num\'eriques sur certains cas tests ont montr\'e qu'il \'etait possible de trouver des solutions globales au probl\`eme d'\'ecoulement des flux en utilisant l'optimisation semid\'efinie positive. Le probl\`eme d'\'ecoulement des flux recherche un point stationnaire du r\'eseau qui est optimal au sens des pertes d'énergie ou des co\^uts de production. Apr\`es cinquante années de recherches sur ce problème non convexe, une méthode pour trouver des solutions globales semblait hors de port\'ee. Le concept utilisé a été d'omettre les non convexit\'es et de r\'esoudre un probl\`eme convexe à la place. Ce proc\'ed\'e est connu sous le nom de relaxation de Shor, en r\'ef\'erence au math\'ematicien ukrainien Naum Zuselevich Shor. Cependant, la relaxation de Shor ne fournit pas de solutions globales dans tous les cas. Pallier ce manque est le point de d\'epart de cette dissertation.

La premi\`ere \'etape qui a \'et\'e franchie (cf. Chapitre \ref{sec:Lasserre hierarchy for small-scale networks})  a \'et\'e de montrer que l'on peut r\'esoudre des petits r\'eseaux \`a l'aide de la hi\'erarchie de Lasserre avec des ordres faibles lorsque la relaxation de Shor \'echoue. Nous nous sommes en effet aper\c cus que le probl\`eme de calcul des flux optimaux est une instance particuli\`ere d'optimisation polynomiale. Or tout problème d'optimisation polynomiale dont le domaine d'admissibilit\'e est born\'e peut \^etre approch\'e d'aussi pr\`es que l'on veuille par une suite de probl\`emes d'optimisation semid\'efinie positive. Cette suite est connue sous le nom de hi\'erarchie de Lasserre, en r\'ef\'erence au math\'ematicien fran\c cais Jean Bernard Lasserre. Ceci est remarquable car l'optimisation polynomiale englobe de nombreux probl\`emes NP-ardus tels que l'optimisation quadratique sous contraintes quadratiques, l'optimisation lin\'eaire en nombres entiers, et en particulier le problème du voyageur de commerce.

Pour prouver davantage l'applicabilité de la hiérarchie de Lasserre d'un point de vue numérique, nous avons prouvé qu'il n'y a pas de saut de dualité pour chaque probl\`eme d'optimisation semid\'efinie positive dans la hi\'erarchie, pour le cas du probl\`eme d'\'ecoulement des flux (cf. Chapitre \ref{sec:Zero duality gap in the Lasserre hierarchy}). Cette propri\'et\'e est essentielle pour que des solveurs efficaces fonctionnent. Plus g\'en\'eralement, nous avons prouv\'e que pour tout probl\`eme d'optimisation contenant une contrainte de boule, il n'y a pas de saut de dualit\'e. Ajouter une contrainte de boule redondante à un probl\`eme avec un ensemble admissible born\'e garantit la convergence de la hi\'erarchie de Lasserre, d'o\`u la pertinence de notre r\'esultat.

Apr\`es que l'applicabilit\'e de la hi\'erarchie de Lasserre ait \'et\'e d\'emontr\'ee pour des petits r\'eseaux, la prochaine \'etape \'etait de pouvoir traiter des r\'eseaux de grande taille. Cependant, il y avait peu de cas tests sur lequels tester de nouvelles approches. Les r\'eseaux sont consid\'er\'es de grande taille s'il contiennent plusieurs milliers de n\oe{}uds. Nous avons palli\'e ce manque en fournissant des donn\'ees du r\'eseau Europ\'een synchrone, contenant un peu plus de 9.000 n\oe{}uds. Afin de pouvoir travailler progressivement sur ces donn\'ees, nous avons fourni quatre instances correspondant à des parties de plus en plus grandes du r\'eseau Europ\'een (cf. Chapitre \ref{sec:Data of European high-voltage transmission network}). Les donn\'ees \'emanent d'un projet europ\'een impliquant nombreux gestionnaires de r\'eseaux dont le but \'etait de d\'evelopper de nouveaux outils pour le r\'eseau supra-national Europ\'een.

Depuis sa d\'ecouverte en 2000, la hi\'erarchie de Lasserre n'avait jamais r\'esolu des probl\`emes provenant des applications avec plus de quelques dizaines de variables. Ceci changea lorsque Daniel K. Molzahn et Ian A. Hiskens à l'universit\'e du Michigan developp\`erent un algorithme pour exploiter le creux dans la hi\'erarchie Lasserre pour le probl\`eme d'\'ecoulement des flux optimaux. Cela leur permit de s'attaquer à des r\'eseaux avec quelques centaines de n\oe{}uds. A peu pr\`es au m\^eme moment, Ramtin Madani, Morteza Ashraphijuo, et Javad Lavaei à l'universit\'e de Columbia ont montr\'e que la relaxation de Shor permet de r\'esoudre certains r\'eseaux de grande taille à condition d'ajouter deux termes de p\'enalisation à l'objecif. Nous avons propos\'e de combiner les deux approches afin d'apporter des solutions proches de l'optimum global de façon syst\'ematique (cf. Chapitre \ref{sec:Penalized Lasserre hierarchy for large-scale networks}). Ce travail a \'et\'e effectu\'e en collaboration avec l'universit\'e du Michigan. Dans l'approche combin\'ee, seul un param\`etre de p\'enalisation doit être sp\'ecifi\'e, au lieu de deux. Dans le cas de la minimisation des pertes, l'objectif est convexe (en fonction des variables de tensions) et nous avons observ\'e qu'aucun terme de p\'enalisation n'est n\'ecessaire. Cela signifie que l'approche trouve l'optimum global. Dans le cas de la minimisation des co\^uts de production, l'objectif n'est pas convexe et un terme de p\'enalisation doit être sp\'ecifi\'e, ce qui donne lieu à une solution proche de l'optimum global.

Sp\'ecifier un param\`etre de p\'enalisation est probl\'ematique car il n'existe pas de méthode générale pour le faire. Pour contourner ce probl\`eme, nous nous sommes aper\c cus que les p\'enalisations réussites \'etaient li\'ees \`a la matrice de Laplace du graphe du r\'eseau \'electrique. Nous avons donc propos\'e une relaxation de Shor bas\'ee sur la matrice de Laplace afin d'obtenir des solutions proches de l'optimum global sans avoir à sp\'ecifier un param\`etre (cf. Chapitre \ref{sec:Laplacian matrix gets rid of penalization parameter}). Un probl\`eme qui est survenu lorsque nous avons essay\'e de r\'esoudre des probl\`emes de grande taille est que les donn\'ees sont mal conditionn\'ees. Certaines lignes ont des imp\'edences très faibles alors que d'autres ont des impédences jusqu'\`a mille fois plus grandes. En conséquence, dans toutes les exp\'erimentations à grande \'echelle, les donn\'ees subissent un traitement pr\'ealable afin d'avoir des caract\'eristiques de lignes plus homog\`enes.

Ayant prouv\'e l'applicabilit\'e de la hi\'erarchie de Lasserre aux r\'eseaux de grande taille, nous avons ensuite r\'eduit son temps de calcul en la transposant des nombres r\'eels aux nombres complexes (cf. Chapitre \ref{sec:Complex hierarchy for enhanced tractability}). Ce qui nous a pouss\'e à le faire est que le probl\`eme d'\'ecoulement des flux est \'ecrit en nombres complexes. Ceux-ci sont utilis\'es pour mod\'eliser un ph\'enom\`eme oscillatoire, à savoir le courant alternatif. Nous nous sommes aper\c cus qu'omettre les non convexit\'es et convertir des nombres complexes aux r\'eels sont deux op\'erations non commutatives. Cela nous a conduit \`a proposer une approche g\'en\'erale pour trouver des solutions globales \`a des probl\`emes d'optimisation avec un domaine admissible born\'e o\`u les variables et les donn\'ees sont des nombres complexes. Elle est bas\'ee sur des r\'esultats r\'ecents en g\'eom\'etrie alg\'ebrique concernant des polyn\^omes strictement positifs avec des ind\'etermin\'ees complexes. En exploitant l'aspect creux, elle parvient \`a trouver des solutions globales à des probl\`emes à plusieurs milliers de variables complexes. En plus de la gestion et la planification des r\'eseaux d'\'electricit\'e du futur, la hi\'erarchie complexe des moments et sommes de carr\'es que nous avons d\'evelopp\'ee pourra \^etre appliqu\'ee en traitement du signal, en imagerie, en automatique, et en m\'ecanique quantique.
\\\\
La thèse est organis\'ee comme suit.
\\\\
Le chapitre \ref{sec:Background and motivations} d\'ecrit le probl\`eme de l'\'ecoulement des flux dans un r\'eseau de transport et les concepts math\'ematiques sous-jacents.\\\\
Le chapitre \ref{sec:Lasserre hierarchy for small-scale networks} illustre num\'eriquement que des ordres faibles de la hi\'erarchie de Lasserre permettent de r\'esoudre des r\'eseaux de petite taille. Publication associ\'ee: 
{\scshape C. Josz, J. Maeght, P. Panciatici, and J.C. Gilbert}, \textit{Application of the Moment-SOS
Approach to Global Optimization of the OPF Problem}, Institute of Electrical and Electronics Engineers, Transactions on Power Systems, 30, pp. 463–470, May 2014.  \href{http://dx.doi.org/10.1109/TPWRS.2014.2320819}{[doi]} \href{http://arxiv.org/pdf/1311.6370v1.pdf}{[preprint]}
\\\\
Le chapitre \ref{sec:Zero duality gap in the Lasserre hierarchy} prouve qu'il n'y a pas de saut de dualit\'e entre les versions primales et duales de la hi\'erarchie de Lasserre en pr\'esence d'une contrainte de boule dans le probl\`eme d'optimisation initial. Publication associ\'ee: 
{\scshape C. Josz and D. Henrion}, \textit{Strong Duality in Lasserre’s Hierarchy for Polynomial Optimization},
Springer Optimization Letters, February 2015. \href{http://dx.doi.org/10.1007/s11590-015-0868-5}{[doi]} \href{https://docs.google.com/viewer?a=v&pid=sites&srcid=ZGVmYXVsdGRvbWFpbnxjZWRyaWNqb3N6fGd4OjY1M2E5NDAyMjg2M2U2Y2Q}{[preprint]}
\\\\
Le chapitre \ref{sec:Data of European high-voltage transmission network} fournit des donn\'ees de grande taille repr\'esentant le r\'eseau Europ\'een à haute tension. Donn\'ees publiques associ\'ees: 
{\scshape C. Josz, S. Fliscounakis, J. Maeght, and P. Panciatici}, \textit{Power Flow Data of the European High-Voltage Transmission Network: 89, 1354, 2869, and 9241-bus PEGASE Systems}, MATPOWER 5.1, March 2015. \href{http://www.pserc.cornell.edu//matpower/}{[link]}\\\\
Le chapitre \ref{sec:Penalized Lasserre hierarchy for large-scale networks} calcule des solutions proches de l'optimum global pour des r\'eseaux de grande taille à l'aide de la hi\'erarchie de Lasserre et d'un paramètre de pénalisation. Publication associ\'ee: 
{\scshape D.K. Molzahn, C. Josz, I.A. Hiskens, and P. Panciatici}, \textit{Solution of Optimal Power Flow Problems using Moment Relaxations Augmented with Objective Function Penalization}, 54th Annual Conference on Decision and Control, Osaka, December 2015. \href{http://arxiv.org/pdf/1508.05037v1.pdf}{[preprint]} \\\\ 
Le chapitre \ref{sec:Laplacian matrix gets rid of penalization parameter} calcule des solutions proches de l'optimum global pour des r\'eseaux de grande taille à l'aide de matrices de Laplace au lieu d'un param\`etre de p\'enalisation. Papier soumis associ\'e:
{\scshape D.K. Molzahn, C. Josz, I.A. Hiskens, and P. Panciatici}, \textit{A Laplacian-Based Approach for Finding Near Globally Optimal Solutions to OPF Problems}, submitted to Institute of Electrical and Electronics Engineers, Transactions on Power Systems. \href{http://arxiv.org/pdf/1507.07212v1.pdf}{[preprint]}
\\\\
Le chapitre \ref{sec:Complex hierarchy for enhanced tractability} transpose la hi\'erarchie de Lasserre aux nombres complexes afin de r\'eduire les temps de calculs lorsqu'on s'int\'eresse à des variables complexes au lieu de variables r\'eelles. Papier soumis associ\'e:
{\scshape C. Josz, D. K. Molzahn}, \textit{Moment/Sum-of-Squares Hierarchy for Complex Polynomial Optimization}, submitted to Society for Industrial and Applied Mathematics, Journal on Optimization. \href{http://arxiv.org/pdf/1508.02068v1.pdf}{[preprint]}\\\\
Le chapitre \ref{sec:Conclusion and perspectives} sugg\`ere des pistes de recherches futures et est suivi des r\'ef\'erences.

\bigbreak
\newpage
\thispagestyle{empty}
\tableofcontents
\newpage

\chapter{Background and motivations}
\label{sec:Background and motivations}
The industrial problem which motivates this work can be viewed as an optimization problem. In this chapter, the equations that define this problem are presented and their relevance in practice is discussed. Next, optimality conditions are presented for general optimization problems. These are crucial to current methods used by industry, and remain important for the approach investigated in this thesis. This approach consists in solving convex relaxations of the original nonconvex problem. The original problem is written using complex numbers and this thesis advocates the use of convex relaxations in complex numbers. To that end, several definitions of complex numbers are provided.

\section{Optimal power flow problem}
\label{subsec:Optimal power flow problem}

The optimal power flow is a central problem in electric power systems introduced half a century ago by Carpentier~\cite{carpentier-1962}. It seeks to find a steady state operation point of an alternating current transmission network that respects Kirchoff's laws, Ohm's law, and power balance equations. In addition, the point has to be optimal under a criteria such as generation costs. It must also satisfy operational constraints which include narrow voltage ranges around nominal values and line ratings to keep Joule heating to acceptable levels.

While many nonlinear methods~\cite{huneault-1991,pandya-2008,matpower,castillo-2013} have been developed to solve this notoriously difficult problem, there is a strong motivation for producing more robust and reliable tools. 
Firstly, electric power systems are growing in complexity due to the increase in the share of renewables, the increase in the peak load, and the expected wider use of demand response and storage. This could hamper power systems reliability if decision-making tools do not evolve. Costly power interruptions could occur more often. Secondly, new tools are needed to profit from high-performance computing and advances in telecommunications such as phasor measurement units and dynamic line ratings. This will reduce operation costs and help keep power supply affordable at a time when expensive investments are being made for renewables. Lastly, system operators face large-scale optimization problems with combinatorial complexity due to phase-shifting transformers, high-voltage direct current transmission lines, and special protection schemes. Solving the continuous case to global optimality would be of great benefit for a more automated decision process.

Electricity transmission networks are meshed networks in which buses not only inject or retrieve power from the network, but also serve as a relay for other buses. Topologically, there exists cycles in the network. 
This is not the case for distribution networks where the topology of the network is a tree\footnote{Optimization over distribution networks may involve graphs that are not trees however. This is due to different possible configurations of the connections between buses in the network.}. A simple model of high-voltage power lines in transmission networks uses a resitance $R$, an inductance $L$, and a capacitance $C$ (cf. figure \ref{fig:tht}).
\begin{figure}[H]
	\centering
	\includegraphics[width=.6\textwidth]{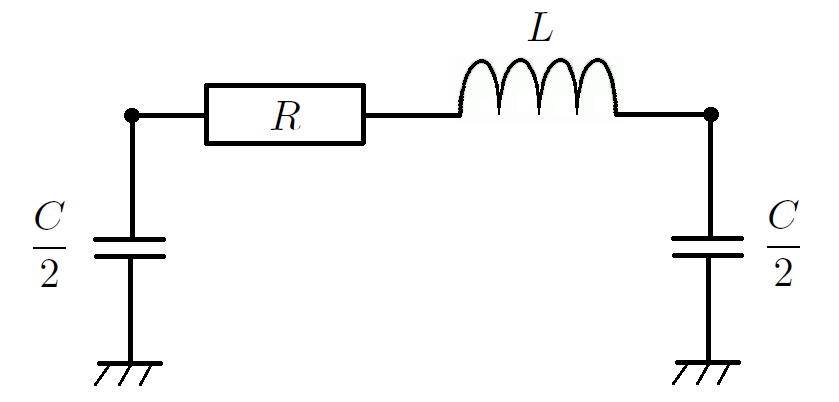}
	\caption{$\Pi$ model of a high voltage transmission line}
	 \label{fig:tht}
\end{figure}

Continental Europe uses alternating current (AC) at a frequency of $50$ Hz $\pm$ $0,5$ Hz, which makes for an angular speed of $\omega \approx 2\pi~\text{rad} \times 50~\text{Hz} \approx 314~\text{rad}.s^{-1}$. The total impedence of a resistance $R$ and an inductance $L$ in series is $R + \text{j} L \omega$. $110~\text{mH}$ is a typical value for inductance in a 100 km long line operating at 400 kV line, so reactance $L\omega$ is roughly equal to $110~\text{mH} \times 314~\text{rad}.s^{-1} \approx 35 ~ \Omega $. Divided by a hundred, the value lies in the range given in table $\ref{tab:Range of physical values in high voltage transmission lines and cables}$.
 
\begin{table}[H]
\begin{tabular}{l|c|c|c|c|c|}
\cline{2-6}
             & \multicolumn{3}{ c| }{\textbf{overhead line}} & \multicolumn{2}{ c| }{\textbf{underground cable}} \\ \cline{2-6}
             & 63 - 90 kV & 225 kV & 400 kV  &   63 kV     &     225 kV \\ \cline{1-6} 
\multicolumn{1}{|c|}{\textbf{resistance} ($\Omega$/km)} & 0.10 - 0.16 & 0.022 - 0.065 & 0.022 - 0.039 & 0.028 - 0.225 & 0.028 - 0.110 \\ \cline{1-6}
\multicolumn{1}{|c|}{\textbf{reactance} ($\Omega$/km)} & 0.4 & 0.29 - 0.41 & 0.32 - 0.43 & 0.104 - 0.134 & 0.107 - 0.134 \\ \cline{1-6}  
\multicolumn{1}{|c|}{\textbf{capacitance} (nF/km)} & 9.1 - 9.5 & 8.9 - 12.5 & 8.7 - 11.5 & 158 - 289 & 131 - 320 \\ \cline{1-6}         
\end{tabular}
\caption{Range of physical values in high voltage transmission lines and cables~\cite{blanchet}}
\label{tab:Range of physical values in high voltage transmission lines and cables}
\end{table}

In order to switch from one of the voltage levels shown in table $\ref{tab:Range of physical values in high voltage transmission lines and cables}$ to another, electricity transmission networks are equipped with an electrical device called \textit{transformer}. 
In this work, it is assumed that power entering a transformer is equal to power exiting it. Such a transformer is said to be an \textit{ideal transformer}. It is modeled by a complex number called \textit{ratio}. The output voltage is equal to the input voltage divided by the ratio while the output current is equal to the input current multiplied by the conjugate of the ratio. This is visible in figure \ref{fig:transmission line with pst} (where $\left(\cdot\right)^H$ denotes the conjugate transpose). Regular transformers have a real ratio and some special transformers called \textit{phase-shifting transformers} have a complex ratio.

Consider a non-zero integer $n \in \mathbb{N}^*$. We model an electricity transmission network by a set of buses $\mathcal{N}: = \{1,\hdots,n\}$ of which a subset $\mathcal{G} \subset \mathcal{N}$ is connected to generators. Let $s_k^\text{gen} = p_k^\text{gen} + \text{j} q_k^\text{gen} \in \mathbb{C}$ denote generated power at bus $k \in \mathcal{G}$. All buses are connected to a load (i.e. power demand). Let $s_k^\text{dem} = p_k^\text{dem} + \text{j} q_k^\text{dem} \in \mathbb{C}$ denote power demand at bus $k \in \mathcal{N}$. Let $v_k \in \mathbb{C}$ denote voltage at bus $k\in \mathcal{N}$ and $i_k \in \mathbb{C}$ denote current injected into the network at bus $k\in \mathcal{N}$. The convention used for current means that $v_k i_k^H$ is the power injected into the network at bus $k\in \mathcal{N}$. This means that $v_k i_k^H = -s_k^\text{dem}$ at bus $k \in \mathcal{N} \setminus \mathcal{G}$ and $v_k i_k^H = s_k^\text{gen} - s_k^\text{dem}$ at bus $k\in \mathcal{G}$.

The network links buses to one another through a set of lines $\mathcal{L} \subset \mathcal{N} \times \mathcal{N}$. A link between two buses is described in figure $\ref{fig:transmission line with pst}$. In this figure, $y_{lm} \in \mathbb{C}$ denotes the mutual admittance between buses $(l,m)\in \mathcal{L}$ ($y_{ml} = y_{lm}$ for all $(l,m)\in \mathcal{L}$); $y^\text{gr}_{lm} \in \mathbb{C}$ denotes the admittance-to-ground at end $l$ of line $(l,m)\in \mathcal{L}$; $\rho_{lm} \in \mathbb{C}$ denotes the ratio of the phase-shifting transformer at end $l$ of line $(l,m) \in \mathcal{L}$ ($\rho_{lm} = 1$ if there is no transformer); and $i_{lm} \in \mathbb{C}$ denotes current injected in line $(l,m)\in \mathcal{L}$ at bus $l$. 
\begin{figure}[H]
	\centering
	\includegraphics[width=1\textwidth]{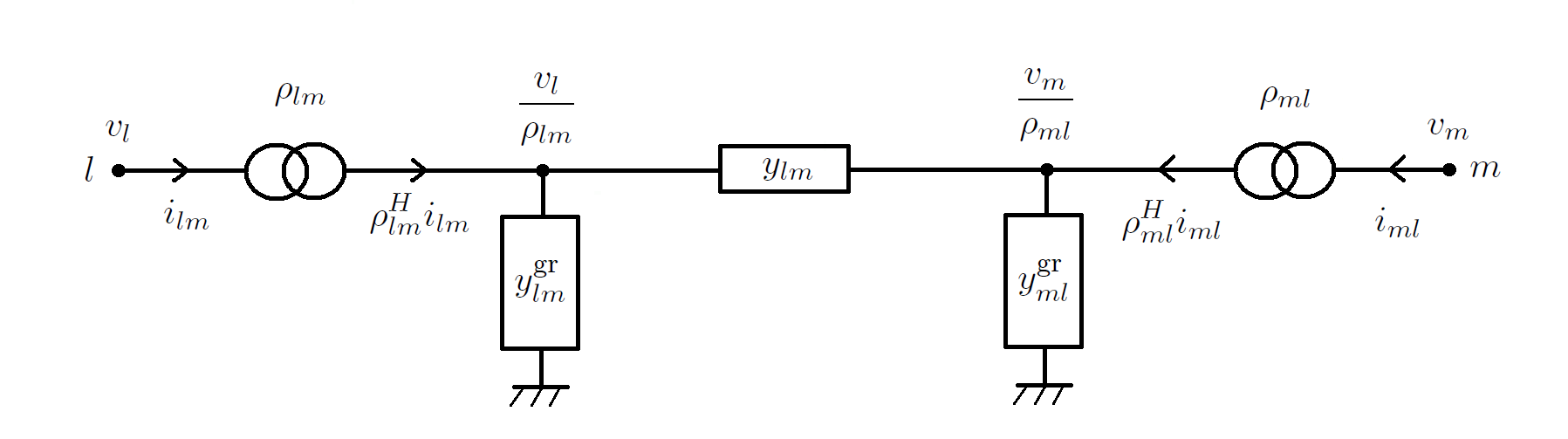}
	\caption{Link between buses $l$ and $m$}
	 \label{fig:transmission line with pst}
\end{figure}
A formulation of the optimal power flow problem is given in tables $\ref{tab:Objective, variables, and data of OPF problem}$ (where $a_k,b_k,c_k \in \mathbb{R}$) and $\ref{tab:Constraints of OPF problem}$.
\begin{table}[H]
\centering
\begin{tabular}{|c|c|}
\hline
\textbf{objective} & \textbf{description} \\
\hline
$\min \sum_{k \in \mathcal{G}} a_k (p^\text{gen})^2 + b_k p^\text{gen} + c_k$ & generation cost \\
\hline
\textbf{variables} & \textbf{description} \\
\hline
$(i_k)_{k\in \mathcal{N}}$ & injected current\\
$(i_{lm})_{(l,m) \in \mathcal{L}}$ & current flow\\
$(p_k^\text{gen})_{k\in \mathcal{N}}$ & active generation\\
$(q_k^\text{gen})_{k\in \mathcal{N}}$ & reactive generation\\
$(v_k)_{k\in \mathcal{N}}$ & voltage\\
\hline
\textbf{data} & \textbf{description} \\
\hline
$(y_{lm})_{(l,m) \in \mathcal{L}}$ & mutual admittance\\
$( y_{lm}^\text{gr})_{(l,m) \in \mathcal{L}}$ & admittance-to-ground\\
$(\rho_{lm})_{(l,m) \in \mathcal{L}}$ & ratio of (phase-shifting) transformer\\
$(p_k^\text{dem})_{k\in \mathcal{N}}$ & active power demand\\
$(q_k^\text{dem})_{k\in \mathcal{N}}$ & reactive power demand\\
$v_k^{\text{min}},v_k^{\text{max}}, p_k^{\text{min}},p_k^{\text{max}},q_k^{\text{min}},q_k^{\text{max}}$ & bounds at buses\\
$i_{lm}^{\text{max}}, v_{lm}^{\text{max}},s_{lm}^{\text{max}},p_{lm}^{\text{max}}$  & bounds on line flow\\
\hline 
\end{tabular}
\caption{Objective, variables, and data}
\label{tab:Objective, variables, and data of OPF problem}
\end{table}

\begin{table}[H]
\centering
\begin{tabular}{|c|c|c|}
\hline
$\begin{array}{c} k\in ... \\ (l,m) \in ... \end{array}$ & \textbf{constraints} & \textbf{description} \\
\hline
$\mathcal{N}$ & $i_l = \sum_{m \in \mathcal{N}(l)} i_{lm}$ & Kirchoff's first law \\
$\mathcal{L}$ & $\rho_{lm}^H i_{lm} =  y_{lm}^\text{gr} \frac{v_l}{\rho_{lm}} + y_{lm}(\frac{v_l}{\rho_{lm}}  - \frac{v_m}{\rho_{ml}})$ & Kirchoff's first law and Ohm's law \\
$\mathcal{N} \setminus \mathcal{G}$  & $v_k i_k^H = -  p^\text{dem}_k  - \text{j} q^\text{dem}_k$ & power demand \\
$\mathcal{G}$ & $v_k i_k^H = p^\text{gen}_k -  p^\text{dem}_k + \text{j}( q^\text{gen}_k - q^\text{dem}_k )$ & power demand and generation\\
$\mathcal{G}$ & $p_k^{\text{min}} \leqslant p^\text{gen}_k \leqslant p_k^{\text{max}}$ & bounds on active generation \\
$\mathcal{G}$ & $q_k^{\text{min}} \leqslant q^\text{gen}_k \leqslant q_k^{\text{max}}$ & bounds on reactive generation \\
$\mathcal{N}$ & $v_k^{\text{min}} \leqslant |v_k| \leqslant v_k^{\text{max}}$ & bounds on voltage amplitude \\
$\mathcal{L}$ & $|v_l - v_m| \leqslant v_{lm}^{\text{max}}$ & bound on voltage difference \\
$\mathcal{L}$ & $|i_{lm}| \leqslant i_{lm}^{\text{max}}$ & bound on current flow \\
$\mathcal{L}$ & $|v_l i_{lm}^H| \leqslant s_{lm}^{\text{max}}$ & bound on apparent power flow \\
$\mathcal{L}$ & $|\text{Re}(v_l i_{lm}^H)| \leqslant p_{lm}^{\text{max}}$ & bound on active power flow \\
\hline
\end{tabular}
\caption{Constraints}
\label{tab:Constraints of OPF problem}
\end{table}
According to the second constraint in table $\ref{tab:Constraints of OPF problem}$, for all $(l,m) \in \mathcal{L}$ :
\begin{equation}
i_{lm} =  \frac{y_{lm} + y_{lm}^\text{gr}}{|\rho_{lm}|^2} v_l  - \frac{y_{lm}}{\rho_{ml} \rho_{lm}^H} v_m
\label{eq:current linear function of voltages}
\end{equation}
Together with the first constraint in table $\ref{tab:Constraints of OPF problem}$, relationship ($\ref{eq:current linear function of voltages}$) yields that for all $l \in \mathcal{N}$ :
$$
\begin{array}{rcl}
i_l & = & \sum_{m \in \mathcal{N} \setminus \{l\}} i_{lm} \\
    & = & \sum_{m \in \mathcal{N} \setminus \{l\}} \frac{y_{lm} + y_{lm}^\text{gr}}{|\rho_{lm}|^2} v_l  - \frac{y_{lm}}{\rho_{ml} \rho_{lm}^H} v_m \\
    & = & \left(\sum_{m \in \mathcal{N} \setminus \{l\}} \frac{y_{lm} + y_{lm}^\text{gr}}{|\rho_{lm}|^2} \right) v_l - \sum_{m \in \mathcal{N} \setminus \{l\}} \frac{y_{lm}}{\rho_{ml} \rho_{lm}^H} v_m
\end{array}
$$
Define the \textit{admittance matrix} $Y$ as the complex matrix of size $n \times n$ by:
$$ Y_{lm} : =
\left\{
\begin{array}{lc}
\sum_{m \in \mathcal{N} \setminus \{l\}} \frac{y_{lm} + y_{lm}^\text{gr}}{|\rho_{lm}|^2} & \text{if} ~~ l = m \\
- \frac{y_{lm}}{\rho_{ml} \rho_{lm}^H} & \text{if} ~~ l \neq m
\end{array}
\right.
$$
Also, define $\textbf{i} : = (i_k)_{k\in \mathcal{N}}$ and $\textbf{v} : = (v_k)_{k\in \mathcal{N}}$. It follows that :
$$ \textbf{i} = Y \textbf{v} $$

\section{Optimality conditions in optimization}
\label{subsec:Local versus global optimality}
As mentionned in Section \ref{subsec:Optimal power flow problem}, current methods for solving the optimal power flow problem use nonlinear optimization techniques. These aim to find at least a solution to the optimality conditions, which we present in this section. Satisfaction of the optimality conditions does not guarantee global optimality for nonconvex problems, but they do for convex problems. The proposed approach in this dissertation uses the optimality conditions to solve convex relaxations of the optimal power flow problem, a concept presented in the next section. 

To discuss optimality conditions, we consider a general framework that encompasses both the nonconvex and convex cases. Consider a finite dimensional normed vector space $\mathbb{E}$ over $\mathbb{R}$ or $\mathbb{C}$ and an objective function $f: \mathbb{E} \rightarrow \mathbb{R}$. Also, consider a feasible set $X \subset \mathbb{E}$ described by a single function $c: \mathbb{E} \rightarrow \mathbb{F}$ where $\mathbb{F}$ is a Hilbert space over $\mathbb{R}$ or $\mathbb{C}$. The feasible set is also defined by a nonempty closed convex cone $K \subset \mathbb{F}$. Using these notations, the problem to be solved can be written:
$$ \boxed{\inf_{x \in \mathbb{E}} ~ f(x) ~~ \text{subject to} ~~ c(x) \in K} $$

Objective function $f$ and constraint function $c$ will be considered twice differentiable. This is valid for the optimal power flow problem and its convex relaxations.\\\\
\textbf{Relationship between local optimality and derivatives}\\
Let's illustrate the relationship between local optimality and derivatives with a simple example. Consider a function $f: \mathbb{R} \rightarrow \mathbb{R}$  with a local minimum in 0 equal to 0. The first order Taylor series reads:
$$ 0 \leqslant f(t) = f'(0)t + o(t) = t\left[f'(0)+o(1)\right] $$
Thus $f'(0)=0$. The second order Taylor series reads:
$$ 0 \leqslant f(t) = f'(0)t + \frac{f''(0)}{2}t^2 + o(t^2) = \frac{1}{2}t^2\left[f''(0)+o(1)\right] $$
Thus $f''(0) \geqslant 0$. If $f''(0) > 0$ then the above line of equations tells us that $0$ is a strict local minimum. Else if $f''(0) = 0$, the third order Taylor series reads:
$$ 0 \leqslant f(t) = f'(0)t + \frac{f''(0)}{2}t^2 + \frac{f'''(0)}{6}t^3 + o(t^3) = \frac{1}{6}t^3\left[f'''(0)+o(1)\right] $$
Thus $f'''(0) = 0$. The fourth order Taylor series will then imply that $f''''(0) \geqslant 0$. We can again distinguish between stricly inequality and equality, and so on $\hdots$ \\\\
In practice, only first order and second order conditions are considered because higher order derivatives are expensive to compute. Moreover, higher order derivatives are unrelated to local optimality unless all lower order derivatives zero out in one point. An example is $f(t) = t^4$ where $f'(0) = f''(0) = f'''(0) = 0$ and $f''''(0) = \frac{1}{4} > 0$. The fourth order strict inequality indicates that 0 is a strict local minimum. On the other hand, if one considers $\alpha \in \mathbb{R}$ and $f(t) = t^2 + \alpha t^3$, this yields $f'(0) = 0$, $f''(0) = 2 > 0$, and $f'''(0) = 6 \alpha$ so that 0 is a strict local minimum regardless of the value of the third order derivative.
\\\\
\textbf{First and second order necessary optimality conditions}\\
Let's go back to the general case where $f: \mathbb{E} \rightarrow \mathbb{R}$.
When there are no constraints, a locally optimal point must be stationnary, that is to say that first order derivatives must zero out in that point. Indeed, consider an optimal point $x$ and write the first order Taylor series for~$h$ close to but different from zero:
$$
0 \leqslant f(x+h) - f(x) = f'(x).h + o(\|h\|) = \|h\| \left[ f'(x)\left(\frac{h}{\|h\|}\right) + o(1) \right]
$$
Hence for all $\|d\|=1$, one has $f'(x).d \geqslant 0$. Taking $d = - \nabla f(x)$  leads to $f'(x)=0$.\\\\
Consider the second order Taylor series for $h$ close to but different from zero:
$$
0 \leqslant f(x+h) - f(x) = f'(x).h + f''(x)(h,h) + o(\|h\|^2) = \|h\|^2 \left[ f''(x) \left( \frac{h}{\|h\|},\frac{h}{\|h\|} \right) + o(1) \right]
$$
Hence for all $\|d\|=1$, one has $f''(x) \left(d,d \right)\geqslant 0$.\\\\ 
When there are constraints, establishing necessary optimality conditions requires the concept of duality in some way or another. 
Let's establish these conditions using the notion of saddle point in min-max duality. This implies an assumption of global  rather than local optimality. 
Min-max duality consists of writing the objective function as the supremum of a coupling function, that is to say : $f(x) = \sup_{y \in Y} \varphi(x,y)$. The optimization problem can thus be written, for some $X \subset \mathbb{E}$ and some $Y \subset \mathbb{F}$ : $$ \inf_{x\in X} \sup_{y \in Y} \varphi(x,y) $$
If it is legitimate to swap inf and sup, i.e. $ \inf_{x\in X} \sup_{y \in Y} \varphi(x,y) = \sup_{y \in Y}\inf_{x\in X}  \varphi(x,y) $ (called \textit{no duality gap}), and there exists a solution to the right-hand problem, then solving the original problem is equivalent to finding a saddle point of the coupling function. Indeed, $(\overline{x},\overline{y})$ is saddle point of $\varphi$ if and only if there is no duality gap and $\overline{x}$ solves $\inf_{x\in X} \sup_{y \in Y} \varphi(x,y)$ and $\overline{y}$ solves $\sup_{y \in Y}\inf_{x\in X}  \varphi(x,y)$.\\\\
The Lagrange function $ \varphi(x,y) = f(x) + \langle c(x),y\rangle $ where $X := \mathbb{E}$ and $Y := K^-:= \{ ~ y \in \mathbb{F} ~|~ \langle z , y \rangle \leqslant 0,~\forall z \in K~\}$ is an example of a coupling function with the enviable property that $\varphi(x,\cdot)$ is an affine function. Let's consider a saddle point $(\overline{x},\overline{y})\in X \times Y$ of the Lagrange function. By definition :
$$ \forall x \in X, \forall y \in Y, ~~~ \varphi(\overline{x},y) \leqslant \varphi(\overline{x},\overline{y}) \leqslant \varphi(x,\overline{y}) $$
The right-hand side inequality means that $\overline{x}$ is an optimal solution of the unconstrained problem $\inf_{x \in \mathbb{E}} \varphi(x,\overline{y})$. Thus $\varphi'_x(\overline{x},\overline{y}) = f'(\overline{x}) + c'(\overline{x})^*~\overline{y} = 0$. 
Proceeding in the same fashion with the left-hand side inequality leads to a constrained optimization problem, so we will proceed differently. The left-hand side inequality implies:
$$\forall y \in K^-,~~~ \langle c(\overline{x}),y \rangle \leqslant  \langle c(\overline{x}),\overline{y} \rangle $$
Plugging in for $y=0$ and $y=2\bar{y}$ yields $\langle c(\overline{x}),\overline{y} \rangle = 0$, known as \textit{complementary slackness}.\\\\
To sum up, if $\overline{x}$ is primal optimal and $\overline{y}$ is dual optimal and there is no duality gap, then:
$$
\left\{
\begin{array}{rl}
\text{first order condition:} & f'(\overline{x}) + c'(\overline{x})^*~\overline{y} = 0 \\

\text{primal feasibility:} & c(\overline{x}) \in K \\
\text{dual feasibility:} & \overline{y} \in K^- \\
\text{complementary slackness:} & \langle c(\overline{x}),\overline{y} \rangle = 0
\end{array}
\right.
$$
Note that min-max duality originates from the minimax theorem proven by Von Neumann in his 1928 paper \textit{Zur Theorie der Gesellschaftsspeile}. A generalization of the minimax theorem states that if $X$ and $Y$ are nonempty convex sets, $X$ is compact, and $\varphi$ is continuous and convex-concave, then:
$$ \inf_{x\in X} \sup_{y \in Y} \varphi(x,y) = \sup_{y \in Y}\inf_{x\in X}  \varphi(x,y) $$
In general, the KKT conditions do not guarantee global optimality, nor even local optimality in fact. If the objective and constraint functions $f$ and  $c$ are convex, then they guarantee global optimality. In the next section, nonconvexities are removed from the optimal power flow problem, yielding a relaxed convex problem. This convex problem is solved using interior-point methods which involve solving for KKT conditions.

\section{Convex relaxation of the optimal power flow problem}
\label{subsec:Convex relaxation of the optimal power flow problem}

Lavaei and Low \cite{lavaei-low-2012} proposed a formulation of the optimal power flow problem where the variables are the real and imaginary parts of voltages at each bus. To do so, they defined a real vector $x = [~\text{Re}(v) ~ ~\text{Im}(v) ~]^T$ where $v \in \mathbb{C}^n$ are the complex voltages. Next they proposed a convex relaxation of the optimal power flow problem. We illustrate their work by considering an example of power loss minimization. 
\begin{figure}[ht]
  \centering
    \includegraphics[width=.65\textwidth]{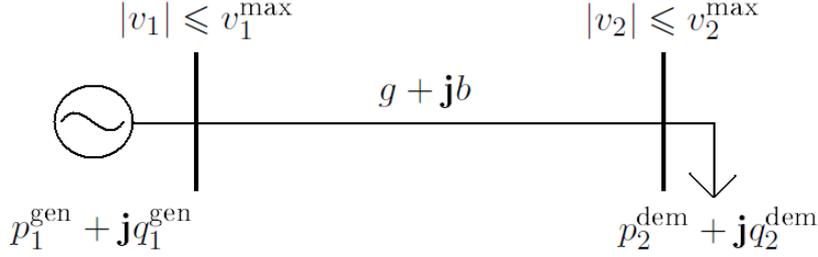}
  \caption{Two-Bus System}
  \label{fig:WB2}
\end{figure}
The system 
of Figure~\ref{fig:WB2} 
links a generator to a load via a line of admittance $g+\textbf{i}b$ while respecting upper voltage constraints. Minimizing power loss reads
\begin{gather}
\inf_{v_1,v_2 \in \mathbb{C}} ~~ g ~ |v_1|^2 - g ~ \overline{v}_1 v_2 - g ~ \overline{v}_2 v_1 + g ~ |v_2|^2, \\
\text{subject to} ~~~~~~~~~~~~~~~~~~~~~~~~~~~~~~~~~~~~~~~~~~~~~~~~~~~~~~~~~~~~~~~~~~~~~~~~~~~~~~~~~~~~~~~~~~~~~~~~~~~~~~~~~~~ \notag \\
 -\frac{g-\textbf{i}b}{2} ~ \overline{v}_1 v_2 -\frac{g+\textbf{i}b}{2} ~ \overline{v}_2 v_1 + g ~ |v_2|^2 = -p_2^\text{dem}, \\
~~~~~ \frac{b+\textbf{i}g}{2} ~ \overline{v}_1 v_2 + \frac{b-\textbf{i}g}{2} ~ \overline{v}_2 v_1 -b ~ |v_2|^2 = -q_2^\text{dem}, ~~ \\
|v_1|^2 \leqslant (v_1^{\text{max}})^2, \\
|v_2|^2 \leqslant (v_2^{\text{max}})^2,
\end{gather}
where $\textbf{i}$ denotes the imaginary number.
Identifying real and imaginary parts of the variables $ v_1 =: x_1 + \textbf{i} x_3 $ and $ v_2 =: x_2 + \textbf{i} x_4 $ leads to
\begin{gather}
\inf_{x_1,x_2,x_3,x_4 \in \mathbb{R}} ~~ gx_1^2 + gx_3^2 - 2gx_1 x_2 - 2g x_3 x_4 - g x_2^2 - gx_4^2, \\
\text{subject to} ~~~~~~~~~~~~~~~~~~~~~~~~~~~~~~~~~~~~~~~~~~~~~~~~~~~~~~~~~~~~~~~~~~~~~~~~~~~~~~~~~~~~~~~~~~~~~~~~~~~~~~~~~~~ \notag \\
- gx_1 x_2 - g x_3 x_4 - b x_1 x_4 + b x_2 x_3 + g x_2^2 + g x_4^2 + p_2^\text{dem} = 0, \\
\hphantom{-}b x_1 x_2 + b x_3 x_4  - g x_1 x_4 + g x_2 x_3 - b x_2^2 - b x_4^2 + q_2^\text{dem} = 0, \\
x_1^2 + x_2^2 \leqslant (v_1^{\text{max}})^2, \\
x_3^2 + x_4^2 \leqslant (v_2^{\text{max}})^2. \\
\end{gather}
This problem can be rewritten as 
\begin{gather}
\inf_{y} ~~ gy_{11} + gy_{33} - 2gy_{12} - 2g y_{34} - g y_{22} - g y_{44}, \\
\text{subject to} ~~~~~~~~~~~~~~~~~~~~~~~~~~~~~~~~~~~~~~~~~~~~~~~~~~~~~~~~~~~~~~~~~~~~~~~~~~~~~~~~~~~~~~~~~~~~~~~~~~~~~~~~~~~ \notag \\
- gy_{12} - g y_{34} - b y_{14} + b y_{23} + g y_{22} + g y_{44} + p_2^\text{dem} = 0, \\
\hphantom{-}b y_{12} + b y_{34}  - g y_{14} + g y_{23} - b y_{22} - b y_{44} + q_2^\text{dem} = 0, \\
y_{11} + y_{22} \leqslant (v_1^{\text{max}})^2, \\
y_{33} + y_{44} \leqslant (v_2^{\text{max}})^2, \\
\left(
\begin{array}{cccc}
y_{11} & y_{12} & y_{13} & y_{14} \\
y_{12} & y_{22} & y_{23} & y_{24} \\
 y_{13} & y_{23} & y_{33} & y_{34} \\
y_{14} & y_{24} & y_{34} & y_{44} 
\end{array}
\right)
\succcurlyeq 0, \\
\text{rank} (y) \leqslant 1.
\end{gather}
Removing the rank constraint leads to a convex relaxation of the optimal power flow problem.

As an extension of Lavaei and Low's work, Sojoudi and Lavaei~\cite{lavaei-sojoudi-2012} studied the theory behind optimization over graphs. To help with future work, we provide fully detailed proofs of results found in \cite{lavaei-sojoudi-2012}. The reader may skip these by moving to Section \ref{subsec:Definitions of complex numbers} and still understand the rest of the dissertation.\\\\
Consider
$$ \mathbb{H}^n_{+1} := \{~\textbf{v}\textbf{v}^H ~|~ \textbf{v} \in \mathbb{C}^n~\}. $$
Let $\textbf{i}$ denote the imaginary number. The notation of set $\mathbb{H}^n_{+1}$ stems from the following proposition:
\begin{proposition}
\label{prop:notation hermitian of rank one}
\normalfont
$$
M \in \mathbb{H}^n_{+1} ~~~ \Longleftrightarrow ~~~ M \in \mathbb{H}^n_+ ~~\text{and}~~ \text{rk}(M) \leqslant 1
\label{prop:positive semidefinite hermitian rank one}
$$
\end{proposition}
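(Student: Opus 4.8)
The plan is to prove the two implications separately. The forward implication $M \in \mathbb{H}^n_{+1} \Rightarrow (M \in \mathbb{H}^n_+ \text{ and } \text{rk}(M) \leqslant 1)$ is a direct verification from the defining form $M = \textbf{v}\textbf{v}^H$, whereas the reverse implication relies on the spectral decomposition of Hermitian matrices to reconstruct a suitable vector $\textbf{v}$ from the eigendata of $M$.

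For the forward direction, I would start from $M = \textbf{v}\textbf{v}^H$ for some $\textbf{v} \in \mathbb{C}^n$ and check the three required properties in turn. Hermiticity follows from $M^H = (\textbf{v}\textbf{v}^H)^H = \textbf{v}\textbf{v}^H = M$. Positive semidefiniteness follows because for every $\textbf{x} \in \mathbb{C}^n$ one has $\textbf{x}^H M \textbf{x} = \textbf{x}^H\textbf{v}\textbf{v}^H\textbf{x} = |\textbf{v}^H\textbf{x}|^2 \geqslant 0$. Finally, $\text{rk}(M) \leqslant 1$ because $M\textbf{x} = \textbf{v}(\textbf{v}^H\textbf{x})$ exhibits every vector in the range of $M$ as a scalar multiple of $\textbf{v}$, so the range is contained in the line spanned by $\textbf{v}$.

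For the reverse direction, I would distinguish two cases according to the rank. If $\text{rk}(M) = 0$ then $M = 0 = \textbf{0}\,\textbf{0}^H$, which lies in $\mathbb{H}^n_{+1}$. If $\text{rk}(M) = 1$, I would invoke the spectral theorem: a Hermitian matrix admits an orthonormal basis of eigenvectors with real eigenvalues, and positive semidefiniteness forces all these eigenvalues to be nonnegative. Since the rank is exactly one, precisely one eigenvalue $\lambda > 0$ survives, so $M = \lambda\,\textbf{u}\textbf{u}^H$ for a unit eigenvector $\textbf{u}$. Setting $\textbf{v} := \sqrt{\lambda}\,\textbf{u}$ then gives $M = \textbf{v}\textbf{v}^H \in \mathbb{H}^n_{+1}$, closing the argument. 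The only substantive ingredient — and hence the ``hard part'', though it is entirely standard — is the spectral theorem, which is what legitimizes collapsing a rank-one positive semidefinite Hermitian matrix into a single positive multiple of an outer product; everything else is elementary bookkeeping with the adjoint.
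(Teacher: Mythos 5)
Your proof is correct, and the forward direction coincides with the paper's argument (Hermiticity by direct conjugation, positive semidefiniteness via $|\textbf{v}^H\textbf{x}|^2 \geqslant 0$, rank via the range lying in the span of $\textbf{v}$). The reverse direction, however, takes a genuinely different route. You invoke the spectral theorem: diagonalize $M$, note that positive semidefiniteness forces nonnegative eigenvalues and rank one leaves exactly one positive eigenvalue $\lambda$, then set $\textbf{v} = \sqrt{\lambda}\,\textbf{u}$. The paper instead avoids eigendecomposition entirely: it writes the rank-$\leqslant 1$ matrix as $M = \textbf{t}\textbf{u}^H$ for two vectors $\textbf{t},\textbf{u} \in \mathbb{C}^n$ and constructs $\textbf{v}$ entrywise by $v_i := \sqrt{|t_i u_i|}\,e^{\textbf{i}\,\mathrm{arg}(u_i)}$, then verifies $v_i v_j^H = t_i u_j^H$ through a chain of modulus-and-argument manipulations, using Hermiticity to get $|t_i u_j| = |t_j u_i|$ and positive semidefiniteness to get $\mathrm{arg}(t_i) \equiv \mathrm{arg}(u_i) \ [2\pi]$. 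Your approach buys brevity and uniformity (the rank-$0$ and rank-$1$ cases are dispatched cleanly, with no delicate phase bookkeeping or implicit handling of zero entries), at the cost of appealing to a substantial classical theorem; the paper's approach buys self-containedness — nothing beyond complex arithmetic is used — at the cost of a longer and more fragile entrywise computation. Both are valid, and your explicit treatment of the rank-$0$ case is a point of care the paper leaves implicit.
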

\begin{proof}[Proof]
($\Longrightarrow$) Consider a matrix $M \in \mathbb{H}^n_{+1}$, that is, there exists $\textbf{v} \in \mathbb{C}^n$ such that $M = \textbf{v}\textbf{v}^H$. Firstly, observe that $M^H = (\textbf{v}\textbf{v}^H)^H = \textbf{v}\textbf{v}^H = M$. Secondly, for all $\textbf{z} \in \mathbb{C}^n$, $\textbf{z}^H M \textbf{z} = \textbf{z}^H \textbf{v}\textbf{v}^H \textbf{z} = (\textbf{v}^H \textbf{z})^H \textbf{v}^H \textbf{z} = |\textbf{v}^H \textbf{z}|^2 \geqslant 0$. Lastly, each column of $M=\textbf{v}\textbf{v}^H$ is a linear combination of $\textbf{v}$, so that the rank of $M$ is at most 1.\\\\
($\Longleftarrow$) Matrix $M$ is of rank at most 1 so there exists two vectors $\textbf{t},\textbf{u} \in \mathbb{C}^n$ such that $M = \textbf{t}\textbf{u}^H$. Define $\textbf{v} \in \mathbb{C}^n$ such that for all $1\leqslant i \leqslant n$: 
$$v_i: = \sqrt{|t_i u_i|}~~ e^{\textbf{i}\text{arg}(u_i)}$$
Thus : $$
\begin{array}{rcl}
v_i v_j^H & = & \sqrt{|t_i u_i t_j u_j|} ~~ e^{\textbf{i}\text{arg}(u_i)} ~ e^{-\textbf{i}\text{arg}(u_j)} \\
 & = & \sqrt{|t_i u_j t_j u_i |} ~~ e^{\textbf{i}\text{arg}(u_i)} ~ e^{-\textbf{i}\text{arg}(u_j)} \\
 & = & \sqrt{|t_i u_j|^2} ~~ e^{\textbf{i}\text{arg}(u_i)} ~ e^{-\textbf{i}\text{arg}(u_j)} ~~ \text{($|t_i u_j|= |t_j u_i|$ since $\textbf{t} \textbf{u}^H \in \mathbb{H}^n$)} \\
 & = & |t_i u_j| ~~ e^{\textbf{i}\text{arg}(u_i)} ~ e^{-\textbf{i}\text{arg}(u_j)} \\
 & = & |t_i| ~ e^{\textbf{i}\text{arg}(u_i)} ~~ |u_j| ~ e^{-\textbf{i}\text{arg}(u_j)} \\
 & = & |t_i| ~ e^{\textbf{i}\text{arg}(u_i)} ~~ u_j^H \\
 & = & |t_i| ~ e^{\textbf{i}\text{arg}(t_i)} ~~ u_j^H ~~ \text{($\text{arg}(t_i) \equiv \text{arg}(u_i)[2\pi]$ since $t_iu_i^H \in \mathbb{R}_+$ since $\textbf{t}\textbf{u}^H \succcurlyeq 0 $)} \\
 & = & t_i u_j^H \\
\end{array}
$$
Vector $\textbf{v }\in \mathbb{C}^n$ thereby satisfies $\textbf{v} \textbf{v}^H = \textbf{t} \textbf{u}^H = M $.
\end{proof}

Given a set of edges $\mathcal{E} \subset \mathcal{N} \times \mathcal{N}$, define the following $\mathbb{C}$-linear operator :
$$
\begin{array}{rccl}
\phi^\mathcal{E} : & \mathcal{M}_n(\mathbb{C}) & \longmapsto & \mathcal{M}_n(\mathbb{C}) \\
 & M & \longrightarrow & \phi^\mathcal{E}(M)_{ij} = \left\{ \begin{array}{cl} M_{ij} & \text{if} ~~ (i,j) \in \mathcal{E} ~~ \text{or} ~~ i = j \\ 0 & \text{else} \end{array} \right.
\end{array}
$$
Given a graph $\mathcal{L}$, graph theory can be used to decompose the constraint $M \in \mathbb{H}^n_{+1} + \text{Ker}(\phi^\mathcal{L})$ into several smaller constraints. (The graph $\mathcal{L}$ typically corresponds to the sparsity pattern. In a such sparse optimization problem, the constraint $M \in \mathbb{H}^n_{+1}$ may be replaced by $M \in \mathbb{H}^n_{+1} + \text{Ker}(\phi^\mathcal{L})$.) First, two lemmas are presented.

\begin{lemma}
\label{lemma:spanning tree}
\normalfont
\textit{Any undirected connected graph has a spanning tree.}
\end{lemma}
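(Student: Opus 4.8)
The plan is to prove that any undirected connected graph has a spanning tree by a constructive argument based on edge removal, exploiting the characterization of trees as connected acyclic graphs. A spanning tree is a connected acyclic subgraph containing every vertex, so the task reduces to removing edges until no cycle remains while preserving connectivity at every step.

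First I would set up the induction or iterative removal procedure. Start from the graph $\mathcal{L}$ itself, which is connected by hypothesis and spans all vertices trivially. If $\mathcal{L}$ contains no cycle, it is already a tree and we are done. Otherwise, pick any cycle and remove one edge belonging to that cycle. The key observation is that removing a single edge lying on a cycle cannot disconnect the graph: any path that previously used the removed edge $(u,v)$ can be rerouted along the remainder of the cycle, so every pair of vertices remains connected. Thus connectivity is preserved, the vertex set is unchanged, and the number of edges strictly decreases by one.

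Next I would argue termination. Since the graph is finite, the edge set is finite, and each step decreases the number of edges by exactly one, the procedure must terminate after finitely many steps. When it terminates, the resulting subgraph is connected (preserved at every step), spans all vertices (the vertex set is never touched), and contains no cycle (termination occurs precisely when no cycle remains). Hence the final subgraph is a spanning tree of $\mathcal{L}$.

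The main obstacle, though it is conceptually minor, is justifying rigorously that deleting a cycle edge preserves connectivity; this is where the cycle structure is essential, since removing an arbitrary edge could disconnect the graph. I would make this explicit by showing that for the removed edge $(u,v)$, the remaining edges of the cycle form a $u$–$v$ path, so any walk in the old graph can be converted into a walk in the new graph by substituting that path for each use of $(u,v)$, thereby preserving reachability between all vertex pairs. An alternative, equally clean approach would be to take a spanning connected subgraph with the minimum number of edges and argue by contradiction that it must be acyclic, but the iterative edge-removal argument is the most transparent and self-contained.
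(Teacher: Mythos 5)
Your proof takes a genuinely different route from the paper. You build the spanning tree ``from above'' by iteratively deleting an edge of some cycle, using the fact that a cycle edge is never a bridge, and you terminate when the graph is acyclic. The paper instead builds the tree ``from below'': it applies Zorn's lemma to the poset of subtrees of $\mathcal{L}$ ordered by inclusion (the union of a chain of trees is again a tree, hence an upper bound), extracts a maximal tree, and argues that a maximal tree must be spanning, since otherwise a path from an uncovered vertex to the tree could be adjoined to contradict maximality. Your argument is more elementary and self-contained --- no appeal to the axiom of choice --- and your connectivity-preservation step (rerouting any walk through the remainder of the cycle) is exactly the right justification.

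There is, however, a scope gap you should be aware of. Your termination argument rests explicitly on finiteness of the edge set, but the lemma as stated --- ``any undirected connected graph has a spanning tree'' --- carries no finiteness hypothesis, and the paper's Zorn-based proof is precisely what handles the infinite case (note that the paper's \emph{next} lemma, on assigning potentials $\theta_i$ along a spanning tree, is the one that explicitly assumes the graph is finite). For an infinite connected graph your procedure need not terminate, and your fallback alternative (a spanning connected subgraph with the minimum number of edges) likewise presupposes a finite edge set. In the paper's application the graphs are power networks and hence finite, so your argument suffices for everything the lemma is used for; but to prove the statement as written you would need either to add a finiteness hypothesis or to replace the edge-counting termination by a maximality argument of the Zorn type.
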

\begin{proof}[Proof] Let $\mathcal{L}$ be the set of edges of an undirected connected graph. Define the following set :
$$\mathcal{S} : = \{ ~ \mathcal{T} \in \mathcal{P}(\mathcal{L})~|~ \mathcal{T} ~ \text{is a tree} ~ \}$$
$(\mathcal{S},\subset)$ is a partially ordered set since $(\mathcal{P}(\mathcal{L}),\subset)$ is a partially ordered set and $\mathcal{S} \subset \mathcal{P}(\mathcal{L})$. Consider a totally ordered subset of $\mathcal{S}$ and name it $\mathcal{U}$. $\cup_{\mathcal{T}\in \mathcal{U}} \mathcal{T}$ is a bound of $\mathcal{U}$ in $\mathcal{S}$. Zorn's lemma implies that $\mathcal{S}$ contains a maximal element. If the maximal element is not a spanning tree, there exists a vertex not contained in it. Since the graph is connected, there exists a path in $\mathcal{L}$ linking this vertex to a vertex in the maximum tree. The union of the maximum tree and that path forms a tree of $\mathcal{L}$ that contradicts the maximality of the maximum tree. Thus the maximum tree is a spanning tree of $\mathcal{L}$.
\end{proof}
\begin{lemma}
\label{lemma:equation over spanning tree}
\normalfont
\textit{Let $\mathcal{T}$ denote a spanning tree of a finite, undirected, and connected graph $(\mathcal{N},\mathcal{L})$ and let $(\theta_{ij})_{(i,j)\in \mathcal{T}}$ denote some real numbers. Assume that $\theta_{ij} + \theta_{ji} \equiv 0 ~[2\pi]$ for all $(i,j)\in \mathcal{T}$. Then there exists some real numbers $(\theta_i)_{i \in \mathcal{N}}$ such that:}
\begin{equation*}
\theta_i - \theta_j \equiv \theta_{ij}~[2\pi] ~~,~~ \forall (i,j) \in \mathcal{T}
\end{equation*}
\end{lemma}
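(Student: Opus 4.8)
The plan is to prove the statement by induction on the number of vertices $|\mathcal{N}|$, peeling off one leaf of the tree at a time. The base case is a single vertex, for which $\mathcal{T}$ has no edges and we may set $\theta_1 := 0$ vacuously. For the inductive step I would use the standard fact that a finite tree with at least two vertices has a leaf, i.e.\ a vertex $v$ incident to a unique edge of $\mathcal{T}$, say joining $v$ to its unique neighbor $w$. Deleting $v$ and its incident edges yields a graph on $\mathcal{N}\setminus\{v\}$ that is again finite, undirected, connected, and acyclic, hence a spanning tree $\mathcal{T}'$ of the induced subgraph.

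First I would apply the induction hypothesis to $\mathcal{T}'$ to obtain real numbers $(\theta_i)_{i\in\mathcal{N}\setminus\{v\}}$ satisfying $\theta_i - \theta_j \equiv \theta_{ij}~[2\pi]$ for every edge $(i,j)$ of $\mathcal{T}'$. It then remains only to assign a value to $\theta_v$ that is compatible with the two edges $(w,v)$ and $(v,w)$ that were removed, since these are the only edges of $\mathcal{T}$ not already handled. I would simply define
\[
\theta_v := \theta_w - \theta_{wv}.
\]
By construction $\theta_w - \theta_v \equiv \theta_{wv}~[2\pi]$, which settles the edge $(w,v)$. For the reverse orientation $(v,w)$ I would invoke the antisymmetry hypothesis $\theta_{vw} + \theta_{wv} \equiv 0~[2\pi]$: from $\theta_v - \theta_w \equiv -\theta_{wv}~[2\pi]$ and this relation we get $\theta_v - \theta_w \equiv \theta_{vw}~[2\pi]$, as required. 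Every other edge of $\mathcal{T}$ lies in $\mathcal{T}'$ and its congruence is untouched, so the extended family $(\theta_i)_{i\in\mathcal{N}}$ works for all of $\mathcal{T}$, completing the induction.

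An equivalent non-inductive route would be to root $\mathcal{T}$ at any vertex $r$, set $\theta_r := 0$, and define $\theta_i$ for each other vertex by accumulating the edge values $\theta_{\cdot\cdot}$ along the unique tree-path from $r$ to $i$; the uniqueness of that path (acyclicity) is exactly what makes the definition unambiguous, and the antisymmetry hypothesis again reconciles the two orientations of each edge. I expect the only real point requiring care, and the place where the tree hypothesis is genuinely used, to be precisely this consistency issue: in a graph with cycles one would additionally need the sum of the $\theta_{ij}$ around each cycle to vanish modulo $2\pi$, whereas a tree has no cycles and therefore imposes no such obstruction. In the inductive formulation this difficulty is dissolved because the removed leaf contributes only the single pair of edges $(w,v)$, $(v,w)$, leaving no room for a conflicting second constraint on $\theta_v$.
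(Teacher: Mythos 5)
Your proof is correct, but your primary argument takes a genuinely different route from the paper's. The paper proves the lemma non-inductively, by exactly the construction you relegate to a closing remark: fix a root $i_0$, choose $\theta_{i_0}$ arbitrarily, and define $\theta_i \equiv \theta_{i_0} + \sum \theta_{lm}~[2\pi]$ where the sum runs over the unique path in $\mathcal{T}$ from $i$ to $i_0$; the edge relation is then verified by observing that for $(i,j)\in\mathcal{T}$ the two paths from $i$ and from $j$ to the root coincide except for the single edge $(i,j)$ or $(j,i)$, so all terms cancel and antisymmetry $\theta_{ij}+\theta_{ji}\equiv 0~[2\pi]$ handles the case where the surviving term is $-\theta_{ji}$. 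Your leaf-peeling induction buys a cleaner verification: instead of arguing about cancellation along two overlapping paths, you only ever have to check the two orientations of the one new edge $(w,v)$, $(v,w)$, and acyclicity is consumed invisibly in the fact that a leaf has a unique neighbor, so $\theta_v$ faces no second, potentially conflicting constraint. What the paper's explicit path-sum construction buys in return is that the same telescoping-sum mechanism is reused verbatim in the subsequent proofs (notably Proposition \ref{prop:matrix completion over cycle basis} and Lemma \ref{lemma:equation over cycle}, where sums along paths in $\mathcal{T}$ and around cycles are manipulated directly), so the paper's choice keeps the whole section stylistically uniform. One small point of care in your inductive step: the lemma is stated for a spanning tree of an ambient graph $(\mathcal{N},\mathcal{L})$, so after deleting the leaf you should apply the hypothesis to $\mathcal{T}'$ as a spanning tree of the induced subgraph on $\mathcal{N}\setminus\{v\}$ (which is connected because it contains the connected spanning subgraph $\mathcal{T}'$); you say essentially this, and it is enough.
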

\begin{proof}[Proof]
Simply define $(\theta_i)_{i \in \mathcal{N}}$ by choosing a random real number $\theta_{i_0}$ for some $i_0 \in \mathcal{N}$ and for all $i \in \mathcal{N}\setminus \{i_0\}$, choose $\theta_i \equiv \theta_{i_0} + \sum_{l,m} \theta_{lm} ~[2\pi]$ where the sum is taken over the path in $\mathcal{T}$ linking $i$ to $i_0$. This is possible because $(\mathcal{N},\mathcal{L})$ is a finite graph. Thus defined, the real numbers $(\theta_i)_{i \in \mathcal{N}}$ satisfy for all $(i,j) \in \mathcal{T}$:
$$
\begin{array}{rcll}
\theta_i - \theta_j & \equiv & \theta_{ij}~[2\pi] & \\
\end{array}
$$
because all terms in the sums associated respectively to $\theta_i$ and $\theta_j$ cancel each other out except for $\theta_{ij}$. Indeed, $(i,j)\in \mathcal{T}$ so the paths starting at $i$ and $j$ and both ending at $i_0$ are the same expect that one contains $(i,j)$ or $(j,i)$ and the other doesn't. If the former contains $(i,j)$, the only term that does not cancel out is $\theta_{ij}$. If the latter contains $(j,i)$, the only term that does not cancel out is $-\theta_{ji} \equiv \theta_{ij} ~[2\pi]$.
\end{proof}

Let $(c_i)_{1 \leqslant i \leqslant p}$ denote a cycle basis of $\mathcal{L}$ and let $(b_i)_{1 \leqslant i \leqslant q}$ denote the set of bridge edges of $\mathcal{L}$. Define the following set :
$$ \Omega : = \underbrace{\{c_1,\hdots,c_p\}}_\text{cycle basis} \cup \underbrace{\{b_1,\hdots,b_q\}}_\text{bridge edges} $$
\begin{proposition}
\normalfont
\textit{The following statement holds:}
$$
\mathbb{H}^n_{+1} + \text{Ker}(\phi^\mathcal{L}) = \bigcap_{\mathcal{E} \in \Omega} \mathbb{H}^n_{+1} + \text{Ker}(\phi^\mathcal{E})
$$
\label{prop:matrix completion over cycle basis}
\end{proposition}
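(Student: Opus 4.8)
The plan is to prove the two inclusions separately, the forward one being routine and the reverse one carrying all the content.

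For the inclusion $\subseteq$, I would observe that every $\mathcal{E}\in\Omega$ is a subset of the edge set $\mathcal{L}$, since each basis cycle and each bridge is built from edges of $\mathcal{L}$. Consequently a matrix that vanishes on the diagonal and on all of $\mathcal{L}$ also vanishes on the diagonal and on $\mathcal{E}$, i.e. $\text{Ker}(\phi^{\mathcal{L}})\subseteq\text{Ker}(\phi^{\mathcal{E}})$. Adding $\mathbb{H}^n_{+1}$ to both sides and intersecting over $\mathcal{E}\in\Omega$ gives $\mathbb{H}^n_{+1}+\text{Ker}(\phi^{\mathcal{L}})\subseteq\bigcap_{\mathcal{E}\in\Omega}(\mathbb{H}^n_{+1}+\text{Ker}(\phi^{\mathcal{E}}))$.

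For $\supseteq$, I would take $M$ in the intersection and, for each $\mathcal{E}$, write $M=\textbf{w}^{(\mathcal{E})}(\textbf{w}^{(\mathcal{E})})^H+N^{(\mathcal{E})}$ with $N^{(\mathcal{E})}\in\text{Ker}(\phi^{\mathcal{E}})$, so that $M$ coincides with the Hermitian rank-one matrix $\textbf{w}^{(\mathcal{E})}(\textbf{w}^{(\mathcal{E})})^H$ on the diagonal and on the edges of $\mathcal{E}$. Because $N^{(\mathcal{E})}$ kills the whole diagonal, $M_{kk}=|w^{(\mathcal{E})}_k|^2\geqslant 0$ for every $k$ and every $\mathcal{E}$; set $r_k:=\sqrt{M_{kk}}$. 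For an edge $(i,j)\in\mathcal{E}$ one gets $M_{ij}=w^{(\mathcal{E})}_i\overline{w^{(\mathcal{E})}_j}$, hence $|M_{ij}|=r_ir_j$ and $M_{ji}=\overline{M_{ij}}$. The crucial graph-theoretic observation is that these relations in fact hold on \emph{every} edge of $\mathcal{L}$: a bridge is itself an element of $\Omega$, while any non-bridge edge lies on some cycle and therefore on at least one basis cycle (an edge lying on no basis cycle would lie on no element of the cycle space, i.e. be a bridge). So on $\mathcal{L}$ the moduli and Hermitian structure of $M$ are exactly those of a rank-one matrix with entry moduli $r_k$, and it only remains to fit the phases together globally.

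The reduction is then to find reals $(\theta_k)$ with $\arg M_{ij}\equiv\theta_i-\theta_j~[2\pi]$ on every edge carrying $M_{ij}\neq 0$; the vector $\textbf{v}$ with $v_k=r_k e^{\textbf{i}\theta_k}$ yields $\textbf{v}\textbf{v}^H$ agreeing with $M$ on the diagonal and on $\mathcal{L}$, which is precisely $M-\textbf{v}\textbf{v}^H\in\text{Ker}(\phi^{\mathcal{L}})$. Here I would use Lemma~\ref{lemma:spanning tree} to pick a spanning tree $\mathcal{T}$ of $(\mathcal{N},\mathcal{L})$, set $\theta_{ij}:=\arg M_{ij}$ on its edges (the relation $M_{ji}=\overline{M_{ij}}$ gives $\theta_{ij}+\theta_{ji}\equiv 0~[2\pi]$), and apply Lemma~\ref{lemma:equation over spanning tree} to obtain node phases realizing the arguments on all tree edges; bridges, lying in every spanning tree, are covered here. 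For a non-tree (hence non-bridge) edge $e=(a,b)$, telescoping the node phases along the tree path from $b$ to $a$ reduces the desired $\theta_a-\theta_b\equiv\arg M_{ab}~[2\pi]$ to the statement that the summed argument around the fundamental cycle $C_e$ formed by $e$ and that path is $\equiv 0~[2\pi]$.

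The heart of the proof, and the step I expect to be most delicate, is this vanishing of the summed argument around $C_e$. For a basis cycle $c_i$ it is immediate: the witness $\textbf{w}^{(c_i)}$ makes each edge argument equal to $\arg w^{(c_i)}_l-\arg w^{(c_i)}_m$, so the cyclic sum telescopes to $0$. The work is to transfer this to $C_e$, and I would argue that the map sending an oriented closed walk to its summed argument (valued in $\mathbb{R}/2\pi\mathbb{Z}$) is additive under concatenation and, by $M_{ji}=\overline{M_{ij}}$, antisymmetric under reversal, hence a homomorphism on the cycle space that kills each generator $c_i$ and therefore kills $C_e$. The subtle point to pin down is that the basis genuinely spans $C_e$ with the right coefficients (cleanest if one takes the $c_i$ to be the fundamental cycles of $\mathcal{T}$, for which this is automatic), and that orientations are tracked consistently on shared edges. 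Finally I would dispose of the degenerate vertices where $r_k=0$: there $v_k=0$, every incident edge has $M_{ij}=0=v_i\overline{v_j}$ by the modulus identity, so such edges impose no phase constraint and may be removed before applying the two lemmas component by component to the surviving positive-diagonal subgraph.
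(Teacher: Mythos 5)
Your proposal is correct and follows essentially the same route as the paper's proof: both establish $\subseteq$ from $\mathcal{E}\subset\mathcal{L}$ giving $\text{Ker}(\phi^{\mathcal{L}})\subseteq\text{Ker}(\phi^{\mathcal{E}})$, and both prove $\supseteq$ by taking per-$\mathcal{E}$ rank-one witnesses, building $\textbf{v}$ from a spanning tree via Lemmas \ref{lemma:spanning tree} and \ref{lemma:equation over spanning tree}, and closing the argument on non-tree edges by telescoping phase sums around cycles decomposed over the basis in $\Omega$. Your additional care with the degenerate vertices where $M_{kk}=0$ (where $\arg M_{ij}$ is undefined) and your explicit flagging of the orientation/coefficient subtlety in the cycle-space decomposition are refinements the paper glosses over, but they do not change the underlying argument.
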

\begin{proof}[Proof]
($\subset$) It suffices to see that $\text{Ker}(\phi^\mathcal{L}) \subset \text{Ker}(\phi^\mathcal{E})$ for all $\mathcal{E} \in \Omega$. Indeed, $\mathcal{E} \in \Omega$ implies that $\mathcal{E} \subset \mathcal{L}$.
\\\\
($\supset$) Consider $M \in \bigcap_{\mathcal{E} \in \Omega} \mathbb{H}^n_{+1} + \text{Ker}(\phi^\mathcal{E})$. For all $\mathcal{E} \in \Omega$, there exists $\textbf{v}^\mathcal{E} \in \mathbb{C}^n$ and $N^\mathcal{E} \in \text{Ker}(\phi^\mathcal{E})$ such that :
$$ M = \textbf{v}^\mathcal{E} (\textbf{v}^\mathcal{E})^H + N^\mathcal{E} $$
Thus, for all $i,j \in \mathcal{N}$ :
$$ M_{ij} = v_i^\mathcal{E} (v_j^\mathcal{E})^H + N^\mathcal{E}_{ij} $$
Moreover, $\phi^\mathcal{E}(N^\mathcal{E}) = 0$ so $N^\mathcal{E}_{ij} = 0$ if $(i,j) \in \mathcal{E}$ or $i = j$. It follows that for all $\mathcal{E} \in \Omega$ :
\begin{subnumcases}{}
\label{subcase:Ma}
M_{ii} = v_i^\mathcal{E} (v_i^\mathcal{E})^H ~~,~~ \text{for all vertices $i$ of $\mathcal{E}$} \\
\label{subcase:Mb}
M_{ij} = v_i^\mathcal{E} (v_j^\mathcal{E})^H ~~,~~ \forall (i,j) \in \mathcal{E}
\end{subnumcases}
$\mathcal{L}$ is an undirected connected graph so there exists a spanning tree $\mathcal{T}$ of $\mathcal{L}$ according to lemma $\ref{lemma:spanning tree}$. The real numbers $(\text{arg}(M_{ij}))_{(i,j)\in \mathcal{T}}$ satisfy $\text{arg}(M_{ij}) + \text{arg}(M_{ji}) \equiv 0 ~[2\pi]$ due to $(\ref{subcase:Mb})$. Indeed, given $(i,j)\in \mathcal{T}$, there exists some $\mathcal{E} \in \Omega$ such that $(i,j) \in \mathcal{E}$ and $(j,i) \in \mathcal{E}$ so that $M_{ij} = v_i^\mathcal{E} (v_j^\mathcal{E})^H$ and $M_{ji} = v_j^\mathcal{E} (v_i^\mathcal{E})^H$. Moreover, the set of vertices $\mathcal{N}$ is finite. Lemma $\ref{lemma:equation over spanning tree}$ can thereby by applied to prove that there exists a vector $\textbf{v} \in \mathbb{C}^n$ such that : 
\begin{subnumcases}{}
\label{subcase:va}
|v_i| = \sqrt{M_{ii}} ~~,~~ \forall i \in \mathcal{N}  \\
\label{subcase:vb}
\text{arg}(v_i) - \text{arg}(v_j) \equiv \text{arg}(M_{ij})~[2\pi] ~~,~~ \forall (i,j) \in \mathcal{T}
\end{subnumcases}
Notice that for all $(i,j) \in \mathcal{L}$,
$$ \begin{array}{rcll}
|v_i v_j^H| & = & |v_i|~|v_j|                          & \\
            & = & \sqrt{M_{ii}} \sqrt{M_{jj}}          & \text{due to ($\ref{subcase:va}$)} \\
            & = & |v_i^\mathcal{E}|~|v_j^\mathcal{E}|  & \text{due to ($\ref{subcase:Ma}$) where}~(i,j) \in \mathcal{E}~\text{for some}~\mathcal{E} \in \Omega \\
            & = & |v_i^\mathcal{E}(v_j^\mathcal{E})^H| & \\
            & = & |M_{ij}|                             & \text{due to ($\ref{subcase:Mb}$)} \\
\end{array}
$$
Moreover, for all $(i,j) \in \mathcal{L}$,
$$ \begin{array}{rcll}
\text{arg}(v_i v_j^H) & \equiv & \text{arg}(v_i) - \text{arg}(v_j)~[2\pi] & \\
            		  & \equiv & \sum_{l,m} \text{arg}(v_l) - \text{arg}(v_m)~[2\pi] & \text{telescoping sum along a path in $\mathcal{T}$ from $i$ to $j$} \\
            		  & \equiv & \sum_{l,m} \text{arg}(M_{lm}) ~[2\pi] & \text{due to ($\ref{subcase:vb}$) and $(l,m) \in \mathcal{T}$} \\
            		  & \equiv & \sum_{\mathcal{C}} \sum_{(l,m)\in \vec{\mathcal{C}}} ~ \text{arg}(M_{lm}) ~[2\pi] & \text{sum over cycles $\mathcal{C} \in \Omega$ for which cycle defined}\\
            		  & & & \text{by path in $\mathcal{T}$ has nonzero coordinates}\\
            		  & \equiv & \sum_{\mathcal{C}} \sum_{(l,m)\in \vec{\mathcal{C}}} ~ \text{arg}(v_l^\mathcal{C} (v_m^\mathcal{C})^H) ~[2\pi] & \text{due to ($\ref{subcase:Mb}$)} \\
            		  & \equiv & \sum_{\mathcal{C}} \sum_{(l,m)\in \vec{\mathcal{C}}} ~ \text{arg}(v_l^\mathcal{C}) - \text{arg}(v_m^\mathcal{C}) ~[2\pi] & \text{telescoping sums over cycles are equal to zero} \\
            		  & \equiv & \sum_{l,m} \text{arg}(v_l^\mathcal{C}) - \text{arg}(v_m^\mathcal{C})~[2\pi] & \text{telescoping sum along path in $\mathcal{T}$ from $i$ to $j$} \\
            		  & \equiv & \text{arg}(v_i^\mathcal{C}) - \text{arg}(v_j^\mathcal{C})~[2\pi] & \\
            		  & \equiv & \text{arg}\{v_i^\mathcal{C} (v_j^\mathcal{C})^H\}~[2\pi] & \\
            		  & \equiv & \text{arg}(M_{ij})~[2\pi] & \text{due to ($\ref{subcase:Mb}$)}
\end{array}
$$
To sum up :
$$ \left\{
\begin{array}{rcll}
M_{ii} & = & v_i v_i^H ~,& \forall i \in \mathcal{N} \\
M_{ij} & = & v_i v_j^H ~,& \forall (i,j) \in \mathcal{L}
\end{array}
\right.
$$
Therefore there exists $N^\mathcal{L} \in \text{Ker}(\phi^\mathcal{L})$ such that :
$$ M = \textbf{v} \textbf{v}^H + N^\mathcal{L} \in \mathbb{H}^n_{+1} + \text{Ker}(\phi^\mathcal{L}) $$
\end{proof}
\begin{lemma}
\label{lemma:equation over cycle}
\normalfont
\textit{Let $\mathcal{C}$ denote a cycle of a finite and undirected graph $(\mathcal{N},\mathcal{L})$ and let $(\theta_{ij})_{(i,j)\in \mathcal{C}}$ denote some real numbers. Assume that $\theta_{ij} + \theta_{ji} \equiv 0 ~[2\pi]$ for all $(i,j)\in \mathcal{C}$ and that $\sum_{(i,j)\in \vec{\mathcal{C}}} \theta_{ij} \equiv 0~[2\pi]$. Then for each vertex $i$ of $\mathcal{C}$, there exists a real number $\theta_i$ such that:}
\begin{equation*}
\theta_i - \theta_j \equiv \theta_{ij}~[2\pi] ~~,~~ \forall (i,j) \in \mathcal{C}
\end{equation*}
\end{lemma}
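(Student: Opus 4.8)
The plan is to mimic the propagation argument of Lemma \ref{lemma:equation over spanning tree}, fixing the potential at one vertex and walking around the cycle, but now accounting for the extra subtlety that a cycle has one more edge than a spanning tree, so the construction must close up consistently on itself. First I would write the cycle as an ordered sequence of distinct vertices $i_1, i_2, \ldots, i_k$ whose oriented edge set is
$$\vec{\mathcal{C}} = \{(i_1,i_2),\, (i_2,i_3),\, \ldots,\, (i_{k-1},i_k),\, (i_k,i_1)\}.$$

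Next I would pick $\theta_{i_1}$ to be any real number and define, recursively for $s = 1, \ldots, k-1$,
$$\theta_{i_{s+1}} := \theta_{i_s} - \theta_{i_s i_{s+1}}.$$
By construction this gives $\theta_{i_s} - \theta_{i_{s+1}} = \theta_{i_s i_{s+1}}$ as an exact equality, hence a fortiori modulo $2\pi$, for each of the $k-1$ forward edges along the chosen orientation. Invoking the antisymmetry hypothesis $\theta_{ij} + \theta_{ji} \equiv 0\ [2\pi]$, the corresponding relation $\theta_{i_{s+1}} - \theta_{i_s} \equiv \theta_{i_{s+1} i_s}\ [2\pi]$ then holds for each reversed edge as well.

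The remaining and decisive edge is the closing edge $(i_k, i_1)$ together with its reversal, and this is where the cycle-sum hypothesis enters. Telescoping the recursion yields $\theta_{i_k} - \theta_{i_1} = -\sum_{s=1}^{k-1} \theta_{i_s i_{s+1}}$. The assumption $\sum_{(i,j) \in \vec{\mathcal{C}}} \theta_{ij} \equiv 0\ [2\pi]$ is precisely the statement that $\sum_{s=1}^{k-1} \theta_{i_s i_{s+1}} + \theta_{i_k i_1} \equiv 0\ [2\pi]$, whence $\theta_{i_k} - \theta_{i_1} \equiv \theta_{i_k i_1}\ [2\pi]$, exactly the desired relation on the last edge; the reversal $(i_1, i_k)$ again follows from antisymmetry. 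This closing verification is the only place the cycle-sum condition is used, and it is exactly what distinguishes this lemma from the spanning-tree version: without it, the potentials assigned by propagating around the loop would in general fail to match at the point where the loop closes. I expect this closing step to be the sole genuine obstacle, everything else being a direct telescoping computation.
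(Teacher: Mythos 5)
Your proof is correct and follows essentially the same route as the paper's: fix a potential at one vertex, propagate it along the oriented cycle so that the relation holds exactly on each forward edge (antisymmetry handling reversals), and invoke the cycle-sum hypothesis $\sum_{(i,j)\in\vec{\mathcal{C}}}\theta_{ij}\equiv 0~[2\pi]$ precisely to verify the closing edge. The only difference is bookkeeping (you subtract along the path from the base vertex, the paper adds along the path toward it), which does not change the argument.
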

\begin{proof}[Proof]
Simply define the set of all $\theta_i$'s for each vertex $i$ of $\mathcal{C}$ by choosing a random real number $\theta_{i_0}$ for some vertex $i_0$ of $\mathcal{C}$ and for all other vertices $i \in \mathcal{N}\setminus \{i_0\}$, choose $\theta_i \equiv \theta_{i_0} + \sum_{l,m} \theta_{lm} ~[2\pi]$ where the sum is taken over the path in $\vec{\mathcal{C}}$ linking $i$ to $i_0$ for some orientation of $\mathcal{C}$. This is possible because $(\mathcal{N},\mathcal{L})$ is a finite graph. Thus defined, the real numbers $\theta_i$ satisfy for all $(i,j) \in \mathcal{C}$:
$$
\begin{array}{rcll}
\theta_i - \theta_j & \equiv & \theta_{ij}~[2\pi] & \\
\end{array}
$$
Indeed, for all $(i,j) \in  \vec{\mathcal{C}}$ but the edge ending in $i_0$, all terms in the sums associated respectively to $\theta_i$ and $\theta_j$ cancel each other out except for $\theta_{ij}$. As for the edge $(i,i_0)$ ending in $i_0$, notice that $\theta_i \equiv \theta_{i_0} + \sum_{l,m} \theta_{lm} ~[2\pi]$ where the sum is taken over the path in $\vec{\mathcal{C}}$ linking $i$ to $i_0$ so that $\sum_{(l,m) \in \vec{\mathcal{C}}} \theta_{lm} \equiv \theta_{i_0i} + \sum_{l,m} \theta_{lm} ~[2\pi]$. It is assumed that $\sum_{(l,m) \in \vec{\mathcal{C}}} \theta_{lm} \equiv 0~[2\pi]$ thus $\sum_{l,m} \theta_{lm} \equiv -\theta_{i_0i}~[2\pi]$. It follows that $\theta_i \equiv \theta_{i_0} -\theta_{i_0i} ~[2\pi]$, ie $\theta_i - \theta_{i_0} \equiv \theta_{ii_0} ~[2\pi]$.
\end{proof}
\begin{proposition}
\label{prop:matrix completion over all edges}
\normalfont
\textit{The following statement holds:}
$$
M \in \mathbb{H}^n_{+1} + \text{Ker}(\phi^\mathcal{L}) ~~\Longleftrightarrow~~ M \in  \bigcap_{{(i,j)} \in \mathcal{L}} \mathbb{H}^n_{+1} + \text{Ker}(\phi^{(i,j)}) ~~\&~~  \sum_{(i,j)\in \vec{\mathcal{C}}} \text{arg}(M_{ij}) \equiv 0~[2\pi] ~\text{for all cycles}~ \mathcal{C} \in \Omega
$$
\end{proposition}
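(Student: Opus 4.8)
The plan is to derive this edge-by-edge characterization from the cycle-basis decomposition already established in Proposition \ref{prop:matrix completion over cycle basis}. That result rewrites the left-hand membership as $M \in \bigcap_{\mathcal{E}\in\Omega}\mathbb{H}^n_{+1}+\text{Ker}(\phi^\mathcal{E})$, where $\Omega$ splits into the bridge edges $b_1,\dots,b_q$ and the cycle-basis cycles $c_1,\dots,c_p$. Since each bridge is already a single edge of the form $\phi^{(i,j)}$ appearing on the right-hand side, the whole argument reduces to comparing, for one cycle $\mathcal{C}\in\Omega$, the single condition $M\in\mathbb{H}^n_{+1}+\text{Ker}(\phi^\mathcal{C})$ with the conjunction of the edgewise conditions $M\in\mathbb{H}^n_{+1}+\text{Ker}(\phi^{(i,j)})$ over $(i,j)\in\mathcal{C}$ together with the phase condition $\sum_{(i,j)\in\vec{\mathcal{C}}}\text{arg}(M_{ij})\equiv 0\,[2\pi]$. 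I would prove the two implications by transporting this local equivalence through the intersection.

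For the forward implication, assume $M\in\mathbb{H}^n_{+1}+\text{Ker}(\phi^\mathcal{L})$, say $M=\textbf{v}\textbf{v}^H+N$ with $N\in\text{Ker}(\phi^\mathcal{L})$. Because $\text{Ker}(\phi^\mathcal{L})\subset\text{Ker}(\phi^{(i,j)})$ for every $(i,j)\in\mathcal{L}$ (as noted in the proof of Proposition \ref{prop:matrix completion over cycle basis}), each edgewise membership is immediate. On any edge $N$ vanishes, so $M_{ij}=v_iv_j^H$ and hence $\text{arg}(M_{ij})\equiv\text{arg}(v_i)-\text{arg}(v_j)\,[2\pi]$; summing this telescoping expression around any cycle $\mathcal{C}\in\Omega$ gives $\sum_{(i,j)\in\vec{\mathcal{C}}}\text{arg}(M_{ij})\equiv 0\,[2\pi]$. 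This yields every condition on the right.

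For the converse, I would invoke Proposition \ref{prop:matrix completion over cycle basis} and check $M\in\mathbb{H}^n_{+1}+\text{Ker}(\phi^\mathcal{E})$ for each $\mathcal{E}\in\Omega$. Bridges are covered directly by the edgewise hypothesis. For a cycle $\mathcal{C}$, the edgewise conditions force $M_{ii}\geqslant 0$, $M_{ji}=\overline{M_{ij}}$ and $|M_{ij}|^2=M_{ii}M_{jj}$ on its edges, so the moduli $|v_i|:=\sqrt{M_{ii}}$ are automatically consistent. The phases are supplied by Lemma \ref{lemma:equation over cycle}: taking $\theta_{ij}:=\text{arg}(M_{ij})$, the relation $M_{ji}=\overline{M_{ij}}$ gives $\theta_{ij}+\theta_{ji}\equiv 0\,[2\pi]$ and the hypothesis gives $\sum_{(i,j)\in\vec{\mathcal{C}}}\theta_{ij}\equiv 0\,[2\pi]$, so the lemma produces reals $\theta_i$ with $\theta_i-\theta_j\equiv\theta_{ij}\,[2\pi]$ along $\mathcal{C}$. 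Setting $v_i:=\sqrt{M_{ii}}\,e^{\textbf{i}\theta_i}$ on the vertices of $\mathcal{C}$ (and arbitrarily elsewhere) reproduces both the moduli and the arguments of $M_{ij}$, so $M$ agrees with $\textbf{v}\textbf{v}^H$ on the diagonal and on $\mathcal{C}$, i.e. $M\in\mathbb{H}^n_{+1}+\text{Ker}(\phi^\mathcal{C})$.

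I expect the main obstacle to be this cycle construction in the converse, which mirrors the delicate telescoping argument in the proof of Proposition \ref{prop:matrix completion over cycle basis} but now needs the extra phase hypothesis: one must verify that the two structural prerequisites of Lemma \ref{lemma:equation over cycle}---the antisymmetry $\theta_{ij}+\theta_{ji}\equiv0\,[2\pi]$ and the vanishing oriented sum around $\mathcal{C}$---are genuinely guaranteed by the edgewise membership and the stated angle condition, and then confirm that the single vector $\textbf{v}$ so obtained matches $M$ on every edge of the cycle simultaneously. The bookkeeping of orientations in $\vec{\mathcal{C}}$ and the reductions modulo $2\pi$ are where care is required; the remainder is a direct application of the two results already proved.
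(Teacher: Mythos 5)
Your proof takes essentially the same route as the paper's: the forward direction via the telescoping-sum argument on each cycle, and the converse by verifying membership for each $\mathcal{E} \in \Omega$ (bridges directly from the edgewise hypothesis, cycles by feeding $\theta_{ij} := \text{arg}(M_{ij})$ into Lemma \ref{lemma:equation over cycle} and matching moduli through the edgewise memberships) before invoking Proposition \ref{prop:matrix completion over cycle basis} to conclude. The only cosmetic caveat is that your vector entries chosen ``arbitrarily elsewhere'' should be taken with modulus $\sqrt{M_{kk}}$ so that the diagonal of $\textbf{v}\textbf{v}^H$ matches that of $M$ at every vertex, an imprecision the paper's own proof shares.
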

\begin{proof}[Proof]
($\subset$) Consider $M \in \mathbb{H}^n_{+1} + \text{Ker}(\phi^\mathcal{L})$. For all ${(i,j)} \in \mathcal{L}$, $\text{Ker}(\phi^\mathcal{L}) \subset \text{Ker}(\phi^{(i,j)})$ thus $M \in  \bigcap_{{(i,j)} \in \mathcal{L}} \mathbb{H}^n_{+1} + \text{Ker}(\phi^{(i,j)})$.\\\\ Consider a cycle $\mathcal{C} \in \Omega$. There exists $\textbf{v} \in \mathbb{C}^n$ such that for all $(i,j)\in \vec{\mathcal{C}}$, $M_{ij} = v_i v_j^H$. Therefore: 
$$\begin{array}{rcll}
\sum_{(i,j)\in \vec{\mathcal{C}}} \text{arg}(M_{ij}) & \equiv & \sum_{(i,j)\in \vec{\mathcal{C}}} \text{arg}(v_i v_j^H)~[2\pi] & \\
 & \equiv & \sum_{(i,j)\in \vec{\mathcal{C}}} \text{arg}(v_i) - \text{arg}(v_j)~[2\pi] & \text{telescoping sum} \\
 & \equiv & 0 ~[2\pi] & 
\end{array}
$$
($\supset$) Consider $M \in  \bigcap_{{(i,j)} \in \mathcal{L}} \mathbb{H}^n_{+1} + \text{Ker}(\phi^{(i,j)})$ such that $\sum_{(i,j)\in \vec{\mathcal{C}}} \text{arg}(M_{ij}) \equiv 0~[2\pi] ~\text{for all cycles}~ \mathcal{C} \in \Omega$. The following is true for all $(i,j) \in \mathcal{L}$ :
\begin{subnumcases}{}
\label{subcase:Ma cycle}
M_{kk} = v_k^{(i,j)} (v_k^{(i,j)})^H ~~,~~ \text{for $k = i,j$}\\
\label{subcase:Mb cycle}
M_{lm} = v_l^{(i,j)} (v_m^{(i,j)})^H ~~,~~ \text{for $(l,m) = (i,j),(j,i)$}
\end{subnumcases}
Consider a cycle $\mathcal{C} \in \Omega$. The real numbers $(\text{arg}(M_{ij}))_{(i,j)\in \mathcal{C}}$ satisfy $\text{arg}(M_{ij}) + \text{arg}(M_{ji}) \equiv 0 ~[2\pi]$ due to $(\ref{subcase:Mb cycle})$. Moreover, it is assumed that $\sum_{(i,j)\in \vec{\mathcal{C}}} \text{arg}(M_{ij}) \equiv 0~[2\pi]$. Lemma $\ref{lemma:equation over cycle}$ can therefore be used to show that there exists $\textbf{v} \in \mathbb{C}^n$ such that:
\begin{subnumcases}{}
\label{subcase:va cycle}
|v_i| = \sqrt{M_{ii}} ~~,~~ \text{for all vertices $i$ of $\mathcal{C}$}  \\
\label{subcase:vb cycle}
\text{arg}(v_i) - \text{arg}(v_j) \equiv \text{arg}(M_{ij})~[2\pi] ~~,~~ \forall (i,j) \in \mathcal{C}
\end{subnumcases}
Notice that for all $(i,j) \in \mathcal{C}$ :
$$ \begin{array}{rcll}
|v_i v_j^H| & = & |v_i|~|v_j|                          & \\
            & = & \sqrt{M_{ii}} \sqrt{M_{jj}}          & \text{due to ($\ref{subcase:va cycle}$)} \\
            & = & |v_i^{(i,j)}|~|v_j^{(i,j)}|  & \text{due to $(\ref{subcase:Ma cycle})$} \\
            & = & |v_i^{(i,j)}(v_j^{(i,j)})^H| & \\
            & = & |M_{ij}|                             & \text{due to ($\ref{subcase:Mb cycle}$)} \\
\end{array}
$$
Thus for all $\mathcal{C} \in \Omega$ : $$ M \in \mathbb{H}^n_{+1} + \text{Ker}(\phi^\mathcal{C}) $$
Moreover, it is assumed that :
$$M \in  \bigcap_{{(i,j)} \in \mathcal{L}} \mathbb{H}^n_{+1} + \text{Ker}(\phi^{(i,j)})$$
Therefore :
$$ M \in \bigcap_{\mathcal{E} \in \Omega} \mathbb{H}^n_{+1} + \text{Ker}(\phi^\mathcal{E}) ~~=~~ \mathbb{H}^n_{+1} + \text{Ker}(\phi^\mathcal{L}) $$
where the equality follows from proposition $\ref{prop:matrix completion over cycle basis}$.
\end{proof}

\section{Definitions of complex numbers}
\label{subsec:Definitions of complex numbers}


Complex numbers are a central aspect of the thesis. They are used to model an oscillatory phenomenon, namely alternating current. We now consider several definitions of complex numbers.


A complex number $x + \textbf{i}y$ can be thought of as the matrix:
$$ 
\left(
\begin{array}{cr}
x & -y \\
y &  x
\end{array}
\right).
$$
The additions and multiplications of complex numbers translate into additions and multiplications of real matrices. 
Concerning addition, we have
$$ 
\begin{array}{ccccc}
(a+ \textbf{i} b) & + & (c+ \textbf{i} d) & = & (a + c) + \textbf{i}(b+d) \\[.5em]
\left(
\begin{array}{cr}
a & -b \\
b &  a
\end{array}
\right)
&
+
&
\left(
\begin{array}{cr}
c & -d \\
d &  c
\end{array}
\right)
&
=
&
\left(
\begin{array}{cr}
a+c & -b-d \\
b+d &  a+c
\end{array}
\right)
\end{array}
$$
Concerning multiplication, we have:
$$ 
\begin{array}{ccccc}
(a+ \textbf{i} b) & \times & (c+ \textbf{i} d) & =  & ac - bd + \textbf{i}(ad + bc)\\[.5em]
\left(
\begin{array}{cr}
a & -b \\
b &  a
\end{array}
\right)
& 
\times
&
\left(
\begin{array}{cr}
c & -d \\
d &  c
\end{array}
\right)
&
=
&
\left(
\begin{array}{cr}
ac - bd & -ad - bc \\
ad + bc &  ac - bd
\end{array}
\right)
\end{array}
$$
In particular, we have
$$ 
\begin{array}{ccccc}
\textbf{i} & \times & \textbf{i} & =  & -1\\[.5em]
\left(
\begin{array}{cr}
0 & -1 \\
1 &  0
\end{array}
\right)
& 
\times
&
\left(
\begin{array}{cr}
0 & -1 \\
1 &  0
\end{array}
\right)
&
=
&
\left(
\begin{array}{rr}
-1 & 0 \\
0 &  -1
\end{array}
\right)
\end{array}
$$

As stated above, a complex number can be viewed as a real matrix of size $2 \times 2$. More generally, a complex square matrix can be viewed as a real matrix of double its size. This will become very helpful when we consider optimization over complex matrix variables in Chapter \ref{sec:Complex hierarchy for enhanced tractability}.

We note that complex numbers can also be defined using the Euclidian division of polynomials. 
The idea is to build a solution to the equation $x^2 = -1$ though it has no real solution. To do so, consider the ring of polynomials $\mathbb{R}[X]$ with one real indeterminate $X$. The remainder of $X^2$ when divided by $1+X^2$ is equal to $-1$, which is written $X^2 \equiv -1 ~ [1+X^2]$. More generally, given a polynomial $P \in \mathbb{R}[X]$, its division by $1+X^2$ has a remainder of the form $a+bX$ where $a$ and $b$ are some real numbers. Indeed, the degree of the remainder must be strictly less than the degree of $1+X^2$. 
Let $\text{cl}(P)$ denote the equivalence class modulo $1+X^2$ represented by $P$. The set of equivalence classes is thus equal to $\{ ~  \text{cl}(a+bX) ~|~ a,b \in \mathbb{R}~\}$. This set may be identified with the set of complex numbers because it is isomorphic to it. Indeed, consider two classes $\text{cl}(a+bX)$ and $\text{cl}(c+dX)$ with $a,b,c,d \in \mathbb{R}$. Concerning addition, we have
$$\text{cl}(a+bX) + \text{cl}(c+dX) = \text{cl} \boldsymbol{\left(\right.} ~ (a+c) + (b+d)X ~ \boldsymbol{\left.\right)}$$
since
$$ (a+bX) + (c+dX) \equiv (a+c) + (b+d)X ~ [1+X^2]. $$
Concerning multiplication, we have
$$\text{cl}(a+bX) \times \text{cl}(c+dX) = \text{cl} \boldsymbol{\left(\right.} ~ ac - bd + (ad + bc)X ~ \boldsymbol{\left.\right)}$$
since
$$
\begin{array}{cccl}
(a+bX)(c+dX) & \equiv & ac + bdX^2 + (ad + bc)X & [1+X^2] \\
 & \equiv & ac - bd\hphantom{X^2} + (ad + bc)X & [1+X^2] \\[.5em]
 & & (\text{because}~ X^2 \equiv -1 ~ [1+X^2]). &
\end{array}
$$


\chapter{Lasserre hierarchy for small-scale networks}
\label{sec:Lasserre hierarchy for small-scale networks}

Finding a global solution to the optimal power flow (OPF) problem is
difficult due to its nonconvexity. A convex relaxation in the form of
semidefinite optimization (SDP) had attracted much attention when I started my Ph.D. Indeed, it
yielded a global solution in several practical cases. However, it did
not in all cases, and such cases had been documented in several
publications. Here we present another SDP method known as the
moment-sos (sum of squares) approach, which generates a sequence that
converges towards a global solution to the OPF problem at the cost of
higher runtime. Our finding is that in the small examples where the
previously studied SDP method fails, this approach finds the global
solution. The higher cost in runtime is due to an increase in the matrix
size of the SDP problem, which can vary from one instance to another.
Numerical experiment shows that the size is very often a quadratic
function of the number of buses in the network, whereas it is a linear
function of the number of buses in the case of the previously studied
SDP method. The material in this chapter is based on the publication: 
\\\\
\noindent {\scshape C. Josz, J. Maeght, P. Panciatici, and J.C. Gilbert}, \textit{Application of the Moment-SOS
Approach to Global Optimization of the OPF Problem}, Institute of Electrical and Electronics Engineers, Transactions on Power Systems, 30, pp. 463–470, May 2014.  \href{http://dx.doi.org/10.1109/TPWRS.2014.2320819}{[doi]} \href{http://arxiv.org/pdf/1311.6370v1.pdf}{[preprint]}

\section{Introduction}

The optimal power flow can be cast as a nonlinear optimization problem which is NP-hard, as was shown
in \cite{lavaei-low-2012}. So far, the various methods
\cite{huneault-1991, pandya-2008} that have been
investigated to solve the OPF can only guarantee local optimality, due
to the nonconvexity of the problem. Recent progress suggests that it may
be possible to design a method, based on semidefinite optimization (SDP),
that yields global optimality rapidly.

The first attempt to use SDP to solve the OPF problem was made by Bai et
al. \cite{bai-fujisawa-wang-wei-2008} in 2008. In
\cite{lavaei-low-2012}, Lavaei and Low show that the OPF can be written
as an SDP problem, with an additional constraint imposing that the rank
of the matrix variable must not exceed~1.
They discard the rank constraint, as it is done in Shor's relaxation
\cite{shor-1987b}, a procedure which applies to quadratically
constrained quadratic problems (see~\cite{sturm-zhang.s-2003,
luo-2010} and the references therein).
They also accept quartic terms that appear in some formulations of the
OPF, transforming them by Schur's complement.
Their finding is that
for all IEEE benchmark networks, namely the 9, 14, 30, 57, 118, and
300-bus systems, the rank constraint is satisfied if a small resistance
is added in the lines of the network that have zero resistance. Such a
modification to the network is acceptable because in reality, resistance
is never equal to zero.

There are cases when the rank constraint is not satisfied and a global
solution can thus not be found. Lesieutre et al.
\cite{lesieutre_molzahn_borden_demarco-allerton2011} illustrate this with a
practical 3-bus cyclic network. Gopalakrishnan et al.
\cite{biegler-gopalakrishnan-nikovski-raghunathan-2012} find yet more
examples by modifying the IEEE benchmark networks. Bukhsh et al.
\cite{bukhsh2013} provide a 2-bus and a 5-bus
example. In addition, they document the local solutions to the OPF in
many of the above-mentioned examples where the rank constraint is not
satisfied \cite{bukhsh-grothey-mckinnon-trodden-2013b}.

Several papers propose ways of handling cases when the rank constraint
is not satisfied. Gopalakrishnan et al.
\cite{biegler-gopalakrishnan-nikovski-raghunathan-2012} propose a branch
and reduce algorithm. It is based on the fact that the rank relaxation
gives a lower bound of the optimal value of the OPF. But according to
the authors, using the classical Lagrangian
dual to evaluate a lower bound is about as
efficient. Sojoudi and Lavaei \cite{lavaei-sojoudi-2012} prove that if
one could add controllable phase-shifting transformers to every loop in
the network and if the objective is an increasing function of generated
active power, then the rank constraint is satisfied. Though numerical
experiments confirm this \cite{farivar-low-2013}, such a modification to
the network is not realistic, as opposed to the one mentioned earlier.

Cases where the rank constraint holds have been identified. Authors of
\cite{bose-chandy-gayme-low-2011, tse-zhang-2011, lavaei-sojoudi-2012b}
prove that the rank constraint is satisfied if the graph of the network
is acyclic and if load over-satisfaction is allowed. This is typical of
distribution networks but it is not true of transmission networks.

This paper examines the applicability of the moment-sos (sum of
squares) approach to the OPF. This approach~\cite{lasserre-2000,
parrilo-2000b, lasserre-2001} aims at finding global solutions to
polynomial optimization problems, of which the OPF is a particular
instance. The approach can be viewed as an extension of the SDP method
of \cite{lavaei-low-2012}. Indeed, it proposes a sequence of SDP
relaxations whose first element is the rank relaxation in many cases.
The subsequent relaxations of the sequence become more and more
accurate. When the rank relaxation fails, it is therefore natural to see
whether the second order relaxation provides the global minimum, then
the third, and so on.

The limit to this approach is that the complexity of the relaxations
rapidly increases. The matrix size of the SDP relaxation of order $d$ is
roughly equal to the number of buses in the network to the power $d$.
Surprisingly, in the 2, 3, and 5-bus systems found in
\cite{lesieutre_molzahn_borden_demarco-allerton2011,
bukhsh2013} where the rank relaxation fails,
the second order relaxation nearly always finds the global solution. 

Below, section \ref{subsec:Polynomial optimization formulation} shows that the OPF can be viewed as a
polynomial optimization problem. The moment-sos approach which aims at solving such problems is described in section
\ref{subsec:moment-sos-approach}. In section \ref{subsec:Numerical results2},
numerical results show that this approach successfully
finds the global solution to the 2, 3, and 5-bus systems mentioned
earlier. Conclusions are given in section \ref{subsec:Conclusion2}.

\section{Polynomial optimization formulation}
\label{subsec:Polynomial optimization formulation}

In order to obtain a polynomial formulation of the OPF, we proceed in 3
steps. First, we write a formulation in complex numbers. Second, we use
it to write a formulation in real numbers. Third, we use the real
formulation to write a polynomial formulation.

Let $\textbf{a}^H$ and $A^H$ denote the conjugate transpose of a complex
vector $\textbf{a}$ and of a complex matrix $A$ respectively. It can be
deduced from~\cite{lavaei-sojoudi-2012} that there exist finite sets
$\mathcal{I}$ and $\mathcal{J}$, Hermitian matrices
$(A_k)_{k\in\mathcal{G}}$ of size $n$, complex matrices
$(B_i)_{i\in\mathcal{I}}$ and $(C_i)_{i\in\mathcal{J}}$ of size $n$, and
complex numbers $(b_i)_{i\in\mathcal{I}}$ and $(c_i)_{i\in\mathcal{J}}$
such that the OPF can be written as 
\begin{equation}
\min_{\textbf{v}\in \mathbb{C}^n} ~ \sum_{k \in \mathcal{G}} c_{k2} (\textbf{v}^H A_k \textbf{v})^2 + c_{k1}\textbf{v}^H A_k \textbf{v}+ c_{k0},
\label{eq:complexobj}
\end{equation}
subject to
\begin{gather}
\forall\, i \in \mathcal{I},~~~ \textbf{v}^H B_i \textbf{v}~ \leq ~ b_i,
\label{eq:complexcon1}
\\
\forall\, i \in \mathcal{J},~~~ |\textbf{v}^H C_i \textbf{v}| ~ \leq ~ c_i.
\label{eq:complexcon2}
\end{gather}
Constraints
\eqref{eq:complexcon2} correspond to bounds on apparent power flow. Constraints
\eqref{eq:complexcon1} correspond to all other constraints. 

Let $\textbf{x} \in \mathbb{R}^{2n}$ denote $[\text{Re}(\textbf{v})^T ~ \text{Im}(\textbf{v})^T ]^T$
as is done in \cite{lavaei-low-2012}. In order to transform the complex
formulation of the OPF \eqref{eq:complexobj}-\eqref{eq:complexcon2} into
a real number formulation, observe that $\textbf{v}^H M \textbf{v}=(\textbf{x}^T M^\text{re}
\textbf{x})+\text{j}(\textbf{x}^T M^\text{im}\textbf{x})$, where the superscript $^T$ denotes
transposition,
\begin{align*}
M^\text{re}&:=
\begin{bmatrix}
\text{Re}(M) &            -\text{Im}(M) \\
\text{Im}(M) & \hphantom{-}\text{Re}(M)
\end{bmatrix},
\quad\text{and}
\\
M^\text{im}&:=
\begin{bmatrix}
\hphantom{-}\text{Im}(M) & \text{Re}(M) \\
           -\text{Re}(M) & \text{Im}(M)
\end{bmatrix}.
\end{align*}

Then \eqref{eq:complexobj}-\eqref{eq:complexcon2} becomes
\begin{equation}
\label{eq:real objective}
\min_{\textbf{x} \in \mathbb{R}^{2n}} ~ \sum_{k \in \mathcal{G}} c_{k2} (\textbf{x}^T A_k^\text{re} \textbf{x})^2 + c_{k1}\textbf{x}^T A_k^\text{re} \textbf{x} + c_{k0},
\end{equation}
subject to
\begin{gather}
\forall\, i \in \mathcal{I},~~~ \textbf{x}^T B_i^\text{re} \textbf{x} ~ \leq ~ \text{Re}(b_i),
\label{eq:real constraint1}
\\
\forall\, i \in \mathcal{I},~~~ \textbf{x}^T B_i^\text{im} \textbf{x} ~ \leq ~ \text{Im}(b_i),
\label{eq:real constraint2}
\\
\forall\, i \in \mathcal{J},~~~ (\textbf{x}^T C_i^\text{re} \textbf{x})^2 + (\textbf{x}^T C_i^\text{im} \textbf{x})^2 ~ \leq ~ c_i^2.
\label{eq:real constraint3}
\end{gather}

We recall that a polynomial is a function $p: \textbf{x}\in\mathbb{R}^n\mapsto
\sum_{\alpha\in \mathcal{A}}p_\alpha \textbf{x}^\alpha$, where $\mathcal{A}\subset\mathbb{N}^n$ is a finite
set of integer multi-indices, the coefficients $p_\alpha$ are real
numbers, and $\textbf{x}^\alpha$ is the monomial $x_1^{\alpha_1}\cdots
x_n^{\alpha_n}$. Its degree, denoted $\deg p$, is the largest
$|\alpha|=\sum_{i=1}^n\alpha_i$ associated with a nonzero~$p_\alpha$.

The formulation of the OPF in real numbers \eqref{eq:real
objective}-\eqref{eq:real constraint3} is said to be a polynomial
optimization problem since the functions that define it are
polynomials.
Indeed, the objective \eqref{eq:real objective} is a polynomial of
$\textbf{x}\in\mathbb{R}^n$ of degree 4, the constraints \eqref{eq:real
constraint1}-(\ref{eq:real constraint2}) are polynomials of $\textbf{x}$ of
degree~2, and the constraints \eqref{eq:real constraint3} are
polynomials of $\textbf{x}$ of degree~4.

Formulation \eqref{eq:real
objective}-\eqref{eq:real constraint3} will however not be used below because it has
infinitely many global solutions. Indeed, formulation
\eqref{eq:complexobj}-\eqref{eq:complexcon2} from which it derives is
invariant under the change of variables $\textbf{v}\to\textbf{v} e^{\text{j}\theta}$
where $\theta \in \mathbb{R}$. This invariance property transfers to \eqref{eq:real
objective}-\eqref{eq:real constraint3}. An optimization
problem with non isolated solutions is generally more difficult to solve
than one with a unique
solution~\cite{bonnans-gilbert-lemarechal-sagastizabal-2006}. This
feature manifests itself in some properties of the moment-sos approach
described in section~\ref{subsec:moment-sos-approach}. For this reason, we
choose to arbitrarily set the voltage phase at bus $n$ to zero. Bearing
in mind that $v_n^\text{min} \geq 0$, this can be done by replacing
voltage constraint \eqref{eq:volt con} at bus~$n$ by \eqref{eq:phase
zero}:
\begin{gather}
(v_n^\text{min})^2 \leq x_{n}^2 + x_{2n}^2 \leq (v_n^\text{max})^2,
\label{eq:volt con} \\
x_{2n} = 0 ~~\text{and}~~ v_n^\text{min} \leq x_{n} \leq v_n^\text{max}.
\label{eq:phase zero}
\end{gather}

In light of \eqref{eq:phase zero}, a polynomial optimization problem
where there are $2n-1$ variables instead of $2n$ variables can be
formulated. More precisely, the OPF can be cast as the following polynomial
optimization problem

\textbf{PolyOPF:}
\begin{equation}
\min_{\textbf{x} \in \mathbb{R}^{2n-1}} ~ f_0(\textbf{x}) := \sum_{\alpha} f_{0,\alpha} \textbf{x}^\alpha,
\label{eq:obj}
\end{equation}
subject to
\begin{equation}
%
\forall\, i=1, \ldots, m , ~~~ f_i(\textbf{x}) := \sum_{\alpha} f_{i,\alpha} \textbf{x}^\alpha \geq 0,
\label{eq:con}
\end{equation}
where $m$ is an integer, $f_{i,\alpha}$ denotes the real coefficients of
the polynomial functions $f_i$, and summations take place
over~$\mathbb{N}^{2n-1}$. The summations are nevertheless finite because
only a finite number of coefficients are nonzero.

\section{Moment-sos approach}
\label{subsec:moment-sos-approach}

We first review some theoretical aspects of the moment-sos approach (a
nice short account can be found in~\cite{anjos-lasserre-2012b}, and more
in~\cite{lasserre_book, blekherman-parrilo-thomas-2013}). Next, we
present a set of relaxations of PolyOPF obtained by this method and
illustrate it on a simple example. Finally, we emphasize the
relationship between the moment-sos approach and the rank relaxation of
\cite{lavaei-low-2012}.

The moment-sos approach has been designed to find global solutions to
polynomial optimization problems. It is grounded on deep
results from real algebraic geometry. The term \textit{moment-sos}
derives from the fact that the approach has two dual aspects: the moment
and the sum of squares approaches. 
Both approaches are dual of one another in the
sense of Lagrangian duality~\cite{rockafellar-1974b}. Below, we focus on
the moment approach because it leads to SDP problems that have a close
link with the previously studied SDP method
in~\cite{lavaei-low-2012}.

Let $\textbf{K}$ be a subset of $\mathbb{R}^{2n-1}$. The
moment approach rests on the surprising (though easy to prove) fact that
the problem $\min \{ f_0(\textbf{x})$: $\textbf{x}\in \textbf{K} \}$ is equivalent to the
convex\ optimization problem in $\mu$:
\begin{equation}
\label{lasserre-measure-pbl}
\min_{ \substack{\mu ~\text{positive measure on \textbf{K}} \\ \int d \mu=1} } \int f_0 d \mu.
\end{equation}
Although the latter problem has a simple structure, it cannot be solved
directly, since its unknown $\mu$ is an infinite dimensional object.
Nevertheless, the realized transformation suggests that the initial
difficult global optimization problem can be structurally simplified by
judiciously expressing it on a space of larger dimension. 
The moment-sos approach goes along this way by introducing a hierarchy
of more and more accurate approximations of problem
\eqref{lasserre-measure-pbl}, hence \eqref{eq:obj}-\eqref{eq:con},
defined on spaces of larger and larger dimension.

When~$f_0$ is a polynomial and $\textbf{K}:=\{x\in\mathbb{R}^{2n-1}$: $f_i(\textbf{x})\geq0$, for
$i=1,\ldots, m\}$ is defined by polynomials $f_i$ like in PolyOPF, it
becomes natural to approximate the measure~$\mu$ by a finite number of
its moments. The \textit{moment} of $\mu$,
associated with  
$\alpha \in \mathbb{N}^{2n-1}$, is the real number $y_\alpha:=
\int \textbf{x}^\alpha\,d\mu$. Then, when~$f_0$ is the polynomial in \eqref{eq:obj},
the objective of \eqref{lasserre-measure-pbl} becomes 
$\int f_0 d\mu =\int (\sum_{\alpha}f_{0,\alpha} \textbf{x}^\alpha) d\mu
=\sum_{\alpha}f_{0,\alpha} \int \textbf{x}^\alpha d\mu
=\sum_{\alpha}f_{0,\alpha} y_\alpha$, 
whose linearity in the new
unknown $y$ is transparent.
The constraint $\int d\mu=1$ is also readily transformed into $y_0=1$. 
In contrast, expressing which are the vectors $y$ that are
moments of a positive measure $\mu$ on~$\textbf{K}$ (the other constraint in
\eqref{lasserre-measure-pbl}) is a much more difficult task
known as the \textit{moment problem}, which has been studied for over a
century~\cite{putinar-schmudgen-2008}.
It is that constraint that is approximated in the moment-sos approach,
with more and more accuracy in spaces of higher and higher
dimension.

The sum of squares approach is dual to the moment approach in the
sense of Lagrangian duality~\cite{rockafellar-1974b}. It relies on the
fact that minimizing a function $f_0$ over a set $\textbf{K}$ is equivalent to
maximizing a real number $\lambda$ under the constraints
$f_0(\textbf{x})-\lambda \geq 0$ for all $\textbf{x} \in \textbf{K}$. These trivial \textit{linear}
constraints are intractable because there is an \textit{infinite} number
of them. In the case of polynomial optimization, one recovers the
problem of finding certificates ensuring the positivity of the
polynomial $f_0-\lambda$ on the semi-algebraic set~$\textbf{K}$, which involves
sums of squares of polynomials~\cite{marshall-2008}. Relaxations consist
in imposing degree bounds on these sos polynomials.

Lasserre \cite{lasserre-2001} proposes a sequence of relaxations for any
polynomial optimization problem like PolyOPF that grow better in
accuracy and bigger in size when the order~$d$ of the relaxation
increases. Here and below, $d$ is an integer larger than or equal to
each $v_i:=\lceil(\deg f_i)/2\rceil$ for all $i=0,\ldots,m$ (we
have denote by $\lceil\cdot\rceil$ the ceiling operator).

Let $Z \succcurlyeq 0$ denote that $Z$ is a symmetric positive semidefinite
matrix. Define $\mathbb{N}^p_q:=\{ \alpha \in \mathbb{N}^p: |\alpha| \leq q \}$, whose
cardinality is $|\mathbb{N}^p_q|=\left( \begin{array}{c} p+q \\ q \end{array} \right):=(p+q)!/(p!\,q!)$, and denote
by $(z_{\alpha,\beta})_{\alpha,\beta \in \mathbb{N}^p_q}$ a matrix indexed by
the elements of~$\mathbb{N}^p_q$.

\textbf{Relaxation of order d:}
\begin{equation}
\min_{(y_\alpha)_{\alpha \in \mathbb{N}_{2d}^{2n-1}}} ~
\sum_{\alpha} f_{0,\alpha} y_{\alpha},
\label{eq:objd}
\end{equation}
subject to
\begin{gather}
y_0 = 1,
\label{eq:con1d}
\displaybreak[0]\\
( y_{\alpha+\beta} )_{\alpha,\beta \in \mathbb{N}_{d}^{2n-1}} \succcurlyeq 0,
\label{eq:con2d}
\displaybreak[0]\\
\forall\, i=1, \ldots, m , ~~ \sum_{\gamma} f_{i,\gamma} \left(
y_{\alpha+\beta+\gamma} \right)_{\alpha,\beta \in
\mathbb{N}^{2n-1}_{d-v_i}} \succcurlyeq 0.
\label{eq:con3d}
\end{gather}
We have already discussed the origin of \eqref{eq:objd}-\eqref{eq:con1d}
in the above SDP problem, while \eqref{eq:con2d}-\eqref{eq:con3d} are
necessary conditions to ensure that $y$ is formed of moments of some
positive measure on $\textbf{K}$. When $d$
increases, these problems form a \textit{hierarchy of semidefinite
relaxations}, called that way because the objective \eqref{eq:objd} is
not affected and the feasible set is reduced as the size of the matrices in \eqref{eq:con2d}-\eqref{eq:con3d} increases. These properties show that
the optimal value of problem \eqref{eq:objd}-\eqref{eq:con3d} increases
with $d$ and remains bounded by the optimal value of
\eqref{eq:obj}-\eqref{eq:con}.

For the method to give better results, a ball constraint $\|\textbf{x}\|^2\leq M$ must be added
according to the technical assumption~1.1 in \cite{anjos-lasserre-2012b}.
For the OPF problem, this can be done easily by setting~$M$
to $\sum_{k \in \mathcal{N}} (v_k^\text{max})^2$ without modifying the
problem. The following two properties hold in this case~\cite[theorem
1.12]{anjos-lasserre-2012b}:
 \begin{enumerate}
 \item
 the optimal values of the hierarchy of semidefinite relaxations
 increasingly converge toward the optimal value of
 PolyOPF,
 \item
 let $\textbf{y}^d$ denote a global solution to the relaxation of order~$d$
 and $(\textbf{e}^i)_{1\leq i\leq 2n-1}$ denotes the canonical basis of~$\mathbb{N}^{2n-1}$; if PolyOPF has a unique global
 solution, then $(\textbf{y}^d_{\textbf{e}^i})_{1\leq i \leq 2n-1}$
 converges towards the global solution to PolyOPF as $d$ tends to
 $+\infty$.
 \end{enumerate}

The largest matrix size of the moment relaxation
appears in \eqref{eq:con1d} and has the value $|\mathbb{N}_{d}^{2n-1}| =
\left( \begin{array}{c} 2n-1+d \\ d \end{array} \right)$,
where $n$ is the number of buses. For a fixed $d$, matrix size is
therefore equal to $O(n^d)$. This makes high order relaxations too large
to compute with currently available SDP software packages. Consequently,
the success of the moment-sos approach relies wholly upon its ability to
find a global solution with a low order relaxation, for which there is
no guarantee. Note that the global solution is found by a finite order
relaxation under conditions that include the convexity of the
problem~\cite{lasserre-2008b} (not the case of PolyOPF though) or the
positive definiteness of the Hessian of the Lagrangian at the saddle
points of the Lagrangian~\cite{deklerk-laurent-2011b} (open question in
the case of PolyOPF).

\subsection*{Moment-sos relaxations and rank relaxation}

When the polynomials $f_i$ defining PolyOPF are quadratic, the first
order ($d=1$) relaxation \eqref{eq:objd}-\eqref{eq:con3d} is equivalent
to Shor's relaxation~\cite{lasserre-2001b}. To make the link with the
rank relaxation of~\cite{lavaei-low-2012}, consider now the case when
the varying part of the $f_i$'s are quadratic and \textit{homogeneous} like
in~\cite{lavaei-low-2012}, that is $f_i(\textbf{x})=\textbf{x}^T A_i \textbf{x}-a_i $ for
all~$i=0,\ldots,m$, with symmetric matrices~$A_i$ and
scalars~$a_i$. Then introducing the
vector $\textbf{s}$ and the matrix $Y$ defined by $\textbf{s}_i=y_{\textbf{e}^i}$ and
$Y_{kl}=y_{\textbf{e}^k+\textbf{e}^l}$,
reads
\begin{equation}
\label{qp-order1relation-obj}
\textstyle
\min_{(\textbf{s},Y)}\;\textbf{trace}(A_0Y) - a_0,
\end{equation}
subject to
\begin{equation}
\label{qp-order1relation-con}
\begin{bmatrix}
1   & \textbf{s}^T \\
\textbf{s} & Y
\end{bmatrix}
\succcurlyeq 0
\quad\mbox{and}\quad
\textbf{trace}(A_iY)\geq a_i ~~(\forall\,i=1,\ldots,m).
\end{equation}
Using Schur's complement, the positive semidefiniteness condition
in \eqref{qp-order1relation-con} is equivalent to $Y-\textbf{s} \textbf{s}^T \succcurlyeq 0$.
Since $\textbf{s}$ does not intervene elsewhere in
\eqref{qp-order1relation-obj}-\eqref{qp-order1relation-con}, it can be
eliminated and the constraints of the problem can be replaced by
\begin{equation}
\label{qp-order1relation-con'}
Y \succcurlyeq 0
\quad\mbox{and}\quad
\text{trace}(A_iY)\geq a_i ~~(\forall\,i=1,\ldots,m).
\end{equation}
The pair made of \eqref{qp-order1relation-obj} and
\eqref{qp-order1relation-con'} is the rank relaxation of
\cite{lavaei-low-2012}.
We have just shown that the equivalence between that the SDP
relaxation of \cite{lavaei-low-2012} and to the first-order moment
relaxation holds when the varying part of the $f_i$'s are quadratic and
homogeneous. For the OPF problem, this certainly occurs when
\begin{enumerate}
\item the objective of the OPF is an affine function of active power,
\item there are no constraints on apparent power flow,
\item \eqref{eq:volt con} is not replaced by \eqref{eq:phase zero}.
%
\end{enumerate}
Point~1 ensures that the objective is quadratic and has a
homogeneous varying part. Points~2 and~3 guarantee the same property for
the constraint functions.

\section{Numerical results}
\label{subsec:Numerical results2}

We present numerical results for the moment-sos approach applied to
instances of the OPF for which the rank relaxation method of
\cite{lavaei-low-2012} fails to find the global solution. We focus on
the WB2 2-bus system, LMBM3 3-bus system, and the WB5 5-bus system that
are described in \cite{bukhsh2013}. Note that
LMBM3 is also found in \cite{lesieutre_molzahn_borden_demarco-allerton2011}. For
each of the three systems, the authors of
\cite{bukhsh2013} modify a bound in the data
and specify a range for which the rank relaxation fails. We consider 10
values uniformly distributed in the range in order to verify that the
rank relaxation fails and to assess the moment-sos approach.
We proceed in accordance with the discussion of section
\ref{subsec:moment-sos-approach} by adding the
redundant ball constraint. Surprisingly, the second order relaxation
whose greatest matrix size is equal to $(2n+1)n$ nearly always finds the
global solution. 

The materials used are:
\begin{itemize}
\item
Data of WB2, LMBM3, WB5 systems available online
\cite{bukhsh-grothey-mckinnon-trodden-2013b},
\item
Intel\textregistered\ Xeon\texttrademark\ MP CPU 2.70 GHz 7.00 Go RAM,
\item
MATLAB version 7.7 2008b,
\item
MATLAB-package MATPOWER version 3.2
\cite{matpower},
\item
SeDuMi 1.02 \cite{sturm-1999} with tolerance parameter \texttt{pars.eps}
set to $10^{-12}$ for all computations,
\item
MATLAB-based toolbox YALMIP~\cite{yalmip} to compute Optimization 4
(Dual OPF) in \cite{lavaei-low-2012} that yields the solution to the
rank relaxation,
\item
MATLAB-package GloptiPoly version 3.6.1
\cite{henrion-lasserre-loefberg-2009} to compute solutions to a
hierarchy of SDP relaxations \eqref{eq:objd}-\eqref{eq:con3d}.
\end{itemize}

The same precision is used as in the solutions of the test archives
\cite{bukhsh-grothey-mckinnon-trodden-2013b}. In other words, results
are precise up to $10^{-2}$ p.u. for voltage phase, $10^{-2}$ degree for
angles, $10^{-2}$ MW for active power, $10^{-2}$ MVA for reactive power,
and cent per hour for costs. Computation time is several seconds. 

GloptiPoly can guarantee that it has found a global solution to a
polynomial optimization problem, up to a given precision. This is
certainly the case when it finds a feasible point $\textbf{x}$ giving to the
objective a value sufficiently close to the optimal value of the
relaxation.

\subsection*{2-bus network: WB2}

Authors of \cite{bukhsh2013} observe that in
the WB2 2-bus system of figure \ref{fig:WB2 2-bus system}, the rank
constraint is not satisfied in the rank relaxation method of
\cite{lavaei-low-2012} when $0.976~\text{p.u.} < v_2^\text{max} <
1.035~\text{p.u.}$ In table \ref{tab:WB2}, the first column is made up
of 10 points in that range that are uniformly distributed. The second
column contains the lowest order of the relaxations that yield a global
solution. The optimal value of the relaxation of that order is written
in the third column. The fourth column contains the optimal value of the
rank relaxation (it is put between parentheses when the relaxation is
inexact).

\begin{figure}[H]
  \centering
    \includegraphics[width=.6\textwidth]{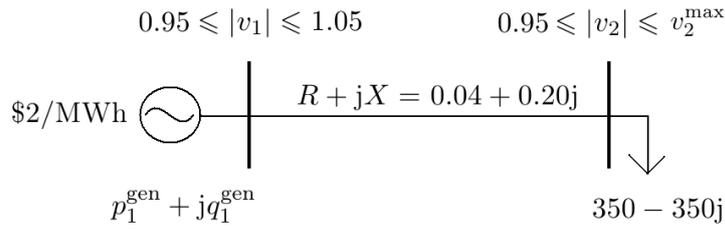}
  \caption{WB2 2-bus system}
  \label{fig:WB2 2-bus system}
\end{figure}

\begin{table}[H]
\centering
\caption{Order of hierarchy needed to reach global solution to WB2 when rank relaxation fails}
\begin{tabular}{c|c|c|c}
$v_2^\text{max}$ & relax. & optimal & rank relax. \\
(p.u.) & order & value (\$/h) &  value (\$/h)\\
\hline
0.976 & 2 & 905.76 &  905.76  \\
0.983 & 2 & 905.73 & (903.12) \\
0.989 & 2 & 905.73 & (900.84) \\
0.996 & 2 & 905.73 & (898.17) \\
1.002 & 2 & 905.73 & (895.86) \\
1.009 & 2 & 905.73 & (893.16) \\
1.015 & 2 & 905.73 & (890.82) \\
1.022 & 3 & 905.73 & (888.08) \\
1.028 & 3 & 905.73 & (885.71) \\
1.035 & 2 & 882.97 &  882.97  \\
\end{tabular}
\label{tab:WB2}
\end{table}

The hierarchy of SDP relaxations is defined for $d\geq 1$ because the
objective is an affine function and there are no apparent flow
constraints. Let's explain how it works in the case where
$v_2^\text{max} = 1.022$~p.u. The optimal value of the first order
relaxation is 861.51~\$/h, that of the second order relaxation is
901.38~\$/h, and that of the third is 905.73~\$/h. This is coherent with
point~1 of the discussion of section \ref{subsec:moment-sos-approach} that claims that the optimal values increase
with~$d$. Computing higher orders is not necessary because GloptiPoly
numerically proves global optimality for the third order.

Notice that for $v_2^\text{max} = 1.022$~p.u. the value of the rank
relaxation found in table \ref{tab:WB2} (888.08~\$/h) is different from
the value of the first order relaxation (861.51~\$/h). If we run
GloptiPoly with \eqref{eq:volt con} instead of \eqref{eq:phase zero},
the optimal value of the first order relaxation is equal 888.08~\$/h as
expected according to section \ref{subsec:moment-sos-approach}.

For $v_2^\text{max} = 0.976~\text{p.u.}$ and $v_2^\text{max} =
1.035~\text{p.u.}$ (see
the first and last rows of table~\ref{tab:WB2}), the rank constraint is satisfied in the rank
relaxation method so its optimal value is equal to the one of the successful
moment-sos method. In between those
values, the rank constraint is not satisfied since the optimal value is
less than the optimal value of the OPF. Notice the correlation
between the results of table \ref{tab:WB2} and the upper half of figure
8 in \cite{bukhsh2013}. Indeed, the figure
shows the optimal value of the OPF is constant whereas the optimal value
of the rank relaxation decreases in a linear fashion when
$0.976~\text{p.u.} < v_2^\text{max} < 1.035~\text{p.u.}$

Surprisingly and encouragingly, according to the second column of table
\ref{tab:WB2}, the second order moment-sos relaxation finds the global
solution in 8 out of 10 times, and the third order relaxation always
find the global solution.

\textit{Remark:} The fact that the rank constraint is not satisfied for
the WB2 2-bus system of \cite{bukhsh2013}
seems in contradiction with the results of papers
\cite{bose-chandy-gayme-low-2011, tse-zhang-2011, lavaei-sojoudi-2012b}.
Indeed, the authors of the papers state that the rank is less than or
equal to 1 if the graph of the network is acyclic and if load
over-satisfaction is allowed. However, load over-satisfaction is not
allowed in this network. For example, for $v_2^\text{max} = 1.022$~p.u.,
adding 1 MW of load induces the optimal value to go down from
905.73~\$/h to 890.19~\$/h. One of the sufficient conditions in
\cite{bose-chandy-gayme-low-2015} for the rank is less than or equal to
1 relies on the existence of a strictly feasible point. It is not the
case here because equality constraints must be enforced in the power
balance equation.

\subsection*{3-bus network: LMBM3}

We observe that in the LMBM3 3-bus system of figure \ref{fig:LMBM3 3-bus
system}, the rank constraint is not satisfied in the rank relaxation
method of \cite{lavaei-low-2012} when $28.35~\text{MVA} \leq
s_{23}^\text{max} = s_{32}^\text{max} < 53.60~\text{MVA}$. Below
$28.35~\text{MVA}$, no solutions can be found by the OPF solver
\texttt{runopf} in MATPOWER nor by the hierarchy of SDP relaxations. At
$53.60~\text{MVA}$, the rank constraint is satisfied in the rank
relaxation method so its optimal value is equal to the optimal value of
the OPF found by the second order relaxation; see to the last row of
table \ref{tab:LMBM3}.

\begin{figure}[H]
  \centering
    \includegraphics[width=.5\textwidth]{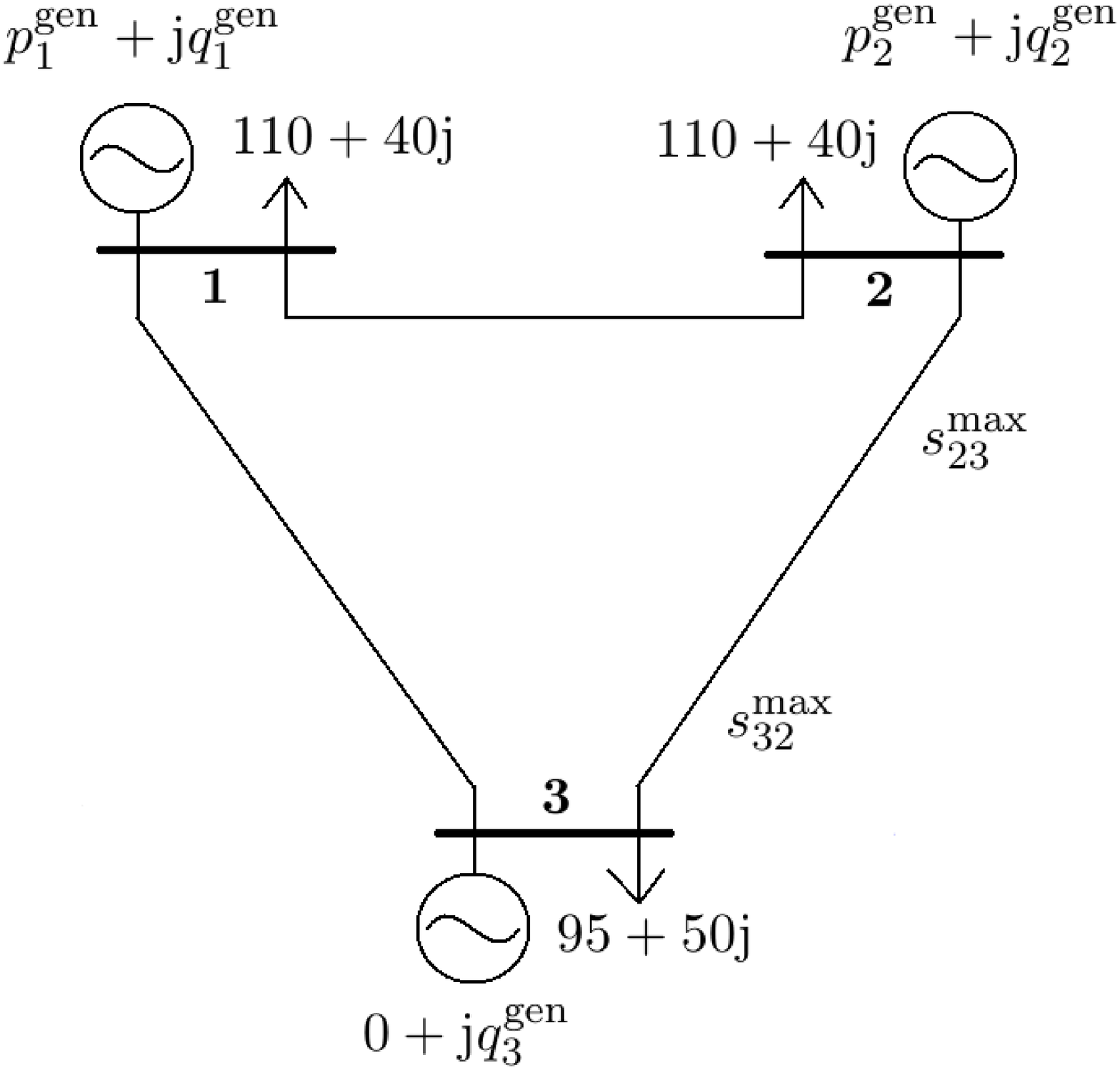}
  \caption{LMBM3 3-bus system}
  \label{fig:LMBM3 3-bus system}
\end{figure}

\begin{table}[H]
\centering
\caption{Order of hierarchy needed to reach global solution to LMBM3 when rank relaxation fails}
\begin{tabular}{c|c|c|c}
$s_{23}^\text{max} = s_{32}^\text{max}$ &  relax. & optimal & rank relax. \\
(MVA) & order & value (\$/h) & value (\$/h) \\
\hline
28.35 & 2 &            10294.88 & (6307.97) \\
31.16 & 2 & \hphantom{1}8179.99 & (6206.78) \\
33.96 & 2 & \hphantom{1}7414.94 & (6119.71) \\
36.77 & 2 & \hphantom{1}6895.19 & (6045.33) \\
39.57 & 2 & \hphantom{1}6516.17 & (5979.38) \\
42.38 & 2 & \hphantom{1}6233.31 & (5919.12) \\
45.18 & 2 & \hphantom{1}6027.07 & (5866.68) \\
47.99 & 2 & \hphantom{1}5882.67 & (5819.02) \\
50.79 & 2 & \hphantom{1}5792.02 & (5779.34) \\  
53.60 & 2 & \hphantom{1}5745.04 &  5745.04  \\ 
\end{tabular}
\label{tab:LMBM3}
\end{table}

The objective of the OPF is a quadratic function of active power so the hierarchy of SDP relaxations is defined for $d~\geqslant~2$. Again, it is surprising that the second order moment-sos relaxation always finds
the global solution to the LMBM3 system, as can be seen in the second
column of table \ref{tab:LMBM3}.

Authors of \cite{lavaei-low-2012} make the assumption that the
objective of the OPF is an increasing function of generated active
power. The moment-sos approach does not require such an assumption. For
example, when $s_{23}^\text{max} = s_{32}^\text{max} = 50~\text{MVA}$,
active generation at bus~1 is equal to 148.07~MW and active generation
at bus~2 is equal to 170.01~MW using the increasing cost function of
\cite{lesieutre_molzahn_borden_demarco-allerton2011,
bukhsh-grothey-mckinnon-trodden-2013b}. Suppose we choose a different
objective which aims at reducing deviation from a given active generation
plan at each generator. Say that this plan is $p_1^\text{plan} =
170~\text{MW}$ at bus~1 and $p_2^\text{plan} = 150~\text{MW}$ at bus 2.
The objective function is equal to $(p_1^\text{gen}-p_1^\text{plan})^2 +
(p_2^\text{gen}-p_2^\text{plan})^2$. It is not an increasing function of
$p_1^\text{gen}$ and $p_2^\text{gen}$. The second order relaxation
yields a global solution in which active generation at bus~1 is equal to
169.21~MW and active generation at bus 2 is equal to 149.19~MW.

\subsection*{5-bus network: WB5}

Authors of \cite{bukhsh2013} observe that in
the WB5 5-bus system of figure \ref{fig:WB5 5-bus system}, the rank
constraint is not satisfied in the rank relaxation method of
\cite{lavaei-low-2012} when $q_{5}^\text{min}
> -30.80~\text{MVAR}$. Above $61.81~\text{MVAR}$, no solutions can be
found by the OPF solver \texttt{runopf} in MATPOWER. At
$-30.80~\text{MVAR}$, the rank constraint is satisfied in the rank
relaxation method so its optimal value is equal to the optimal value
of the OPF found by the second order moment-sos relaxation; see the
first row of table \ref{tab:WB5}. As for the 9 values considered greater
than $-30.80~\text{MVAR}$, the rank constraint is not satisfied since
the optimal value is not equal to the optimal value of the OPF. Notice that the objective of the OPF is a linear function of active power and there are bounds on apparent flow so the hierarchy of SDP relaxations is defined for $d\geqslant 1$.
\begin{figure}[H]
  \centering
    \includegraphics[width=.6\textwidth]{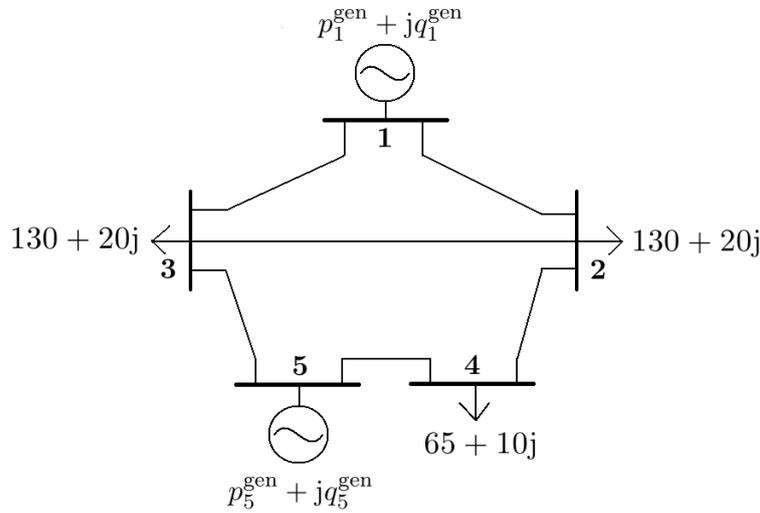}
  \caption{WB5 5-bus system}
  \label{fig:WB5 5-bus system}
\end{figure}
\begin{table}[H]
\centering
\caption{Order of hierarchy needed to reach global solution to WB5 when rank relaxation fails}
\begin{tabular}{c|c|c|c}
$q_{5}^\text{min}$ & relax. & optimal & rank relax. \\
(MVAR) & order & value (\$/h) & value (\$/h) \\
\hline
           -30.80 & 2 & \hphantom{1}945.83 & \hphantom{1(}945.83\hphantom{)} \\
           -20.51 & 2 &            1146.48 & \hphantom{1}(954.82) \\
           -10.22 & 2 &            1209.11 & \hphantom{1}(963.83) \\
\hphantom{-}00.07 & 2 &            1267.79 & \hphantom{1}(972.85) \\
\hphantom{-}10.36 & 2 &            1323.86 & \hphantom{1}(981.89) \\
\hphantom{-}20.65 & 2 &            1377.97 & \hphantom{1}(990.95) \\
\hphantom{-}30.94 & 2 &            1430.54 &            (1005.13) \\
\hphantom{-}41.23 & 2 &            1481.81 &            (1033.07) \\
\hphantom{-}51.52 & 2 &            1531.97 &            (1070.39) \\
\hphantom{-}61.81 & - &              -     &            (1114.90) \\
\end{tabular}
\label{tab:WB5}
\end{table}
When $q_{5}^\text{min} = 61.81~\text{MVAR}$, the hierarchy of SDP
relaxations is unable to find a feasible point, hence the empty slots in
the last row of table \ref{tab:WB5}. Apart from that value, the second
order moment-sos relaxation again always finds the global solution
according to the second column of table \ref{tab:WB5}.

Waki \textit{et al.}\ \cite{waki2006}
have produced a piece of software called
SparsePOP~\cite{kim-kojima-muramatsu-waki-2008}
similar to GloptiPoly only that it seeks to reduce problem size in
Lasserre's relaxations using matrix completion theory in semidefinite
programming. SparsePOP successfully solves the systems studied in this
paper to global optimality but fails to reduce the size of the
moment-sos relaxations and to solve problems with a larger number of
buses.

\section{Conclusion}
\label{subsec:Conclusion2}

This chapter examined the application of the moment-sos (sum of squares)
approach to the global optimization of the optimal power flow (OPF)
problem. The result is that the OPF can be successfully
convexified in the case of several small networks where a previously known
SDP method fails. The SDP problems considered in this paper can be
viewed as extensions of the previously used rank relaxation. It is
guaranteed to be more accurate than the previous one but requires more
runtime. 

Interestingly, Daniel K. Molzahn and Ian A. Hiskens independently made very similar findings~\cite{pscc2014} as presented in this chapter. They successfully solved networks with up to 10 buses. A group at IBM research Ireland was also working on the same ideas~\cite{ibm_paper}. They managed to solve networks with up to 40 buses. To do so, they formulated the OPF problem as a quadratically-constrained quadratic problem and used SparsePOP. This works better than what we had tried, namely using SparsePOP with an OPF formulation with monomials of order 4.

We next focus on a property of the moment-sos approach to further prove its applicability in practice. In the small examples considered in this chapter, there is no duality gap at each order of the moment-sos hierarchy according to numerical results. This property is necessary for efficient solvers to work such as interior-point solvers. However, in the existing literature, there were no results guaranteeing this property.  In the next chapter, we prove there is no duality gap in the moment-sos hierarchy in the presence of a ball constraint. We also explain why there is no duality gap when applying the moment-sos hierarchy to the OPF without a ball constraint.

\chapter{Zero duality gap in the Lasserre hierarchy}
\label{sec:Zero duality gap in the Lasserre hierarchy}

A polynomial optimization problem (POP) consists of minimizing a multivariate real polynomial on a semi-algebraic
set $K$ described by polynomial inequalities and equations. In its full generality it is a nonconvex,
multi-extremal, difficult global optimization problem.  More than an decade ago, J.~B.~Lasserre
proposed to solve POPs by a hierarchy of convex semidefinite optimization (SDP) relaxations
of increasing size. Each problem in the hierarchy has a primal SDP formulation (a relaxation of
a moment problem) and a dual SDP formulation (a sum-of-squares representation
of a polynomial Lagrangian of the POP). In this chapter,
we show that there is no duality gap between each primal and dual SDP problem
in Lasserre's hierarchy, provided one of the constraints in the description of set $K$ is a ball constraint. Our proof uses elementary results on SDP duality,
and it does not assume that $K$ has a strictly feasible point. The material in this chapter is based on the publication:
\\\\
\noindent {\scshape C. Josz and D. Henrion}, \textit{Strong Duality in Lasserre’s Hierarchy for Polynomial Optimization},
Optimization Letters, February 2015. \href{http://dx.doi.org/10.1007/s11590-015-0868-5}{[doi]} \href{https://docs.google.com/viewer?a=v&pid=sites&srcid=ZGVmYXVsdGRvbWFpbnxjZWRyaWNqb3N6fGd4OjY1M2E5NDAyMjg2M2U2Y2Q}{[preprint]}

\section{Introduction}
\label{subsec:Introduction3}

Consider the following polynomial optimization problem (POP)
\begin{equation}
\label{eq:pop}
\begin{array}{ll}
\inf_x & f(x) := \sum_\alpha f_{\alpha} x^\alpha \\
\mathrm{s.t.} & g_i(x) := \sum_\alpha g_{i,\alpha} x^\alpha \geq 0, \quad i=1,\ldots,m
\end{array}
\end{equation}
where we use the multi-index notation $x^\alpha := x_1^{\alpha_1} \cdots x_n^{\alpha_n}$ for $x \in {\mathbb R}^n$,
$\alpha \in {\mathbb N}^n$ and
where the data are polynomials $f, g_1, \ldots, g_m \in {\mathbb R}[x]$
so that in the above sums only a finite number of coefficients
$f_{\alpha}$ and $g_{i,\alpha}$ are nonzero. Let $K$ denote its feasible set:
\[
K:=\{x \in {\mathbb R}^n \: :\: g_i(x) \geq 0, \: i=1,\ldots,m\}
\]
To solve POP (\ref{eq:pop}), 
Lasserre \cite{lasserre-2000,lasserre-2001} proposed a semidefinite optimization (SDP) relaxation hierarchy
with guaranteed asymptotic global convergence provided an algebraic assumption holds:

\begin{assumption}\label{archimedean}
There exists a polynomial $u \in \mathbb{R}[x]$ such that $\{x\in \mathbb{R}^n  \: :\: u(x)\geq 0\}$ is bounded
and $u=u_0 + \sum_{i=1}^m u_i g_i $ where polynomials $u_i \in {\mathbb R}[x]$, $i=0,1,\ldots,m$
are sums of squares (SOS) of other polynomials.
\end{assumption}

Nie \textit{et al.} \cite{nie-2014} have proven that Assumption \ref{archimedean} also implies generically finite convergence, that is to say that for almost every instance of POP, there exists a finite-dimensional SDP relaxation in the hierarchy whose optimal value is equal to the optimal value of the POP. Assumption \ref{archimedean} can be difficult to check computationally (as the degrees of the SOS
multipliers can be arbitrarily large), and it is often replaced
by the following slightly stronger assumption:

\begin{assumption}\label{ball}
The description of $K$ contains a ball constraint, say $g_m(x) = R^2 - \sum_{i=1}^n x_i^2$ for some real number $R$.
\end{assumption}

Indeed, under Assumption \ref{ball}, simply choose $u=g_{m}$, $u_1=\cdots=u_{m-1}=0$, and $u_{m}=1$
to conclude that Assumption \ref{archimedean} holds as well. 
In practice, it is often easy to see to it that Assumption \ref{ball} holds. In the case of a POP with a bounded feasible set, a redundant ball constraint can be added.

More generally, if the intersection of the sublevel set $\{x \in \mathbb{R}^n \: :\: f(x) \leq f(x_0)\}$ with the feasible set
of the POP is bounded for some feasible point $x_0$, then a redundant ball constraint can also be added.
As an illustration, a reviewer suggested the example of the minimization of
$f(x)=x^2_1+x^2_2-3x_1x_2$ on the unbounded set defined on ${\mathbb R}^2$ by the constraint $g_1(x)=1-3x_1x_2\geq 0$.
The intersection of the feasible set with the set defined by the constraint $f(x) \leq f(0)=0$
is included in the ball defined by $g_2(x)=1-x^2_1-x^2_2\geq 0$ so that the POP can be equivalently defined
on the bounded set $K=\{x \in {\mathbb R}^2 \: :\: g_1(x) \geq 0, \: g_2(x) \geq 0\}$.

Each problem in Lasserre's hierarchy consists of a primal-dual SDP pair, called SDP relaxation,
where the primal  corresponds
to a convex moment relaxation of the original (typically nonconvex) POP, and the dual  corresponds
to a SOS representation of a polynomial Lagrangian of the POP. The question arises of whether the
duality gap vanishes in each SDP relaxation. This is of practical importance because numerical algorithms
to solve SDP problems are guaranteed to converge only where there is a zero duality gap,
and sometimes under the stronger assumption that there is a primal or/and dual SDP interior point.

 In \cite[Example 4.9]{schweighofer-2005},
Schweighofer provides a two-dimensional POP with no interior point
for which Assumption \ref{archimedean} holds, yet a duality gap exists at the first
SDP relaxation: $\inf x_1x_2 \:\mathrm{s.t.}\: x \in K=\{x \in {\mathbb R}^2 \: :\:
-1 \leq x_1 \leq 1, \: x^2_2 \leq 0\}$, with
primal SDP value equal to zero and dual SDP value equal to minus infinity.
This shows that a stronger assumption is required to ensure a zero SDP duality gap. 
A sufficient condition for strong duality has been given in \cite{lasserre-2001}:
set $K$ should contain an interior point. However, this may be too restrictive:
in the proof of Lemma 1 in \cite{henrion-2012} the authors use notationally awkward arguments
involving truncated moment matrices
to prove the absence of SDP duality gap for a certain set $K$ that contains no interior point.
This shows that the existence of an interior point is not necessary for a zero SDP duality gap.
More generally, it is not possible to assume the existence
of an interior point for POPs with explicit equality constraints,
and a weaker assumption for zero SDP duality gap is welcome.

Motivated by these observations, in this note we prove that under the basic Assumption \ref{ball}
on the description of set $K$, there is no duality gap in the SDP hierarchy.
Our interpretation of this result,
and the main message of this contribution, is that in the context of Lasserre's hierarchy
for POP, a practically relevant description of a bounded semialgebraic feasibility set
must include a redundant ball constraint.

\section{Proof}
\label{subsec:Proof}

For notational convenience, let $g_0(x)=1 \in {\mathbb R}[x]$ denote the unit polynomial.
Define the localizing matrix
\[
M_{d-d_i}(g_iy) := \left(\sum_\gamma  g_{i,\gamma} y_{\alpha+\beta+\gamma}\right)_{|\alpha|,|\beta|\leq d-d_i}
 = \sum_{|\alpha|\leq 2d} A_{i,\alpha} y_{\alpha}
\]
where $d_i$ is the smallest integer greater than or equal to half the degree of $g_i$,
for $i=0,1,\ldots,m$, and
$|\alpha| =\sum_{i=1}^n \alpha_i$.
The Lasserre hierarchy for POP (\ref{eq:pop})
consists of a  primal moment SDP problem
\[
(P_d) ~:~ 
\begin{array}{ll}
\inf_y & \sum_{\alpha}  f_{\alpha} y_\alpha \\
\mathrm{s.t.} & y_0 = 1 \\
& M_{d-d_i}(g_i y) \succeq 0, \:i=0,1,\ldots,m
\end{array}
\]
and a dual SOS SDP problem
\[
(D_d) ~:~ 
\begin{array}{ll}
\sup_{z,Z} & z \\
\mathrm{s.t.} & f_0 - z =  \sum_{i=0}^m \langle A_{i,0}, Z_i \rangle\\
& f_\alpha = \sum_{i=0}^m \langle A_{i,\alpha}, Z_i \rangle, \quad 0<|\alpha|\leq 2d \\
& Z_i \succeq 0, \:i=0,1,\ldots,m, ~~ z \in \mathbb{R}
\end{array}
\]
where $A\succeq 0$ stands for matrix $A$ positive semidefinite, $\langle A,B \rangle = \mathrm{trace}\:AB$ is
the inner product between two matrices.
The Lasserre hierarchy is indexed by an integer $d \geq d_{\min}:=\max_{i=0,1,\ldots,m} d_i$.
The primal-dual pair $(P_d,D_d)$ is called the SDP relaxation of order $d$ for POP (\ref{eq:pop}). The size of the primal variable $(y_\alpha)_{|\alpha| \leq 2d}$ is $\left( \begin{array}{c} n + 2d \\ n \end{array} \right)$ and the size of the dual variable $Z_i$ is $\left( \begin{array}{c} n + d-d_i \\ n \end{array} \right).$

Let us define the following sets:
\begin{itemize}
\item $\mathcal{P}_d$: feasible points for $P_d$;
\item $\mathcal{D}_d$: feasible points for $D_d$;
\item $\mathrm{int}\:\mathcal{P}_d$: strictly feasible points for  $P_d$;
\item $\mathrm{int}\:\mathcal{D}_d$: strictly feasible points for $D_d$;
\item $\mathcal{P}^*_d$:  optimal solutions for  $P_d$;
\item $\mathcal{D}^*_d$:  optimal solutions for $D_d$.
\end{itemize}
Finally, let us denote by $\mathrm{val}\:P_d$ the infimum in problem $P_d$
and by $\mathrm{val}\:D_d$ the supremum in problem $D_d$.

\begin{lemma}
\label{lemma:Slater}
$\mathrm{int}\:\mathcal{P}_d$ nonempty or $\mathrm{int}\:\mathcal{D}_d$ nonempty
implies $\mathrm{val}\:P_d=\mathrm{val}\:D_d$.
\end{lemma}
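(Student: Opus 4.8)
The plan is to recognise Lemma~\ref{lemma:Slater} as the classical strong-duality (Slater) statement for a primal–dual pair of conic semidefinite programs, and to deduce it from weak duality together with the conic duality theorem. Throughout, $(P_d)$ is a minimisation in the moment variable $y$ and $(D_d)$ is a maximisation in $(z,Z)$, so I would first prove the inequality $\mathrm{val}\:D_d \leq \mathrm{val}\:P_d$ and then close the gap under the interior-point hypothesis.

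First I would establish weak duality by an explicit pairing. Take $y\in\mathcal{P}_d$ and $(z,Z)\in\mathcal{D}_d$. Using $y_0=1$ and the dual equality constraints $f_\alpha=\sum_{i=0}^m\langle A_{i,\alpha},Z_i\rangle$, where the $\alpha=0$ constraint contributes the term $-z$, a direct substitution gives
\[
\sum_\alpha f_\alpha y_\alpha - z \;=\; \sum_{i=0}^m \Big\langle \sum_{|\alpha|\leq 2d} A_{i,\alpha}y_\alpha,\, Z_i\Big\rangle \;=\; \sum_{i=0}^m \big\langle M_{d-d_i}(g_i y),\, Z_i\big\rangle.
\]
Since $M_{d-d_i}(g_i y)\succeq 0$ and $Z_i\succeq 0$, each inner product is nonnegative, so $\sum_\alpha f_\alpha y_\alpha \geq z$; taking the infimum over $y$ and the supremum over $(z,Z)$ yields $\mathrm{val}\:D_d \leq \mathrm{val}\:P_d$.

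Next I would split on which interior is assumed nonempty and invoke the conic duality theorem, which states that a conic program with a strictly feasible point and a finite optimal value has no duality gap, with attainment in its dual. Suppose $\mathrm{int}\:\mathcal{P}_d\neq\emptyset$. Then $\mathcal{P}_d\neq\emptyset$, so $\mathrm{val}\:P_d<+\infty$. If $\mathrm{val}\:P_d=-\infty$, weak duality forces $\mathrm{val}\:D_d=-\infty$ as well and the values agree; if $\mathrm{val}\:P_d$ is finite, the conic duality theorem applied to the strictly feasible primal $(P_d)$ gives $\mathrm{val}\:P_d=\mathrm{val}\:D_d$. Symmetrically, suppose $\mathrm{int}\:\mathcal{D}_d\neq\emptyset$; then $\mathrm{val}\:D_d>-\infty$, the case $\mathrm{val}\:D_d=+\infty$ is settled by weak duality (forcing $\mathrm{val}\:P_d=+\infty$), and the finite case is settled by the conic duality theorem applied to the strictly feasible dual $(D_d)$. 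In every case $\mathrm{val}\:P_d=\mathrm{val}\:D_d$.

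I do not expect a genuine obstacle here, since the content is entirely standard SDP duality. The points requiring care are bookkeeping rather than depth: getting the orientation of weak duality right so that the $\pm\infty$ edge cases fall on the correct side, and noting that Slater's condition alone yields strong duality only once finiteness of the relevant optimal value is secured, which nonemptiness of the primal feasible set provides on the minimising side and nonemptiness of the dual feasible set provides on the maximising side. The real difficulty of the chapter lies elsewhere, namely in verifying that Assumption~\ref{ball} forces $\mathrm{int}\:\mathcal{P}_d$ or $\mathrm{int}\:\mathcal{D}_d$ to be nonempty; the present lemma only packages the duality input that this verification will feed.
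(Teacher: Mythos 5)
Your proposal is correct, and it is essentially the paper's approach: the paper gives no argument at all for Lemma~\ref{lemma:Slater}, simply noting that it is the classical Slater/conic-duality theorem and citing \cite[Theorem 4.1.3]{shapiro-2000}, which is precisely the result you invoke as your black box. Your explicit verification of the weak-duality pairing $\sum_\alpha f_\alpha y_\alpha - z = \sum_{i=0}^m \langle M_{d-d_i}(g_i y), Z_i\rangle \geq 0$ and the handling of the $\pm\infty$ cases are correct bookkeeping that the citation subsumes, not a different route.
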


Lemma \ref{lemma:Slater} is classical in convex optimization, and it is generally
called Slater's condition, see e.g. \cite[Theorem 4.1.3]{shapiro-2000}.

\begin{lemma}
\label{lemma:Trnovska}
The two following statements are equivalent :
\begin{enumerate}
\item $\mathcal{P}_d$ is nonempty and $\mathrm{int}\:\mathcal{D}_d$ is nonempty;
\item $\mathcal{P}^*_d$ is nonempty and bounded.
\end{enumerate}
\end{lemma}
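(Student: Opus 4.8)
The plan is to treat $(P_d,D_d)$ as an abstract primal--dual SDP pair and to exploit the special feature that the block coming from $g_0=1$ is the full moment matrix $M_d(g_0 y)=M_d(y)$. The engine of both implications is a single bilinear identity: if $(z,Z)$ is feasible for $D_d$ and $w=(w_\alpha)_{|\alpha|\le 2d}$ satisfies $w_0=0$, then substituting $M_{d-d_i}(g_i w)=\sum_{|\alpha|\le 2d}A_{i,\alpha}w_\alpha$ and using the dual equality constraints gives
\[
\sum_{i=0}^m \langle M_{d-d_i}(g_i w),\,Z_i\rangle \;=\; \sum_{\alpha} f_\alpha w_\alpha .
\]
I will also repeatedly use that a vanishing moment matrix forces its generating sequence to vanish: if $M_d(w)=0$ then $w_{\alpha+\beta}=0$ for all $|\alpha|,|\beta|\le d$, and since every $\gamma$ with $|\gamma|\le 2d$ splits as $\gamma=\alpha+\beta$ with $|\alpha|,|\beta|\le d$, in fact $w=0$.

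For $(1)\Rightarrow(2)$, I would fix $(z,Z)\in\mathrm{int}\:\mathcal{D}_d$, so every $Z_i\succ 0$, and take any recession direction $w$ of $\mathcal{P}_d$, that is $w_0=0$, $M_{d-d_i}(g_iw)\succeq 0$ for all $i$, and $\sum_\alpha f_\alpha w_\alpha\le 0$. The identity then reads $0\ge\sum_\alpha f_\alpha w_\alpha=\sum_i\langle M_{d-d_i}(g_iw),Z_i\rangle$, a sum of nonnegative terms (trace of a positive semidefinite times a positive definite matrix); hence every term vanishes, and since $Z_i\succ 0$ each $M_{d-d_i}(g_iw)=0$. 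Taking $i=0$ gives $M_d(w)=0$, so $w=0$. Thus no nonzero recession direction of $\mathcal{P}_d$ has nonpositive objective, which means the objective is bounded below on $\mathcal{P}_d$ and every sublevel set $\{y\in\mathcal{P}_d:\sum_\alpha f_\alpha y_\alpha\le c\}$ has trivial recession cone, hence is compact. As $\mathcal{P}_d\neq\emptyset$, the infimum is attained on a nonempty compact set, so $\mathcal{P}^*_d$ is nonempty and bounded.

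For $(2)\Rightarrow(1)$, nonemptiness of $\mathcal{P}_d$ is immediate from $\mathcal{P}^*_d\subseteq\mathcal{P}_d$; the real work is producing a dual Slater point. I would first note that boundedness of the nonempty set $\mathcal{P}^*_d$ is equivalent to the triviality of its recession cone, i.e. to
\[
\{\,w:\ w_0=0,\ M_{d-d_i}(g_iw)\succeq 0\ (i=0,\dots,m),\ \textstyle\sum_{\alpha} f_\alpha w_\alpha\le 0\,\}=\{0\},
\]
where $\le$ may replace $=$ because $\mathcal{P}^*_d\neq\emptyset$ forces the objective to be nonnegative on every feasible recession direction. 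I then encode dual strict feasibility as an interior-image statement: identifying vectors with $w_0=0$ with their reduced coordinates $(w_\alpha)_{0<|\alpha|\le 2d}$, set $\mathcal{A}(Z):=\big(\sum_i\langle A_{i,\alpha},Z_i\rangle\big)_{0<|\alpha|\le 2d}$ and $f_\bullet:=(f_\alpha)_{0<|\alpha|\le 2d}$, so that a strictly feasible dual point exists iff $f_\bullet\in\mathcal{A}(\mathrm{int}\:\mathcal{K})$, with $\mathcal{K}$ the product of positive semidefinite cones. Its adjoint is $\mathcal{A}^{\ast}(w)=\big(M_{d-d_i}(g_iw)\big)_i$, and the moment-matrix argument shows $\mathcal{A}^{\ast}$ is injective, hence $\mathcal{A}$ is onto; a surjective linear map is open, so $\mathcal{A}(\mathrm{int}\:\mathcal{K})$ is a nonempty open convex subset of the moment space. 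If $f_\bullet\notin\mathcal{A}(\mathrm{int}\:\mathcal{K})$, separating the point from this open convex set yields $w\neq 0$ with $\langle \mathcal{A}^{\ast}w,Z\rangle=\langle w,\mathcal{A}(Z)\rangle\le\langle w,f_\bullet\rangle$ for all $Z$ in the cone; boundedness of the left side over the cone and self-duality of the semidefinite cone force $\mathcal{A}^{\ast}w\preceq 0$ and the bound to be $0$, whence $\langle f_\bullet,w\rangle\ge 0$. Then $-w$ (extended by a zero $0$-th component) is a nonzero element of the set displayed above, a contradiction; so $f_\bullet\in\mathcal{A}(\mathrm{int}\:\mathcal{K})$ and $\mathrm{int}\:\mathcal{D}_d\neq\emptyset$.

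The main obstacle is precisely this final separation step: for a general SDP the image of the cone need not be closed, so strong theorems of the alternative can fail. What rescues the argument is structural. Because the $g_0=1$ block supplies the full moment matrix, $\mathcal{A}$ is automatically surjective and therefore open, and because I only need a point of the \emph{open} set $\mathcal{A}(\mathrm{int}\:\mathcal{K})$ rather than of its closure, the ordinary separating-hyperplane theorem suffices with no closedness hypothesis. One could alternatively invoke a general conic result equating boundedness of a nonempty primal optimal set with dual strict feasibility, but I prefer this self-contained route since the moment structure makes surjectivity free; similarly, $(1)\Rightarrow(2)$ could be obtained from Lemma~\ref{lemma:Slater} together with a standard attainment theorem, but deriving compactness of the sublevel sets directly keeps everything elementary.
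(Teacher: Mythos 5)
Your proof is correct, and it takes a genuinely different route from the paper, which never actually proves Lemma \ref{lemma:Trnovska}: the paper invokes it as a known fact about general primal--dual SDP pairs, citing Trnovsk\'a \cite{maria-2005} for the proof (and noting the statement appears without proof in \cite{shapiro-2000}). You instead give a self-contained argument tailored to the Lasserre pair $(P_d,D_d)$, and both directions check out. In $(1)\Rightarrow(2)$, your bilinear identity $\sum_{i=0}^m\langle M_{d-d_i}(g_iw),Z_i\rangle=\sum_\alpha f_\alpha w_\alpha$ for $w_0=0$ and dual-feasible $(z,Z)$ is valid (the $w_0=0$ condition exactly removes the one dual constraint involving $z$), and combined with $Z_i\succ 0$ it forces every $M_{d-d_i}(g_iw)=0$; the $g_0=1$ block then gives $w=0$ since every $\gamma$ with $|\gamma|\le 2d$ splits as $\alpha+\beta$ with $|\alpha|,|\beta|\le d$, so sublevel sets of $P_d$ have trivial recession cone and are compact. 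In $(2)\Rightarrow(1)$, the same moment-matrix observation gives injectivity of $\mathcal{A}^*$, hence surjectivity and openness of $\mathcal{A}$, so you only ever separate the point $f_\bullet$ from a nonempty \emph{open} convex set; this is precisely what sidesteps the non-closedness of linear images of PSD cones that makes general SDP theorems of the alternative delicate, and the separating vector, after a sign flip and extension by a zero $0$-th component, lands in the recession set that hypothesis (2) forces to be trivial. What each route buys: the paper's citation is short and rests on a theorem valid for arbitrary SDP pairs, which suffices since it treats $(P_d,D_d)$ as a generic SDP; your proof is longer but self-contained and makes visible that the full moment matrix supplied by $g_0=1$ is the structural reason the argument is easy here. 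One further remark: the moment structure is a convenience rather than a necessity in your second direction, since any $w\neq 0$ in $\ker\mathcal{A}^*$ would already be, up to sign, a nonzero element of the recession set excluded by hypothesis (2); so your argument is essentially the general proof with the degenerate separation case ruled out for free.
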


A proof of Lemma \ref{lemma:Trnovska} can be found in \cite{maria-2005}.
According to Lemmas \ref{lemma:Slater} and \ref{lemma:Trnovska},
$\mathcal{P}_d^*$ nonempty and bounded implies strong duality.
This result is also mentioned without proof at the end of \cite[Section 4.1.2]{shapiro-2000}.

\begin{lemma}
\label{lemma:bound}
Under Assumption \ref{ball}, set $\mathcal{P}_d$ is included in the Euclidean ball of radius\\ $\sqrt{\left( \begin{array}{c}
n+d \\
n
\end{array}
\right)}\sum_{k=0}^{d} R^{2k}$ centered at the origin.
\end{lemma}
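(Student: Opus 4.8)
The plan is to control a feasible vector $y\in\mathcal{P}_d$ entirely through its moment matrix $M_d(y)=(y_{\alpha+\beta})_{|\alpha|,|\beta|\le d}$, which is positive semidefinite because $g_0=1$ forces $M_d(y)=M_{d}(g_0 y)\succeq 0$. Write $N:=|\mathbb{N}^n_d|=\binom{n+d}{n}$ for its size and let $\lambda_1,\dots,\lambda_N\ge 0$ be its eigenvalues. I would rely on three elementary facts. First, every coordinate $y_\gamma$ with $|\gamma|\le 2d$ can be decomposed as $\gamma=\alpha+\beta$ with $|\alpha|,|\beta|\le d$, so $y_\gamma$ appears as an entry of $M_d(y)$; since all squared entries are nonnegative, $\|y\|\le\|M_d(y)\|_F$. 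Second, $\|M_d(y)\|_F=\sqrt{\sum_i\lambda_i^2}\le\sqrt{N}\,\lambda_{\max}$. Third, $\lambda_{\max}\le\operatorname{trace}M_d(y)$ because the eigenvalues are nonnegative. Chaining these gives $\|y\|\le\sqrt{N}\,\operatorname{trace}M_d(y)$, so everything reduces to bounding the trace.

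The key step is the trace bound $\operatorname{trace}M_d(y)=\sum_{|\alpha|\le d}y_{2\alpha}\le\sum_{k=0}^d R^{2k}$, and this is where Assumption \ref{ball} enters. Positive semidefiniteness of $M_d(y)$ already gives $y_{2\alpha}\ge 0$ for all $|\alpha|\le d$. Since $g_m=R^2-\sum_{i=1}^n x_i^2$ has $d_m=1$, the localizing constraint $M_{d-1}(g_m y)\succeq 0$ has nonnegative diagonal entries $R^2 y_{2\alpha}-\sum_{i=1}^n y_{2\alpha+2e_i}\ge 0$ for every $|\alpha|\le d-1$, i.e. $\sum_{i=1}^n y_{2(\alpha+e_i)}\le R^2 y_{2\alpha}$, where $e_i$ is the $i$-th unit multi-index. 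Introducing the level sums $T_k:=\sum_{|\alpha|=k}y_{2\alpha}$ and summing this inequality over all $|\alpha|=k$ (for $k\le d-1$), I would reindex the left-hand side by $\beta=\alpha+e_i$: each $\beta$ with $|\beta|=k+1$ arises with multiplicity $\#\{i:\beta_i\ge 1\}\ge 1$, and because every $y_{2\beta}\ge 0$ the multiplicity can be discarded downward to yield $T_{k+1}\le\sum_{|\alpha|=k}\sum_{i} y_{2(\alpha+e_i)}\le R^2 T_k$. With the normalization $T_0=y_0=1$, induction gives $T_k\le R^{2k}$, hence $\operatorname{trace}M_d(y)=\sum_{k=0}^d T_k\le\sum_{k=0}^d R^{2k}$.

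Assembling the two parts yields, for every $y\in\mathcal{P}_d$, the bound $\|y\|\le\sqrt{N}\sum_{k=0}^d R^{2k}=\sqrt{\binom{n+d}{n}}\sum_{k=0}^d R^{2k}$, which is exactly the claimed radius.

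I expect the recursive trace estimate to be the only delicate point. The three linear-algebra facts are routine, but one must correctly read the nonnegativity of the diagonals of \emph{both} $M_d(y)$ and the ball localizing matrix $M_{d-1}(g_m y)$ and, above all, perform the summation over a fixed degree level so that the geometric recursion $T_{k+1}\le R^2 T_k$ emerges after throwing away the harmless multiplicity factor $\#\{i:\beta_i\ge 1\}$. Everything else is bookkeeping, and notably no strict feasibility or interior-point hypothesis on $K$ is used — which is precisely the feature this lemma must supply to the surrounding strong-duality argument via Lemmas \ref{lemma:Slater} and \ref{lemma:Trnovska}.
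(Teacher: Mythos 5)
Your proof is correct, and it follows the same overall strategy as the paper: bound the Euclidean norm of $y$ by the Frobenius norm of the moment matrix $M_d(y)$ (each $y_\gamma$ with $|\gamma|\le 2d$ occurs at least once as an entry), bound the Frobenius norm by $\sqrt{\binom{n+d}{n}}$ times the largest eigenvalue, bound that eigenvalue by the trace since the spectrum is nonnegative, and finally control $\mathrm{trace}\,M_d(y)$ by $\sum_{k=0}^d R^{2k}$ using positive semidefiniteness of the localizing matrix of the ball constraint. The one place you diverge is in how that trace bound is extracted. The paper uses only $\mathrm{trace}\,M_{k-1}(g_m y)\ge 0$ for $k=1,\dots,d$, which yields the inhomogeneous cumulative recursion $\mathrm{trace}\,M_k(y)\le 1+R^2\,\mathrm{trace}\,M_{k-1}(y)$; you instead use the stronger (equally valid) consequence of positive semidefiniteness that every diagonal entry $R^2 y_{2\alpha}-\sum_{i} y_{2(\alpha+e_i)}$ of the ball localizing matrix is nonnegative, sum over a fixed degree level $|\alpha|=k$, and obtain the homogeneous recursion $T_{k+1}\le R^2 T_k$, hence the per-level bounds $T_k\le R^{2k}$. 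Both recursions produce the identical trace estimate and the identical radius, but yours is slightly cleaner at exactly the delicate point: the multiplicity $\#\{i:\beta_i\ge 1\}\ge 1$ that you discard using $y_{2\beta}\ge 0$ is precisely the subtlety the paper flags in its footnote — the associated journal version mistakenly wrote this reindexing as an equality — and your level-wise treatment handles it correctly from the start.
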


\begin{proof}
If $\mathcal{P}_d = \emptyset$, the result is obvious. If not, consider a feasible point $(y_\alpha)_{|\alpha| \leq 2d} \in \mathcal{P}_d$.
 Let $k \in \{1,\ldots,d\}$. In the SDP problem $P_k$,
the localizing matrix associated to the ball constraint $g_{m}(x) = R^2 - \sum_{i=1}^n x^2_i \geq 0$ reads
\[
M_{k-1}(g_{m} y) = \left(\sum_\gamma g_{m,\gamma} ~ y_{\alpha+\beta+\gamma}\right)_{|\alpha|,|\beta|\leq k-1}
\]
with trace equal to
$$ 
\begin{array}{rll}
\mathrm{trace}\:M_{k-1}(g_{m} y) & = & \sum_{|\alpha|\leq k-1} \sum_\gamma g_{m,\gamma} ~ y_{2\alpha+\gamma} \\\\
  & = & \sum_{|\alpha|\leq k-1} \left( g_{m,0} ~ y_{2\alpha} + \sum_{|\gamma|=1} g_{m,2\gamma} ~ y_{2\alpha+2\gamma} \right) \\\\
  & = & \sum_{|\alpha|\leq k-1} \left( R^2 y_{2\alpha} - \sum_{|\gamma|=1} y_{2(\alpha+\gamma)} \right) \\\\
  & = & \sum_{|\alpha|\leq k-1} R^2 y_{2\alpha} - \sum_{|\alpha|\leq k-1,|\gamma|=1} y_{2(\alpha+\gamma)} \\\\
  & \leq\footnotemark & R^2 (\sum_{|\alpha|\leq k-1} y_{2\alpha}) + y_0 - \sum_{|\alpha|\leq k} y_{2\alpha} \\\\
  & \leq & R^2 \:\mathrm{trace}\:M_{k-1}(y) + 1 -  \mathrm{trace}\:M_{k}(y).\\
\end{array}
$$
\footnotetext{In the associated publication \cite{josz-2015}, there is an equality instead of an inequality, which is an error. Indeed, $\sum_{|\alpha|\leq k-1,|\gamma|=1} y_{2(\alpha+\gamma)} = \sum_{0<|\alpha|\leqslant k} y_{2\alpha}$ is false whereas $\sum_{|\alpha|\leq k-1,|\gamma|=1} y_{2(\alpha+\gamma)} \geqslant \sum_{0<|\alpha| \leqslant k} y_{2\alpha}$ is true. The reason for this is that each term $y_{2\alpha}$ with $0<|\alpha| \leqslant k$ appears at least once in $\sum_{|\alpha|\leq k-1,|\gamma|=1} y_{2(\alpha+\gamma)}$, but can potentially appear more than once.  More precisely, we have $\sum_{|\alpha|\leq k-1,|\gamma|=1} y_{2(\alpha+\gamma)} = \sum_{|\alpha|\leqslant k} |\alpha| y_{2\alpha} \geqslant \sum_{0<|\alpha|\leqslant k} y_{2\alpha}$. This error thankfully has no impact on the rest of the proof other than the inequality right below it.}
From the structure of the localizing matrix, it holds $M_{k-1}(g_{m} y) \succeq 0$ hence \\
$\mathrm{trace}\:M_{k-1}(g_{m} y)~\geq~0$ and
\[
\mathrm{trace}\:M_k(y) \leq 1 + R^2\:\mathrm{trace}\:M_{k-1}(y)
\]
from which we derive
\[
\mathrm{trace}\:M_d(y) \leq  \sum_{k=1}^{d} R^{2(k-1)} + R^{2d}\:\mathrm{trace}\:M_0(y) =  \sum_{k=0}^{d} R^{2k} 
\]
since $\mathrm{trace}\:M_0(y) = y_0 = 1$.
The operator norm
$\|M_{d}(y)\|$, equal to the maximum eigenvalue of $M_d(y)$, is upper bounded
by $\mathrm{trace}\:M_d(y)$, the sum of the eigenvalues of $M_d(y)$, which are all nonnegative.
Moreover the Frobenius norm satisfies
$$\begin{array}{rcl}
\|M_d(y)\|_F^2 & := & \langle ~ M_d(y)~,~M_d(y) ~ \rangle  \\\\
& = & \langle ~ \sum_{|\delta|\leq 2d} A_{0,\delta}~y_\delta ~,~ \sum_{|\delta|\leq 2d} A_{0,\delta}~y_\delta ~ \rangle \\\\
 & = & \sum_{|\delta|\leq  2d} ~ \langle A_{0,\delta},A_{0,\delta} \rangle ~ y_\delta^2 ~~~ \text{by orthogonality of matrices
$(A_{0,\delta})_{|\delta|\leq  2d}$}\\\\
 & \geq  & \sum_{|\delta|\leq  2d} ~ y_\delta^2 ~~~ \text{because $\langle A_{0,\delta},A_{0,\delta} \rangle \geq  1$}
\end{array} 
$$
where matrices $(A_{0,\delta})_{|\delta| \leq 2d}$ can be written using column vectors $(e_\alpha)_{|\alpha|\leq d}$~, containing only zeros apart from the value 1 at index $\alpha$, via the explicit formula
$$A_{0,\delta} = \sum_{ \begin{array}{c} \alpha + \beta = \delta  \\ |\alpha|,|\beta|\leq d \end{array} }     e_{\alpha} e_{\beta}^T.$$
The proof follows then from
\[
\sqrt{\sum_{|\delta|\leq  2d} y^2_\delta} \leq \|M_d(y)\|_F \leq 
\sqrt{
\left( \begin{array}{c}
n+d \\
n
\end{array}
\right)}
\|M_d(y)\| \leq 
\sqrt{
\left( \begin{array}{c}
n+d \\
n
\end{array}
\right)
}
\sum_{k=0}^{d} R^{2k}.
\]
\end{proof}

\begin{theorem}
\label{theorem:strong duality}
Assumption \ref{ball} implies that $-\infty < \mathrm{val}\:P_d=\mathrm{val}\:D_d $ for all
$d \geq d_{\min}$.
\end{theorem}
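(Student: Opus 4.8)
The plan is to deduce the theorem directly from the three preceding lemmas, exploiting the observation already recorded in the excerpt that ``$\mathcal{P}_d^*$ nonempty and bounded implies strong duality.'' Concretely, I would show that the set of primal optimal solutions $\mathcal{P}_d^*$ is nonempty and bounded and that $\mathrm{val}\,P_d$ is finite; the equality $\mathrm{val}\,P_d=\mathrm{val}\,D_d$ then follows by chaining Lemma \ref{lemma:Trnovska} and Lemma \ref{lemma:Slater}, and finiteness of the common value comes along for free.

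First I would observe that $\mathcal{P}_d$ is closed, being the intersection of the affine hyperplane $\{y_0=1\}$ with the preimages of the closed positive semidefinite cone under the continuous (indeed linear) maps $y\mapsto M_{d-d_i}(g_i y)$. By Lemma \ref{lemma:bound}, Assumption \ref{ball} forces $\mathcal{P}_d$ to lie inside a Euclidean ball, so $\mathcal{P}_d$ is in fact compact. Next I would check that $\mathcal{P}_d$ is nonempty whenever the POP feasible set $K$ is nonempty: for any $x\in K$ the truncated moment vector $y_\alpha:=x^\alpha$ satisfies $y_0=1$ and $M_{d-d_i}(g_i y)=g_i(x)\,vv^T\succeq 0$ with $v=(x^\alpha)_{|\alpha|\leq d-d_i}$, since $g_i(x)\geq 0$. (The degenerate case $K=\emptyset$ falls outside the intended scope, the point of Assumption \ref{ball} being to add a redundant ball constraint to a nonempty bounded set.) As the objective $y\mapsto\sum_\alpha f_\alpha y_\alpha$ is continuous and $\mathcal{P}_d$ is nonempty and compact, the infimum is attained and finite; hence $\mathcal{P}_d^*$ is nonempty, $-\infty<\mathrm{val}\,P_d$, and $\mathcal{P}_d^*\subseteq\mathcal{P}_d$ is bounded.

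Finally, I would invoke the implication $(2)\Rightarrow(1)$ of Lemma \ref{lemma:Trnovska}: $\mathcal{P}_d^*$ nonempty and bounded yields that $\mathrm{int}\,\mathcal{D}_d$ is nonempty. Lemma \ref{lemma:Slater} then gives $\mathrm{val}\,P_d=\mathrm{val}\,D_d$, which combined with the finiteness established above is exactly the assertion $-\infty<\mathrm{val}\,P_d=\mathrm{val}\,D_d$ for every $d\geq d_{\min}$.

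The analytic heart of the argument, the boundedness of $\mathcal{P}_d$, is already carried out in Lemma \ref{lemma:bound}, so the remaining steps amount to a routine assembly. The only points demanding genuine care are the feasibility of the relaxation (nonemptiness of $\mathcal{P}_d$, handled by the Dirac moment vector) and selecting the correct direction of Lemma \ref{lemma:Trnovska}, which is the real bridge converting a statement about \emph{primal} boundedness into \emph{dual} strict feasibility, precisely the input that Slater's condition needs.
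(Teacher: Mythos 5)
Your first case---$\mathcal{P}_d$ nonempty---is essentially identical to the paper's own argument: closedness plus Lemma \ref{lemma:bound} give compactness, the linear objective attains a finite minimum, so $\mathcal{P}_d^*$ is nonempty and bounded, and Lemmas \ref{lemma:Trnovska} and \ref{lemma:Slater} then deliver dual strict feasibility and zero gap. The genuine gap is the case you explicitly wave off. Assumption \ref{ball} only requires that the \emph{description} of $K$ contain a ball constraint; it does not make $K$, nor $\mathcal{P}_d$, nonempty, since the remaining constraints $g_1,\ldots,g_{m-1}$ may be mutually inconsistent. The theorem as stated still asserts $-\infty < \mathrm{val}\,P_d = \mathrm{val}\,D_d$ in that situation (the thesis elsewhere stresses that in applications one often does not know whether a feasible point exists), and the paper devotes roughly half of its proof to it. When $\mathcal{P}_d = \emptyset$ one has $\mathrm{val}\,P_d = +\infty$, so one must prove $\mathrm{val}\,D_d = +\infty$. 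This is not automatic: weak duality gives nothing in this direction, and an infeasible SDP can be \emph{weakly} infeasible, in which case its dual may have a finite value or be infeasible---exactly the kind of duality gap the theorem is meant to exclude, in the spirit of Schweighofer's example quoted in the introduction.

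To close this case the paper argues as follows, and your proof needs some version of it. First, $P_d$ cannot be weakly infeasible: if $(y^p)_{p\in\mathbb{N}}$ were a sequence with $|y_0^p - 1| \leq \tfrac{1}{p+1}$ and $\lambda_{\min}\bigl(M_{d-d_i}(g_i y^p)\bigr) \geq -\tfrac{1}{p+1}$ for all $i$, the trace computation underlying Lemma \ref{lemma:bound} bounds the spectrum of $M_d(y^p)$ uniformly in $p$, so $(y^p)$ lies in a compact set and a subsequence converges to a point of $\mathcal{P}_d$, a contradiction. Hence $P_d$ is strongly infeasible, which yields an improving ray for $D_d$. Second, one must check $\mathcal{D}_d \neq \emptyset$ (an improving ray alone does not give $\mathrm{val}\,D_d = +\infty$): the restricted primal keeping only $y_0 = 1$, $M_d(y) \succcurlyeq 0$ and $M_{d-1}(g_m y) \succcurlyeq 0$ is feasible---the ball itself is nonempty, so Dirac moment vectors of points in the ball work---and bounded by Lemma \ref{lemma:bound}, so Lemma \ref{lemma:Trnovska} applied to it produces a dual feasible point $(z, Z_0, Z_m)$, and padding with zeros gives $(z, Z_0, 0, \ldots, 0, Z_m) \in \mathcal{D}_d$. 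Feasibility plus the improving ray force $\mathrm{val}\,D_d = +\infty = \mathrm{val}\,P_d$, which is what the theorem requires in this case.
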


\begin{proof}
Let $d \geq d_{\min}$.
Firstly, let us consider the case when $\mathcal{P}_d$ is nonempty.
According to Lemma \ref{lemma:bound}, $\mathcal{P}_d$ is bounded and closed, and the objective function in $P_d$
is linear, so we conclude that
${\mathcal P}^*_d$ is nonempty and bounded. According to Lemma \ref{lemma:Trnovska}, $\mathrm{int}\:\mathcal{D}_d$
is nonempty, and from Lemma \ref{lemma:Slater}, $\mathrm{val}\:P_d=\mathrm{val}\:D_d $.\\\\
Secondly, let us consider the case when $\mathcal{P}_d$ is empty. An infeasible SDP problem can be either weakly infeasible or strongly infeasible, see \cite[Section 5.2]{klerk-2000} for definitions. Let us prove by contradiction that $P_d$ cannot be weakly infeasible. If $P_d$ is weakly infeasible,
there must exist a sequence $(y^p)_{p \in \mathbb{N}}$ such that
$$\forall p \in \mathbb{N}~,~~~
\left\{
\begin{array}{l}
1- \frac{1}{p+1} \leq y^p_0 \leq 1 + \frac{1}{p+1} \\
\lambda_{\min} ( M_{d-d_i}(g_i y^p) ) \geq -\frac{1}{p+1}, \:i=0,1,\ldots,m
\end{array}
\right.
$$
where $\lambda_{\min}$ denotes the minimum eigenvalue of a symmetric matrix.
According to the proof of Lemma \ref{lemma:bound}, for all $1 \leq k \leq d$ and all real numbers $(y_\alpha)_{|\alpha|\leq 2d}$, one has
$$ \mathrm{trace}\:M_{k-1}(g_{m} y) = R^2 \:\mathrm{trace}\:M_{k-1}(y) + y_0 -  \mathrm{trace}\:M_{k}(y). $$
Clearly, $\mathrm{trace}\:M_{k-1}(g_{m} y) \geq - \frac{c}{1+p}$ where $c:=\left( \begin{array}{c} n + d \\ n \end{array} \right)$ denotes the size of the moment matrix $M_{d}(y)$. The following holds
$$ \mathrm{trace}\:M_{k}(y^p) \leq R^2 \:\mathrm{trace}\:M_{k-1}(y^p) + 1 + \frac{1+c}{1+p} $$
from which we derive
$$ \mathrm{trace}\:M_{d}(y^p) \leq (1 + \frac{1+c}{1+p})\sum_{k=0}^{d} R^{2k}.$$
Together with $\lambda_{\min}(M_{d}(y^p))\geq -\frac{1}{1+p}$, this yields
$$ \lambda_{\max} (M_{d}(y^p)) \leq (1 + \frac{1+c}{1+p})\sum_{k=0}^{d} R^{2k} + \frac{c-1}{1+p}$$
where $\lambda_{\max}$ denotes the minimum eigenvalue of a symmetric matrix.
Hence for all $p\in \mathbb{N}$, the spectrum of the moment matrix $M_d(y^p)$ is lower bounded by $l:=-1$ and upper bounded by $u:=(2+c)\sum_{k=0}^{d} R^{2k} + c-1$. Therefore:
\[
\sqrt{\sum_{|\delta|\leq  2d} (y^p_\delta)^2} \leq \|M_d(y^p)\|_F \leq 
\sqrt{c} \: \max ( |l| , |u| ) \]
The sequence $(y^p)_{p \in \mathbb{N}}$ is hence included in a compact set. Thus there exists a subsequence which converges towards $y^{\text{lim}}$ such that $y^{\text{lim}}_0 = 1$ and $\lambda_{\min} ( M_{d-d_i}(g_i y^{\text{lim}}) )\geq~0$, $\:i=0,1,\ldots,m$. The limit $y^{\text{lim}}$ is thus included is $\mathcal{P}_d$, which is a contradiction.\\\\
SDP problem $P_d$ is strongly infeasible which means that its dual problem $D_d$ has an improving ray \cite[Definition 5.2.2]{klerk-2000}. To conclude that $\text{val}\:D_d = + \infty$, all that is left to prove is that $\mathcal{D}_d \neq \emptyset$. Consider the 
primal problem $P_d$ discarding all constraints but $y_0=1$, $M_d(y) \succcurlyeq 0$, and $M_{d-1}(g_my) \succcurlyeq 0$. It is a feasible and bounded SDP problem owing to Lemma \ref{lemma:bound}. According to Lemma \ref{lemma:Trnovska}, its dual problem must contain a feasible point $(z,Z_0,Z_m)$ and
hence $(z,Z_0,0,\hdots,0,Z_m) \in \mathcal{D}_d$.
  
\end{proof}

\section{Conclusion}
\label{subsec:Conclusion3}

We prove that there is no duality gap in Lasserre's SDP hierarchy for POPs
whose description of the feasible set contains a ball constraint. Prior results ensuring zero duality gap required the existence of a strictly feasible point, which excludes POPs with equality constraints. A zero duality gap is an important property for interior-point solvers to successfully find solutions. A slight adaption of the proof we propose shows that in the case of the optimal power flow problem, upper bounds on voltage imply that there is no duality gap at each order of the Lasserre hierarchy. The adaption consists of summing the traces of each localizing matrix associated to an upper voltage constraint. The sum is equal to the trace of the localizing matrix of a sphere constraint that would be obtained by adding all upper voltage constraints. This means that the computation in Lemma \ref{lemma:bound} is still valid and that the overall proof still holds.

In Chapter \ref{sec:Lasserre hierarchy for small-scale networks}, we've proven the applicability of Lasserre's hierarchy from numerical perspective. In this chapter, its applicability was enforced from a theoretical perspective. To prove its applicability to large-scale networks, new test cases needed to be made publicly available. Indeed, the only large-scale networks available so far were Polish networks, each corresponding to a different period in the year, and the Great Britain network. These networks contain roughly two to three thousand buses each. In the next chapter, we present new data of European networks from various countries and with up to nine thousand buses. Figure \ref{fig:timeline} gives a brief history of the computations prior to the introduction of these networks. Only computations leading to physically meaningful results are presented, in other words those that lead to feasible solutions. These are either global solutions or nearly global solutions (in the case of penalization). Working in collaboration with Daniel K. Molzahn and Ian Hiskens, we were to find global solutions in the case of active power minimization for well-conditioned networks with up to two thousand variables (Chapters  \ref{sec:Penalized Lasserre hierarchy for large-scale networks} and \ref{sec:Complex hierarchy for enhanced tractability}).

\begin{figure}[H]
	\centering
	\includegraphics[width=.8\textwidth]{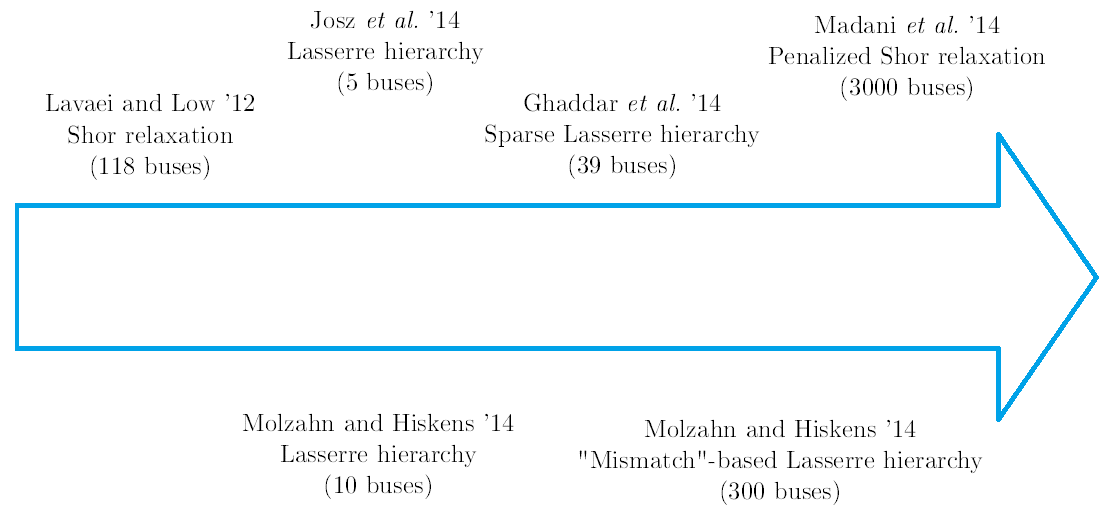}
	\caption{Timeline of computional advances \cite{lavaei-low-2012,cedric_tps,pscc2014,ibm_paper,mh_sparse_msdp,lavaei_allerton2014} }
	 \label{fig:timeline}
\end{figure}

\chapter{Data of European transmission network}
\label{sec:Data of European high-voltage transmission network}

We contributed four new test cases~\cite{josz-2016} to \matpower{}~\cite{matpower}, a package of MATLAB. They represent parts and all of the European high voltage AC transmission network. The data stems from the Pan European Grid Advanced Simulation and State Estimation (PEGASE) project, part of the 7\textsuperscript{th} Framework Program of the European Union (\url{http://www.fp7-pegase.com/}). Its goal was to develop new tools for the real-time control and operational planning of the pan-Euporean transmission network. Specifically, new approaches were implemented for state estimation, dynamic security analysis, steady state optimization. A dispatcher training simulator was also created. The associated public data we provided is:
\\\\
\noindent C. Josz, S. Fliscounakis, J. Maeght, and P. Panciatici, \textit{Power Flow Data for European High-Voltage Transmission Network: 89, 1354, 2869, and 9241-bus PEGASE Systems}, \matpower{} 5.1, March 2015. \href{http://www.pserc.cornell.edu//matpower/}{[link]} 
\\\\
PEGASE data contains asymmetric shunt conductance and susceptance in the PI transmission line model of branches. However, \matpower{} format do not allow for asymmetry. As a result, we set the total line charging susceptance of branches to 0 per unit in the \matpower{} files. We used the nodal representation of shunt conductance and susceptance. This procedure leaves the power flow equations unchanged compared with the original PEGASE data. However, line flow constraints in the optimal power flow problem are modified.
\\\\
The data includes negative resistances and negative lower bounds on active power generation. This is due to sections of the network that are not represented. These sections may include generators, which accounts for the negative resistances. Imports and exports with these sections account for negative bounds on generation. These sections may be a country connected to the European network that is represented by the data. It may also be a section within a country for which data was not provided. A big challenge when making realistic test cases is to account for missing data in a sensible manner. See~\cite{monticelli,alvarado} for work on the subject. Note that the non-represented sections also account for buses where voltage magnitudes have very large lower and upper bounds. All others buses have tight constraints, plus or minus $10\%$ of the nominal value. This is visible is figure \ref{fig:plane}. It represents the voltages at each bus along with the lower and upper constraints resulting in an annulus.

\section{case89pegase}

This case accurately represents the size and complexity of a small part of the European high voltage transmission network. The network contains 89 buses, 12 generators, and 210 branches. It operates at 380, 220, and 150 kV. 

\section{case1354pegase}

This case accurately represents the size and complexity of a medium part of the European high voltage transmission network. The network contains 1,354 buses, 260 generators, and 1,991 branches. It operates at 380, and 220 kV.

\section{case2869pegase}

This case accurately represents the size and complexity of a large part of the European high voltage transmission network. The network contains 2,869 buses, 510 generators, and 4,582 branches. It operates at 380, 220, 150, and 110 kV.

\section{case9241pegase}

This case accurately represents the size and complexity of the European high voltage transmission network. The network contains 9,241 buses, 1,445 generators, and 16,049 branches. It operates at 750, 400, 380, 330, 220, 154, 150, 120 and 110 kV. It represents 23 countries as can be seen in figure \ref{fig:pegase}. The numbers between the countries correspond the sum of the active power flows traded at the interconnections for the voltage profile of figure \ref{fig:plane}. This voltage profile was found with the nonlinear solver KNITRO. It is contained in the case9241 \matpower{} file.
\begin{figure}[H]
	\centering
	\includegraphics[width=1\textwidth]{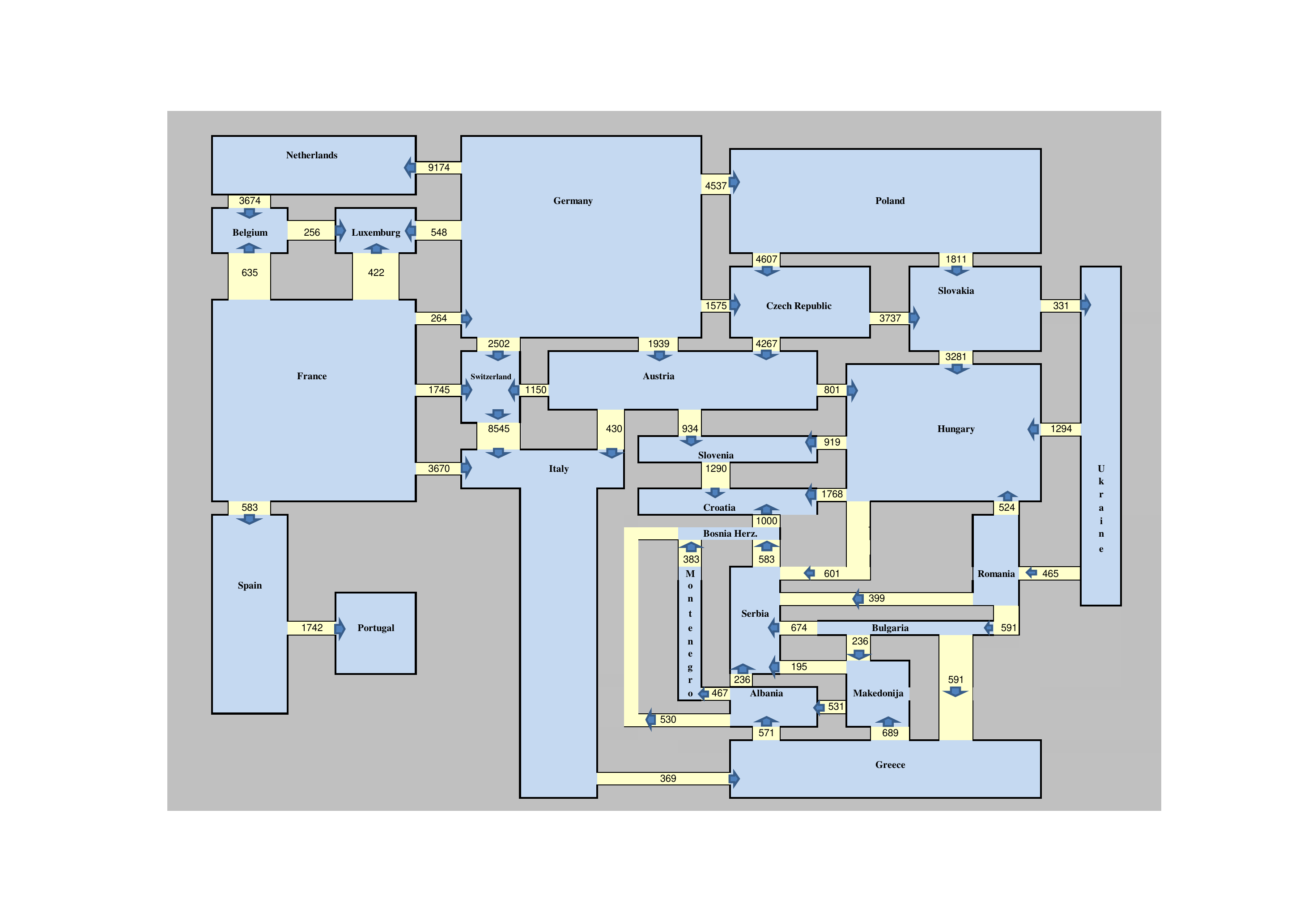}
	\caption{PEGASE network with active power flows in MW (courtesy of St\'ephane Fliscounakis)}
	 \label{fig:pegase}
\end{figure}
\begin{figure}[H]
	\centering
	\includegraphics[width=1\textwidth]{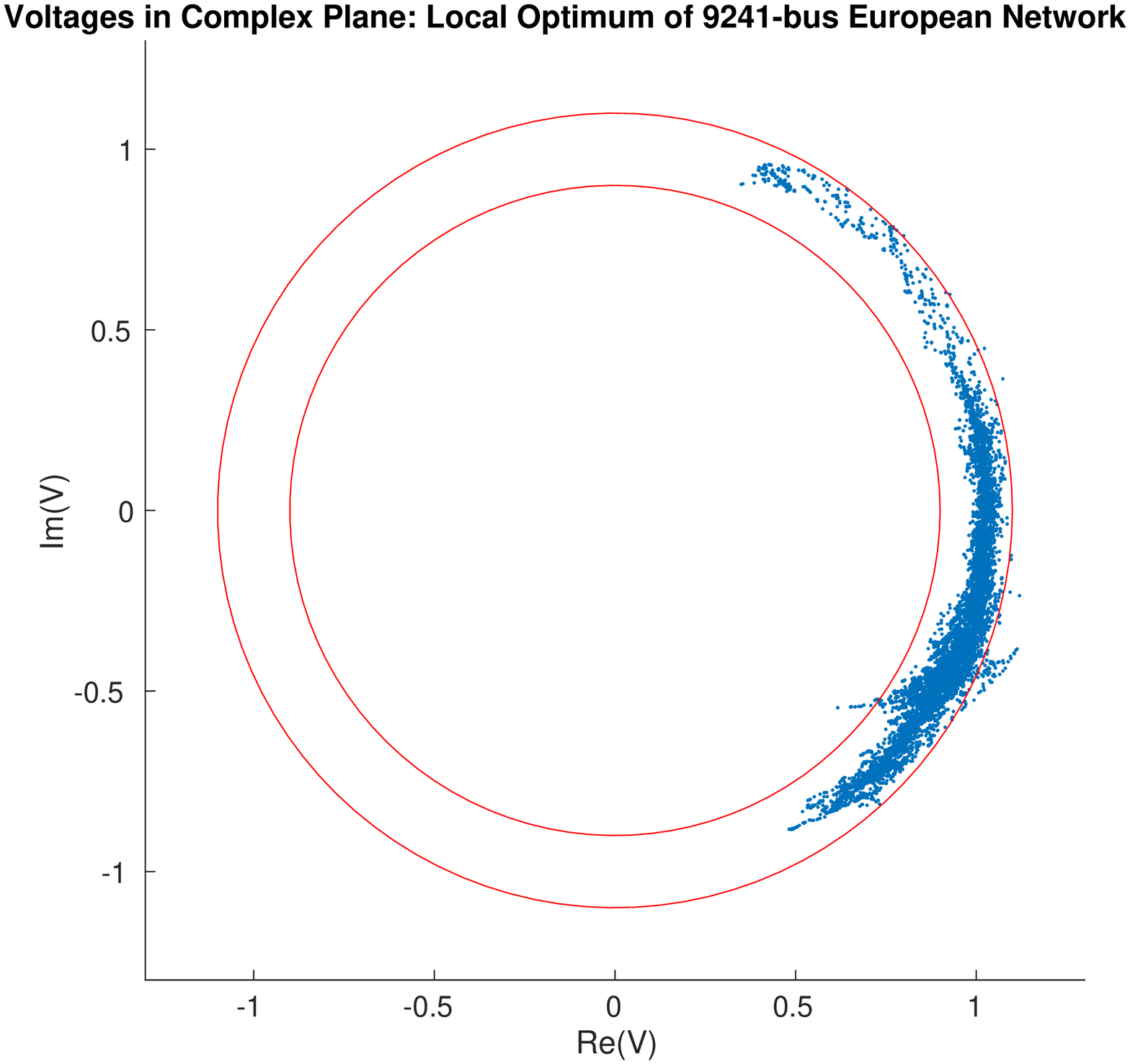}
	\caption{Local optimum}
	 \label{fig:plane}
\end{figure}
The PEGASE data provides large-scale test cases on which to test new methods and tools. After having shown the applicability of moment/sum-of-squares approach to small-scale systems, the main challenge was to make the approach tractable for large-scale systems. A significant turn took place when Daniel K. Molzahn and Ian A. Hiskens proposed a novel way to exploit sparsity when applying the moment-sos hierarchy to the OPF problem~\cite{mh_sparse_msdp}. The main idea is that a global solution can be obtained by only applying high-order moments to some constraints. These constraints are deduced by looking at the moment matrix when the relaxation fails. The authors of~\cite{mh_sparse_msdp} were thus able to solve networks to global optimality with up to three hundred buses. In the next chapter, their method is combined with a penalization approach due to Madani \textit{et al}~\cite{lavaei_allerton2014}. Tests are made with the PEGASE data in the next three chapters.

\chapter{Penalized Lasserre hierarchy}
\label{sec:Penalized Lasserre hierarchy for large-scale networks}

Applications of convex relaxation techniques to the nonconvex OPF problem have been of recent interest, including work using the Lasserre hierarchy of ``moment'' relaxations to globally solve many OPF problems. By preprocessing the network model to eliminate low-impedance lines, this chapter demonstrates the capability of the moment relaxations to globally solve large OPF problems that minimize active power losses for portions of several European power systems. Large problems with more general objective functions have thus far been computationally intractable for current formulations of the moment relaxations. To overcome this limitation, this chapter proposes the combination of an objective function penalization with the moment relaxations. This combination yields feasible points with objective function values that are close to the global optimum of several large OPF problems. Compared to an existing penalization method, the combination of penalization and the moment relaxations eliminates the need to specify one of the penalty parameters and solves a broader class of problems.
The material presented in this chapter is based on the publication: 
\\\\
\noindent{\scshape D.K. Molzahn, C. Josz, I.A. Hiskens, and P. Panciatici}, \textit{Solution of Optimal Power Flow Problems using Moment Relaxations Augmented with Objective Function Penalization}, 54th Annual Conference on Decision and Control, Osaka, December 2015. \href{http://arxiv.org/pdf/1508.05037v1.pdf}{[preprint]}

\section{Introduction}

The SDP relaxation of~\cite{lavaei-low-2012} has been generalized to a family of ``moment relaxations'' using the Lasserre hierarchy~\cite{lasserre_book} for polynomial optimization~\cite{pscc2014,cedric_tps,ibm_paper}. The moment relaxations take the form of SDPs, and the first-order relaxation in this hierarchy is equivalent to the SDP relaxation of~\cite{lavaei-low-2012}. Increasing the relaxation order in this hierarchy enables global solution of a broader class of OPF problems.

The ability to globally solve a broader class of OPF problems has a computational cost; the moment relaxations quickly become intractable with increasing order. Fortunately, second- and third-order moment relaxations globally solve many small problems for which the first-order relaxation fails to yield the globally optimal decision variables.

However, increasing system size results in computational challenges even for low-order moment relaxations. The second-order relaxation is computationally intractable for OPF problems with more than ten buses. Exploiting network sparsity enables solution of the first-order relaxation for systems with thousands of buses~\cite{jabr11,dan2013} and the second-order relaxation for systems with about forty buses~\cite{mh_sparse_msdp,ibm_paper}. Recent work~\cite{mh_sparse_msdp} solves larger problems (up to 300 buses) by both exploiting sparsity and only applying the computationally intensive higher-order moment relaxations to specific buses in the network. Other recent work improves tractability using a second-order cone programming (SOCP) relaxation of the higher-order moment constraints~\cite{dan2015}.

Solving larger problems using moment relaxations is often limited by numerical convergence issues rather than computational performance. We present a preprocessing method that improves numerical convergence by removing low-impedance lines from the network model. Similar methods are commonly employed (e.g., PSS/E~\cite{PSSEManual}), but more extensive modifications are needed for adequate convergence due to the limited numerical capabilities of current SDP solvers.

After this preprocessing, the moment relaxations globally solve several large OPF problems which minimize active power losses for European power systems. Directly using the moment relaxations to globally solve more general large OPF problems with objective functions that minimize generation cost has been computationally intractable thus far.

To solve these OPF problems, we form moment relaxations using a penalized objective function. Previous literature~\cite{lavaei_mesh,lavaei_allerton2014} augments the SDP relaxation~\cite{lavaei-low-2012} with penalization terms for the total reactive power generation and the apparent power loss of certain lines. For many problems, this penalization finds feasible points with objective function values that are very close to the lower bounds obtained from the SDP relaxation. Related work~\cite{laplacian_obj} uses a Laplacian-based objective function with a constraint on generation cost to find feasible points are very near the global optima. This section analyzes the physical and convexity properties of the reactive power penalization.

There are several disadvantages of the penalization method of~\cite{lavaei_allerton2014}. This penalization often requires choosing multiple parameters. (See~\cite{laplacian_obj} for a related approach that does not require choosing penalty parameters.) Also, there are OPF problems that are globally solved by the moment relaxations, but no known penalty parameters yield a feasible solution.

We propose a ``moment+penalization'' approach that augments the moment relaxations with a reactive power penalty. Typical penalized OPF problems only require higher-order moment constraints at a few buses. Thus, for a variety of large test cases, augmenting the moment relaxation with the proposed single-parameter penalization achieves feasible solutions that are at least very near the global optima (within at least 1\% for a variety of example problems). The moment+penalization approach enables solution of a broader class of problems than either method individually.

Below, Section~\ref{l:preprocess} describes the low-impedance line preprocessing. Section~\ref{l:penalization} discusses the existing penalization and the proposed moment+penalization approaches. Section~\ref{results5} demonstrates the moment+penalization approach using several large test cases, and Section~\ref{l:conclusion} concludes.

\section{Preprocessing low-impedance lines}
\label{l:preprocess}

By exploiting sparsity and selectively applying the higher-order constraints, the moment relaxations globally solve many OPF problems with up to 300 buses. Solution of larger problems with higher-order relaxations is typically limited by numerical convergence issues rather than computational concerns. This section describes a preprocessing method for improving numerical properties of the moment relaxations.

Low-impedance lines, which often represent ``jumpers'' between buses in the same physical location, cause numerical problems for many algorithms. Low line impedances result in a large range of values in the bus admittance matrix $\mathbf{Y}$, which causes numerical problems in the constraint equations.

To address these numerical problems, many software packages remove lines with impedances below a threshold. For instance, lines with impedance below a parameter \texttt{thrshz} are removed prior to applying other algorithms in PSS/E~\cite{PSSEManual}.

We use a slightly modified version of the low-impedance line removal procedure in PSS/E~\cite{PSSEManual}.\footnote{Lines with non-zero resistances are not considered to be ``low impedance'' by PSS/E. We consider both the resistance and the reactance.} Low-impedance lines are eliminated by grouping buses that are connected by lines with impedances below a specified threshold \texttt{thrshz}. Each group of buses is replaced by one bus that is connected to all lines terminating outside the group. Generators, loads, and shunts (including the shunt susceptanes of lines connecting buses within a group) are aggregated. The series parameters of lines connecting buses within a group are eliminated.

Removing low-impedance lines typically has a small impact on the solution. To recover an approximate solution to the original power system model, assign identical voltage phasors to all buses in each group and distribute flows on lines connecting buses within a group under the approximation that all power flows through the low-impedance lines.

A typical low-impedance line threshold \texttt{thrshz} is \mbox{$1\times 10^{-4}$}~per unit. However, the numerical capabilities of SDP solvers are not as mature as other optimization tools. Therefore, we require a larger $\mathtt{thrshz} = 1\times 10^{-3}$ per unit to obtain adequate convergence of the moment relaxations. This larger threshold typically introduces only small errors in the results, although non-negligible errors are possible.

{\scshape Matpower} solutions obtained for the Polish~\cite{matpower} and most \mbox{PEGASE} systems~\cite{pegase,josz-2016} were the the same before and after low-impedance line preprocessing to within 0.0095 per unit voltage magnitude and $0.67^\circ$ voltage angle difference across each line. Operating costs for all test problems were the same to within 0.04\%. The 2869-bus \mbox{PEGASE} system had larger differences: 0.0287 per unit voltage magnitude and $1.37^\circ$ angle difference. A power flow solution for the full network using the solution to the OPF problem after low-impedance line preprocessing yields smaller differences: 0.0059 per unit voltage magnitude and $1.17^\circ$ angle difference. Thus, the differences from the preprocessing for the 2869-bus \mbox{PEGASE} system can be largely attributed to the sensitivity of the OPF problem itself to small changes in the low-impedance line parameters. Preprocessing reduced the number of buses by between 21\% and 27\% for the PEGASE and between 9\% and 26\% for the Polish systems.

These results show the need for further study of the sensitivity of OPF problems to low-impedance line parameters as well as additional numerical improvements of the moment relaxations and SDP solvers to reduce \texttt{thrshz}. 

\section{Moment relaxations and penalization}
\label{l:penalization}

As will be shown in Section~\ref{l:results}, the moment relaxations globally solve many large OPF problems with active power loss minimization objectives after removing low-impedance lines as described in Section~\ref{l:preprocess}. Directly applying the moment relaxations to many large OPF problems with more general cost functions has so far been computationally intractable. This section describes the nonconvexity associated with more general cost functions and proposes a method to obtain feasible solutions that are at least near the global optimum, if not, in fact, globally optimal for many problems.

Specifically, we propose augmenting the moment relaxations with a penalization in the objective function. Previous literature~\cite{lavaei_mesh,lavaei_allerton2014} adds terms to the first-order moment relaxation that penalize the total reactive power injection and the apparent power line loss (i.e., $\sqrt{\left(f_{Plm} + f_{Pml}\right)^2 + \left(f_{Qlm} + f_{Qml}\right)^2}$) for ``problematic'' lines identified by a heuristic. This penalization often finds feasible points that are at least nearly globally optimal.

However, the penalization in~\cite{lavaei_allerton2014} requires choosing two penalty parameters and fails to yield a feasible solution to some problems. This section describes progress in addressing these limitations by augmenting the moment relaxations with a reactive power penalization. The proposed ``moment+penalization'' approach only requires a single penalty parameter and finds feasible points that are at least nearly globally optimal for a broader class of OPF problems. This section also analyzes the convexity properties and provides a physical intuition for reactive power penalization.

\section{Penalization of reactive power generation}
\label{l:penalty_explanation}

The penalization method proposed in~\cite{lavaei_allerton2014} perturbs the objective function to include terms that minimize the total reactive power loss and the apparent power loss on specific lines determined by a heuristic method. These terms enter the objective function with two scalar parameter coefficients. Obtaining a feasible point near the global solution requires appropriate choice of these parameters.

For typical operating conditions, reactive power is strongly associated with voltage magnitude. Penalizing reactive power injections tends to reduce voltage magnitudes, which also tends to increase active power losses since a larger current flow, with higher associated loss, is required to deliver a given quantity of power at a lower voltage magnitude.

For many problems for which the first-order moment relaxation fails to yield the global optimum, the relaxation ``artificially'' increases the voltage magnitudes to reduce active power losses. This results in voltage magnitudes and power injections that are feasible for the SDP relaxation but infeasible for the OPF problem. 

By choosing a reactive power penalty parameter that balances these competing tendencies (increasing voltage magnitudes to reduce active power losses vs. decreasing voltage magnitudes to reduce the penalty), the penalized relaxation finds a feasible solution to many OPF problems. Since losses typically account for a small percentage of active power generation and active and reactive power are typically loosely coupled, the reactive power penalization often results in a feasible point that is near the global optimum.

We next study the convexity properties of the cost function and the reactive power penalization. The cost function is convex in terms of active power generation but not necessarily in terms of the real and imaginary voltage components.\footnote{The cost function of the \emph{moment relaxation} is always convex. This section studies the convexity of the penalized objective function for the \emph{original} nonconvex OPF problem.} Thus, the objective function is a potential source of nonconvexity which may result in the relaxation's failure to globally solve the OPF problem.

Consider the eigenvalues of the symmetric matrices $\mathbf{C}$ and $\mathbf{D}$, where, for the vector $\hat{x}$ containing the voltage components, $\hat{x}^\intercal \mathbf{C} \hat{x}$ is a linear cost of active power generation and $\hat{x}^\intercal \mathbf{D} \hat{x}$ calculates the reactive power losses. For the 2383-bus Polish system~\cite{matpower}, which has linear generation costs, the most negative eigenvalue of $\mathbf{C}$ is $-8.53\times 10^7$. Thus, the objective function of the original OPF problem is nonconvex in terms of the voltage components, which can cause the SDP relaxation to fail to yield the global optimum. Conversely, active power loss minimization is convex in terms of the voltage components due to the absence of negative resistances.

As indicated by the potential for negative eigenvalues of $\mathbf{D}$ (e.g., the matrix $\mathbf{D}$ for the 2383-bus Polish system has a pair of negative eigenvalues at $-0.0175$), penalizing reactive power losses is generally nonconvex due to capacitive network elements (i.e., increasing voltage magnitudes may decrease the reactive power loss). See~\cite{laplacian_obj} for related work that uses a convex objective based on a Laplacian matrix.


Further work is needed to investigate the effects of reactive power penalization on OPF problems with more realistic generator models that explicitly consider the trade-off between active and reactive power outputs (i.e., generator ``D-curves''). A tighter coupling between active and reactive power generation may cause the reactive power penalization to yield solutions that are far from the global optimum.

The apparent power line loss penalty's effects are not as easy to interpret as the reactive power penalty. Ongoing work includes understanding the effects of the line loss penalty.

\section{Moment+penalization approach}

Although the reactive power penalization often yields a near rank-one solution, this penalization alone is not sufficient to obtain a feasible point for many problems. Reference~\cite{lavaei_allerton2014} penalizes the apparent power line loss associated with certain lines to address the few remaining non-rank-one ``problematic'' submatrices. However, this approach has several disadvantages.

First, penalizing apparent power line losses introduces another parameter.\footnote{Reference~\cite{lavaei_allerton2014} uses the same penalization parameter for each ``problematic'' line. Generally, each line could have a different penalty parameter.} Introducing parameters is problematic, especially when lacking an intuition for appropriate values.

Second, the combination of reactive power and line loss penalization may not yield a feasible solution to some problems. For instance, the OPF problems case9mod and case39mod1 from~\cite{bukhsh2013} are globally solved with low-order moment relaxations, but there is no known penalization of reactive power and/or apparent power line loss that yields a feasible solution for these problems. Also, the penalization approach is not guaranteed to yield a feasible solution that is close to the global optimum.

Unlike the penalization approach, the moment relaxation approach does not require the choice of penalty parameters, globally solves a broader class of OPF problems, and is guaranteed to yield the global optimum when the rank-one condition is satisfied. However, direct application of the moment relaxations to large problems has so far been limited to active power loss minimization objective functions. We conjecture that the nonconvexity associated with more general cost functions requires higher-order moment constraints at too many buses for computational tractability.

To apply the moment relaxations to large OPF problems with active power generation cost objective functions, we augment the moment relaxations with a reactive power penalty. Specifically, we add to the objective the total reactive power produced by all generators multiplied by a penalization parameter $\epsilon_b$ (which is a positive scalar). That is, rather than apply an apparent power loss penalization to the objective function, we apply higher-order moment constraints to specific buses~\cite{mh_sparse_msdp}. As will be demonstrated in Section~\ref{l:results}, higher-order moment constraints are only needed at a few buses in typical OPF problems after augmenting the objective function with a reactive power penalization term.

Similar to the existing penalization, when the rank condition is satisfied, the proposed ``moment+penalization'' approach yields the global solution to the \emph{modified} OPF problem, but not necessarily to the original OPF problem. However, since the penalization does not change the constraint equations, the solution to the moment+penalization approach is \emph{feasible} for the original OPF problem. The first-order moment relaxation without penalization (i.e., $\epsilon_b = 0$) gives a lower bound on the globally optimal objective value for the original OPF problem. This lower bound provides an optimality metric for the feasible solution obtained from the moment+penalization approach. As will be shown in Section~\ref{l:results}, the feasible solutions for a variety of problems are within at least 1\% of the global optimum.

The moment+penalization approach inherits a mix of the advantages and disadvantages of the moment relaxation and penalization methods. First, the moment+penalization approach requires selection of a single scalar parameter (one more than needed for the moment relaxations, but one less than generally needed for the penalization in~\cite{lavaei_allerton2014}). This parameter must be large enough to result in a near rank-one solution, but small enough to avoid large changes to the OPF problem.

Second, the penalization eliminates the moment relaxations' guarantees: the \\ moment+penalization approach may yield a feasible solution that is far from the global optimum or not give any solution. However, the moment+penalization approach finds global or near-global solutions to a broader class of small OPF problems than penalization approach of~\cite{lavaei_allerton2014} (e.g., case9mod and case39mod1 with $\epsilon_b = 0$, and case39mod3 with $\epsilon_b = \$0.10/\mathrm{MVAr}$~\cite{bukhsh2013}). This suggests that the moment+penalization approach inherits the ability of the moment relaxations to solve a broad class of OPF problems.

Finally, the penalization in the moment+penalization approach enables calculation of feasible solutions that are at least nearly globally optimal for a variety of large OPF problems with objective functions that minimize active power generation cost rather than just active power losses.

Note that it is not straightforward to compare the computational costs of the \\ moment+penalization approach and the penalization approach in~\cite{lavaei_allerton2014}. A single solution of a penalized first-order moment relaxation, as in~\cite{lavaei_allerton2014}, is faster than a relaxation with higher-order moment constraints. Thus, if one knows appropriate penalty parameters, the method in~\cite{lavaei_allerton2014} is faster. Although a relatively wide range of penalty parameters tends to work well for typical OPF problems, there are problems for which no known penalty parameters yield feasible solutions. For these problems, the moment+penalization approach has a clear advantage.

The moment+penalization approach has the advantage of systematically tightening the relaxation rather than requiring the choice of penalty parameters. However, the higher-order constraints can significantly increase solver times. Thus, there is a potential trade-off between finding appropriate penalization parameters for the approach in~\cite{lavaei_allerton2014} and increased solver time from the moment+penalization approach. The speed of the moment+penalization approach may be improved using the mixed SDP/SOCP relaxation from~\cite{dan2015}.

\section{Numerical results}
\label{results5}

This section first globally solves several large, active-power-loss minimizing OPF problems using moment relaxations without penalization (\mbox{$\epsilon_b = 0$}).  Next, this section applies the moment+penalization approach to find feasible points that are at least nearly globally optimal for several test cases which minimize active power generation cost. Unless otherwise stated, the preprocessing method from Section~\ref{l:preprocess} with \texttt{thrshz} set to $1\times 10^{-3}$ per unit is applied to all examples. No example enforces a minimum line resistance.

The results are generated using the iterative algorithm from~\cite{mh_sparse_msdp} which selectively applies the higher-order moment relaxation constraints. The algorithm terminates when all power injection mismatches are less than 1~MVA.

The implementation uses \mbox{MATLAB 2013a}, YALMIP 2015.06.26~\cite{yalmip}, and Mosek 7.1.0.28, and was solved using a computer with a quad-core 2.70~GHz processor and 16~GB of RAM. The test cases are the Polish system models in {\scshape Matpower} \cite{matpower} and several \mbox{PEGASE} systems~\cite{pegase,josz-2016} representing portions of the European power system.


\subsubsection{Active Power Loss Minimization Results}

Table~\ref{t:lossresults} shows the results of applying the moment relaxations to several large OPF problems that minimize active power losses. The solutions to the preprocessed problems are guaranteed to be globally optimal since there is no penalization. The columns in Table~\ref{t:lossresults} list the case name, the number of iterations of the algorithm from~\cite{mh_sparse_msdp}, the maximum power injection mismatch, the globally optimal objective value, and the solver time summed over all iterations. The abbreviation ``PL'' stands for ``Poland''. Table~\ref{t:lossresults} excludes several cases (the 89-bus \mbox{PEGASE} system and the Polish 2736sp, 2737sop, and 2746wp systems) which only require the first-order relaxation and thus do not illustrate the capabilities of the higher-order relaxations. PEGASE-1354 and PEGASE-2869 use a \texttt{thrshz} of \mbox{$3\times 10^{-3}$}~per unit. All other systems use $1\times 10^{-3}$ per unit.

\begin{table}[thb]
\caption{Active Power Loss Minimization Results}
\label{t:lossresults}
\footnotesize
\centering
\begin{tabular}{|c|c|c|c|c|}
\hline 
\textbf{Case} & \textbf{Num.}  & \textbf{Global Obj.} & \textbf{Max $\mathrm{S}^{\mathrm{mis}}$}& \textbf{Solver} \\
\textbf{Name} & \textbf{Iter.} & \textbf{Val. (\$/hr)} & \textbf{(MVA)} & \textbf{\!\!Time (sec)\!\!} \\ \hline
PL-2383wp & 3 & \hphantom{1}$24990$ & 0.25 & \hphantom{1}583\\ 
PL-2746wop & 2 & \hphantom{1}$19210$ & 0.39 & 2662\\ 
PL-3012wp & 5 & \hphantom{1}$27642$ & 1.00 & \hphantom{1}319\\ 
PL-3120sp & 7 & \hphantom{1}$21512$ & 0.77 & \hphantom{1}387\\ 
PEGASE-1354 & 5 & \hphantom{1}$74043$ & 0.85 & \hphantom{1}407 \\ 
PEGASE-2869 & 6 & $133944$ & 0.63 & \hphantom{1}921 \\\hline 
\end{tabular}
\end{table}

Each iteration of the algorithm in~\cite{mh_sparse_msdp} after the first adds second-order constraints at two buses. Thus, a small number of second-order buses (between 0.1\% and 0.7\% of the number of buses in the systems in Table~\ref{t:lossresults} after the low-impedance line preprocessing) are applied to all examples in Table~\ref{t:lossresults}. This results in computational tractability for the moment relaxations.

Note that PL-2746wop has a much greater solver time than the other systems even though it only has second-order constraints at two buses. This slow solution time is due to the fact that the two second-order buses are contained in submatrices corresponding to cliques with 10 and 11 buses. The second-order constraints for these large submatrices dominate the solver time. The mixed SDP/SOCP relaxation in~\cite{dan2015} may be particularly useful beneficial for such cases.

Since the low-impedance line preprocessing has been applied to these systems, the solutions do not exactly match the original OPF problems. {\scshape Matpower} \cite{matpower} solutions of the original problems have objective values that are slightly larger than the values in Table~\ref{t:lossresults} due to losses associated with the line resistances removed by the preprocessing.

After the low-impedance line preprocessing, local solutions from {\scshape Matpower} match the solutions from the moment relaxations and are therefore, in fact, globally optimal. This is not the case for all OPF problems~\cite{bukhsh2013,mh_sparse_msdp}.

\subsubsection{Moment+Penalization for More General Cost Functions}

As discussed in Section~\ref{l:penalty_explanation}, minimization of active power generation cost often yields a nonconvex objective function in terms of the voltage components. Despite this nonconvexity, low-order moment relaxations typically yield global solutions to small problems, including problems without known penalty parameters for obtaining a feasible points (e.g., case9mod and case39mod1 from~\cite{bukhsh2013}).

\begin{figure}[t]
\centering
\vspace{1.5pt}
\includegraphics[totalheight=0.179\textheight]{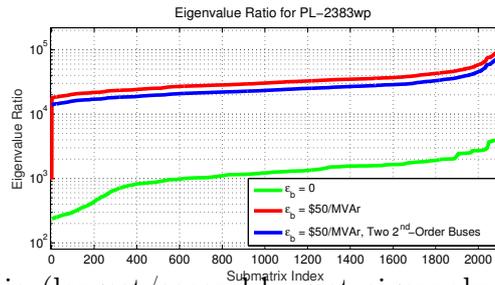}
\vspace{-20pt}
\caption{\hspace{-8pt} Eigenvalue ratio (largest/second-largest eigenvalue) for each submatrix for PL-2383wp. Large values ($>\!\!10^4$) indicate satisfaction of the rank-1 condition. For $\epsilon_b = 0$ (green), most of the submatrices are not rank one. For $\epsilon_b = \$50/\mathrm{MVAr}$ (red), most submatrices satisfy the rank condition with the exception of those on the far left of the figure. Applying second-order moment constraints to two of the buses that are in these submatrices (blue) results in all submatrices satisfying the rank condition.}
\label{f:case2383wp_eigWratio}
\vspace{-13pt}
\end{figure}

However, the moment relaxations are thus far intractable for some large OPF problems with nonconvex objective functions. A reactive power penalty often results in the first-order moment relaxation yielding a solution that is nearly globally optimal (i.e., most of the submatrices in the clique decomposition satisfy the rank-one condition). Enforcing higher-order constraints at buses in the remaining submatrices yields a feasible solution to the OPF problem. This is illustrated in Fig.~\ref{f:case2383wp_eigWratio}, which shows the ratio between the largest and second-largest eigenvalues of the submatrices of the moment matrix, arranged in increasing order, for the 2383-bus Polish system. If the submatrices were all rank one, then this eigenvalue ratio would be infinite. Thus, large numeric values (i.e., greater than $1\times 10^4$) indicate satisfaction of the rank condition within numerical precision. Without the reactive power penalty, the rank condition is not satisfied for most submatrices. With the reactive power penalty, the rank condition is satisfied for many but not all submatrices. Enforcing higher-order moment constraints at two buses which are in the high-rank submatrices results in a feasible (rank-one) operating point for the OPF problem which is within at least 0.74\% of the global optimum.

To further illustrate the effectiveness of the moment+penalization approach, Table~\ref{t:costresults} shows the results of applying the moment+penalization approach to several large OPF problems with active power generation cost functions. The optimality gap column gives the percent difference between a lower bound on the optimal objective value from the first-order moment relaxation and the feasible solution obtained from the moment+penalization approach for the system after low-impedance line preprocessing.

\begin{table}[thb]
\caption{Generation Cost Minimization Results}
\label{t:costresults}
\footnotesize
\centering
\begin{tabular}{|c|c|c|c|c|c|}
\hline 
\textbf{Case} & \textbf{$\epsilon_b$} & \textbf{Num.}  & \textbf{Opt.} & \textbf{Max $\mathrm{S}^{\mathrm{mis}}$}& \textbf{Solver} \\
\textbf{Name} & \textbf{(\$/MVAr)} & \textbf{Iter.} & \textbf{Gap} & \textbf{(MVA)} & \textbf{Time (sec)} \\ \hline
PL-2383wp & 50 & 2 & 0.74\% & 0.13 & 152.2\\ 
PL-3012wp & 50 & 7 & 0.49\% & 0.20 & 1056.3\\ 
PL-3120sp & 100 & 6 & 0.92\% & 0.08 & 1164.4\\\hline
\end{tabular}
\end{table}

The penalized first-order relaxation requires 74.6, 88.9, and 97.0 seconds for PL-2383wp, PL-3012wp, and PL-3120sp, respectively. Attributing the rest of the solver time to the higher-order constraints implies that these constraints accounted for 3.1, 433.7, and 582.4 seconds beyond the time required to repeatedly solve the first-order relaxations.

The moment+penalization approach can yield feasible points that are at least nearly globally optimal for cases where both the penalization method of~\cite{lavaei_allerton2014} and low-order moment relaxations fail individually. For instance, the moment+penalization approach with a reactive power penalty of $\epsilon_b = \$0.10/\mathrm{MVAr}$ gives a feasible point within 0.28\% of the global optimum for case39mod3 from~\cite{bukhsh2013}, but both second- and third-order moment relaxations and the penalization method in~\cite{lavaei_allerton2014} fail to yield global solutions.

\section{Conclusion}
\label{l:conclusion}

``Moment'' relaxations from the Lasserre hierarchy for polynomial optimization globally solve a broad class of OPF problems. By exploiting sparsity and selectively applying the computationally intensive higher-order moment relaxations, previous literature demonstrated the moment relaxations' capability to globally solve moderate-size OPF problems. This chapter presented a preprocessing method that removes low-impedance lines to improve the numerical conditioning of the moment relaxations. After applying the preprocessing method, the moment relaxations globally solve a variety of OPF problems that minimize active power losses for systems with thousands of buses. A proposed ``moment+penalization'' method is capable of finding feasible points that are at least nearly globally optimal for large OPF problems with more general cost functions. This method has several advantages over previous penalization approaches, including requiring fewer parameter choices and solving a broader class of OPF problems. In the next chapter, we devise a method that provides nearly global solutions to large-scale OPF problems without the need to specify any parameter.

\chapter{Laplacian matrix approach}
\label{sec:Laplacian matrix gets rid of penalization parameter}

A semidefinite optimization (SDP) relaxation globally solves many optimal power flow (OPF) problems. For other OPF problems where the SDP relaxation only provides a lower bound on the objective value rather than the globally optimal decision variables, recent literature has proposed a penalization approach to find feasible points that are often nearly globally optimal. A disadvantage of this penalization approach is the need to specify penalty parameters. This paper presents an alternative approach that algorithmically determines a penalization appropriate for many OPF problems. The proposed approach constrains the generation cost to be close to the lower bound from the SDP relaxation. The objective function is specified using iteratively determined weights for a Laplacian matrix. This approach yields feasible points to the OPF problem that are guaranteed to be near the global optimum due to the constraint on generation cost. The proposed approach is demonstrated on both small OPF problems and a variety of large test cases representing portions of European power systems.
The material presented in this chapter is based on the submitted manuscript:
\\\\
\noindent {\scshape D.K. Molzahn, C. Josz, I.A. Hiskens, and P. Panciatici}, \textit{A Laplacian-Based Approach for Finding Near Globally Optimal Solutions to OPF Problems}, submitted to Institute of Electrical and Electronics Engineers, Transactions on Power Systems. \href{http://arxiv.org/pdf/1507.07212v1.pdf}{[preprint]}

\section{Introduction}
\label{subsec:Introduction6}

Literature has
proposed an objective function penalization approach for finding
feasible points that are near the global optimum for the OPF
problem~\cite{lavaei_mesh,lavaei_allerton2014}. The penalization
approach has the advantage of not using potentially
computationally expensive higher-order moment constraints, but has the
disadvantage of requiring the choice of appropriate penalization
parameters. This choice involves a compromise, as the
  parameters must induce a feasible solution to the original problem
  while avoiding large modifications to the problem that would cause
  unacceptable deviation from the global optimum.


The penalization formulation in the existing
literature~\cite{lavaei_allerton2014} generally requires specifying
penalty parameters for both the total reactive power injection and
apparent power flows on certain lines. Penalty parameters
  in the literature range over several orders of magnitude for various
  test cases, and existing literature largely lacks  systematic algorithms for determining appropriate parameter values. Recent work~\cite{cdc2015} proposes a ``moment+penalization'' approach that eliminates the need to choose apparent power flow penalization parameters, but still requires selection of a penalty parameter associated with the total reactive power injection.


This chapter presents an iterative algorithm that builds an objective function intended to yield near-globally-optimal solutions to OPF problems for which the SDP relaxation is not exact. The proposed algorithm first solves the SDP relaxation to obtain a lower bound on the optimal objective value. This lower bound is often very close to the global optimum of many practical OPF problems. The proposed approach modifies the SDP relaxation by adding a constraint that the generation cost must be within a small percentage (e.g., 0.5\%) of this lower bound. This percentage is the single externally specified parameter in the proposed approach.

This constraint on the generation cost provides freedom to specify an
objective function that aims to obtain a \emph{feasible} rather than
\emph{minimum-cost} solution for the OPF problem. In other words, we
desire an objective function such that the SDP relaxation yields a
feasible solution to the original nonconvex OPF problem, with
near-global optimality ensured by the constraint on generation
cost.

This chapter proposes an algorithm for calculating an appropriate
objective function defined using a weighted Laplacian matrix. The
weights are determined iteratively based on the mismatch between
 the solution to the relaxation and the power flows resulting from a related set of voltages. The paper will formalize these concepts and
  demonstrate that this approach results in near global solutions to
many OPF problems, including large test cases. Like many penalization/regularization techniques, the proposed approach is not guaranteed to yield a feasible solution. As supported by the results for several large-scale, realistic test cases, the proposed algorithm broadens the applicability of the SDP relaxation to achieve operating points for many OPF problems that are within specified tolerances for both constraint feasibility and global optimality.

There is related work that chooses the objective function of a
relaxation for the purpose of obtaining a feasible solution for the
original nonconvex problem. For instance,~\cite{lesieutre_molzahn_borden_demarco-allerton2011} specifies objective functions that are
linear combinations of squared voltage magnitudes in order to find
multiple solutions to the power flow equations. Additionally,~\cite{lavaei_pf} proposes a method for determining an
objective function that yields solutions to the power flow equations
for a variety of parameter choices. The objective function
in~\cite{lavaei_pf} is defined by a matrix with three properties:
positive semidefiniteness, a simple eigenvalue of 0, and null space
containing the all-ones vector. We note that the weighted Laplacian
objective function developed in this paper is a special case of an objective function that also has these three properties.

This chapter is organized as follows. Section~\ref{subsec:Laplacian objective function} describes the Laplacian objective function approach that is the main contribution of this chapter. Section~\ref{subsec:Algorithm for determining the Laplacian weights} describes an algorithm for determining the Laplacian weights. Section~\ref{l:results} demonstrates the effectiveness of the proposed approach through application to a variety of small OPF problems as well as several large test cases representing portions of European power systems. Section~\ref{subsec:Conclusion6} concludes the chapter.

\section{Laplacian objective function}
\label{subsec:Laplacian objective function}
The proposed approach exploits the empirical observation that the SDP relaxation provides a very close lower bound on the optimal objective value of many typical OPF problems (i.e., there is a very small \emph{relaxation gap}). For instance, the SDP relaxation gaps for the large-scale Polish~\cite{matpower} and PEGASE~\cite{pegase,josz-2016} systems, which represent portions of European power systems, are all less than 0.3\%.\footnote{To obtain satisfactory convergence of the SDP solver, these systems are pre-processed to remove low-impedance lines (i.e., lines whose impedance values have magnitudes less than $1\times 10^{-3}$ per unit) as in~\cite{cdc2015}.}\footnote{These relaxation gaps are calculated using the objective values from the Shor SDP relaxation and solutions obtained either from the second-order moment relaxation~\cite{mh_sparse_msdp} (where possible) or from \matpower{}~\cite{matpower}.} Further, the SDP relaxation is \emph{exact} (i.e., zero relaxation gap) for the IEEE \mbox{14-,} \mbox{30-,} \mbox{39-,} \mbox{57-bus} systems, the 118-bus system modified to enforce a small minimum line resistance, and several of the large-scale Polish test cases.

We assume that the lower bound $c^*$ provided by the SDP relaxation is within a given percentage $\delta$ of the global optimum to the OPF problem. (Most of the examples in Section~\ref{l:results} specify $\delta = 0.5\%$.) We constrain the generation cost using this assumption by requiring it to be less than or equal to $c^*(1+\delta)$.
Note that the feasible set thus defined is non-empty (i.e., the corresponding SDP problem is feasible) for any choice of $\delta \geq 0$. However, if $\delta$ is too small, there may not exist a rank-one matrix $\mathbf{W}$ (i.e., a feasible point for the original OPF problem) in the feasible space.

The lack of a priori guarantees on the size of the relaxation gap is a challenge that the proposed approach shares with many related approaches for convex relaxations of the optimal power flow problem. Existing sufficient conditions that guarantee zero relaxation gap generally require satisfaction of non-trivial technical conditions and a limited set of network topologies~\cite{low_tutorial,lavaei_mesh}. The SDP relaxation is exact for a significantly broader class of OPF problems than those that have a priori exactness guarantees, and has a small relaxation gap for an even broader class of OPF problems.

There are test cases that are specifically constructed to exhibit somewhat anomalous behavior in order to test the limits of the convex relaxations. The SDP relaxation gap is not small for some of these test cases. For instance, the \mbox{3-bus} system in~\cite{iscas2015}, the \mbox{5-bus} system in~\cite{hicss2014}, and the \mbox{9-bus} system in~\cite{bukhsh2013} have relaxation gaps of 20.6\%, 8.9\%, and 10.8\%, respectively, and the test cases in~\cite{kocuk15} have relaxation gaps as large as 52.7\%. The approach proposed in this paper is not appropriate for such problems. Future progress in convex relaxation theory is required to develop broader conditions that provide a priori certification that the SDP relaxation is exact or has a small relaxation gap. We also await the development of more extensive sets of OPF test cases to further explore the observation that many typical existing practical test cases have small SDP relaxation gaps.

By inserting the original cost function as a constraint, this effectively frees the choice of the objection function (which we will denote $f\left(\mathbf{W} \right)$) to obtain a \emph{feasible} rather than \emph{minimum-cost} solution to the OPF. Here, $\mathbf{W}$ denotes the real SDP matrix variable of $2n \times 2n$ where $n$ is the number of buses.
Any solution with $\mathrm{rank}\left(\mathbf{W}\right) = 1$ yields a feasible solution to the OPF problem within $\delta$ of the globally optimal objective value.
We therefore seek an objective function $f\left(\mathbf{W}\right)$
which maximizes the likelihood of obtaining
$\mathrm{rank}\left( \mathbf{W}\right) = 1$. This section describes a
Laplacian form for the function
$f\left(\mathbf{W}\right)$. Specifically, we consider a $n_l \times
n_l$ diagonal matrix $\mathbf{D}$ containing weights for the network
Laplacian matrix $\mathbf{L} = \mathbf{A}_{inc}^\intercal \mathbf{D}
\mathbf{A}_{inc}$, where $\mathbf{A}_{inc}$ is the $n_l\times n$
incidence matrix for the network. The off-diagonal term
$\mathbf{L}_{ij}$ is equal to the negative of the sum of the weights
for the lines connecting buses~$i$ and~$j$, and the diagonal term
$\mathbf{L}_{ii}$ is equal to the sum of the weights of the lines
connected to bus~$i$. The objective function is
\begin{equation}\label{lap_obj}
f\left(\mathbf{W}\right) = \mathrm{tr}\left(\begin{bmatrix}\mathbf{L} & \mathbf{0}_{n\times n} \\ \mathbf{0}_{n\times n} & \mathbf{L}\end{bmatrix}\mathbf{W} \right).
\end{equation}

The choice of an objective function based on a Laplacian matrix is
motivated by previous literature. An existing penalization
approach~\cite{lavaei_mesh} augments the objective
  function by adding a term that minimizes the total reactive power
injection. This reactive power penalty can
be implemented by adding the term
\begin{equation}\label{Qpen}
\epsilon_b\, \mathrm{tr}\left(
\begin{bmatrix}\hphantom{-}\Re\left(\frac{\mathbf{Y}^H - \mathbf{Y}}{2\mathbf{j}}\right) & \Im\left(\frac{\mathbf{Y}^H - \mathbf{Y}}{2\mathbf{j}}\right) \\
-\Im\left(\frac{\mathbf{Y}^H - \mathbf{Y}}{2\mathbf{j}}\right) & \Re\left(\frac{\mathbf{Y}^H - \mathbf{Y}}{2\mathbf{j}}\right)
\end{bmatrix} \mathbf{W}\right)
\end{equation}
to the objective function of the Shor relaxation of the OPF, where
$\epsilon_b$ is a specified penalty parameter,
$\left(\cdot\right)^H$ indicates conjugate transpose, and $ \mathbf{Y}$ is the admittance matrix. In the absence of phase-shifting transformers (i.e.,
$\theta_{lm} = 0 \quad \forall \left(l,m\right) \in \mathcal{L}$), the
matrix $ \frac{\mathbf{Y}^H - \mathbf{Y}}{2\mathbf{j}}$ is equivalent
to $-\Im\left(\mathbf{Y}\right) = -\mathbf{B}$, which is a
\emph{weighted Laplacian matrix} (with weights determined by the
branch susceptance parameters $b_{lm} = \frac{-X_{lm}}{R_{lm}^2 +
  X_{lm}^2}$) plus a diagonal matrix composed of shunt susceptances.

Early work on SDP relaxations of OPF problems~\cite{lavaei-low-2012} advocates enforcing a minimum resistance of $\epsilon_r$ for all lines in the network. For instance, the SDP relaxation fails to be exact for the IEEE 118-bus system~\cite{ieee_test_cases}, but the relaxation is exact after enforcing a minimum line resistance of $\epsilon_r = 1\times 10^{-4}$~per~unit. After enforcing a minimum line resistance, the active power losses are given by
\begin{equation}
\mathrm{tr}\left(\begin{bmatrix}
\hphantom{-}\Re\left(\frac{\mathbf{Y}_r + \mathbf{Y}_r^H}{2}\right) & \Im\left(\frac{\mathbf{Y}_r + \mathbf{Y}_r^H}{2}\right) \\
-\Im\left(\frac{\mathbf{Y}_r + \mathbf{Y}_r^H}{2}\right) & \Re\left(\frac{\mathbf{Y}_r + \mathbf{Y}_r^H}{2}\right) \end{bmatrix}\mathbf{W}\right)
\end{equation}
where $\mathbf{Y}_r$ is the network admittance matrix after enforcing a minimum branch resistance of $\epsilon_r$. In the absence of phase-shifting transformers, $\frac{\mathbf{Y}_r + \mathbf{Y}_r^H}{2}$ is equivalent to $\Re\left(\mathbf{Y}_r\right)$, which is a \emph{weighted Laplacian matrix} (with weights determined by the branch conductance parameters $g_{lm} = \frac{R_{lm}}{R_{lm}^2 + X_{lm}^2}$) plus a diagonal matrix composed of shunt conductances. Since typical OPF problems have objective functions that increase with active power losses, enforcing minimum line resistances is similar to a weighted Laplacian penalization.\footnote{Note that since enforcing minimum line resistances also affects the power injections and the line flows, the minimum line resistance cannot be solely represented as a Laplacian penalization of the objective function.}

The proposed objective function~\eqref{lap_obj} is equivalent to a linear combination of certain components of $\mathbf{W}$:
\begin{align}\nonumber
f\left(\mathbf{W}\right) = \sum_{\left(l,m\right) \in \mathcal{L}}&  \mathbf{D}_{\left(l,m\right)} \left(\mathbf{W}_{ll} - 2\mathbf{W}_{lm} + \mathbf{W}_{mm} \right. \\
&  \left. + \mathbf{W}_{l+n,l+n} - 2\mathbf{W}_{l+n,m+n} + \mathbf{W}_{m+n,m+n} \right)
\end{align}
where $\mathbf{D}_{\left(l,m\right)}$ is the diagonal element of $\mathbf{D}$ corresponding to the line from bus~$l$ to bus~$m$. 

From a physical perspective, the Laplacian objective's tendency to
reduce voltage differences is similar to both the reactive power
penalization proposed in~\cite{lavaei_mesh} and the minimum branch
resistance advocated in~\cite{lavaei-low-2012}. For typical operating
conditions, reactive power injections are closely related to voltage
magnitude differences, so penalizing reactive power injections tends
to result in solutions with similar voltages. Likewise,
the active power losses associated with line resistances increase with
the square of the current flow through the line, which is determined
by the voltage difference across the line. Thus, enforcing a minimum
line resistance tends to result in solutions with smaller voltage
differences in order to reduce losses.

In addition, a Laplacian regularizing term has been used to obtain desirable solution characteristics for a variety of other optimization problems (e.g., machine learning problems~\cite{smola2003,melacci2011}, sensor network localization problems~\cite{weinberger2006}, and analysis of flow networks~\cite{taylor2011}). The Laplacian matrix is also used for image reconstruction problems~\cite{coifman-2008}.


\section{Determining Laplacian weights}
\label{subsec:Algorithm for determining the Laplacian weights}

Having established a weighted Laplacian form for the objective function, we introduce an iterative algorithm for determining appropriate weights $\mathbf{D}$ for obtaining a solution with $\mathrm{rank}\left(\mathbf{W} \right) = 1$. We note that the proposed algorithm is similar in spirit to the method in~\cite[Section~2.4]{candes-2013}, which iteratively updates weighting parameters to promote low-rank solutions of SDPs related to image reconstruction problems.

The proposed algorithm is inspired by the apparent power line flow
penalty used in~\cite{lavaei_allerton2014} and the iterative approach
to determining appropriate buses for enforcing higher-order moment
constraints in~\cite{mh_sparse_msdp}. The approach
in~\cite{lavaei_allerton2014} penalizes the apparent power flows on
lines associated with certain submatrices of $\mathbf{W}$ that are not
rank one.\footnote{The submatrices are determined by the maximal
  cliques of a chordal supergraph of the network;
  see~\cite{jabr11,dan2013,lavaei_allerton2014}
  for further details.} Similar to the approach
in~\cite{lavaei_allerton2014}, the proposed algorithm adds terms to
the objective function that are associated with certain
``problematic lines.''

The heuristic for identifying problematic lines is inspired by the approach used in~\cite{mh_sparse_msdp} to detect ``problematic buses'' for application of higher-order moment constraints. Denote the SDP solution as $\mathbf{W}^\star$ and the closest rank-one matrix to $\mathbf{W}^\star$ as $\mathbf{W}^{\left(1\right)}$. Previous work~\cite{mh_sparse_msdp} compares the power injections associated with $\mathbf{W}^\star$ and $\mathbf{W}^{\left(1\right)}$ to calculate power injection mismatches $S_k^{inj\,mis}$ for each bus~$k \in \mathcal{N}$:
\begin{align}\nonumber
& S_{k}^{inj\,mis} = \\ \label{sdpmis} & \;\; \left| 
\mathrm{tr}\left\lbrace\mathbf{Y}_k \left(\mathbf{W}^\star-\mathbf{W}^{\left(1\right)}\right)\right\rbrace  + \mathbf{j}
\mathrm{tr}\left\lbrace\mathbf{\bar{Y}}_k \left(\mathbf{W}^\star-\mathbf{W}^{\left(1\right)}\right)\right\rbrace\right|
\end{align}
where $\left|\,\cdot\, \right|$ denotes the magnitude of the complex argument. In the parlance of~\cite{mh_sparse_msdp}, problematic buses are those with large power injection mismatches $S_{k}^{inj\,mis}$.

To identify problematic lines rather than buses, we modify~\eqref{sdpmis} to calculate apparent power flow mismatches $S_{\left(l,m\right)}^{flow\,mis}$ for each line~$\left(l,m\right) \in \mathcal{L}$:
\begin{align}\nonumber
& S_{\left(l,m\right)}^{flow\,mis} = \\ \nonumber &\;\; \left| 
\mathrm{tr}\left\lbrace\mathbf{Z}_{lm} \left(\mathbf{W}^\star-\mathbf{W}^{\left(1\right)}\right)\right\rbrace + \mathbf{j}
\mathrm{tr}\left\lbrace\mathbf{\bar{Z}}_{lm} \left(\mathbf{W}^\star-\mathbf{W}^{\left(1\right)}\right)\right\rbrace\right| \\ \label{flowmis}
&\;\; + \left| 
\mathrm{tr}\left\lbrace\mathbf{Z}_{ml} \left(\mathbf{W}^\star-\mathbf{W}^{\left(1\right)}\right)\right\rbrace + \mathbf{j}
\mathrm{tr}\left\lbrace\mathbf{\bar{Z}}_{ml} \left(\mathbf{W}^\star-\mathbf{W}^{\left(1\right)}\right)\right\rbrace\right|.
\end{align}
Observe that $S_{\left(l,m\right)}^{flow\,mis}$ sums the magnitude of the apparent power flow mismatches at both ends of each line.

The condition $\mathrm{rank}\left(\mathbf{W}^\star\right) = 1$ (i.e., ``feasibility'' in this context) is considered satisfied for practical purposes using the criterion that the maximum line flow and power injection mismatches (i.e., $\max_{\left(l,m\right)\in\mathcal{L}} S_{\left(l,m\right)}^{flow\,mis}$ and $\max_{k\in\mathcal{N}}S_{k}^{inj\,mis}$) are less than specified tolerances~$\epsilon_{flow}$ and~$\epsilon_{inj}$, respectively, and the voltage magnitude limits are satisfied to within a specified tolerance~$\epsilon_{V}$.\footnote{For all test cases, the voltage magnitude limits were satisfied whenever the power injection and line flow mismatch tolerances were achieved.}

\newfloat{algorithm}{bt}{lop}
\begin{algorithm}
\caption{Iterative Algorithm for Determining Weights}\label{a:weights}
\begin{algorithmic}[1]
\State \textbf{Input}: tolerances $\epsilon_{flow}$ and $\epsilon_{inj}$, max relaxation gap $\delta$
\State Set $\mathbf{D} = \mathbf{0}_{n_l\times n_l}$
\State Solve the Shor SDP relaxation to obtain $c^*$
\State Calculate $S^{flow\,mis}$ and $S^{inj\,mis}$ using~\eqref{flowmis} and~\eqref{sdpmis}
\NoDo
\While{termination criteria not satisfied} 
	\State Update weights: $\mathbf{D} \leftarrow \mathbf{D} + \mathrm{diag}\left(S^{flow\,mis}\right)$
	\State Solve the generation-cost-constrained relaxation
	\State Calculate $S^{flow\,mis}$ and $S^{inj\,mis}$ using~\eqref{flowmis} and~\eqref{sdpmis}
\EndWhile
\State Calculate the voltage phasors and terminate
\end{algorithmic}
\end{algorithm}

As described in Algorithm~\ref{a:weights}, the weights on the diagonal of $\mathbf{D}$ are determined from the line flow mismatches $S_{\left(l,m\right)}^{flow\,mis}$. Specifically, the proposed algorithm first solves the Shor relaxation to obtain both the lower bound $c^*$ on the optimal objective value and the initial line flow and power injection mismatches $S_{\left(l,m\right)}^{flow\,mis},\, \forall \left(l,m\right) \in\mathcal{L}$ and $S_{k}^{inj\,mis},\, \forall k\in\mathcal{N}$. 

While the termination criteria ($\max_{\left(l,m\right)\in\mathcal{L}} \big\{ S_{\left(l,m\right)}^{flow\,mis} \big\} <
\epsilon_{flow}$, $\max_{k\in\mathcal{N}}  \big\{
  S_{k}^{inj\,mis} \big\} < \epsilon_{inj}$, and no
  voltage limits violated by more than $\epsilon_V$) are not
satisfied, the algorithm solves the SDP
relaxation with the constraint ensuring that the generation cost is
within $\delta$ of the lower bound. The objective function is defined
using the weighting matrix $\mathbf{D} =
\mathrm{diag}\left(S^{flow\,mis}\right)$, where
$\mathrm{diag}\left(\cdot\right)$ denotes the matrix with the vector
argument on the diagonal and other entries equal to zero. Each
iteration adds the line flow mismatch vector $S^{flow\,mis}$ to the previous weights (i.e.,
$\mathbf{D} \leftarrow \mathbf{D} +
\mathrm{diag}\left(S^{flow\,mis}\right)$).

Note that Algorithm~\ref{a:weights} is not guaranteed to converge. Non-convergence may be due to the value of $\delta$ being too small (i.e., there does not exist a rank-one solution) or failure to find a rank-one solution that does exist. To address the former case, Algorithm~\ref{a:weights} could be modified to include an ``outer loop'' that increments $\delta$ by a specified amount (e.g., $0.5$\%) if convergence is not achieved in a certain  number of iterations. We note that, like other convex relaxation methods, the proposed approach would benefit from further theoretical work regarding the development of a priori guarantees on the size of the relaxation gap for various classes of OPF problems.

For some problems with large relaxation gaps (e.g., the
  \mbox{3-bus} system in~\cite{iscas2015}, the 5-bus system
  in~\cite{hicss2014}, and the 9-bus system in~\cite{bukhsh2013}), no
  purely penalization-based methods have so far successfully addressed
  the latter case where the proposed algorithm fails to
  find a rank-one solution that satisfies the generation cost
  constraint with 
    sufficiently large $\delta$ (i.e., no known penalty parameters
  yield feasible solutions using the methods
  in~\cite{lavaei_mesh,lavaei_allerton2014} for these test cases). One
  possible approach for addressing this latter case is the combination
  of penalization techniques with Lasserre's moment relaxation
  hierarchy~\cite{pscc2014,cedric_tps,ibm_paper,mh_sparse_msdp}. The
  combination of the moment relaxations with the penalization methods
  enables the computation of near-globally-optimal solutions for a
  broader class of OPF problems than either method achieves
  individually. See~\cite{cdc2015} for further details on this
  approach.

We note that despite the lack of a convergence guarantee, the examples in Section~\ref{l:results} demonstrate that Algorithm~\ref{a:weights} is capable of finding feasible points that are near the global optimum for many OPF problems, including large test cases.

\section{Numerical results}
\label{l:results}

This section demonstrates the effectiveness of the
proposed approach using several small example problems as well as
large test cases representing portions of European power
systems. The SDP relaxation yields a small but non-zero
  relaxation gap for the test cases selected in this section, and
  Algorithm~\ref{a:weights} yields points that are feasible
  for the OPF (to within the specified termination
    criteria) and that are near the global optimum for these test
  cases. For other test cases with a large SDP relaxation gap, such as those mentioned earlier in~\cite{bukhsh2013,hicss2014,iscas2015,kocuk15}, the proposed algorithm does
  not converge when tested with a variety of values for $\delta$.

The results in this section use line flow and power injection mismatch tolerances $\epsilon_{flow}$ and $\epsilon_{inj}$ that are both equal to $1$~MVA and $\epsilon_V = 5\times 10^{-4}$~per~unit. The implementation of Algorithm~\ref{a:weights} uses \mbox{MATLAB 2013a}, YALMIP \mbox{2015.02.04}~\cite{yalmip}, and Mosek \mbox{7.1.0.28}~\cite{mosek}, and was solved using a computer with a quad-core 2.70~GHz processor and 16~GB of RAM.

Applying Algorithm~\ref{a:weights} to several small- to medium-size
test cases
from~\cite{lesieutre_molzahn_borden_demarco-allerton2011,hicss2014,mh_sparse_msdp,ieee_test_cases,kocuk15}
yields the results shown in
Table~\ref{t:small_results}. Tables~\ref{t:large_gencost_results}
and~\ref{t:large_loss_results} show the results from applying
Algorithm~\ref{a:weights} to large test cases which minimize
generation cost and active power losses, respectively. These test
cases, which are from~\cite{matpower} and~\cite{pegase,josz-2016}, represent
portions of European power systems. The Shor
relaxation has a small but non-zero relaxation gap
for all test cases considered in this section. The columns of
Tables~\ref{t:small_results}--\ref{t:large_loss_results} show the case
name and reference, the number of iterations of
Algorithm~\ref{a:weights}, the final maximum apparent power flow
mismatch $\max_{\left(l,m\right)\in\mathcal{L}} \big\{
  S_{\left(l,m\right)}^{flow\,mis} \big\}$ in MVA, the final maximum
power injection mismatch $\max_{k\in\mathcal{N}}  \big\{
  S_{k}^{inj\,mis} \big\}$ in MVA, the specified value of $\delta$, an
  upper bound on the relaxation gap from the solution to the Shor
  relaxation, and the total solver time in seconds.

Note that the large test cases in Tables~\ref{t:large_gencost_results}
and~\ref{t:large_loss_results} were preprocessed to remove
low-impedance lines as described in~\cite{cdc2015} in order to improve
the numerical convergence of the SDP relaxation. Lines
  which have impedance magnitudes less than a threshold
  (\texttt{thrshz} in~\cite{cdc2015}) of $1\times 10^{-3}$~per~unit
  are eliminated by merging the terminal buses. Table~\ref{tab:size}
  describes the number of buses and lines before and after this
  preprocessing. Low-impedance line preprocessing was not needed for
  the test cases in Table~\ref{t:small_results}. After preprocessing,
  MOSEK's SDP solver converged with sufficient accuracy to yield
  solutions that satisfied the voltage magnitude limits to within
  \mbox{$\epsilon_V = 1\times 10^{-4}$}~per~unit and the 
    power injection and line flow constraints to within the
  corresponding mismatches shown in
  Tables~\ref{t:small_results}--\ref{t:large_loss_results}.

These results show that Algorithm~\ref{a:weights} finds feasible points (within the specified tolerances) that have objective values near the global optimum for a variety of test cases.  Further, Algorithm~\ref{a:weights} globally solves all OPF problems for which the Shor relaxation is exact (e.g., many of the IEEE test cases~\cite{lavaei-low-2012}, several of the Polish test systems~\cite{dan2013}, and the 89-bus PEGASE system~\cite{pegase,josz-2016}). Thus, the algorithm is a practical approach for addressing a broad class of OPF problems. 

We note, however, that Algorithm~\ref{a:weights} does not yield a feasible point for all OPF problems. For instance, the test case WB39mod from~\cite{bukhsh2013} has line flow and power injection mismatches of 18.22~MVA and 12.99~MVA, respectively, after 1000 iterations of Algorithm~\ref{a:weights}. The challenge associated with this case seems to result from light loading with limited ability to absorb a surplus of reactive power injections, yielding at least two local solutions. In addition to challenging the method proposed in this paper, no known penalty parameters yield feasible solutions to this problem. Generalizations of the SDP relaxation using the Lasserre hierarchy have successfully calculated the global solution to this case~\cite{mh_sparse_msdp,cdc2015}. Further, while Algorithm~\ref{a:weights} converges for five of the seven test cases in~\cite{kocuk15} which have small relaxation gaps (less than 2.5\%), the algorithm fails for two other such test cases as well as several other test cases in~\cite{kocuk15} which have large relaxation gaps. We note that the tree topologies used in the test cases in~\cite{kocuk15} are a significant departure from the mesh networks used in the standard test cases from which they were derived; the proposed algorithm succeeds for several test cases that share the original network topologies.

\begin{table}[t]
\centering
\caption{Results for Small and Medium Size Test Cases}
\label{t:small_results}
\resizebox{\columnwidth}{!}{%
\begin{tabular}{|l|c|c|c|c|c|c|}
\hline 
\multicolumn{1}{|c|}{\textbf{Case}} & \!\!\textbf{Num.}\!\! & \textbf{Max} & \textbf{Max} & \textbf{$\delta$} & \textbf{Max} & \!\!\textbf{Solver}\!\!\\
\multicolumn{1}{|c|}{\textbf{Name}} & \!\!\textbf{Iter.}\!\! & \!\!\textbf{Flow Mis.}\!\! & \!\!\textbf{Inj. Mis.}\!\! & \textbf{(\%)} & \!\!\textbf{Relax.}\!\! & \textbf{Time} \\
& & \textbf{(MVA)} & \textbf{(MVA)} & & \!\!\textbf{Gap (\%)}\!\! & \textbf{(sec)} \\ \hline\hline
\!\!\! LMBD3~\cite{lesieutre_molzahn_borden_demarco-allerton2011}\!\!\! & 1 & $1.3${\textsc e}${-5}$ & $1.6${\textsc e}${-5}$ & 0.5 & 0.50 & 0.7 \\ \hline
\!\!\! MLD3~\cite{hicss2014}\!\!\! & 1 & $7.3${\textsc e}${-6}$ & $7.2${\textsc e}${-5}$ & 0.5 & 0.50 & 0.5 \\ \hline
\!\!\! MH14Q~\cite{mh_sparse_msdp}\!\!\! & 2 & $1.8${\textsc e}${-5}$ & $9.9${\textsc e}${-6}$ & 0.5 & 0.02 & 1.2 \\ \hline
\!\!\! MH14L~\cite{mh_sparse_msdp}\!\!\! & 2 & $8.1${\textsc e}${-5}$ & $7.8${\textsc e}${-5}$ & 0.5 & 0.33 & 1.2 \\ \hline
\!\!\! KDS14Lin~\cite{kocuk15}\!\!\! & 1 & $1.2${\textsc e}${-3}$ & $9.2${\textsc e}${-4}$ & 1.0 & 1.00 & 0.7 \\ \hline
\!\!\! KDS14Quad~\cite{kocuk15}\!\!\! & 1 & $1.4${\textsc e}${-4}$ & $8.4${\textsc e}${-5}$ & 1.0 & 1.00 & 0.6 \\ \hline
\!\!\! KDS30Lin~\cite{kocuk15}\!\!\! & 7 & $9.3${\textsc e}${-1}$ & $9.2${\textsc e}${-1}$ & 2.5 & 2.50 & 4.6 \\ \hline
\!\!\! KDS30Quad~\cite{kocuk15}\!\!\! & 6 & $8.1${\textsc e}${-1}$ & $8.0${\textsc e}${-1}$ & 2.0 & 2.00 & 3.6 \\ \hline
\!\!\! KDS30IEEEQuad~\cite{kocuk15}\!\!\! & 100 & $9.5${\textsc e}${-1}$ & $7.2${\textsc e}${-1}$ & 2.5 & 2.50 & 129.8 \\ \hline
\!\!\! MH39L~\cite{mh_sparse_msdp}\!\!\! & 1 & $1.3${\textsc e}${-2}$ & $9.8${\textsc e}${-3}$ & 0.5 & 0.27 & 0.7 \\ \hline
\!\!\! MH57Q~\cite{mh_sparse_msdp}\!\!\! & 1 & $1.2${\textsc e}${-3}$ & $6.9${\textsc e}${-4}$ & 0.5 & 0.03 & 0.7 \\ \hline
\!\!\! MH57L~\cite{mh_sparse_msdp}\!\!\! & 1 & $3.2${\textsc e}${-4}$ & $5.2${\textsc e}${-4}$ & 0.5 & 0.16 & 0.9 \\ \hline
\!\!\! MH118Q~\cite{mh_sparse_msdp}\!\!\! & 2 & $3.3${\textsc e}${-3}$ & $2.7${\textsc e}${-3}$ & 0.5 & 0.50 & 2.6 \\ \hline
\!\!\! MH118L~\cite{mh_sparse_msdp}\!\!\! & 2 & $3.1${\textsc e}${-3}$ & $3.1${\textsc e}${-3}$ & 1.0 & 1.00 & 3.3\\ \hline 
\!\!\! IEEE 300~\cite{ieee_test_cases}\!\!\! & 1 & $1.3${\textsc e}${-1}$ & $1.2${\textsc e}${-1}$ & 0.5 & 0.01 & 3.0\\ \hline
\end{tabular}
}
\end{table}


\begin{table}[t]
\centering
\caption{Results for Large Test Cases that Minimize Generation Cost}
\label{t:large_gencost_results}
\resizebox{\columnwidth}{!}{%
\begin{tabular}{|l|c|c|c|c|c|c|}
\hline 
\multicolumn{1}{|c|}{\textbf{Case}} & \!\!\textbf{Num.}\!\! & \textbf{Max} & \textbf{Max} & \textbf{$\delta$} & \textbf{Max} & \!\!\textbf{Solver}\!\!\\
\multicolumn{1}{|c|}{\textbf{Name}} & \!\!\textbf{Iter.}\!\! & \!\!\textbf{Flow Mis.}\!\! & \!\!\textbf{Inj. Mis.}\!\! & \textbf{(\%)} & \!\!\textbf{Relax.}\!\! & \textbf{Time} \\
& & \textbf{(MVA)} & \textbf{(MVA)} & & \!\!\textbf{Gap (\%)}\!\! & \textbf{(sec)} \\ \hline\hline
\!\!\! PL-2383wp~\cite{matpower}\!\!\! & 2 & 0.54 & 0.50 & 0.5 & 0.50 & 78.6 \\ \hline
\!\!\! PL-3012wp~\cite{matpower}\!\!\! & 2 & 0.36 & 0.27 & 0.5 & 0.50 & 107.6 \\ \hline
\!\!\! PL-3120sp~\cite{matpower}\!\!\! & 2 & 0.56 & 0.33 & 0.5 & 0.50 & 84.2 \\ \hline
\end{tabular}
}
\end{table}

\begin{table}[t!]
\centering
\caption{Results for Large Test Cases that Minimize Active Power Loss}
\label{t:large_loss_results}
\resizebox{\columnwidth}{!}{%
\begin{tabular}{|l|c|c|c|c|c|c|}
\hline 
\multicolumn{1}{|c|}{\textbf{Case}} & \!\!\textbf{Num.}\!\! & \textbf{Max} & \textbf{Max} & \textbf{$\delta$} & \textbf{Max} & \!\!\textbf{Solver}\!\!\\
\multicolumn{1}{|c|}{\textbf{Name}} & \!\!\textbf{Iter.}\!\! & \!\!\textbf{Flow Mis.}\!\! & \!\!\textbf{Inj. Mis.}\!\! & \textbf{(\%)} & \!\!\textbf{Relax.}\!\! & \textbf{Time} \\
& & \textbf{(MVA)} & \textbf{(MVA)} & & \!\!\textbf{Gap (\%)}\!\! & \textbf{(sec)} \\ \hline\hline
\!\!\! PL-2383wp~\cite{matpower} & 5 & 0.21 & 0.16 & 0.5 & 0.26 & 154.0 \\ \hline
\!\!\! PL-3012wp~\cite{matpower} & 5 & 0.08 & 0.04 & 0.5 & 0.18 & 232.2 \\ \hline
\!\!\! PL-3120sp~\cite{matpower} & 5 & 0.25 & 0.19 & 0.5 & 0.38 & 232.6 \\ \hline
\!\!\! PEGASE-1354~\cite{pegase,josz-2016}\!\!\! & 12 & 0.27 & 0.18 & 0.5 & 0.15 & 199.2 \\ \hline
\!\!\! PEGASE-2869~\cite{pegase,josz-2016}\!\!\! & 38 & 0.91 & 0.69 & 0.5 & 0.15 & 2378.4 \\ \hline
\end{tabular}
}
\end{table}

\begin{table}[t!]
\centering
\caption{Descriptions of Large Test Cases Before and After Low-Impedance Line Preprocessing}
\begin{tabular}{|l|c|c|c|c|}
\hline
\multicolumn{1}{|c|}{\textbf{Case}} & \multicolumn{2}{c|}{\textbf{Before Preprocessing}} & \multicolumn{2}{c|}{\textbf{After Preprocessing}}\\\cline{2-5}
\multicolumn{1}{|c|}{\textbf{Name}} & \textbf{Num.} & \textbf{Num.} & \textbf{Num.} & \textbf{Num.}\\
\multicolumn{1}{|c|}{} & \textbf{\; Buses \;} & \textbf{Lines} & \textbf{\; Buses \;} & \textbf{Lines}\\
\hline
PL-2383wp & 2,383 & 2,869 & 2,177 & 2,690\\\hline
PL-3012wp & 3,012 & 3,572 & 2,292 & 2,851\\\hline
PL-3120sp & 3,120 & 3,693 & 2,314 & 2,886\\\hline
PEGASE-1354 & 1,354 & 1,991 & 1,179 & 1,803\\\hline
PEGASE-2869 & 2,869 & 4,582 & 2,120 & 4,164\\
\hline
\end{tabular}
\label{tab:size}
\end{table}

\begin{figure}[t]
\centering
\includegraphics[totalheight=0.33\textheight]{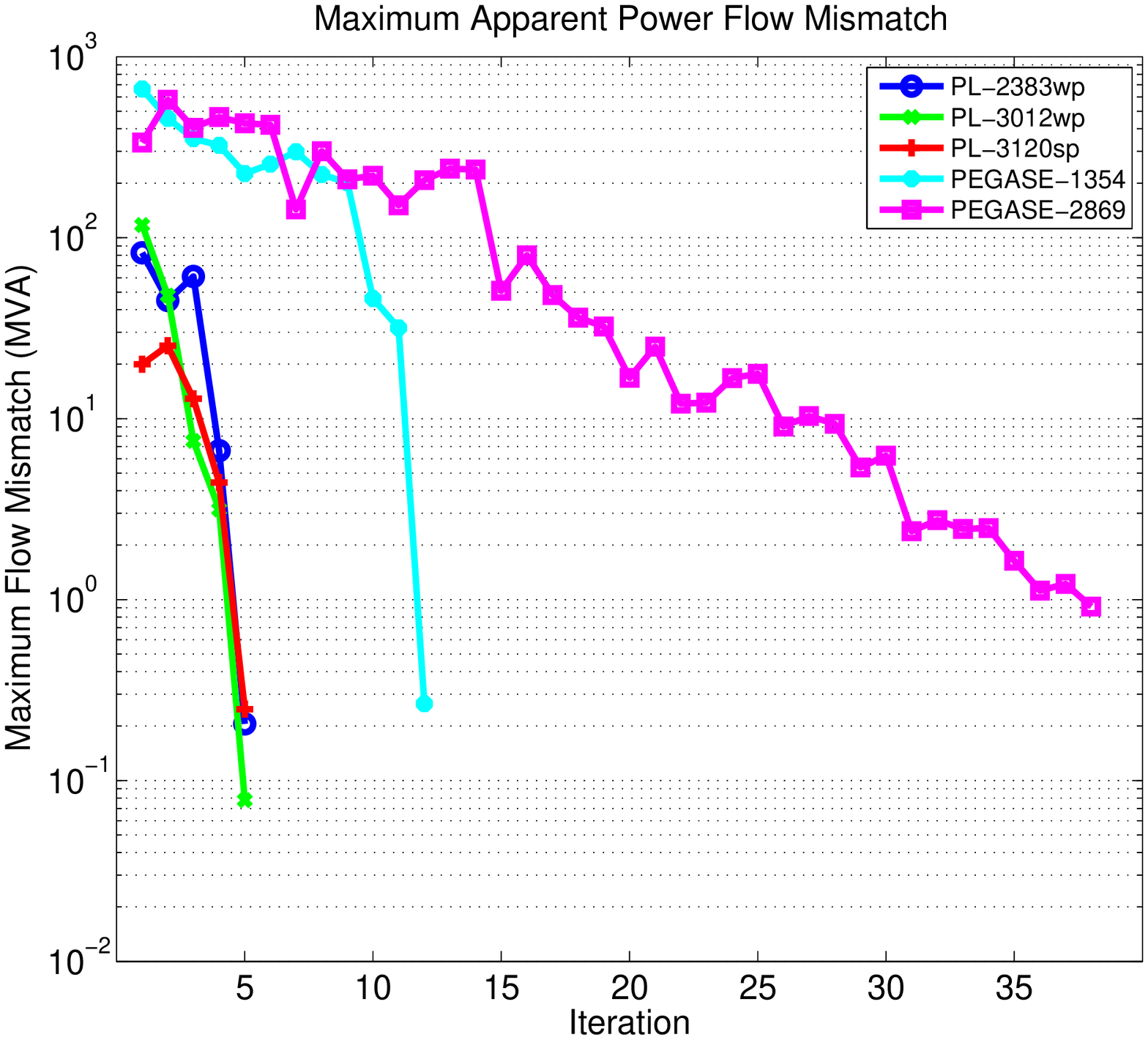}
\caption{Maximum Apparent Power Flow Mismatches versus Iteration of Algorithm~\ref{a:weights} for Active Power Loss Minimizing Test Cases}
\label{f:apparent_flow_mis}
\end{figure}

\begin{figure}[t]
\centering
\includegraphics[totalheight=0.33\textheight]{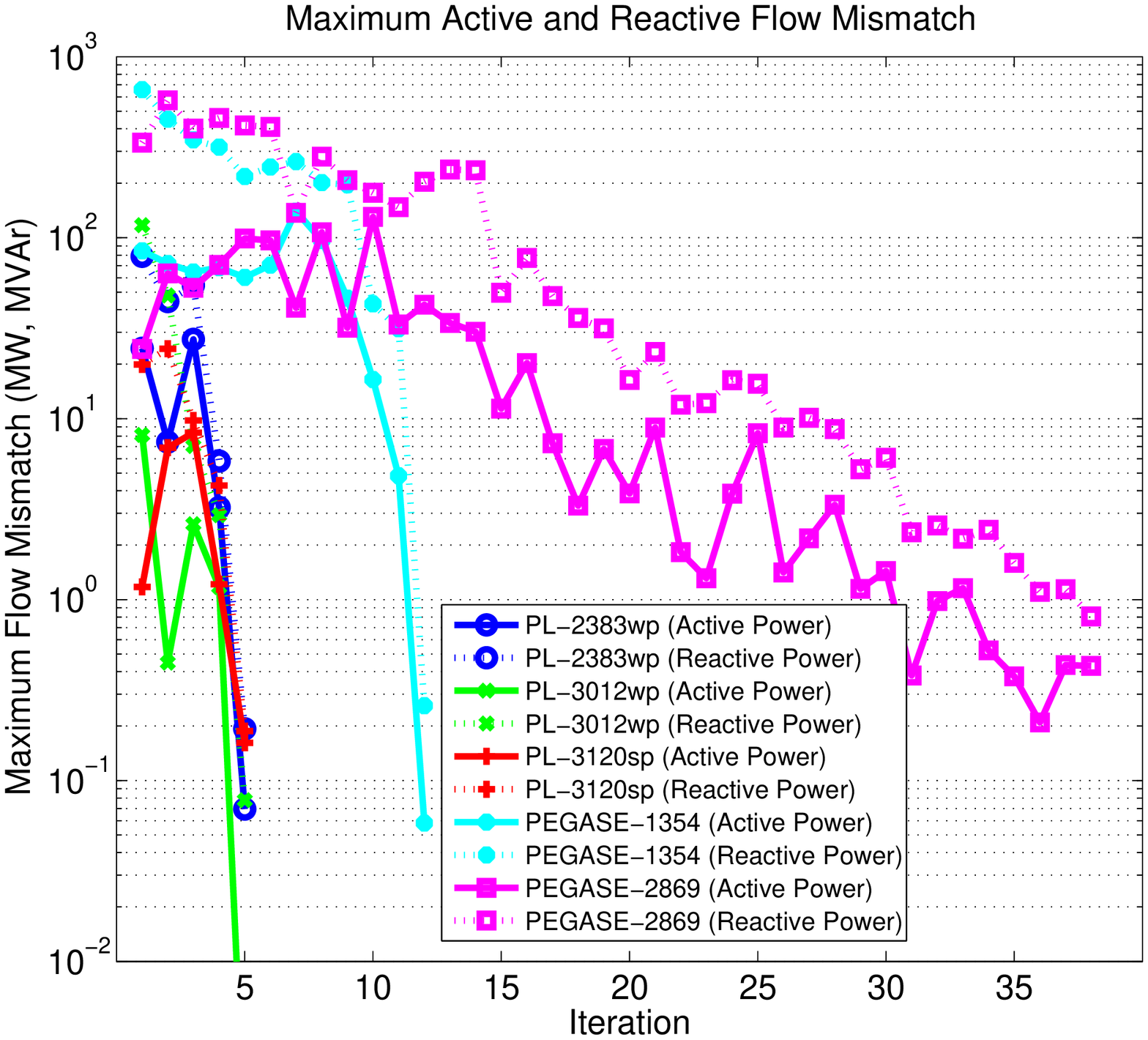}
\caption{Maximum Active and Reactive Power Flow Mismatches versus Iteration of Algorithm~\ref{a:weights} for Active Power Loss Minimizing Test Cases}
\label{f:active_reactive_flow_mis}
\end{figure}

\begin{figure}[t]
\centering
\includegraphics[totalheight=0.33\textheight]{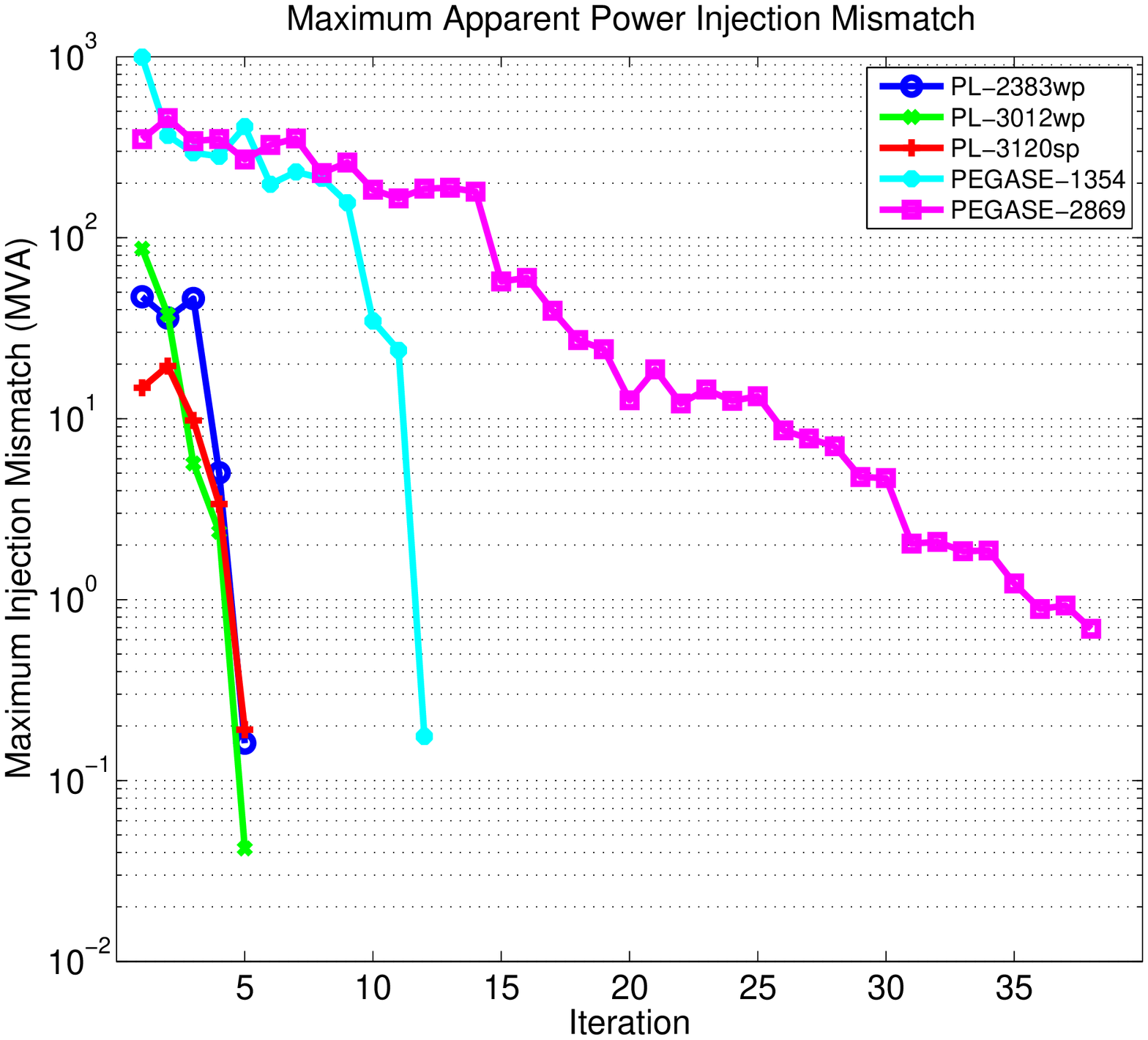}
\caption{Maximum Apparent Power Injection Mismatches versus Iteration of Algorithm~\ref{a:weights} for Active Power Loss Minimizing Test Cases}
\label{f:apparent_injection_mis}
\end{figure}

\begin{figure}[t]
\centering
\includegraphics[totalheight=0.33\textheight]{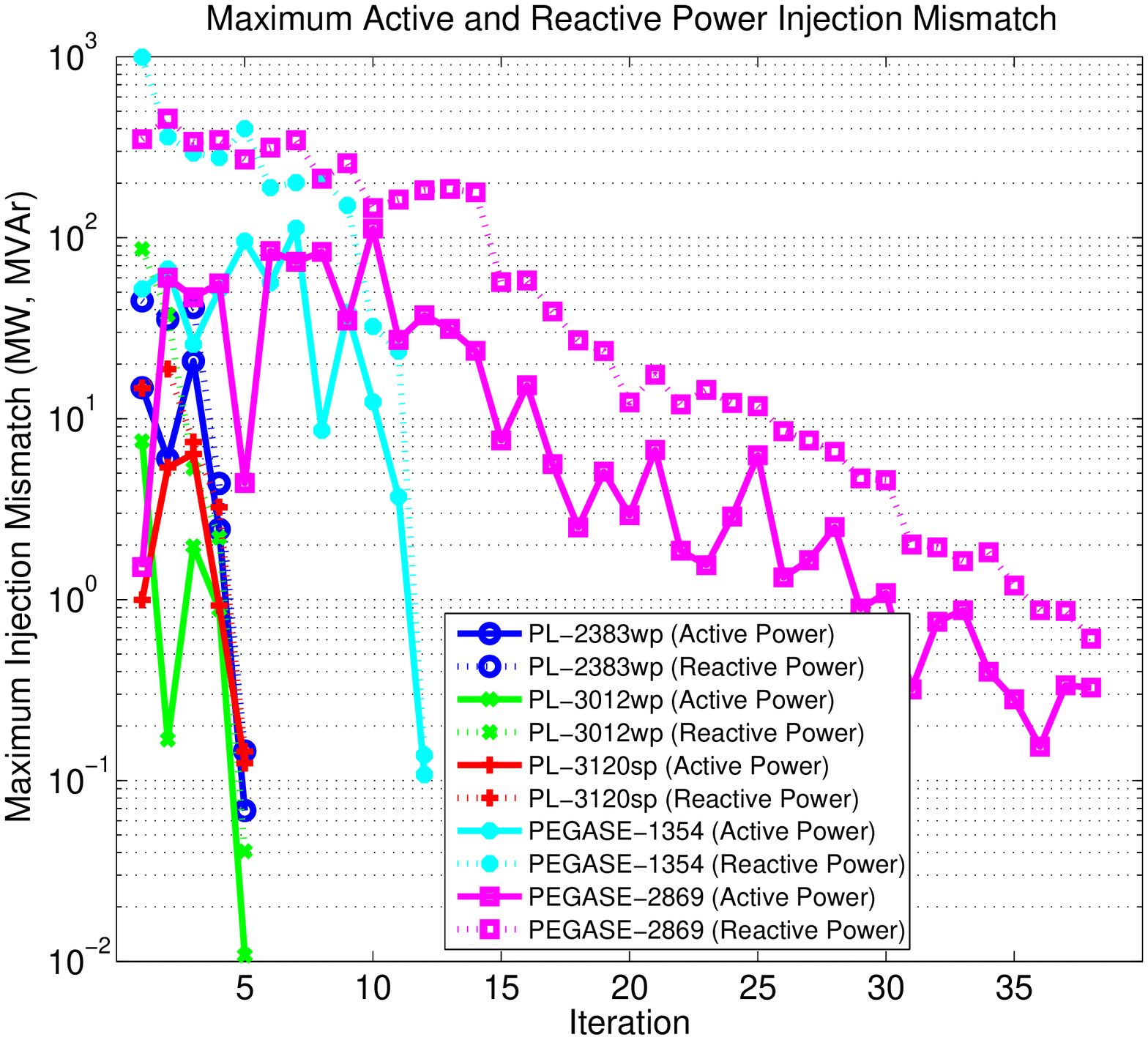}
\caption{Maximum Active and Reactive Power Injection Mismatches versus Iteration of Algorithm~\ref{a:weights} for Active Power Loss Minimizing Test Cases}
\label{f:active_reactive_injection_mis}
\end{figure}

We note that the interior point solver in \matpower{} obtained superior relaxation gaps for the test cases considered in this paper. Within approximately five seconds for the large test cases in Tables~\ref{t:large_gencost_results} and~\ref{t:large_loss_results}, \matpower{} obtained relaxation gaps that ranged from $0.14\%$ to $0.32\%$ smaller than those obtained with Algorithm~\ref{a:weights}.\footnote{Of course, \matpower{} cannot provide any measure of the quality of its solution in terms of a lower bound on the globally optimal objective value whereas Algorithm~\ref{a:weights} provides such guarantees.} This suggests that smaller values of $\delta$ could be used in Algorithm~\ref{a:weights}. Indeed, additional numerical experiments demonstrated that Algorithm~\ref{a:weights} converged with $\delta = 0.25\%$ (half the value used in previous numerical experiments) for all test cases for which the \matpower{} solution indicated that a value of $\delta = 0.25\%$ was achievable.

We select termination parameter values of
  $\epsilon_{flow}$ and $\epsilon_{inj}$ of 1~MVA. This tolerance is
  typically numerically achievable with MOSEK's SDP solver, which
  experience suggests is often a limiting factor to obtaining smaller mismatches.
  
  Note that the
maximum mismatches do not necessarily decrease monotonically with each
iteration of Algorithm~\ref{a:weights}. Figs.~\ref{f:apparent_flow_mis}
and~\ref{f:active_reactive_flow_mis} show the maximum flow mismatches (on a logarithmic scale) for the test cases that minimize active power losses (cf.~Table~\ref{t:large_loss_results}). Likewise, Figs.~\ref{f:apparent_injection_mis} and~\ref{f:active_reactive_injection_mis} show the maximum power injection mismatches for the same test cases. Although the mismatches do not always decrease monotonically, there is a generally decreasing trend which results in satisfaction of the termination criteria for each test case. At each iteration, Algorithm~\ref{a:weights} yields larger reactive power mismatches than active power mismatches for these test cases.

Note that it is not straightforward to compare the computational costs of the Laplacian objective approach and other penalization approaches in the literature~\cite{lavaei_mesh,lavaei_allerton2014}. A single solution of the penalized SDP relaxations in~\cite{lavaei_allerton2014} requires approximately the same computational effort as one iteration of Algorithm~\ref{a:weights}. Thus, if one knows appropriate penalty parameters, the method in~\cite{lavaei_allerton2014} is faster for problems where the SDP relaxation is not exact. However, the key advantage of the proposed approach is that there is no need to specify any parameters other than the value of $\delta$ used in the generation cost constraint. In contrast, the literature largely lacks systematic approaches for identifying appropriate parameter values for the penalization methods in~\cite{lavaei_mesh,lavaei_allerton2014}.

\section{Conclusion}
\label{subsec:Conclusion6}

The SDP relaxation of~\cite{lavaei-low-2012} is capable of globally solving
a variety of OPF problems. To address a broader class of OPF problems,
this paper has described an approach that finds feasible points with
objective values that are within a specified percentage of the global
optimum. Specifically, the approach in this paper adds a constraint to
ensure that the generation cost is within a small specified percentage
of the lower bound obtained from the SDP relaxation. This constraint
frees the objective function to be chosen to yield a \emph{feasible}
(i.e., rank-one) solution rather than a \emph{minimum-cost}
solution. Inspired by previous penalization approaches and results in
the optimization literature, an objective function based on a weighted
Laplacian matrix is selected. The weights for this matrix are
iteratively determined using ``line flow mismatches.'' The proposed
approach is validated through successful application to a variety of
both small and large test cases, including several OPF problems
representing large portions of European power systems. There are,
however, test cases for which the approach takes many iterations to
converge or does not converge at all.

Future work includes modifying the algorithm for choosing the weights
in order to more consistently require fewer iterations. Also, future
work includes testing alternative SDP solution approaches with ``hot
start'' capabilities to improve computational efficiency by leveraging
knowledge of the solution to a ``nearby'' problem from the previous
iteration of the algorithm. Future work also includes extension of the
algorithm to a broader class of OPF problems, such as the test case
WB39mod from~\cite{bukhsh2013} and several examples in~\cite{kocuk15}.

In this chapter and the one before that, two methods were proposed to find nearly global solutions to large-scale OPF problems with generation cost minimization. In the case of active power loss minimization, Lasserre's hierarchy finds the global solution to large-scale problems, provided the data is preprocessed to avoid bad conditioning. The difference between the two cases is that active power loss is a convex function of voltages, whereas generation cost is not. In the next chapter, Lasserre's hierarchy is transposed to complex numbers in order to reduce its computional cost for OPF problems. Moreover, the sparsity exploiting algorithm from~\cite{mh_sparse_msdp} designed for OPF problems is formalized for general polynomial optimization problems with either real or complex variables.

\chapter{Complex hierarchy for enhanced tractability}
\label{sec:Complex hierarchy for enhanced tractability}

We consider the problem of finding the global optimum of a real-valued complex polynomial $f : z \in \mathbb{C}^n \longmapsto \sum_{\alpha,\beta} f_{\alpha,\beta} \bar{z}^\alpha z^\beta \in \mathbb{R}$ ($z^\alpha := z_1^{\alpha_1} \hdots z_n^{\alpha_n} ,~ \overline{f_{\alpha,\beta}} = f_{\beta,\alpha}$) on a compact set defined by real-valued complex polynomial inequalities. It reduces to solving a sequence of complex semidefinite optimization relaxations that grow tighter and tighter thanks to D'Angelo's and Putinar's Positivstellenstatz discovered in 2008. In other words, the Lasserre hierarchy may be transposed to complex numbers. We propose a method for exploiting sparsity and apply the complex hierarchy to problems with several thousand complex variables. These problems consist of computing optimal power flows in the European high-voltage transmission network. The material presented in this chapter is based on the submitted manuscript:
\\\\
\noindent {\scshape C. Josz, D. K. Molzahn}, \textit{Moment/Sum-of-Squares Hierarchy for Complex Polynomial Optimization}, submitted to Society for Industrial and Applied Mathematics, Journal on Optimization. \href{http://arxiv.org/pdf/1508.02068v1.pdf}{[preprint]}

\section{Introduction}
\label{subsec:Introduction7}

Multivariate polynomial optimization where variables and data are complex numbers is a non-deterministic polynomial-time hard problem that arises in various applications such as electric power systems (Section \ref{subsec:Numerical Results7}), imaging science~\cite{singer-2011,candes-2013,bandeira-2014,fogel-2014}, signal processing~\cite{maricic-2003,aittomaki-2009,chen-2009,luo-2010,li-2012,aubry-2013}, automatic control~\cite{toker-1998}, and quantum mechanics~\cite{hilling-2010}. Complex numbers are typically used to model oscillatory phenomena which are omnipresent in physical systems. Although complex polynomial optimization problems can readily be converted into real polynomial optimization problems where variables and data are real numbers, efforts have been made to find \textit{ad hoc} solutions to complex problems~\cite{sorber-2012,jiang-2014,jiang-2015}. The observation that relaxing nonconvex constraints and converting from complex to real numbers are two non-commutative operations motivates our work. This leads us to transpose to complex numbers Lasserre's moment/sum-of-squares hierarchy~\cite{lasserre-2001} for real polynomial optimization.

The moment/sum-of-squares hierarchy succeeds to the vast development of real algebraic geometry during the twentieth century~\cite{prestel-2001}. In 1900, Hilbert's seventeenth problem~\cite{schmudgen-2012} raised the question of whether a non-negative polynomial in multiple real variables can be decomposed as a sum of squares of fractions of polynomials, to which Artin~\cite{artin-1927} answered in the affirmative in 1927. Later, positive polynomials on sets defined by a finite number of polynomial inequality constraints were investigated by Krivine~\cite{krivine-1964}, Stengle~\cite{stengle-1974}, Schm\"udgen~\cite{schmudgen-1991}, and Putinar~\cite{putinar-1993}. A theorem concerning such polynomials is referred to as \textit{Positivstellensatz}~\cite{scheiderer-2009}. For instance, Putinar proved under an assumption slightly stronger than compactness that they can be decomposed as a weighted sum of the constraints where the weights are sums of squares of polynomials. Lasserre~\cite{lasserre-2000,lasserre-2001,lasserre_book} used this result in 2001 to develop a hierarchy of semidefinite programs to solve real polynomial optimization problems with compact feasible sets, with Parrilo~\cite{parrilo-2000b,parrilo-2003} making a similar contribution independently. In order to satisfy the assumption made by Putinar, Lasserre proposed to add a redundant ball constraint $x_1^2 + \hdots + x_n^2 \leqslant R^2$ to the description of the feasible set when it is included in a ball of radius $R$. Subsequent work on the hierarchy includes its comparison with lift-and-project methods~\cite{laurent-2003}, a new proof of Putinar's Positivstellensatz via a 1928 theorem of P\'olya~\cite{schweighofer-2005}, and a proof of generically finite convergence~\cite{nie-2014}.

In 1968, Quillen~\cite{quillen-1968} showed that a real-valued bihomogenous complex polynomial that is positive away from the origin can be decomposed as a sum of squared moduli of holomorphic polynomials when it is multiplied by $(|z_1|^2 + \hdots + |z_{n}|^2)^r$ for some $r\in \mathbb{N}$. The result was rediscovered years later by Catlin and D'Angelo~\cite{catlin-1996} and ignited a search for complex analogues of Hilbert's seventeenth problem~\cite{angelo-2002,angelo-2010} and the ensuing Positivstellens\"atze~\cite{putinar-2006,putinar-2012,putinar-scheiderer-2012,putinar-2013}. Notably, D'Angelo and Putinar~\cite{angelo-2008} proved in 2008 that a positive complex polynomial on a sphere intersected by a finite number of polynomial inequality constraints can be decomposed as a weighted sum of the constraints where the weights are sums of squared moduli of holomorphic polynomials.
Similar to Lasserre, we use D'Angelo's and Putinar's Positivstellensatz to construct a complex moment/sum-of-squares hierarchy of semidefinite programs to solve complex polynomial optimization problems with compact feasible sets. To satisfy the assumption in the Positivstellensatz, we propose to add a slack variable $z_{n+1} \in \mathbb{C}$ and a redundant constraint $|z_1|^2 + \hdots + |z_{n+1}|^2 = R^2$ to the description of the feasible set when it is in a ball of radius $R$. The complex hierarchy is more tractable than the real hierarchy yet produces potentially weaker bounds. Computational advantages are shown using the optimal power flow problem in electrical engineering.

Below, Section \ref{subsec:Motivation} uses Shor and second-order conic relaxations to motivate the construction of a complex moment/sum-of-squares hierarchy in Section \ref{sec:Complex Moment/Sum-of-Squares Hierarchy}. Using a sparsity-exploiting method, numerical experiments on the optimal power flow problem are presented in Section \ref{subsec:Numerical Results7}. Section \ref{sec:Conclusion7} concludes our work.

\section{Motivation}
\label{subsec:Motivation}
Let $\mathbb{N}$, $\mathbb{N}^*$, $\mathbb{R}$, $\mathbb{R}_+$ and $\mathbb{C}$ denote the set of natural, positive natural, real, non-negative real, and complex numbers respectively. Also, let ``$\textbf{i}$'' denote the imaginary unit and $\mathbb{H}_n$ denote the set of Hermitian matrices of order $n\in \mathbb{N}^*$. Let's begin with the subclass of complex polynomial optimization composed of quadratically-constrained quadratic programs
\begin{subequations}
\label{eq:qcqp}
\begin{gather}
\text{QCQP-}\mathbb{C}~\text{:} ~~~
\inf_{z \in \mathbb{C}^n}~ z^H H_0 z, ~~~~~~~~~~~~~~~~~~~~~~~~~~~~~~~~~~~~~~ \label{eq:qcqpC1} \\
~~~~~~~~ \text{s.t.} ~~
z^H H_i z \leqslant h_i,~~~~ i=1, \hdots, m, \label{eq:qcqpC2}
\end{gather}
\end{subequations}
where $m \in \mathbb{N}^*$, $H_0,\hdots,H_m \in \mathbb{H}_n$, $h_0,\hdots,h_m \in \mathbb{R}$, and $\left(\cdot\right)^H$ denotes the conjugate transpose. The feasible set is not assumed to contain a point (i.e. it may be empty). The Shor~\cite{shor-1987b} and second-order conic relaxations of QCQP-$\mathbb{C}$ share the following property: it is better to relax nonconvex constraints before converting from complex to real numbers rather than to do the two operations in the opposite order.

\subsection*{Shor relaxation}
\label{subsec:Shor Relaxation}
For $H \in \mathbb{H}_n$ and $z \in \mathbb{C}^n$, the relationship $z^H H z = \text{Tr}(H z z^H)$ holds where $\text{Tr}\left(\cdot\right)$ denotes the trace\footnote{For all matrices $A,B \in \mathbb{C}^{n\times n}$, $\text{Tr}(AB) = \sum_{1 \leqslant i,j \leqslant n} A_{ij} B_{ji}$.} of a complex square matrix. Relaxing the rank of $Z = zz^H$ in $\eqref{eq:qcqp}$ yields 
\begin{subequations}
\begin{gather}
\text{SDP-}\mathbb{C}~\text{:} ~~~
\inf_{Z \in \mathbb{H}_n}~ \text{Tr}(H_0 Z), ~~~~~~~~~~~~~~~~~~~~~ \label{eq:sdpC1} \\
~~~~~~~~~~~~~~~~~~~~~~\text{s.t.} ~~~
\text{Tr}(H_i Z) \leqslant h_i,~~~~ i=1, \hdots, m,\\
Z \succcurlyeq 0, \label{eq:sdpC2} ~
\end{gather}
\end{subequations}
\noindent where $\succcurlyeq 0$ indicates positive semidefiniteness.

Let $\text{Re}Z$ and $\text{Im}Z$ denote the real and imaginary parts of the matrix $Z\in \mathbb{C}^{n\times n}$ respectively. Consider the ring homomorphism $\Lambda : (\mathbb{C}^{n\times n},+,\times) \longrightarrow (\mathbb{R}^{2n\times 2n},+,\times)$ defined by
\begin{equation}
\label{eq:conversion}
\Lambda(Z) :=
\left( \begin{array}{cr}
\text{Re}Z & - \text{Im} Z \\
\text{Im}Z & \text{Re}Z
\end{array} 
\right),
\end{equation}
whose relevant properties are proven in Appendix \ref{app:Ring Homomorphism}. To convert \sdpc{} into real numbers, real and imaginary parts of the complex matrix variable are identified using two properties: (1) a complex matrix $Z$ is positive semidefinite if and only if the real matrix $\Lambda(Z)$ is positive semidefinite, and (2) if $Z_1,Z_2 \in \mathbb{H}_n$, then $\text{Tr}\left[\Lambda(Z_1)\Lambda(Z_2)\right] = \text{Tr}\left[\Lambda(Z_1Z_2)\right]  = 2\text{Tr}(Z_1 Z_2)$. 
This yields the converted problem
\begin{subequations}
\begin{gather}
\text{CSDP-}\mathbb{R}~\text{:} ~~~ \inf_{X \in \mathbb{S}_{2n}} ~ \text{Tr}( \Lambda(H_0) X), ~~~~~~~~~~~~~~~~~~~~~~~~ \label{eq:csdpR1} \\
~~~~~~~~~~~~~~~~~~~~~~\text{s.t.} ~~~
\text{Tr}( \Lambda(H_i) X) \leqslant h_i,~~~~ i=1, \hdots, m, \label{eq:csdpR2}
\\
X  \succcurlyeq 0, \label{eq:csdpR3} ~~~~~~
\\ ~~~~~~~~~~~~~~~~~~~~~~~~~~~~~~~~~~~~~
X = 
\left(
\begin{array}{cl}
A & B^T \\
B & C
\end{array}
\right) ~~~\&~~~ 
\begin{array}{lcr}
A & = & C, \\
B^T & = & -B,
\end{array}
\label{eq:csdpR4}
\end{gather}
\end{subequations}
where $\mathbb{S}_{2n}$ denotes the set of real symmetric matrices of order~$2n$ and $\left(\cdot\right)^T$ indicates the transpose. Note that the set of matrices satisfying \eqref{eq:csdpR4} is isomorphic to $\mathbb{C}^{n \times n}$. A global solution to \qcqp{} can be retrieved from \csdpr{} if and only if $\text{rank}(X)\in \{0,2\}$ at optimality (proof in Appendix \ref{app:Rank-2 Condition}).

In order to convert \qcqp{} into real numbers, real and imaginary parts of the complex vector variable are identified. This is done by considering a new variable $x = \left( ~ (\text{Re}z)^T ~ (\text{Im}z)^T ~ \right)^T $ and observing that if $H \in \mathbb{H}_n$, then $z^H H z = x^T \Lambda(H) x = \text{Tr}(\Lambda(H)xx^T)$. This gives rise to a problem which we will call QCQP-$\mathbb{R}$. Relaxing the rank of $X=xx^T$ yields
\begin{subequations}
\begin{gather}
\text{SDP-}\mathbb{R}~\text{:} ~~~ \inf_{X \in \mathbb{S}_{2n}} ~ \text{Tr}( \Lambda(H_0) X), ~~~~~~~~~~~~~~~~~~  \label{eq:sdpR1} \\
~~~~~~~~~~~~~~~~~~~~~~~~~~\text{s.t.} ~~~
\text{Tr}( \Lambda(H_i) X) \leqslant h_i, ~~~~ i=1, \hdots, m, \label{eq:sdpR2}
\\
X  \succcurlyeq 0. \label{eq:sdpR3} ~~~ 
\end{gather}
\end{subequations}

A global solution to \qcqp{} can be retrieved from \sdpr{} if and only if $\text{rank}(X)\in \{0,1\}$ or $\text{rank}(X)=2$ and \eqref{eq:csdpR4} holds at optimality.

We have $\text{val}(\text{SDP-}\mathbb{C}) = \text{val}(\text{CSDP-}\mathbb{R}) = \text{val}(\text{SDP-}\mathbb{R})$
where ``val'' is the optimal value of a problem (proof in Appendix \ref{app:Invariance of Shor Relaxation Bound}).
The number of scalar variables of \csdpr{} is half that of \sdpr{} due to constraint \eqref{eq:csdpR4}. This constraint also halves the possible ranks of the matrix variable, which must be an even integer in \csdpr{} whereas it can be any integer between 0 and $2n$ in \sdpr{}. The number of variables in \sdpr{} can be reduced by a small fraction ($\frac{2}{2n+1}$ to be exact) by setting a diagonal element of $X$ to 0. This does not affect the optimal value (proof in Appendix \ref{app:Invariance of SDP-R Relaxation Bound}). Figure~\ref{fig:commutation} summarizes this section.
\begin{figure}[ht]
  \centering
    \includegraphics[width=.65\textwidth]{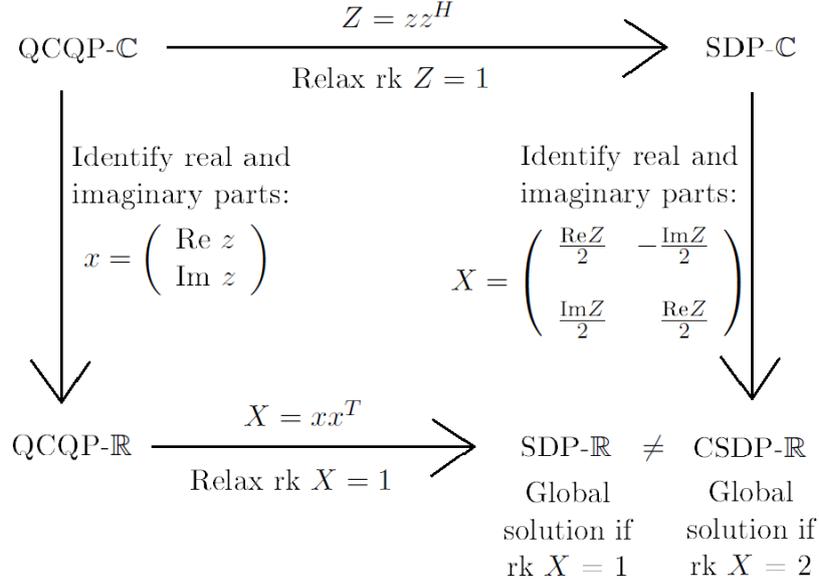}
  \caption{Non-Commutativity of Relaxation and Complex-to-Real Conversion}
  \label{fig:commutation}
\end{figure}

\subsection*{Second-order conic relaxation}
\label{subsec:Second-Order Conic Relaxation}
In \sdpc{}, assume that the semidefinite constraint \eqref{eq:sdpC2} is relaxed to the second-order cones
\begin{equation}
\left(
\begin{array}{cc}
Z_{ii} & Z_{ij} \\
Z_{ij}^H & Z_{jj}
\end{array}
\right)
 \succcurlyeq 0 ~~,~~ 1 \leqslant i \neq j \leqslant n. \label{eq:2x2}
\end{equation}
Equation \eqref{eq:2x2} is equivalent to constraining the determinant $Z_{ii} Z_{jj} - Z_{ij} Z_{ij}^H$ and diagonal elements $Z_{ii}$ to be non-negative. This yields
\begin{subequations}
\begin{gather}
\text{SOCP-}\mathbb{C}~\text{:} ~~~ \inf_{Z \in \mathbb{H}_n}~ \text{Tr}(H_0 Z), ~~~~~~~~~~~~~~~~~~~~~~ \label{eq:socpC1} \\
~~~~~~~~~~~~~~~~~~~~~~~~\text{s.t.} ~~~
\text{Tr}(H_i Z) \leqslant h_i, ~~~~ i=1, \hdots, m, \label{eq:socpC2}\\
~~~~~~~~~~~~~~~~~~~~~~~~~~~~~~~~~~~~|Z_{ij}|^2 \leqslant Z_{ii} Z_{jj}, ~~~~~ 1 \leqslant i \neq j \leqslant n, \label{eq:socpC3} \\
~~~~~~~~~~~~~~~~~~~~~~~~~~~~~~ Z_{ii} \geqslant 0, ~~~~~~~~~~~ i=1, \hdots, n, \label{eq:socpC4}
\end{gather}
\end{subequations}
where $|\cdot|$ denotes the complex modulus.
Identifying real and imaginary parts of the matrix variable $Z$ leads to 
\begin{subequations}
\begin{gather}
\text{CSOCP-}\mathbb{R}~\text{:} ~~~
\inf_{X \in \mathbb{S}_{2n}} ~ \text{Tr}( \Lambda(H_0) X), ~~~~~~~~~~~~~~~~~ \label{eq:csocpR1} ~~~~~ \\
~~~~~~~~~~~~~~~~~~~~~~~~~~~\text{s.t.} ~~~
\text{Tr}( \Lambda(H_i) X) \leqslant h_i, ~~~~ i=1, \hdots, m, \label{eq:csocpR2}
\\
~~~~~~~~~~~~~~~~~~~~~~~~~~~~~~~~~~~~~~~~~~~X_{ij}^2 + X_{n+i,j}^2 \leqslant X_{ii} X_{jj}, ~~~~ 1 \leqslant i \neq j \leqslant n, \label{eq:csocpR3} \\
~~~~~~~~~~~~~~~~~~~~~~~~~~~~~~~~~~X_{ii} + X_{n+i,n+i} \geqslant 0, ~~ i=1, \hdots, n, \label{eq:csocpR4} \\
~~~~~~~~~~~~~~~~~~~~~~~~~~~~~~~~~~~~~~~~~~~~X = 
\left(
\begin{array}{cl}
A & B^T \\
B & C
\end{array}
\right) ~~~\&~~~ 
\begin{array}{lcr}
A & = & C, \\
B^T & = & -B.
\end{array} \label{eq:csocpR5}
\end{gather}
\end{subequations}
In \sdpr{} of Section, assume that the semidefinite constraint \eqref{eq:sdpR3} is relaxed to the second-order cones
\begin{equation}
\left(
\begin{array}{cc}
X_{ii} & X_{ij} \\
X_{ij} & X_{jj}
\end{array}
\right)
 \succcurlyeq 0 ~~,~~ 1 \leqslant i \neq j \leqslant 2n.
\end{equation}
This leads to
\begin{subequations}
\begin{gather}
\text{SOCP-}\mathbb{R}~\text{:} ~~~ \inf_{X \in \mathbb{S}_{2n}} ~ \text{Tr}( \Lambda(H_0) X), ~~~~~~~~~~~~~~~~~~~~~~~~~ \label{eq:socpR1} \\
~~~~~~~~~~~~~~~~~~~~~\text{s.t.} ~~~
\text{Tr}( \Lambda(H_i) X) \leqslant h_i, ~~~~ i=1, \hdots, m, \label{eq:socpR2} 
\\
~~~~~~~~~~~~~~~~~~~~~~~~~~~ X_{ij}^2 \leqslant X_{ii} X_{jj}, ~~~~ 1 \leqslant i \neq j \leqslant 2n, \label{eq:socpR3} \\
~~~~~~~~~~~~~~~~ X_{ii} \geqslant 0, ~~~ i=1, \hdots, 2n. \label{eq:socpR4}
\end{gather}
\end{subequations}

Unlike in the Shor relaxation, we have
$
\text{val}(\text{SOCP-}\mathbb{C}) = \text{val}(\text{CSOCP-}\mathbb{R}) \geqslant \text{val}(\text{SOCP-}\mathbb{R})
$ (proof in Appendix \ref{app:Discrepancy Between Second-Order Conic Relaxation Bounds}).
The number of scalar variables of \csocpr{} is half that of \socpr{} due to constraint \eqref{eq:csocpR5}. The number of second-order conic constraints in \csocpr{}, equal to $\frac{n(n-1)}{2}$, is roughly a fourth of that in \socpr{}, equal to $\frac{2n(2n-1)}{2}$.

\subsection*{Exploiting sparsity}

\label{subsec:Exploiting sparsity}

Given an undirected graph $(\mathcal{V},\mathcal{E})$ where $\mathcal{V} \subset \{1,\hdots,n\}$ and $\mathcal{E}\subset \mathcal{V} \times \mathcal{V}$, define for all $Z \in \mathbb{H}_n$
\begin{equation}
\Psi_{(\mathcal{V},\mathcal{E})}(Z)_{ij} := \left\{ \begin{array}{cl} Z_{ij} & \text{if} ~~ (i,j) \in \mathcal{E} ~~ \text{or} ~~ i = j \in \mathcal{V}, \\ 0 & \text{else}. \end{array} \right.
\end{equation}

We associate an undirected graph $\mathcal{G}$ to \qcqp{} whose nodes are $\{1,\hdots,n\}$ and that satisfies 
$H_i = \Psi_\mathcal{G}(H_i) $ for $i = 0, \hdots, m$. Let $ \mathbb{H}_n^+$ denote the set of positive semidefinite Hermitian matrices of size $n$ and let ``Ker'' denote the kernel of a linear application. Given the definition of $\mathcal{G}$, constraint~\eqref{eq:sdpC2} of \sdpc{} can be relaxed to 
$
Z \in \mathbb{H}_n^+ + \text{Ker} ~ \Psi_{\tilde{\mathcal{G}}}
$
without changing its optimal value for any graph $\tilde{\mathcal{G}}$ whose nodes are $\{1,\hdots,n\}$ and where $\mathcal{G} \subset \tilde{\mathcal{G}}$. 
Consider a chordal extension $\mathcal{G}\subset \mathcal{G}^\text{ch}$, that is to say that all cycles of length four or more have a chord (edge between two non-consecutive nodes of the cycle). Let $\mathcal{C}_1,\hdots,\mathcal{C}_p \subset \mathcal{G}^\text{ch}$ denote the maximal cliques of $\mathcal{G}^\text{ch}$. (A clique is a subgraph where all nodes are linked to one another. The set of maximally sized cliques of a given graph can be computed in linear time~\cite{tarjan}). A chordal extension has a useful property for exploiting sparsity \cite{grone}: for all $Z \in \mathbb{H}_n$, we have that $Z \in \mathbb{H}_n^+ + \text{Ker}~ \Psi_{\mathcal{G}^\text{ch}}$ if and only if $\Psi_{\mathcal{C}_i}(Z) \succcurlyeq 0$ for $i=1,\hdots, p$. Note that $\Psi_{\mathcal{C}_i}(Z) \succcurlyeq 0$ if and only if
$\Lambda \circ \Psi_{\mathcal{C}_i}(Z) \succcurlyeq 0$, where ``$\circ$'' is the composition of functions. Given a graph $(\mathcal{V},\mathcal{E})$, define for $X \in \mathbb{S}_{2n}$
\begin{equation}
\tilde{\Psi}_{(\mathcal{V},\mathcal{E})}(X) :=
\left(
\begin{array}{cl}
\Psi_{(\mathcal{V},\mathcal{E})}(A) & \Psi_{(\mathcal{V},\mathcal{E})}(B^T)  \\
\Psi_{(\mathcal{V},\mathcal{E})}(B)  & \Psi_{(\mathcal{V},\mathcal{E})}(C) 
\end{array}
\right),
\end{equation}
using the block decomposition in the left hand part of \eqref{eq:csdpR4}. Notice that $\Lambda \circ \Psi_{(\mathcal{V},\mathcal{E})}= \tilde{\Psi}_{(\mathcal{V},\mathcal{E})} \circ \Lambda$. As a result, \eqref{eq:csdpR3} can be replaced by
$ \tilde{\Psi}_{\mathcal{C}_i} (X) \succcurlyeq 0$ for $i = 1,\hdots, p$ without changing the optimal value of \csdpr{}, with an analogous replacement for constraint \eqref{eq:sdpR3} in \sdpr{}. If in \sdpr{} we exploit the sparsity of matrices $\Lambda(H_i)$ instead of that of $H_i$, the resulting graph has twice as many nodes. Computing a chordal extension and maximal cliques is hence more costly.

Sparsity in the second-order conic relaxations is exploited using the fact that applying \eqref{eq:csocpR3} and \eqref{eq:socpR3} only for $(i,j)$ that are edges of $\mathcal{G}$ does not change the optimal values of \csocpr{} and \socpr{}.

\section{Complex moment/sum-of-squares hierarchy} 
\label{sec:Complex Moment/Sum-of-Squares Hierarchy}
We now transpose the work of Lasserre~\cite{lasserre-2001} from real to complex numbers.
Let $z^\alpha$ denote the monomial $z_1^{\alpha_1} \cdots z_n^{\alpha_n}$ where $z \in {\mathbb C}^n$ and $\alpha \in {\mathbb N}^n$ for some integer $n \in \mathbb N^*$. Define $|\alpha| := \alpha_1+\hdots+\alpha_n$ and $\overline{w}$ as the conjugate of $w \in \mathbb{C}$. Define $\bar{z} := (\bar{z}_1,\hdots,\bar{z}_n)^T$ where $z \in {\mathbb C}^n$.
Consider the sets
\begin{equation}
\label{eq:sets}
\begin{array}{rl}
\mathbb{C}[z] := & \{ ~ p : \mathbb{C}^n \rightarrow \mathbb{C} ~|~ p(z) = \sum_{|\alpha|\leqslant l} p_{\alpha} z^\alpha, ~ l\in \mathbb{N}, ~ p_{\alpha} \in \mathbb{C}  ~ \}, \\
\mathbb{C}[\bar{z},z]  := & \{ ~ f : \mathbb{C}^n \rightarrow \mathbb{C} ~|~ f(z) = \sum_{|\alpha|,|\beta|\leqslant l} f_{\alpha,\beta} \bar{z}^\alpha z^\beta,~ l\in \mathbb{N}, ~ f_{\alpha,\beta} \in \mathbb{C} ~ \}, \\
\mathbb{R}[\bar{z},z]  := & \{ ~ f \in \mathbb{C}[\bar{z},z] ~|~ \overline{f(z)} = f(z), ~ \forall z \in \mathbb{C}^n ~ \}, \\[0.25em]
\Sigma[z] := & \{ ~ \sigma : \mathbb{C}^n \rightarrow \mathbb{C} ~|~ \sigma = \sum_{j=1}^r |p_j|^2, ~ r\in \mathbb{N}^*, ~ p_j \in \mathbb{C}[z] ~ \},
\end{array}
\end{equation}
and for all $d\in \mathbb{N}$
\begin{equation}
\label{eq:setsd}
\begin{array}{rl}
\mathbb{C}_d[z] := & \{ ~ p : \mathbb{C}^n \rightarrow \mathbb{C} ~|~ p(z) = \sum_{|\alpha|\leqslant d} p_{\alpha} z^\alpha, ~ p_{\alpha} \in \mathbb{C}  ~ \}, \\
\mathbb{C}_d[\bar{z},z] := & \{ ~ f : \mathbb{C}^n \rightarrow \mathbb{C} ~|~ f(z) = \sum_{|\alpha|,|\beta|\leqslant d} f_{\alpha,\beta} \bar{z}^\alpha z^\beta, ~ f_{\alpha,\beta} \in \mathbb{C}  ~ \}, \\
\mathbb{R}_d[\bar{z},z]  := & \{ ~ f \in \mathbb{C}_d[\bar{z},z] ~|~ \overline{f(z)} = f(z), ~ \forall z \in \mathbb{C}^n ~ \}, \\[0.25em]
\Sigma_d[z] := & \{ ~ \sigma : \mathbb{C}^n \rightarrow \mathbb{C} ~|~ \sigma = \sum_{j=1}^r |p_j|^2, ~ r\in \mathbb{N}^*, ~ p_j \in \mathbb{C}_d[z] ~ \}.
\end{array}
\end{equation}
Note that the coefficients of a function $f \in \mathbb{R}[\bar{z},z]$ satisfy $\overline{f_{\alpha,\beta}} = f_{\beta,\alpha}$ for all $|\alpha|,|\beta|\leqslant l$ for some $l \in \mathbb{N}$. The set of complex polynomials $\mathbb{C}[\bar{z},z]$ is a $\mathbb{C}$-algebra (i.e. commutative ring and vector space over $\mathbb{C}$) 
and the set of holomorphic polynomials $\mathbb{C}[z]$ is a subalgebra of it (i.e. subspace closed under sum and product). The set of real-valued complex polynomials $\mathbb{R}[\bar{z},z]$ is an $\mathbb{R}$-algebra. The set of sums of squared moduli of holomorphic polynomials $\Sigma[z]$ and the set $\Sigma_d[z] \subset \mathbb{R}_d[z]$ are pointed cones (i.e. closed under multiplication by elements of $\mathbb{R}_+$) that are convex (i.e. $tu+ (1-t)v$ with $0\leqslant t \leqslant 1$ belongs to them if $u$ and $v$ do). 
Let $C(K,\mathbb{C})$ denote the Banach (i.e. complete) $\mathbb{C}$-algebra of continuous functions from a compact set $K \subset \mathbb{C}^n$ to $\mathbb{C}$ equipped with the norm $\|\varphi\|_{\infty} := \sup_{z\in K} |\varphi(z)|$. Consider $R_K : \mathbb{C}[\bar{z},z] \longrightarrow C(K,\mathbb{C})$ defined by  $f \longmapsto f_{|K}$
where $f_{|K}$ denotes the restriction of $f$ to $K$. $R_K(\mathbb{C}[\bar{z},z])$ is a unital subalgebra of $C(K,\mathbb{C})$ (i.e. contains multiplicative unit) that separates points of $K$ (i.e. $u\neq v \in K \Longrightarrow \exists \varphi \in R_K(\mathbb{C}[\bar{z},z]): \varphi(u)\neq \varphi(v)$) and that is closed under complex conjugation. It is hence a dense subalgebra due to the Complex Stone-Weiestrass Theorem.
Likewise, $C(K,\mathbb{R}) := \{ \varphi \in C(K,\mathbb{C}) ~|~ \overline{\varphi(z)} = \varphi(z), ~ \forall z \in \mathbb{C}^n  \}$ is a Banach $\mathbb{R}$-algebra of which $R_K(\mathbb{R}[\bar{z},z])$ is a dense subalgebra.
In other words, a continuous real-valued function of multiple complex variables can be approximated as close as desired by real-valued complex polynomials when restricted to a compact set. They are hence a powerful modeling tool in optimization. Speaking of which, let $m \in \mathbb{N}^*$ and $k,k_1,\hdots,k_m \in \mathbb{N}$. Consider $(f,g_1,\hdots,g_m) \in \mathbb{R}_k[\bar{z},z] \times \mathbb{R}_{k_1}[\bar{z},z] \times \hdots \times \mathbb{R}_{k_m}[\bar{z},z]$ where there exists $|\alpha|=k$ and $|\beta| \leqslant k$ such that $f_{\alpha,\beta} \neq 0$. In addition, for $i=1,\hdots,m$, there exists $|\alpha|=k_i$ and $|\beta| \leqslant k_i$ such that $g_{i,\alpha,\beta} \neq 0$. Consider the complex multivariate polynomial optimization problem
\begin{equation}
\boxed{
\label{eq:complexPOP}
\begin{array}{rcllll}
f^\text{opt} & := & \inf_{z \in {\mathbb C}^n} & f(z) & \mathrm{s.t.} & g_i(z) \geqslant 0,~~i=1,...,m,
\end{array}
}
\end{equation}
where by convention $f^\text{opt} := + \infty$ if the feasible set is empty.
The feasible set is a closed semi-algebraic set on which we make the following assumption from now on:
\begin{equation}
\boxed{
\label{eq:assumption}
K:= \{ ~ z \in \mathbb{C}^n ~|~ g_i(z) \geqslant 0,~i=1,...,m ~ \} ~~ \text{is compact.}
}
\end{equation}
Let $K^\text{opt}$ denote the set of optimal solutions to \eqref{eq:complexPOP}.
It may be empty because we do not assume $K$ to be non-empty. (Note that in practice, it is often hard to know whether there exists a feasible solution, as for the application of Section \ref{subsec:Numerical Results7}.) 

Let $\mathcal{M}(K)$ denote the Banach space over $\mathbb{R}$ of Radon measures on $K$. Bear in mind that since $K$ is compact, $\mathcal{M}(K)$ may be identified with the topological dual of $C(K,\mathbb{R})$ i.e. the Banach space over $\mathbb{R}$ of linear continuous applications from $C(K,\mathbb{R})$ to $\mathbb{R}$ equipped with the operator norm. (This is due to the Riesz-Markov-Kakutani Representation Theorem.) For $\varphi \in C(K,\mathbb{C})$, define $\int_K \varphi d\mu := \int_K \text{Re}(\varphi)d\mu + \textbf{i} \int_K \text{Im}(\varphi)d\mu$~\cite[1.31 Definition]{rudin-1987}\footnote{We wish to thank Bruno Nazaret for bringing this reference to our attention.}. Next, consider the convex pointed cone 
$
\mathcal{P}(K) := \{ ~ \varphi \in C(K,\mathbb{R}) ~|~ \varphi(z) \geqslant 0,~\forall z\in K ~ \}$.
A Radon measure $\mu$ is positive (denoted $\mu \geqslant 0$) if $\varphi \in \mathcal{P}(K)$ implies that $\int_K \varphi d\mu \geqslant 0$. Let $\mathcal{M}_+(K)$ denote the set of positive Radon measures. 
With these definitions, we have
\begin{equation}
\label{eq:moment}
\begin{array}{rcllll}
f^\text{opt} & = & \inf_{\mu \in \mathcal{M}(K)}  & \int_K f d\mu & \mathrm{s.t.} & \int_K d\mu = 1 ~~\&~~ \mu \geqslant 0.
\end{array}
\end{equation}
Indeed, if $z \in K$, then the Dirac\footnote{The Dirac measure $\delta_z$ with $z\in K$ may be identified with the continuous linear application from $C(K,\mathbb{R})$ to $\mathbb{R}$ defined by $\varphi \longmapsto \varphi(z)$. This is one way to interpret the fact that $\int_K f d\delta_z = f(z)$.} 
measure $\delta_z$ is a feasible point of \eqref{eq:moment} for which the objective value is equal to $f(z)$. Hence the optimal value of \eqref{eq:moment} is less than or equal to $f^\text{opt}$. Conversly, if $\mu$ is a feasible point of \eqref{eq:moment}, then $\int_K (f - f^\text{opt})d\mu \geqslant 0$ and hence $\int_K f d\mu \geqslant \int_K f^\text{opt} d\mu = f^\text{opt} \int_K d\mu = f^\text{opt}$. 
\begin{proposition}
\label{prop:measure}
\normalfont
\textit{The set of optimal solutions to \eqref{eq:moment} is}
\begin{equation}
\label{eq:solutions}
\{ ~ \mu \in \mathcal{M}_+(K) ~|~ \mu(K^\text{opt}) = 1 ~~ \& ~~ \mu(K \setminus K^\text{opt}) = 0 ~ \}.
\end{equation}
\textit{As a consequence, if }$K^\text{opt}$\textit{ is a finite set of $S\in \mathbb{N}^*$ points $z(1),\hdots,z(S) \in \mathbb{C}^n$, then the set optimal solutions to \eqref{eq:moment} is} $\{ ~ \sum_{j=1}^S \lambda_j \delta_{z(j)} ~ | ~ \sum_{j=1}^S \lambda_j = 1 ~~  \& ~~ \lambda_1,\hdots,\lambda_S \in \mathbb{R}_+ \}$.
\end{proposition}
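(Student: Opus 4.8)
The plan is to prove the characterization by a double inclusion, built around the single nonnegative continuous function $h := f - f^\text{opt}$ on $K$, which vanishes exactly on $K^\text{opt}$ and is strictly positive on $K \setminus K^\text{opt}$. Here I use that when $K \neq \emptyset$ its compactness together with continuity of $f$ forces $f^\text{opt}$ to be finite and $K^\text{opt}$ to be nonempty; when $K = \emptyset$ both the set \eqref{eq:solutions} and the optimal set are empty, so there is nothing to prove. Throughout I rely on the fact, recalled before the proposition, that the optimal value of \eqref{eq:moment} equals $f^\text{opt}$, and on the identification (Riesz--Markov--Kakutani) of a positive Radon measure with a regular, countably additive Borel measure.

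First I would show that every $\mu$ lying in \eqref{eq:solutions} is optimal. Such a $\mu$ is positive with total mass $\mu(K) = \mu(K^\text{opt}) + \mu(K \setminus K^\text{opt}) = 1 + 0 = 1$, hence feasible for \eqref{eq:moment}. Splitting the integral and using $f \equiv f^\text{opt}$ on $K^\text{opt}$ together with $\mu(K \setminus K^\text{opt}) = 0$ gives $\int_K f\, d\mu = f^\text{opt}\,\mu(K^\text{opt}) = f^\text{opt}$, so $\mu$ attains the optimal value.

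Next, for the reverse inclusion, I would take an arbitrary optimal $\mu$, so that feasibility and optimality yield $\int_K h\, d\mu = \int_K f\, d\mu - f^\text{opt}\int_K d\mu = 0$. The key step is to deduce, from $h \geq 0$, $\mu \geq 0$ and $\int_K h\, d\mu = 0$, that $\mu$ places no mass on $\{h > 0\} = K \setminus K^\text{opt}$. I would slice this set as the increasing union of the closed (hence Borel) sets $A_n := \{z \in K : h(z) \geq 1/n\}$ for $n \geq 1$. On $A_n$ one has $h \geq 1/n$, so $0 = \int_K h\, d\mu \geq \int_{A_n} h\, d\mu \geq \tfrac{1}{n}\mu(A_n) \geq 0$, which forces $\mu(A_n) = 0$ for every $n$. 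Continuity from below of $\mu$ then gives $\mu(K \setminus K^\text{opt}) = \lim_n \mu(A_n) = 0$, whence $\mu(K^\text{opt}) = \mu(K) - \mu(K \setminus K^\text{opt}) = 1$, placing $\mu$ in \eqref{eq:solutions}.

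Finally, the corollary for a finite $K^\text{opt} = \{z(1), \ldots, z(S)\}$ follows by specialization: any $\mu$ supported on this finite set is determined by its atoms $\lambda_j := \mu(\{z(j)\}) \geq 0$, giving $\mu = \sum_{j=1}^S \lambda_j \delta_{z(j)}$ with $\sum_{j=1}^S \lambda_j = \mu(K^\text{opt}) = 1$, and conversely every such convex combination is positive, of unit mass, and supported on $K^\text{opt}$, hence lies in \eqref{eq:solutions}. I expect the main obstacle to be the measure-theoretic bookkeeping in the reverse inclusion --- justifying that the abstractly defined positive functional $\mu$ is a genuine countably additive Borel measure, so that the sublevel slicing, the values $\mu(A_n)$, and continuity from below are all legitimate; the remaining arguments are routine integral splittings.
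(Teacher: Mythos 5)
Your proof is correct and follows essentially the same route as the paper's: a double inclusion hinging on the identity $\int_K (f - f^\text{opt})\,d\mu = 0$ for any optimal $\mu$, combined with the splitting of integrals over $K^\text{opt}$ and $K \setminus K^\text{opt}$. The only differences are that you carefully justify the step the paper asserts without proof (that a positive measure integrating the nonnegative function $f - f^\text{opt}$ to zero places no mass on $K \setminus K^\text{opt}$, via the slicing $A_n = \{h \geq 1/n\}$ and continuity from below) and that you spell out the finite-$K^\text{opt}$ consequence, which the paper's proof leaves implicit; both additions are sound and strengthen the write-up rather than diverging from it.
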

\begin{proof}
Consider $\mu$ an optimal solution to \eqref{eq:moment}. It must be that $\int_K (f-f^\text{opt})d\mu = 0$. Thus $\int_{K\setminus K^\text{opt}} (f-f^\text{opt}) d\mu = 0$ and $\mu(K \setminus K^\text{opt})  = \int_{K\setminus K^\text{opt}} d\mu = 0$. Therefore $\mu(K^\text{opt}) =  \int_{K^\text{opt}} d\mu = \mu(K) - \mu(K \setminus K^\text{opt}) =1$. Conversly, if $\mu$ belongs to the set in \eqref{eq:solutions}, then it is feasible for \eqref{eq:moment} and $\int_K (f - f^\text{opt}) d\mu = \int_{ K \setminus K^\text{opt} } (f - f^\text{opt}) d\mu = 0$. Hence $\int_K fd\mu=\int_K f^\text{opt} d\mu = f^\text{opt}\int_K d\mu = f^\text{opt}$.
\end{proof}

In order to dualize the equality constraint in \eqref{eq:moment}, consider the Lagrange function $\mathcal{L} : \mathcal{M}_+(K) \times \mathbb{R} \longrightarrow \mathbb{R}$ defined by
$
(\mu,\lambda) \longmapsto \int_K f d\mu + \lambda \left(1-\int_K d\mu \right)
$.
We have $\mathcal{L}(\mu,\lambda)  =  \lambda + \int_K (f - \lambda) d\mu$ and
\begin{equation}
\inf_{\mu \in \mathcal{M}_+(K)} \int_K (f- \lambda) d\mu =
\left\{
\begin{array}{cl}
\hphantom{-}0 & \text{if} ~ f(z)-\lambda \geqslant 0, ~~~ \forall z \in K, \\
- \infty & \text{else},
\end{array}
\right.
\end{equation}
since, in the second case, we may consider $t \delta_z$ for a $z\in K$ such that $f(z)-\lambda< 0$ and $t \rightarrow + \infty$.
This leads to the dual problem
\begin{equation}
\label{eq:sos}
\begin{array}{rcllll}
f^\text{opt} & = & \sup_{\lambda \in \mathbb{R}}  & \lambda & \mathrm{s.t.} & f(z)- \lambda \geqslant 0,~~ \forall z \in K.
\end{array}
\end{equation}
Primal problem \eqref{eq:moment} gives rise to the complex moment hierarchy below. Dual problem \eqref{eq:sos} gives rise to the complex sum-of-squares hierarchy below.

\subsection*{Complex moment hierarchy} 
\label{subsec:Complex Moment Hierarchy}

Let $\mathcal{H}$ (respectively $\mathcal{H}_d$) denote the set of sequences of complex numbers $(y_{\alpha,\beta})_{\alpha,\beta \in \mathbb{N}^n}$ (respectively $(y_{\alpha,\beta})_{|\alpha|,|\beta| \leqslant d}$)
such that $\overline{y_{\alpha,\beta}} = y_{\beta,\alpha}$ for all $\alpha,\beta \in \mathbb{N}^n$ (respectively $|\alpha|,|\beta| \leqslant d$).
\begin{definition}
\label{def:moment}
An element $y \in \mathcal{H}$ is said to have a representing measure $\mu$ on $K$ if $\mu \in \mathcal{M}_+(K)$ and $y_{\alpha,\beta} = \int_K \bar{z}^\alpha z^\beta d\mu$ for all $\alpha,\beta \in \mathbb{N}^n$.
In that case, $y_{\alpha,\beta}$ is called the $(\alpha,\beta)$\text{-moment} of $\mu$.
\end{definition} 

When $y \in \mathcal{H}$ has a representing measure on $K$, the measure is unique because $R_K(\mathbb{C}[\bar{z},z])$ is dense in $C(K,\mathbb{C})$. The complex moment problem consists in characterizing the sequences that are representable by a measure on $K$ and is connected to other branches of mathematics such as functional analysis and spectral theory of operators~\cite{akhiezer-1965}. It has been studied by Atzmon~\cite{atzmon-1975}, Schm\"udgen~\cite{schmudgen-1991}, Putinar~\cite{putinar-1998}, Curto and Fialkow~\cite{curto-1996,curto-2000,curtoquad-2000}, Stochel~\cite{stochel-2001}, and Vasilescu~\cite{vasilescu-2003}. 
For example, Atzmon~\cite[Theorem 2.1]{atzmon-1975} proved that the solutions to the complex moment problem where $K=\{ z \in \mathbb{C} ~|~ |z|=1\}$ are the sequences $y\in \mathcal{H}$ such that 
$\sum_{m,n,j,k \in \mathbb{N}} c_{n,j} ~ \overline{c}_{m,k} ~ y_{m+j,n+k} \geqslant 0$
and $\sum_{m,n \in \mathbb{N} } w_m \overline{w}_n ~ (y_{m,n} - y_{m+1,n+1}) \geqslant 0$
for all complex numbers $(c_{j,k})_{j,k \in \mathbb{N}}$ and $(w_n)_{n \in \mathbb{N}}$ with only finitely many non-zero terms. A generalization to the multidimensional case is considered in Section \ref{subsec:Convergence of the Complex Hierarchy}. We conclude our presentation of the complex moment problem by noting that the case where $K$ is not compact is an open problem.

Consider a feasible point $\mu$ of \eqref{eq:moment} and the sequence $y \in \mathcal{H}$ that has representation measure $\mu$ on $K$. Notice that 
$\int_K f d\mu = \int_K \sum_{|\alpha|,|\beta|\leqslant k} f_{\alpha,\beta} \bar{z}^\alpha z^\beta  d\mu =
\sum_{|\alpha|,|\beta|\leqslant k} f_{\alpha,\beta} \int_K \bar{z}^\alpha z^\beta d\mu 
= \sum_{|\alpha|,|\beta|\leqslant k} ~ f_{\alpha,\beta} y_{\alpha,\beta} =: L_y(f)$ and
$\int_K d\mu = \int_K \bar{z}^0 z^0 d\mu = y_{0,0} = 1$.
For all $p \in \mathbb{C}[z]$, we have $|p|^2 g_i \geqslant 0$ on $K$.  Since $\mu \geqslant 0$, this implies that $\int_K  |p|^2 g_i d\mu \geqslant 0$. Naturally, we also have $\int_K  |p|^2 g_0 d\mu \geqslant 0$ if we define $g_0 := 1$. Define $k_0 :=0$ and $d^\text{min}:=\max\{k, k_1 \hdots,k_m\}$. Consider $d\geqslant d^\text{min}$, $0 \leqslant i \leqslant m$, and $p \in \mathbb{C}_{d-k_i}[z]$. We have $\int_K |p|^2 g_i d\mu = \hdots$
\begin{equation}
\label{eq:positivecon}
\begin{array}{l}
 = \int_K  |\sum_{|\alpha|\leqslant d-k_i} ~ p_{\alpha} z^{\alpha}|^2 ~ ( \sum_{|\gamma|,|\delta| \leqslant k_i} ~ g_{i,\gamma,\delta} ~ \bar{z}^{\gamma} z^{\delta}) ~ d\mu \\
= \int_K (\sum_{|\alpha|,|\beta| \leqslant d-k_i} ~ \overline{p}_{\alpha}  p_{\beta} \bar{z}^\alpha z^\beta) ~ ( \sum_{|\gamma|,|\delta| \leqslant k_i} ~ g_{i,\gamma,\delta} ~ \bar{z}^{\gamma} z^{\delta}) ~ d\mu \\
= \int_K \sum_{|\alpha|,|\beta| \leqslant d-k_i} \overline{p}_\alpha  p_\beta ~ \sum_{|\gamma|,|\delta| \leqslant k_i}  ~ g_{i,\gamma,\delta} ~ \bar{z}^{\alpha+\gamma} z^{\beta+\delta} ~ d\mu \\
= \sum_{|\alpha|,|\beta| \leqslant d-k_i} \overline{p}_{\alpha}  p_{\beta} ~ \sum_{|\gamma|,|\delta| \leqslant k_i}  ~ g_{i,\gamma,\delta} ~ \int_K \bar{z}^{\alpha+\gamma} z^{\beta+\delta} ~ d\mu \\
= \sum_{|\alpha|,|\beta| \leqslant d-k_i} \overline{p}_{\alpha}  p_{\beta} ~ (\sum_{|\gamma|,|\delta| \leqslant k_i}  ~ g_{i,\gamma,\delta} ~ y_{\alpha+\gamma,\beta+\delta}) =: M_{d-k_i}(g_iy)(\alpha,\beta) \\
= \sum_{|\alpha|,|\beta| \leqslant d-k_i} \overline{p}_{\alpha}  p_{\beta} ~ M_{d-k_i}(g_iy)(\alpha,\beta)\\
= \vec{p}^H M_{d-k_i}(g_iy) \vec{p},
\end{array}
\end{equation}
where $\vec{p} := ( p_\alpha )_{|\alpha|\leqslant d-k_i}$ and $M_{d-k_i}(g_iy)$ is a Hermitian matrix indexed by $|\alpha|,|\beta| \leqslant d-k_i$. As a result
\begin{equation}
\label{eq:necessary}
M_{d-k_i}(g_iy) \succcurlyeq 0, ~~~~~~ i = 0,\hdots,m, ~~~~~~ \forall d \geqslant d^\text{min}.
\end{equation}
To sum up, $y$ is a feasible point of 
\begin{equation}
\label{eq:momentinf}
\begin{array}{rclll}
\rho & := & \inf_{y\in \mathcal{H}} & L_y(f), & \\
 & & \text{s.t.} &  y_{0,0} = 1, &  \\
 & & &  M_{d-k_i}(g_iy) \succcurlyeq 0, &i = 0, \hdots, m, ~~ \forall d \geqslant d^\text{min},
\end{array}
\end{equation}
with same objective value as $\mu$ in \eqref{eq:moment}. Automatically, $\rho \leqslant f^\text{opt}$. Consider the relaxation of \eqref{eq:momentinf} defined by
\begin{equation}
\label{eq:momentd}
\boxed{
\begin{array}{rclll}
\rho_d & := & \inf_{y\in \mathcal{H}_d} & L_y(f),                                         & \\
           &     & \text{s.t.}                          & y_{0,0} = 1,                                 &  \\
           &     &                                         &  M_{d-k_i}(g_iy) \succcurlyeq 0, & i = 0, \hdots, m,
\end{array}
}
\end{equation}
which we name the \textit{complex moment relaxation of order} $d$ for reasons that will become clear with Theorem \ref{th:represent}. In Section \ref{subsec:Complex Sum-of-Squares Hierarchy}, we will introduce its dual counterpart.
\begin{remark}
\label{rem:Ly}
\normalfont
Given $y \in \mathcal{H}$, the function $L_y$ in this section can be formally be defined by the $\mathbb{C}$-linear operator $L_y : \mathbb{C}[\bar{z},z] \longrightarrow \mathbb{C}$ such that $L_y(\bar{z}^\alpha z^\beta) = y_{\alpha,\beta}$ for all $\alpha,\beta \in \mathbb{N}$. If $\varphi \in \mathbb{C}[\bar{z},z]$ and $\overline{\varphi} = \varphi$, then $\overline{L_y(\varphi)} = L_y(\varphi)$.
Given $l,d \in \mathbb{N}$ and $\varphi \in \mathbb{R}_{l}[\bar{z},z]$, the matrix $M_d$ in \eqref{eq:positivecon} can be formally be defined as the Hermitian matrix indexed by $|\alpha|,|\beta| \leqslant d$ such that
$M_{d}(\varphi y)(\alpha,\beta) := L_y(\varphi(z) \bar{z}^\alpha z^\beta) = \sum_{|\gamma|,|\delta| \leqslant l}  ~ \varphi_{\gamma,\delta} ~ y_{\alpha+\gamma,\beta+\delta}$. Notice that $M_{d}(\varphi y)(0,0) = L_y(\varphi)$. Lastly, define $M_d(y):=M_d(g_0y)$ which we refer to as \textit{complex moment matrix of order $d$}. 
\end{remark}

\subsection*{Complex sum-of-squares hierarchy} 
\label{subsec:Complex Sum-of-Squares Hierarchy} 
We introduced notation $\vec{p}$ for $p \in \mathbb{C}_d[z]$ where $d\in \mathbb{N}$ and will now extend it to $\sigma \in \Sigma_d[z]$. For such an element, there exists $r\in \mathbb{N}^*$ and $p_j \in \mathbb{C}_d[z]$ such that $\sigma = \sum_{j=1}^r |p_j|^2$. Let $\vec{\sigma} := \sum_{j=1}^r \vec{p}_j \vec{p}_j^H$. Also, define $\langle A , B \rangle_{\mathcal{H}_d} := \text{Tr} (AB)$ where $A,B \in \mathcal{H}_d$.
Given $d\geqslant d^\text{min}$, consider the Lagrange function
$\mathcal{L}_d : \mathcal{H}_d \times \mathbb{R} \times \Sigma_{d-k_0}[z] \times \hdots \times \Sigma_{d-k_m}[z] \longrightarrow \mathbb{R}$ defined by
$( y , \lambda, \sigma_0, \hdots , \sigma_m ) \longmapsto L_y(f) + \lambda(1-y_{0,0}) - \sum_{i=0}^m \langle M_{d-k_i}(g_iy) , \vec{\sigma}_i \rangle_{\mathcal{H}_{d-k_i}}$.
Compute $\mathcal{L}_d(y , \lambda, \sigma_0, \hdots , \sigma_m) = \lambda + L_y(f - \lambda) - \sum_{i=0}^m \sum_{j=0}^{r_i} (\vec{p}_j^{\hphantom{.}i})^H M_{d-k_i}(g_iy) \vec{p}_j^{\hphantom{.}i} = \lambda + L_y(f - \lambda) -  \sum_{i=0}^m \sum_{j=0}^{r_i} L_y(|p_j^i|^2 g_i) = \lambda + L_y(f - \lambda - \sum_{i=0}^m \sigma_i g_i)$.
Observe that 
\begin{equation}
\inf_{y \in \mathcal{H}} ~~ L_y\left(f - \lambda - \sum_{i=0}^m \sigma_i g_i\right) =
\left\{
\begin{array}{cl}
\hphantom{-}0 & \text{if} ~ f(z) - \lambda - \sum_{i=0}^m \sigma_i(z) g_i(z) = 0, \\
& \text{for all} ~ z \in \mathbb{C}^n, \\
- \infty & \text{else}.
\end{array}
\right.
\end{equation}
Indeed, in the second case, there exists $z\in \mathbb{C}^n$ such that $f(z) - \lambda - \sum_{i=0}^m \sigma_i(z) g_i(z) \neq 0$. With $(y_{\alpha,\beta})_{\alpha,\beta \in \mathbb{N}} := (\bar{z}^\alpha z^\beta)_{\alpha,\beta \in \mathbb{N}}$, $L_{ty}(f - \lambda - \sum_{i=0}^m \sigma_i g_i) \longrightarrow - \infty$ for either $t\longrightarrow -\infty$ or $t\longrightarrow +\infty$.
The associated dual problem of \eqref{eq:momentd} is thus
\begin{equation}
\label{eq:sosd}
\boxed{
\begin{array}{rcll}
\rho_d^* &:= & \sup_{\lambda,\sigma} & \lambda, \\
              &    & \text{s.t.} &  f-\lambda = \sum_{i=0}^m \sigma_i g_i, \\ 
              &    &                &  \lambda \in \mathbb{R},~ \sigma_i \in \Sigma_{d-k_i}[z] ,~ i=0, \hdots, m,
\end{array}
}
\end{equation}
which we name the \textit{complex sum-of-squares relaxation of order} $d$. Consider
\begin{equation}
\label{eq:sosinf}
\begin{array}{rcll}
\rho^* &:= & \sup_{\lambda,\sigma} & \lambda, \\
              &    & \text{s.t.} &  f-\lambda = \sum_{i=0}^m \sigma_i g_i, \\ 
              &    &                & \lambda \in \mathbb{R},~ \sigma_i \in \Sigma[z] ,~ i=0, \hdots, m,
\end{array}
\end{equation}
whose relationship with \eqref{eq:momentinf} is touched upon in Proposition \ref{prop:dualconverge} below.
\begin{proposition}
\label{prop:dualconverge}
\normalfont
\textit{We have} $\rho_d^* \leqslant \rho_d$ for all $d\geqslant d^\text{min}$ and $\rho_d^* \longrightarrow \rho^* \leqslant \rho \leqslant f^\text{opt}$.
\end{proposition}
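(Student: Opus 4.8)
The statement packages together four facts, and the plan is to establish them in the order $\rho_d^* \leqslant \rho_d$ (weak duality at finite order), $\rho_d^* \longrightarrow \rho^*$ (monotone convergence of the sum-of-squares values), $\rho^* \leqslant \rho$ (weak duality at infinite order), and finally $\rho \leqslant f^\text{opt}$. The last inequality requires no new work: it was already observed right after \eqref{eq:necessary}, since any feasible measure $\mu$ of \eqref{eq:moment} yields through its moments a feasible sequence $y \in \mathcal{H}$ for \eqref{eq:momentinf} with $L_y(f) = \int_K f\,d\mu$, so that $\rho \leqslant f^\text{opt}$.

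For $\rho_d^* \leqslant \rho_d$ the plan is to argue directly rather than invoke an abstract duality theorem. First I would fix a feasible point $y \in \mathcal{H}_d$ of \eqref{eq:momentd} and a feasible triple $(\lambda,\sigma_0,\hdots,\sigma_m)$ of \eqref{eq:sosd}, so that $f - \lambda = \sum_{i=0}^m \sigma_i g_i$ as an identity in $\mathbb{R}[\bar z, z]$. Applying the linear functional $L_y$ of Remark \ref{rem:Ly} to both sides gives $L_y(f) - \lambda\, y_{0,0} = \sum_{i=0}^m L_y(\sigma_i g_i)$. Writing each $\sigma_i = \sum_j |p_j^i|^2$ with $p_j^i \in \mathbb{C}_{d-k_i}[z]$ and reusing the computation \eqref{eq:positivecon}, every summand is $L_y(|p_j^i|^2 g_i) = (\vec p_j^{\,i})^H M_{d-k_i}(g_i y)\, \vec p_j^{\,i} \geqslant 0$ because $M_{d-k_i}(g_i y) \succcurlyeq 0$. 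Since $y_{0,0}=1$, this yields $\lambda \leqslant L_y(f)$; taking the supremum over dual-feasible points and the infimum over primal-feasible points gives $\rho_d^* \leqslant \rho_d$.

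Next, $(\rho_d^*)_{d \geqslant d^\text{min}}$ is nondecreasing because $\Sigma_{d-k_i}[z] \subseteq \Sigma_{d+1-k_i}[z]$, so each feasible point of \eqref{eq:sosd} at order $d$ remains feasible at order $d+1$; likewise every feasible point at order $d$ is feasible for \eqref{eq:sosinf}, whence $\rho_d^* \leqslant \rho^*$. Conversely, any feasible triple of \eqref{eq:sosinf} uses fixed elements $\sigma_i \in \Sigma[z]$ of some finite degree, hence lies in $\Sigma_{d-k_i}[z]$ once $d$ is large enough, so $\sup_d \rho_d^* \geqslant \rho^*$. Together these give $\rho_d^* \uparrow \rho^*$. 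To obtain $\rho^* \leqslant \rho$ I would then note that truncating any sequence feasible for \eqref{eq:momentinf} to $\mathcal{H}_d$ produces a point feasible for \eqref{eq:momentd} with unchanged objective (as $\deg f \leqslant k \leqslant d$), whence $\rho_d \leqslant \rho$ for every $d$; combined with the first part this gives $\rho_d^* \leqslant \rho_d \leqslant \rho$, and passing to the limit yields $\rho^* \leqslant \rho$.

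The computations are routine once the functional $L_y$ and the moment matrices are in place; the only point demanding care is the limit $\rho_d^* \longrightarrow \rho^*$, where I must make sure that the degree of the representation in \eqref{eq:sosinf} is genuinely bounded for a fixed feasible point, so that the triple is eventually admissible in \eqref{eq:sosd}. No compactness or Positivstellensatz input is needed here—those enter only for the reverse inequality $\rho^* = f^\text{opt}$ treated later—so the main obstacle is purely the bookkeeping of degrees and indices rather than anything analytic.
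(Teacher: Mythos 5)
Your proof is correct and follows essentially the same route as the paper's: monotonicity of $(\rho_d^*)_{d\geqslant d^\text{min}}$ from the inclusions $\Sigma_{d-k_i}[z] \subseteq \Sigma_{d+1-k_i}[z]$, the observation that any certificate feasible for \eqref{eq:sosinf} has finite degree and is therefore feasible in \eqref{eq:sosd} once $d$ is large enough, weak duality $\rho_d^* \leqslant \rho_d$, and the previously established bound $\rho \leqslant f^\text{opt}$. The only differences are presentational: you spell out the weak-duality computation by applying $L_y$ to the identity $f-\lambda=\sum_{i=0}^m \sigma_i g_i$ and you obtain $\rho^* \leqslant \rho$ via the truncation bound $\rho_d \leqslant \rho$, whereas the paper simply invokes weak duality and the monotone convergence of $(\rho_d)$ — the underlying argument is the same.
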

\begin{proof}
The sequence $(\rho_d^*)_{d\geqslant d^\text{min}}$ is non-decreasing and upper bounded by $\rho^* \in \mathbb{R}\cup\{\pm \infty\}$. Thus it converges towards some limit $\rho^*_\text{lim} \in \mathbb{R}\cup\{\pm \infty\}$ such that $\rho^*_\text{lim} \leqslant \rho^*$. If $\rho^* = - \infty$, then $\rho_d^* = - \infty$ for all $d\geqslant d^\text{min}$ and $\rho_d^* \longrightarrow \rho^*$. If not, by definiton of the optimum $\rho^*$, there exists a sequence $(\lambda^l,\sigma_0^l,\hdots,\sigma_m^l)$ of feasible points such that $\lambda^l \leqslant \rho^*$ and $\lambda^l \longrightarrow \rho^*$. To each $l\in\mathbb{N}$, we may associate an integer $d(l)\in\mathbb{N}$ such that $(\lambda^l,\sigma_0^l,\hdots,\sigma_m^l)$ is a feasible point of the complex sum-of-squares relaxation of order $d(l)$. Thus $\lambda^l \leqslant \rho_{d(l)}^* \leqslant \rho^*$. As a result, $\rho_\text{limit}^* = \rho^*$. Moreover, $(\rho_d)_{d\geqslant d^\text{min}}$ is non-decreasing and upper bounded by $\rho \in \mathbb{R}\cup\{\pm \infty\}$. Thus it converges towards some limit $\rho_\text{lim} \in \mathbb{R}\cup\{\pm \infty\}$ such that $\rho_\text{lim} \leqslant \rho$. Moreover, weak duality implies that $\rho_d^* \leqslant \rho_d ~ (\leqslant \rho)$. Thus $\rho^* \leqslant \rho_\text{lim} \leqslant \rho$. It was already shown that $\rho \leqslant f^\text{opt}$.
\end{proof}
\begin{remark}
\label{rem:duality}
\normalfont
Problems \eqref{eq:sosinf} and \eqref{eq:momentinf} may be interpreted as a pair of primal-dual linear programs in infinite-dimensional spaces~\cite{anderson-1987}. Indeed, consider the duality bracket $\langle .,. \rangle$ defined from $\mathbb{R}[\bar{z},z] \times \mathcal{H}$ to $\mathbb{R}$ by $\langle \varphi , y \rangle := L_y(\varphi)$. A sequence $(\varphi^n)_{n\in \mathbb{N}}$ in $\mathbb{R}[\bar{z},z]$ is said to converge weakly towards $\varphi \in \mathbb{R}[\bar{z},z]$ if for all $y \in \mathcal{H}$, we have $\langle \varphi^n , y \rangle \longrightarrow \langle \varphi , y \rangle$. Consider the weakly continuous $\mathbb{R}$-linear operator $A : \mathbb{R}[\bar{z},z] \longrightarrow \mathbb{R}[\bar{z},z]$ defined by $\varphi \longmapsto \varphi - \varphi_{0,0}$. Its dual $A^* : \mathcal{H} \longrightarrow \mathcal{H}$ is defined by $y \longmapsto y - y_{0,0} \delta_{0,0}$ where $(\delta_{0,0})_{0,0} = 1$ and $(\delta_{0,0})_{\alpha,\beta} = 0$ if $(\alpha,\beta)\neq(0,0)$. Indeed, $\langle A\varphi , y \rangle = \langle \varphi , A^* y \rangle$ for all $(\varphi,y) \in \mathbb{R}[\bar{z},z] \times \mathcal{H}$. Consider the convex pointed cone defined by $C:= \Sigma[z] g_0 + \hdots + \Sigma[z]g_m$ and its dual cone $C^* := \{ y \in \mathcal{H} ~ | ~ \forall \varphi \in C, ~ \langle \varphi , y \rangle \geqslant 0 \}$. With $b:=Af$, notice that 
\begin{equation}
\label{eq:pd}
\begin{array}{rcllll}
f_{0,0} - \rho^* & = & \inf_{\varphi \in \mathbb{R}[\bar{z},z]} & \langle \varphi , \delta_{0,0} \rangle & \text{s.t.} & A\varphi = b  ~~ \& ~~ \varphi \in C, \\
f_{0,0} - \rho\hphantom{^*}  & = & \sup_{y \in \mathcal{H}} & \langle b , y \rangle & \text{s.t.} & \delta_{0,0} - A^*y \in C^*.
\end{array}
\end{equation}
Let $\text{cl}(C)$ denote the weak closure of $C$ in $\mathbb{R}[\bar{z},z]$. According to \cite[5.91 Bipolar Theorem]{aliprantis-1999}\footnote{We wish to thank Jean-Bernard Baillon for bringing this reference to our attention.}, we have $\text{cl}(C) = C^{**}$. In the next section, Theorem \ref{th:angelo} and Theorem \ref{th:represent} provide a sufficient condition ensuring no duality gap in \eqref{eq:pd} and $\text{cl}(C) = \{ \varphi \in \mathbb{R}[\bar{z},z] ~|~ \varphi_{|K} \geqslant 0 \}$ respectively.
\end{remark}

\subsection*{Convergence of the complex hierarchy} 
\label{subsec:Convergence of the Complex Hierarchy} 
We now turn our attention to a result from algebraic geometry discovered in 2008.
\begin{theorem}[D'Angelo's and Putinar's Positivstellenstatz~\cite{angelo-2008}]
\label{th:angelo}
If one of the constraints that define $K$ in \eqref{eq:assumption} is a sphere constraint $|z_1|^2 + \hdots + |z_n|^2 = 1$, then 
\begin{equation}
\label{eq:sumangelo}
f_{|K} > 0 ~~~~~~ \Longrightarrow ~~~~~~ \exists \sigma_0, \hdots, \sigma_m \in \Sigma[z]: ~~~ 
f = \sum_{i=0}^m \sigma_i g_i.
\end{equation}
\end{theorem}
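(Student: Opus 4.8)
The plan is to adapt Putinar's functional-analytic proof of his real Positivstellensatz to the Hermitian setting, replacing real squares by squared moduli of holomorphic polynomials and letting the sphere constraint supply the Archimedean property that drives the argument. Write $g_0 := 1$ and let $C := \sum_{i=0}^m \Sigma[z]\,g_i \subset \mathbb{R}[\bar{z},z]$ be the quadratic module generated by the constraints, exactly as in Remark \ref{rem:duality}. The goal is to prove that $f_{|K}>0$ forces $f \in C$, which is the equality in \eqref{eq:sumangelo}.

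First I would exploit the sphere constraint algebraically. Since $|z_1|^2 + \cdots + |z_n|^2 = 1$ is imposed on $K$ (encoded by $1 - \sum_k|z_k|^2 \geqslant 0$ and $\sum_k|z_k|^2 - 1 \geqslant 0$), the constant $1$ equals $\sum_k |z_k|^2$ on $K$, so every monomial $\bar{z}^\alpha z^\beta$ can be balanced into a bihomogeneous polynomial by inserting powers of $\sum_k|z_k|^2$ without changing its restriction to $K$. Using this, I would verify that $C$ is Archimedean: for each $\varphi \in \mathbb{R}[\bar{z},z]$ there is $\lambda \in \mathbb{R}_+$ with $\lambda - \varphi \in C$, so that the constant $1$ is an order unit of $C$.

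The heart of the proof is a Hahn--Banach separation in the finest locally convex topology on $\mathbb{R}[\bar{z},z]$. Because $1$ is an order unit, if $f \notin C$ one separates it by an $\mathbb{R}$-linear functional $L = L_y$ satisfying $L(f) \leqslant 0$, $L \geqslant 0$ on $C$, and $L(1)=1$. Positivity of $L$ on $\Sigma[z]$ says the Hermitian sequence $(y_{\alpha,\beta})$ is positive semidefinite, and the Archimedean bound controls its growth; this is exactly where the complex moment problem and the theorems of Quillen and Catlin--D'Angelo enter, certifying that such a positive sequence on the sphere is a moment sequence. By the Riesz--Markov--Kakutani theorem $L$ is then represented by a positive Radon measure $\mu$ on $K$, supported in each $\{g_i \geqslant 0\}$ because $L \geqslant 0$ on $\Sigma[z]\,g_i$. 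Then $L(f) = \int_K f\,d\mu > 0$ as $f>0$ on $K$ and $\mu \geqslant 0$ is a probability measure, contradicting $L(f) \leqslant 0$; hence $f \in C$.

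The main obstacle is the representation step: upgrading the abstract positive functional $L$ to a genuine measure on $K$, and more fundamentally showing that a positive Hermitian sequence on the sphere really is a moment sequence. In the real case the Archimedean hypothesis alone suffices, but here the cone $\Sigma[z]$ is built from \emph{holomorphic} polynomials, so positivity must respect the bigraded conjugate/holomorphic structure and cannot be reduced to real squares. Bridging this gap is the content of Quillen's theorem and its refinement by Catlin and D'Angelo: a bihomogeneous polynomial strictly positive away from the origin becomes a sum of squared moduli after multiplication by a power of $\sum_k|z_k|^2$, a multiplier that equals $1$ on $K$ and is therefore harmless. I expect reconciling this multiplier-based certificate with the functional-analytic separation, and checking the Archimedean property of the Hermitian module, to be the delicate technical heart of the argument.
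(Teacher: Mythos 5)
Your proposal takes a genuinely different route from the paper, and it is worth being clear about that first: the paper does not reprove the underlying Positivstellensatz at all. Its ``proof'' is a two-line translation of the literature: \cite[Theorem 3.1]{angelo-2008} is quoted as giving $f=\sum_{i=0}^{m-2}\sigma_i g_i + r\,s$ with $\sigma_i\in\Sigma[z]$ and $r\in\mathbb{R}[\bar z,z]$ arbitrary, where $s(z)=1-|z_1|^2-\cdots-|z_n|^2$, and then \cite[Proposition 1.2]{angelo-2010} (every Hermitian polynomial is a difference $\sigma_{m-1}-\sigma_m$ of two Hermitian sums of squares) converts the term $r\,s$ into $\sigma_{m-1}g_{m-1}+\sigma_m g_m$ using the two opposite constraints $g_{m-1}=s$ and $g_m=-s$. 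You instead attempt a self-contained proof by transplanting Putinar's separation argument; its skeleton (Archimedean property, Hahn--Banach separation, representation of the separating functional by a measure, localization of its support, contradiction) is indeed the architecture of the actual proof in \cite{angelo-2008}, so the shape of your plan is right.

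As written, however, the proposal has a fatal gap at its central step, plus a repairable flaw earlier. The repairable flaw: verifying Archimedeanity by ``balancing'' monomials with powers of $\sum_k|z_k|^2$ cannot work, because multiplication by $\sum_k|z_k|^2$ shifts bidegree by $(1,1)$ and therefore never equalizes a monomial $\bar z^\alpha z^\beta$ with $|\alpha|\neq|\beta|$; no Hermitian polynomial containing unbalanced monomials becomes bihomogeneous this way. (Archimedeanity is still provable elementarily, e.g.\ by splitting Hermitian coefficient matrices into differences of positive semidefinite ones and using Lagrange's identity together with a telescoping certificate for $1-|z^\alpha|^2$, so this part is fixable.) The fatal gap is the representation step. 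Quillen's and Catlin--D'Angelo's theorems concern \emph{bihomogeneous} polynomials, and by the same bidegree obstruction they say nothing about the unbalanced Hermitian polynomials that Theorem \ref{th:angelo} covers -- the paper later singles out the balanced case under the name ``oscillatory'' precisely because it is special. What actually turns the separating functional $L$ into a measure in \cite{angelo-2008} is operator theory: positivity of $L$ on $\pm s\,\Sigma[z]$ forces the GNS multiplication tuple $(M_{z_1},\dots,M_{z_n})$ to satisfy $\sum_{i} M_{z_i}^{*}M_{z_i}=I$, i.e.\ to be a spherical isometry, and Athavale's theorem that spherical isometries are jointly subnormal supplies the commuting normal extension whose spectral measure represents $L$; positivity on $\Sigma[z]g_i$ then localizes the support to $K$. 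This subnormality ingredient (or an equivalent) appears nowhere in your outline, and substituting an appeal to ``the complex moment problem'' is circular in the context of this paper, since the solvability of that moment problem on the sphere (Theorem \ref{th:represent}) is itself deduced from Theorem \ref{th:angelo} via Corollary \ref{cor:convergence}.
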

\begin{proof}
D'Angelo and Putinar wrote the theorem slightly differently so we provide an explanation. Say that constraints $g_{m-1}$ and $g_m$ are such that $g_{m-1}=s$ and $g_m=-s$ where $s(z):= 1 - |z_1|^2 - \hdots - |z_n|^2$. With the assumptions of Theorem \ref{th:angelo}, the authors of \cite[Theorem 3.1]{angelo-2008} show that there exists $\sigma_0, \hdots, \sigma_{m-2} \in \Sigma[z]$ and $r \in \mathbb{R}[\bar{z},z]$ such that
$f(z) =\sum_{i=0}^{m-2} \sigma_i(z) g_i(z) +  r(z)s(z)$ for all $z\in \mathbb{C}^n$. Thanks to~\cite[Proposition 1.2]{angelo-2010}, there exists $\sigma_{m-1},\sigma_m \in \Sigma[z]$ such that $r = \sigma_{m-1} - \sigma_m$ hence the desired result. 
\end{proof}

Theorem \ref{th:angelo} can easily be generalized to any sphere $|z_1|^2 + \hdots + |z_n|^2 = R^2$ of radius $R > 0$. With scaled variable $w = \frac{z}{R} \in \mathbb{C}^n$, the sphere constraint has radius 1 and a monomial of \eqref{eq:complexPOP} with coefficient $c_{\alpha,\beta}\in \mathbb{C}$  reads $c_{\alpha,\beta} \bar{z}^\alpha z^\beta = c_{\alpha,\beta} (R\bar{w})^\alpha (Rw)^\beta = R^{|\alpha|+|\beta|} c_{\alpha,\beta} \bar{w}^\alpha w^\beta$. With the scaled coefficients $R^{|\alpha|+|\beta|} c_{\alpha,\beta}$, Theorem \ref{th:angelo} can then be applied. Reverting back to the old scale $z = Rw$ in \eqref{eq:sumangelo} leads to the desired result. Accordingly, we define the following statement which we will consider true only when explicitly stated:
\begin{equation}
\label{eq:sa}
\textbf{Sphere Assumption:} ~~~
\boxed{
\begin{array}{l}
\text{One of the constraints of polynomial} \\
\text{optimization problem \eqref{eq:complexPOP} is a sphere} \\ 
\text{constraint} ~ |z_1|^2 + \hdots + |z_n|^2 = R^2 ~ \text{for} \\
\text{some radius} ~ R > 0.
\end{array}
}
\end{equation}
Under the sphere assumption, $K$ is compact so assumption \eqref{eq:assumption} holds.
\begin{corollary}
\label{cor:convergence}
Under the sphere assumption \eqref{eq:sa}, $\rho_d^* \rightarrow f^\text{opt}$ and $\rho_d \rightarrow f^\text{opt}$.
\end{corollary}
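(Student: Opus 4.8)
The plan is to reduce the whole corollary to the single inequality $\rho^* \geq f^\text{opt}$, since Proposition \ref{prop:dualconverge} already supplies the chain $\rho_d^* \leqslant \rho_d \leqslant f^\text{opt}$ together with $\rho_d^* \to \rho^* \leqslant \rho \leqslant f^\text{opt}$. Once $\rho^* \geqslant f^\text{opt}$ is established, this chain collapses to $\rho^* = \rho = f^\text{opt}$; then $\rho_d^* \to f^\text{opt}$ is immediate from $\rho_d^* \to \rho^*$, and $\rho_d \to f^\text{opt}$ follows by squeezing $\rho_d$ between $\rho_d^*$ and $f^\text{opt}$. So everything rests on producing, from D'Angelo's and Putinar's Positivstellensatz, sum-of-squares certificates that are feasible for the infinite-degree problem \eqref{eq:sosinf}.

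First I would fix an arbitrary real number $\lambda < f^\text{opt}$ and check that the real-valued complex polynomial $f - \lambda$ is strictly positive on $K$. If $K$ is nonempty, every $z \in K$ satisfies $f(z) \geqslant f^\text{opt} > \lambda$ by definition of the infimum $f^\text{opt}$, so $f - \lambda > 0$ on $K$; if $K$ is empty (so that $f^\text{opt} = +\infty$), positivity on $K$ is vacuous. Because the Sphere Assumption \eqref{eq:sa} places a sphere constraint among the constraints defining $K$, and because Theorem \ref{th:angelo} extends to a sphere of arbitrary radius $R>0$ via the rescaling $w = z/R$ indicated just after its proof, I may apply Theorem \ref{th:angelo} to $f - \lambda$. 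This produces $\sigma_0, \ldots, \sigma_m \in \Sigma[z]$ with $f - \lambda = \sum_{i=0}^m \sigma_i g_i$, so $(\lambda, \sigma_0, \ldots, \sigma_m)$ is feasible for \eqref{eq:sosinf} and hence $\rho^* \geqslant \lambda$. Taking the supremum over all $\lambda < f^\text{opt}$ gives $\rho^* \geqslant f^\text{opt}$, the missing inequality.

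This argument handles the empty-feasible-set case uniformly: there $\rho^* \geqslant \lambda$ for every real $\lambda$, forcing $\rho^* = +\infty = f^\text{opt}$, and the squeeze then drives both $\rho_d^*$ and $\rho_d$ to $+\infty$. I do not expect any genuinely hard step, as the entire difficulty is quarantined inside Theorem \ref{th:angelo}, the complex Positivstellensatz, which is assumed. The only points deserving care are (i) that the certificate furnished by the Positivstellensatz lives in $\Sigma[z]$ of a priori unbounded degree, which is exactly what the infinite problem \eqref{eq:sosinf} admits, so no degree truncation is needed to reach $\rho^*$; and (ii) that passing from $\rho^*$ to the finite-order values $\rho_d^*$ is legitimate, which is already guaranteed by the convergence $\rho_d^* \to \rho^*$ of Proposition \ref{prop:dualconverge}. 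Optionally, one could instead exhibit feasibility directly at a finite order $d$ by choosing $d$ at least the largest degree $k_i + \deg \sigma_i$ occurring in the certificate, thereby recovering $\rho_d^* \geqslant \lambda$ for all large $d$ and making the convergence effective; this is not required for the bare statement.
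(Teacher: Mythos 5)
Your proof is correct and follows essentially the same route as the paper: the paper likewise applies Theorem \ref{th:angelo} to $f-(f^\text{opt}-\epsilon)$, which is positive on $K$ for every $\epsilon>0$, to conclude $\rho^* = f^\text{opt}$, and then invokes Proposition \ref{prop:dualconverge} to obtain both $\rho_d^* \rightarrow f^\text{opt}$ and, by squeezing, $\rho_d \rightarrow f^\text{opt}$. Your explicit handling of the empty-$K$ case (where $f^\text{opt}=+\infty$) is a minor refinement that the paper's one-line proof glosses over, but it does not alter the argument.
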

\begin{proof} Theorem \ref{th:angelo} implies that $\rho^* = f^\text{opt}$
because for all $\epsilon > 0$, function \mbox{$f-(f^\text{opt}-\epsilon)$} is positive on $K$. The sequences $(\rho_d^*)_{d\geqslant d^\text{min}}$ and $(\rho_d)_{d\geqslant d^\text{min}}$ converge towards $f^\text{opt}$ due to Proposition \ref{prop:dualconverge}.
\end{proof}

To require a sphere constraint in a complex polynomial optimization problem seems very restrictive and irrelevant for many problems. But in fact, a sphere constraint can be applied to any complex polynomial optimization problem \eqref{eq:complexPOP} with a feasible set contained in a ball $|z_1|^2 + \hdots + |z_{n}|^2 \leqslant R^2$ of known radius $R > 0$. Indeed, simply add a slack variable $z_{n+1} \in \mathbb{C}$ and the constraint
\begin{equation}
\label{eq:sphere}
\boxed{
|z_1|^2 + \hdots + |z_{n+1}|^2 = R^2.
}
\end{equation}
Let $\hat{K}$ denote the feasible set of the problem in $n+1$ variables.
If $(z_1,\hdots,z_{n+1}) \in \hat{K}$, then $(z_1,\hdots,z_n) \in K$  and has the same objective value. Conversly, if $(z_1,\hdots,z_n) \in K$, then $(z_1,\hdots,z_{n+1}) \in \hat{K}$ for all $z_{n+1} \in \mathbb{C}$ such that $|z_{n+1}|^2 = R^2 - |z_1|^2  \hdots - |z_n|^2$. Again, the objective value is unchanged.
To ensure a bijection between $K$ and $\hat{K}$, add yet two more constraints $\textbf{i}z_{n+1}  - \textbf{i}\overline{z}_{n+1} =  0$ and $z_{n+1} + \overline{z}_{n+1} \geqslant 0$, thereby preserving the number of global solutions. In that case, the application from $K$ to $\hat{K}$ defined by $(z_1,\hdots,z_n) \longmapsto (z_1,\hdots,z_n,\sqrt{R^2-|z_1|^2-\hdots-|z_n|^2})$ is a bijection. Adding the two extra constraints is optional and not required for convergence of optimal values. 

As seen in Theorem \ref{th:angelo}, an equality constraint may be enforced via two opposite inequality constraints. Let $h_1,\hdots,h_e$ denote $e\in \mathbb{N}^*$ equality constraints in polynomial optimization problem \eqref{eq:complexPOP}.
Putinar and Scheiderer~\cite[Propositions 6.6 and 3.2 (iii)]{putinar-2013} show that the sphere assumption in D'Angelo's and Putinar's Positivstellensatz may be weakened to the existence of $r_1,\hdots,r_e \in \mathbb{R}[\bar{z},z]$, $\sigma \in \Sigma[z]$, and $a\in \mathbb{R}$ such that
\begin{equation}
\label{eq:weak}
\sum_{j=1}^e r_j(z) h_j(z) = \sum_{i=1}^n |z_i|^2 + \sigma(z)+a, ~~~~~~ \forall z\in \mathbb{C}^n.
\end{equation}
If a problem contains the constraints $|z_1|^2 - 1 = \hdots = |z_n|^2 - 1 = 0$, then the assumption is satisfied by $r_1 = \hdots = r_n = 1$, $\sigma = 0$ and $a=-n$. In particular, there is no need to add a slack variable in the non-bipartite Grothendieck problem over the complex numbers~\cite{bandeira-2014}.
\begin{example}
\label{ex:angelo}
\normalfont
D'Angelo and Putinar~\cite{angelo-2008} consider $\frac{1}{3} < a < \frac{4}{9}$ and problem
\begin{equation}
\label{eq:ex1}
\begin{array}{llcl}
 \inf_{z \in \mathbb{C}} & f(z) & := & 1-\frac{4}{3}|z|^2+a|z|^4, \\
            \text{s.t.}          & g(z) & := & 1 - |z|^2 \geqslant 0,
\end{array}
\end{equation}
whose set of global solutions is $K^\text{opt} = \{ z \in \mathbb{C}~|~ |z| =1\}$ and $f^\text{opt} = a-\frac{1}{3}>0$. They prove that the decomposition $f = \sigma_0 + \sigma_1g ~ (\sigma_0,\sigma_1 \in \Sigma[z])$ of Theorem \ref{th:angelo} does not hold. As a result, the optimal values of the complex sum-of-squares relaxations cannot exceed 0 even though $f^\text{opt}>0$. Indeed, if $\rho_d^* > 0$ for some order $d\geqslant d^\text{min}$, then there exists $\lambda \geqslant \frac{\rho_d^*}{2}$ and $\sigma_0,\sigma_1 \in \Sigma_d[z]$ such that $f - \lambda = \sigma_0 + \sigma_1 g$. Thus $f = \lambda + \sigma_0 + \sigma_1 g$ where $\lambda + \sigma_0 \in \Sigma_d[z]$, which is a contradiction. We suggest solving
\begin{equation}
\label{eq:ex1sphere}
\begin{array}{llcl}
 \inf_{z_1,z_2 \in \mathbb{C}} & \hat{f}(z_1,z_2) & := & 1-\frac{4}{3}|z_1|^2+a|z_1|^4, \\
            \text{s.t.}          & \hat{g}(z_1,z_2) & := & 1 - |z_1|^2 - |z_2|^2 = 0,
\end{array}
\end{equation}
for which the decomposition of Theorem \ref{th:angelo} holds thereby ensuring convergence of the complex moment/sum-of-squares hierarchy (Corollary \ref{cor:convergence}). In other words, for all $\lambda < f^\text{opt}$ there exists $\hat{\sigma}_0 \in \Sigma[z_1,z_2]$ and $\hat{r}\in \mathbb{R}[\overline{z}_1,\overline{z}_2,z_1,z_2]$ such that
\begin{equation}
\label{eq:pol}
\hat{f}(z_1,z_2) - \lambda = \hat{\sigma}_0(z_1,z_2) + \hat{r}(z_1,z_2) \hat{g}(z_1,z_2),~~~~~~ \forall z_1,z_2 \in \mathbb{C}.
\end{equation}
Plug in $z_1=z$ and $z_2=0$ and obtain $ f(z) - \lambda = \hat{\sigma}_0(z,0) + \hat{r}(z,0) g(z)$ for all $z \in \mathbb{C}$.
While function $z \longmapsto \hat{\sigma}_0(z,0)$ belongs to $\Sigma[z]$, function $z \longmapsto \hat{r}(z,0)$ does not! Hence we do not contradict the fact that $f = \sigma_0 + \sigma_1g ~ (\sigma_0,\sigma_1 \in \Sigma[z])$ is impossible. Consider $a = \frac{1}{2}(\frac{1}{3} + \frac{4}{9}) = \frac{7}{18}$ so that $f^\text{opt} = \frac{1}{18}$. Notice that $d^\text{min} = 2$ for \eqref{eq:ex1} and \eqref{eq:ex1sphere}. The complex relaxations of orders $2 \leqslant d \leqslant 3$ of \eqref{eq:ex1} both yield\footnote{MATLAB 2013a, YALMIP \mbox{2015.06.26}~\cite{yalmip}, and MOSEK are used for the numerical experiments.} the value $-0.3333$. The complex relaxation of order 2 of \eqref{eq:ex1sphere} yields the value $0.0556 ~ (\approx f^\text{opt})$ and optimal polynomials $\hat{\sigma}_0(z_1,z_2) = 0.2780 |z_2|^2 + 0.2776 |z_1 z_2|^2 + 0.6667 |z_2|^4$ and $\hat{r}(z_1,z_2) = 0.9444 - 0.3889|z_1|^2 + 0.6665 |z_2|^2$, all of which satisfy \eqref{eq:pol}. 
\end{example}

\begin{example}
\label{ex:putinar}
\normalfont
Putinar and Scheiderer~\cite{putinar-scheiderer-2012} consider parameters $0 < a < \frac{1}{2}$ and $C>\frac{1}{1-2a}$, and problem
\begin{equation}
\label{eq:ex2}
\begin{array}{llcl}
 \inf_{z \in \mathbb{C}} & f(z) & := & C - |z|^2, \\
            \text{s.t.}          & g(z) & := & |z|^2 - a z^2 - a \bar{z}^2 - 1 = 0,
\end{array}
\end{equation}
whose set of global solutions is $K^\text{opt} = \left\{ \pm \frac{1}{\sqrt{1-2a}} \right\}$ and $f^\text{opt} = C-\frac{1}{1-2a}>0$. They prove that the decomposition of Theorem \ref{th:angelo} does not hold. Since the feasible set is included in the Euclidean ball of radius $\sqrt{C}$, we suggest solving
\begin{equation}
\label{eq:ex2sphere}
\begin{array}{llcl}
 \inf_{z_1,z_2 \in \mathbb{C}} & \hat{f}(z_1,z_2) & := & C - |z_1|^2, \\
            \text{s.t.}          & \hat{g}_1(z_1,z_2) & := & |z_1|^2 - a z_1^2 - a \bar{z}_1^2 - 1 = 0, \\
                                    & \hat{g}_2(z_1,z_2) & := & C - |z_1|^2 - |z_2|^2 = 0, \\
                                    & \hat{g}_3(z_1,z_2) & := & \textbf{i} z_2 - \textbf{i} \overline{z}_2 = 0, \\
                                    & \hat{g}_4(z_1,z_2) & := & z_2 + \overline{z}_2 \geqslant 0.
\end{array}
\end{equation}
Consider $a = \frac{1}{4}$ and $C = \frac{1}{1-2a} + 1 = 3$ so that $f^\text{opt} = 1$. Notice that $d^\text{min} = 2$ for \eqref{eq:ex2} and \eqref{eq:ex2sphere}. The complex relaxations of orders $2\leqslant d \leqslant 3$ of \eqref{eq:ex2} are unbounded. The complex relaxation of order 2 of \eqref{eq:ex2sphere} yields the value 0.6813. That of order 3 yields the value 1.0000 and the complex moment matrix\footnote{It so happens that the Hermitian matrix $M_3(y)$, indexed by $(\alpha,\beta) \in \mathbb{N}^2 \times \mathbb{N}^2$ with $|\alpha|,|\beta| \leqslant 3$, is real-valued in this example.} with $10^{-4}$ precision
\begin{equation}
\label{eq:M3(y)}
M_3(y) ~~ = ~~
\begin{array}{ccccccccccc}
 & \begin{turn}{90} (0,0) \end{turn} & \begin{turn}{90} (1,0) \end{turn} & \begin{turn}{90} (0,1) \end{turn} & \begin{turn}{90} (2,0) \end{turn} & \begin{turn}{90} (1,1) \end{turn} & \begin{turn}{90} (0,2) \end{turn} & \begin{turn}{90} (3,0) \end{turn} & \begin{turn}{90} (2,1) \end{turn} & \begin{turn}{90} (1,2) \end{turn} & \begin{turn}{90} (0,3) \end{turn} \\
(0,0) & 1 & 0 & 1 & 2 & 0 & 1 & 0 & 2 & 0 & 1 \\
(1,0) & 0 & 2 & 0 & 0 & 2 & 0 & 4 & 0 & 2 & 0 \\
(0,1) & 1 & 0 & 1 & 2 & 0 & 1 & 0 & 2 & 0 & 1 \\
(2,0) & 2 & 0 & 2 & 4 & 0 & 2 & 0 & 4 & 0 & 2 \\
(1,1) & 0 & 2 & 0 & 0 & 2 & 0 & 4 & 0 & 2 & 0 \\
(0,2) & 1 & 0 & 1 & 2 & 0 & 1 & 0 & 2 & 0 & 1 \\
(3,0) & 0 & 4 & 0 & 0 & 4 & 0 & 8 & 0 & 4 & 0 \\
(2,1) & 2 & 0 & 2 & 4 & 0 & 2 & 0 & 4 & 0 & 2 \\
(1,2) & 0 & 2 & 0 & 0 & 2 & 0 & 4 & 0 & 2 & 0 \\
(0,3) & 1 & 0 & 1 & 2 & 0 & 1 & 0 & 2 & 0 & 1
\end{array} ~~~~~~
\end{equation}
which satisfies $\text{rank} ~ M_3(y) = \text{rank} ~ M_1(y) = 2$.\\
\end{example}
Examples \ref{ex:angelo} and \ref{ex:putinar} show the importance of the modeling of the feasible set of the optimization problem. Depending on what equations are used to define the feasible set, the complex moment/sum-of-squares hierarchy may or may not converge towards the globally optimal value. If one of the constraints is a sphere, convergence is guaranteed. The real moment/sum-of-squares hierarchy also depends on how the feasible set is modeled. In that case, convergence is guaranteed if one of the constraints is a ball.

As a by-product of Corollary \ref{cor:convergence}, we propose a solution to the complex moment problem in Theorem \ref{th:represent} below. To that end, consider Lemma \ref{lemma:sphere} below where we transpose~\cite[Lemma 3]{josz-2015} from real to complex numbers.
\begin{lemma}
\normalfont
\label{lemma:sphere}
Let $s : \mathbb{C}^n \longrightarrow \mathbb{R}$ be defined by $s(z):= R^2 - |z_1|^2 - \hdots - |z_n|^2$. Given $d \in \mathbb{N}^*$ and $y\in \mathcal{H}_d$, we have
\begin{equation}
~~ (~M_d(g_0y) \succcurlyeq 0 ~~~ \& ~~~ M_{d-1}(sy) = 0~) ~~~~~~ \Longrightarrow ~~~~~~  \text{Tr}(M_{d}(g_0y)) \leqslant y_{0,0} \sum_{l=0}^d R^{2l}.
\end{equation}
\end{lemma}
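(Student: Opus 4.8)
The plan is to transpose the proof of Lemma \ref{lemma:bound} step by step from real to complex numbers. Since $g_0 = 1$, the matrix $M_d(g_0 y) = M_d(y)$ is the complex moment matrix of order $d$, whose diagonal entries are $M_d(y)(\alpha,\alpha) = y_{\alpha,\alpha}$. I would abbreviate $t_k := \text{Tr}(M_k(y)) = \sum_{|\alpha| \leqslant k} y_{\alpha,\alpha}$ for $0 \leqslant k \leqslant d$, so that $t_0 = y_{0,0}$ and the goal becomes $t_d \leqslant y_{0,0}\sum_{l=0}^d R^{2l}$. The hypothesis $M_d(g_0 y) \succcurlyeq 0$ forces every diagonal entry $y_{\alpha,\alpha}$ to be nonnegative for $|\alpha| \leqslant d$, a fact I will invoke repeatedly.

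First I would read off the coefficients of $s$: writing $s(z) = R^2 - \sum_{i=1}^n \bar z_i z_i$ in the form $\sum_{\gamma,\delta} s_{\gamma,\delta}\,\bar z^\gamma z^\delta$ gives $s_{0,0} = R^2$, $s_{e_i,e_i} = -1$ (with $e_i$ the $i$-th canonical multi-index), and all other coefficients zero. By Remark \ref{rem:Ly} this yields the entrywise formula $M_{k-1}(sy)(\alpha,\beta) = R^2 y_{\alpha,\beta} - \sum_{i=1}^n y_{\alpha+e_i,\beta+e_i}$. The crucial structural observation is that for each $1 \leqslant k \leqslant d$ the matrix $M_{k-1}(sy)$ is the principal submatrix of $M_{d-1}(sy)$ obtained by keeping only the rows and columns indexed by $|\alpha|,|\beta| \leqslant k-1$; hence the hypothesis $M_{d-1}(sy) = 0$ forces $M_{k-1}(sy) = 0$, and in particular $\text{Tr}(M_{k-1}(sy)) = 0$, for every such $k$.

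Taking this trace gives the identity $R^2 t_{k-1} = \sum_{|\alpha| \leqslant k-1} \sum_{i=1}^n y_{\alpha+e_i,\alpha+e_i}$. I would then regroup the right-hand side according to the value $\mu := \alpha + e_i$: each diagonal moment $y_{\mu,\mu}$ with $1 \leqslant |\mu| \leqslant k$ appears with a multiplicity equal to the number of indices $i$ with $\mu_i \geqslant 1$, which is at least $1$. Because the $y_{\mu,\mu}$ are nonnegative, this multiplicity may be bounded below by $1$, yielding $R^2 t_{k-1} \geqslant \sum_{1 \leqslant |\mu| \leqslant k} y_{\mu,\mu} = t_k - y_{0,0}$, that is, the recursion $t_k \leqslant y_{0,0} + R^2 t_{k-1}$. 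Unrolling it from $k=d$ down to $k=0$ and using $t_0 = y_{0,0}$ produces $t_d \leqslant y_{0,0}\big(1 + R^2 + \cdots + R^{2(d-1)}\big) + R^{2d} y_{0,0} = y_{0,0}\sum_{l=0}^d R^{2l}$, which is the claim.

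The main obstacle, and essentially the only place demanding care, is this multiplicity count. It is tempting (and is in fact done erroneously in the real-number analogue, cf. the footnote to Lemma \ref{lemma:bound}) to assert that the multiplicity equals $|\mu|$; in reality it equals the size of the support of $\mu$, which can be strictly smaller. The argument nonetheless goes through because all that is needed is that this multiplicity is at least $1$ while $y_{\mu,\mu} \geqslant 0$, so that the exact trace identity relaxes to the inequality $R^2 t_{k-1} \geqslant t_k - y_{0,0}$. I would therefore phrase the bound as an inequality from the outset and refrain from asserting any exact value for the multiplicity.
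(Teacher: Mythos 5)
Your proposal is correct and follows essentially the same route as the paper's proof: take the trace of the vanishing localizing matrices $M_{k-1}(sy)$, bound the regrouped sum of shifted diagonal moments from below by $\sum_{0<|\mu|\leqslant k} y_{\mu,\mu}$ using nonnegativity of the diagonal entries, and unroll the resulting recursion $t_k \leqslant y_{0,0} + R^2 t_{k-1}$. Your treatment is in fact slightly more careful than the paper's, since you make explicit both that the multiplicity of $y_{\mu,\mu}$ equals the support size of $\mu$ (not $|\mu|$) and that the positive-semidefiniteness hypothesis is what licenses replacing the exact trace identity by an inequality.
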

\begin{proof}
Given $1 \leqslant l \leqslant d$, we have $\text{Tr}(M_{l-1}(sy)) = \hdots$
\begin{equation}
\begin{array}{cl}
 = & \sum_{|\alpha|\leqslant l-1} ~ M_{l-1}(sy)(\alpha,\alpha) \\
                                    = & \sum_{|\alpha|\leqslant l-1} ~ L_y(s(z) \bar{z}^\alpha z^\alpha) \\
                                    = & \sum_{|\alpha|\leqslant l-1} \sum_{|\gamma| \leqslant 1} ~ s_{\gamma,\gamma} ~ y_{\gamma+\alpha,\gamma+\alpha} \\
                                    = & \sum_{|\alpha|\leqslant l-1,|\gamma| = 0}  ~ s_{\gamma,\gamma} ~ y_{\gamma+\alpha,\gamma+\alpha} ~+~ \sum_{|\alpha|\leqslant l-1,|\gamma| = 1} ~ s_{\gamma,\gamma} ~ y_{\gamma+\alpha,\gamma+\alpha} \\
 = & \sum_{|\alpha|\leqslant l-1} R^2 ~ y_{\alpha,\alpha} - \sum_{|\alpha|\leqslant l-1,|\gamma| = 1} y_{\gamma+\alpha,\gamma+\alpha}. \\                                 
\end{array}
\end{equation}
$M_{d-1}(sy) = 0$ implies that $M_{l-1}(sy) = 0$ for all $1 \leqslant l \leqslant d$ and hence $ \text{Tr}(M_{l-1}(sy)) = 0 $. In addition, $\sum_{0<|\alpha|\leqslant l}  y_{\alpha,\alpha} \leqslant \sum_{|\alpha|\leqslant l-1,|\gamma| = 1} y_{\gamma+\alpha,\gamma+\alpha}$. Thus
\begin{equation}
\label{eq:recurrence}
\sum_{|\alpha|\leqslant l}  y_{\alpha,\alpha}  \leqslant y_{0,0} + R^2  \sum_{|\alpha|\leqslant l-1} y_{\alpha,\alpha},~~~~~~ l = 1,\hdots,d,
\end{equation}
which proves the lemma.
\end{proof}
The next theorem can be deduced from~\cite{putinar-2013} but we provide a different proof.
\begin{theorem}
\label{th:represent} 
Under the sphere assumption \eqref{eq:sa}, a sequence $y\in \mathcal{H}$ has a representing measure on $K$ if and only if
\begin{equation}
\label{eq:sol}
M_{d}(g_iy) \succcurlyeq 0, ~~~~~~ i = 0,\hdots,m, ~~~~~~ \forall d \in \mathbb{N}.
\end{equation}
\end{theorem}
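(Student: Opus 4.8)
The plan is to prove the two implications separately, with the forward (necessity) direction being immediate and the converse (sufficiency) carrying the content of the theorem.

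For necessity, suppose $y$ admits a representing measure $\mu \in \mathcal{M}_+(K)$. Then for any $p \in \mathbb{C}_d[z]$ and any $0 \le i \le m$, the computation already carried out in \eqref{eq:positivecon} gives $\vec{p}^{\,H} M_d(g_i y)\, \vec{p} = \int_K |p|^2 g_i \, d\mu \ge 0$, since $g_i \ge 0$ on $K$ (with $g_0 := 1$) and $\mu \ge 0$. As $p$ is arbitrary, $M_d(g_i y) \succeq 0$ for every $i$ and every $d$, which is \eqref{eq:sol}.

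For sufficiency, the strategy is to realize the $\mathbb{C}$-linear functional $L_y$ of Remark \ref{rem:Ly} as integration against a positive Radon measure via the Riesz-Markov-Kakutani theorem. First I would restrict $L_y$ to $\mathbb{R}[\bar z,z]$, on which it takes real values. The key step is to show $L_y$ is non-negative on the cone of real-valued polynomials non-negative on $K$: if $\varphi_{|K} > 0$, the Sphere Assumption \eqref{eq:sa} lets me invoke D'Angelo's and Putinar's Positivstellensatz (Theorem \ref{th:angelo}) to write $\varphi = \sum_{i=0}^m \sigma_i g_i$ with $\sigma_i \in \Sigma[z]$, whence $L_y(\varphi) = \sum_{i=0}^m L_y(\sigma_i g_i) \ge 0$, each term being non-negative because $L_y(|p|^2 g_i) = \vec{p}^{\,H} M_d(g_i y)\, \vec{p} \ge 0$ by hypothesis. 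A polynomial merely non-negative on $K$ is handled by applying this to $\varphi + \epsilon$ and letting $\epsilon \downarrow 0$, using $L_y(1) = y_{0,0} \ge 0$, which holds since $M_0(y) \succeq 0$.

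Next I would upgrade positivity to continuity: for $\varphi \in \mathbb{R}[\bar z,z]$ both $\|\varphi\|_\infty - \varphi$ and $\|\varphi\|_\infty + \varphi$ are non-negative on $K$, so the previous step yields the bound $|L_y(\varphi)| \le y_{0,0}\, \|\varphi\|_\infty$. Here Lemma \ref{lemma:sphere}, which exploits $M_{d-1}(sy) = 0$ for the sphere polynomial $s(z) = R^2 - |z_1|^2 - \hdots - |z_n|^2$ to bound $\operatorname{Tr} M_d(y)$ by $y_{0,0}\sum_{l=0}^d R^{2l}$, ensures the moment data stay finite and the estimate is effective. Since $R_K(\mathbb{R}[\bar z,z])$ is a dense subalgebra of $C(K,\mathbb{R})$ by the Stone-Weierstrass theorem recalled earlier, $L_y$ extends uniquely to a continuous positive functional on $C(K,\mathbb{R})$, and Riesz-Markov-Kakutani produces $\mu \in \mathcal{M}_+(K)$ with $L_y(\varphi) = \int_K \varphi\, d\mu$ for real-valued $\varphi$. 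Decomposing a monomial as $\bar z^\alpha z^\beta = \tfrac{1}{2}\bigl(\bar z^\alpha z^\beta + \overline{\bar z^\alpha z^\beta}\bigr) + \textbf{i}\cdot \tfrac{1}{2\textbf{i}}\bigl(\bar z^\alpha z^\beta - \overline{\bar z^\alpha z^\beta}\bigr)$ into real-valued parts and using the definition of $\int_K \varphi\, d\mu$ for complex $\varphi$ recovers $y_{\alpha,\beta} = \int_K \bar z^\alpha z^\beta\, d\mu$, so $\mu$ represents $y$.

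I expect the main obstacle to be the passage from strict positivity to non-negativity on the boundary of $K$ together with the continuity estimate: the Positivstellensatz decomposes only strictly positive polynomials, so care is needed in the $\epsilon \downarrow 0$ approximation and in justifying the dominating-constant bound $|L_y(\varphi)| \le y_{0,0}\,\|\varphi\|_\infty$ before the extension is available. The Sphere Assumption is used twice, and both uses are essential: once to apply Theorem \ref{th:angelo}, and once, through Lemma \ref{lemma:sphere}, to control the moments.
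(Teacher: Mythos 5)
Your proof is correct, and its second half (the bound $|L_y(\varphi)|\leqslant y_{0,0}\|\varphi\|_\infty$, Stone--Weierstrass density, the Riesz--Markov--Kakutani extension, and the recovery of the moments) coincides with the paper's. Where you genuinely differ is in how positivity of $L_y$ on polynomials nonnegative on $K$ is obtained. The paper first isolates the degenerate case $y_{0,0}=0$ (settled by Lemma \ref{lemma:sphere}, which forces $y=0$), then normalizes to $y/y_{0,0}$, observes that this normalized sequence is feasible for the infinite moment problem \eqref{eq:momentinf}, and invokes Corollary \ref{cor:convergence} to get $L_{y/y_{0,0}}(f)\geqslant f^{\text{opt}}\geqslant 0$ whenever $f_{|K}\geqslant 0$. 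You instead apply Theorem \ref{th:angelo} directly to $\varphi+\epsilon$ and let $\epsilon\downarrow 0$. Since Corollary \ref{cor:convergence} is itself proved by exactly this $\epsilon$-shift, the two routes are mathematically equivalent; but yours is more self-contained and, pleasantly, needs neither the normalization nor the case distinction, because the estimate $|L_y(\varphi)|\leqslant y_{0,0}\|\varphi\|_\infty$ handles $y_{0,0}=0$ automatically (it forces $L_y=0$, which the zero measure represents).

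One correction to your own commentary, though it does not affect the validity of the proof: Lemma \ref{lemma:sphere} plays no role in your argument, so your closing claim that the sphere assumption is used ``twice'' and that the lemma is essential to ``control the moments'' is a misdiagnosis. Your continuity estimate follows purely from positivity applied to the two polynomials $\|\varphi\|_\infty\pm\varphi$; no trace bound on the moment matrices is involved, and the numbers $y_{\alpha,\beta}$ are finite by hypothesis. In the paper, Lemma \ref{lemma:sphere} serves only to dispose of the case $y_{0,0}=0$, which your route never needs to isolate; in your proof the sphere assumption enters exactly once, through Theorem \ref{th:angelo}. A last small point of rigor: the functional you extend by density lives on the restriction algebra $R_K(\mathbb{R}[\bar{z},z])$, so one must check that $L_y$ vanishes on any polynomial whose restriction to $K$ is zero. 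This is immediate from your bound (take $\|\varphi\|_\infty=0$), but it deserves a sentence, as the paper gives it one.
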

\begin{proof} 
It was already shown that if $y \in \mathcal{H}$ has a representing measure on $K$, then \eqref{eq:necessary} holds. Notice that \eqref{eq:necessary} and \eqref{eq:sol} are equivalent, hence the ``only if'' part. Concerning the ``if'' part, assume that $y \in \mathcal{H}$ satisfies \eqref{eq:sol}. If $y_{0,0} = 0$, then Lemma \ref{lemma:sphere} implies that $y = 0$ which can be represented by $\mu = 0$ on $K$. Otherwise $y_{0,0}>0$ and $y/y_{0,0}$ is a feasible point of problem \eqref{eq:momentinf} whose optimal value is $f^\text{opt}$ for all $f \in \mathbb{R}[\bar{z},z]$ according to Corollary \ref{cor:convergence}. If moreover $f_{|K} \geqslant 0$, then $L_{ y/y_{0,0} } (f) \geqslant f^\text{opt} \geqslant 0$. In particular, if $f_{|K} = 0$, then $L_{ y/y_{0,0} } (f) = 0$. We may therefore define $\tilde{L}_{ y/y_{0,0} } : R_K( \mathbb{C}[\bar{z},z] ) \longrightarrow \mathbb{C}$ such that $\tilde{L}_{ y/y_{0,0} } ( \varphi_{|K} ) :=  L_{ y/y_{0,0} } (\varphi)$ (similarily to Schweighofer~\cite[Proof of Theorem 2]{schweighofer-2005}). If $\varphi \in R_K(\mathbb{R}[\bar{z},z])$, then $\tilde{L}_{ y/y_{0,0} }(\|\varphi\|_\infty - \varphi) \geqslant 0$ and $\tilde{L}_{ y/y_{0,0} }(\varphi) \leqslant \|\varphi\|_\infty$. Linearity implies that $|\tilde{L}_{ y/y_{0,0} }(\varphi)| \leqslant \|\varphi\|_\infty$. 
As a result, for all $\varphi \in R_K(\mathbb{C}[\bar{z},z])$, we have
$ |\tilde{L}_{ y/y_{0,0} }(\varphi)| = | \tilde{L}_{ y/y_{0,0} }(\text{Re}(\varphi) + \textbf{i} \text{Im}(\varphi)) | = | \tilde{L}_{ y/y_{0,0} }(\text{Re}(\varphi)) + \textbf{i} \tilde{L}_{ y/y_{0,0} }(\text{Im}(\varphi)) | \leqslant | \tilde{L}_{ y/y_{0,0} }(\text{Re}(\varphi)) | + | \tilde{L}_{ y/y_{0,0} }(\text{Im}(\varphi)) | \leqslant \| \text{Re}(\varphi) \|_\infty + \| \text{Im}(\varphi) \|_\infty 
\leqslant 2 \| \varphi \|_\infty
$.
Moreover, $R_K(\mathbb{C}[\bar{z},z])$ is dense in $C(K,\mathbb{C})$. Therefore $\tilde{L}_{y/y_{0,0}}$ may be extended to a continous linear functional on $C(K,\mathbb{C})$ (we preserve the same name for the extension). $K$ is compact thus the Riesz-Markov-Kakutani Representation Theorem implies that there exists a unique Radon measure $\mu$ such that
$\tilde{L}_{y/y_{0,0}}(\varphi) = \int_K \varphi d\mu$ for all $\varphi \in C(K,\mathbb{C})$. It is positive because $\varphi \in \mathcal{P}(K)$ implies that $\tilde{L}_{ y/y_{0,0} }(\varphi) \geqslant 0$ (density argument). Finally, if $\alpha,\beta \in \mathbb{N}^n$, $y_{\alpha,\beta} / y_{0,0}  = L_{y/y_{0,0}} ( \bar{z}^\alpha z^\beta )$ (c.f. Remark \ref{rem:Ly}) hence $y$ has representing measure $y_{0,0}\mu$ on $K$.
\end{proof}

Vasilescu~\cite[Theorem I.2.17]{vasilescu-2003} has already proposed a different solution to the complex moment problem on $K$. We now transpose the proof of~\cite[Theorem 1]{josz-2015} from real to complex numbers.
\begin{proposition}
\label{prop:duality}
\normalfont
\textit{Under the sphere assumption \eqref{eq:sa},} $\rho_d^* = \rho_d \in \mathbb{R}\cup\{+\infty\}$ \textit{for all} $d \geqslant d^\text{min}$.
\end{proposition}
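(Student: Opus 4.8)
The plan is to transpose, essentially verbatim, the real-number argument of Theorem \ref{theorem:strong duality} (Theorem~1 of \cite{josz-2015}) to the complex setting, reading the moment relaxation \eqref{eq:momentd} as the primal SDP $P_d$ and the sum-of-squares relaxation \eqref{eq:sosd} as its Lagrangian dual $D_d$. Weak duality $\rho_d^* \leqslant \rho_d$ is already recorded in Proposition \ref{prop:dualconverge}, so it only remains to exclude a strict gap and to identify the common value as finite or $+\infty$. I would write $\mathcal{P}_d$ and $\mathcal{D}_d$ for the feasible sets of \eqref{eq:momentd} and \eqref{eq:sosd} and invoke the complex analogues of the three elementary facts used in Chapter \ref{sec:Zero duality gap in the Lasserre hierarchy}: Slater's condition (Lemma \ref{lemma:Slater}), the characterization of Lemma \ref{lemma:Trnovska}, and a trace bound. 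These analogues hold over the self-dual cone of positive semidefinite Hermitian matrices by the same reasoning; alternatively, one can push each complex SDP through the ring homomorphism $\Lambda$ of \eqref{eq:conversion}, which converts $M_{d-k_i}(g_iy) \succcurlyeq 0$ into an equivalent real semidefinite constraint together with the linear symmetry \eqref{eq:csdpR4}, and then apply the real lemmas directly.

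First I would treat the case $\mathcal{P}_d \neq \emptyset$. Because the sphere assumption \eqref{eq:sa} forces the feasible set to carry the sphere \emph{equality}, the moment problem enforces both $M_{d-1}(sy) \succcurlyeq 0$ and $M_{d-1}(-sy)\succcurlyeq 0$, hence $M_{d-1}(sy)=0$, so Lemma \ref{lemma:sphere} applies and yields $\mathrm{Tr}\,M_d(y) \leqslant \sum_{l=0}^d R^{2l}$ using $y_{0,0}=1$. Since $M_d(y)\succcurlyeq 0$, this bounds its largest eigenvalue, hence its Frobenius norm, hence every entry $y_{\alpha,\beta}$, exactly as in the concluding estimate of Lemma \ref{lemma:bound}. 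Thus $\mathcal{P}_d$ is bounded and closed and $L_y(f)$ is linear, so $\mathcal{P}_d^*$ is nonempty and bounded; Lemma \ref{lemma:Trnovska} then gives $\mathrm{int}\,\mathcal{D}_d \neq \emptyset$, and Lemma \ref{lemma:Slater} gives $\rho_d^* = \rho_d \in \mathbb{R}$.

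Next I would treat $\mathcal{P}_d = \emptyset$, so that $\rho_d = +\infty$, and show $\rho_d^* = +\infty$ as well. As in the real proof I would first exclude weak infeasibility: for a hypothetical sequence $(y^p)$ with $y^p_{0,0}\to 1$ and $\lambda_{\min}(M_{d-k_i}(g_iy^p)) \to 0$ for each $i$, the same sphere-trace computation underlying Lemma \ref{lemma:sphere} bounds $\mathrm{Tr}\,M_d(y^p)$ and therefore confines the whole sequence to a compact set; a convergent subsequence would then produce a genuinely feasible limit, contradicting $\mathcal{P}_d=\emptyset$. Hence $P_d$ is strongly infeasible, so $D_d$ admits an improving ray, and it remains only to exhibit one point of $\mathcal{D}_d$: solving the truncated primal that retains just $y_{0,0}=1$, $M_d(y)\succcurlyeq 0$ and the sphere localizing constraint gives a feasible bounded SDP whose dual is feasible by Lemma \ref{lemma:Trnovska}, and padding that dual solution with zeros lands it in $\mathcal{D}_d$; combining the ray with this point yields $\rho_d^* = +\infty$.

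The hard part will be the infeasible case, specifically importing the weak/strong-infeasibility dichotomy and the improving-ray construction of \cite{klerk-2000} into the Hermitian setting, and verifying that the pad-with-zeros lift of a dual point of the truncated problem is feasible for the full $\mathcal{D}_d$. I expect the $\Lambda$-conversion to be the safest way to discharge these structural SDP facts, at the cost of carrying the symmetry constraint \eqref{eq:csdpR4} through the argument; everything else is a routine restatement of the real proof once Lemma \ref{lemma:sphere} supplies the boundedness.
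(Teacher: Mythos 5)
Your proposal is correct and follows essentially the same route as the paper's own proof: the same case split on feasibility of the moment relaxation, the same use of Lemma \ref{lemma:sphere} (via the sphere equality forcing $M_{d-1}(sy)=0$) to obtain compactness and invoke Trnovsk\'a's result in the nonempty case, and the same exclusion of weak infeasibility followed by the improving-ray-plus-padded-dual-point argument in the empty case. The only cosmetic difference is that you make the transfer of the real SDP facts through $\Lambda$ of \eqref{eq:conversion} explicit, which the paper also does when it passes to the image of the feasible set under $\Lambda$ before citing Trnovsk\'a.
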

\begin{proof}
Given $A \in \mathcal{H}_d$, consider the operator norm  $\| A \| $, the greatest eigenvalue of $A$ in absolute value, and the Frobenius norm $\| A \|_{\mathcal{H}_d} := \sqrt{ \langle A , A \rangle_{\mathcal{H}_d} }$. Consider $d \geqslant d^\text{min}$. Two cases can occur. The first is that the feasible set of the complex moment relaxation of order $d$ is non-empty, in which case we consider a feasible point $(y_{\alpha,\beta})_{|\alpha|,|\beta|\leqslant d}$. All norms are equivalent in finite dimension so there exists a constant $C_d \in \mathbb{R}$ such that
$\sqrt{ \sum_{|\alpha|,|\beta|\leqslant d} |y_{\alpha,\beta}|^2 }  = \| M_{d}(g_0y) \|_{\mathcal{H}_d} \leqslant C_d ~ \| M_{d}(g_0y) \| \leqslant C_d ~ \sum_{l=0}^d R^{2l}$, according to Lemma \ref{lemma:sphere}.                                                          
As a result, the feasible set of the complex moment relaxation of order $d$ is a non-empty compact set and so is its image by $\Lambda$ (defined in \eqref{eq:conversion}). We can thus apply Trnovsk\'a's result \cite{maria-2005} which states that in a semidefinite program in real numbers, if the primal feasible set is non-empty and compact, then there exists a dual interior point and there is no duality gap.

The second case is that the feasible set of the complex moment relaxation of order $d$ is empty, i.e. $\rho_d = + \infty$. It must be strongly infeasible because it cannot be weakly infeasible (see~\cite[Section 5.2]{klerk-2000} for definitions). Indeed, if it is weakly infeasible, then there exists a sequence $(y^j)_{j\in \mathbb{N}}$ of elements of $\mathcal{H}$ such that for all $j \in \mathbb{N}$, we have $|y^j_{0,0}-1| \leqslant \frac{1}{j+1}$ and
$\lambda_\text{min} ( M_{d-k_i} (g_iy^j) ) \geqslant - \frac{1}{j+1}$ where $i = 0,\hdots,m$.
Define $c:= (n+d)!/(n!d!)$. We now mimick the computations in Lemma \ref{lemma:sphere} using $y^j_{0,0} \leqslant 1 + \frac{1}{j+1} \leqslant 2$ and $|\text{Tr}(M_{l-1} (sy^j))|  \leqslant \frac{c}{j+1} \leqslant c$ if $1\leqslant l \leqslant d$. Consider $j_0 \in \mathbb{N}$ such that for all $j\geqslant j_0$ and $1\leqslant l \leqslant d$, we have $\sum_{|\alpha|\leqslant l-1,|\gamma| = 1} y_{\gamma+\alpha,\gamma+\alpha}^j - \sum_{0<|\alpha|\leqslant l}  y_{\alpha,\alpha}^j \geqslant -1$. Equation \eqref{eq:recurrence} then becomes $\sum_{|\alpha|\leqslant l}  y_{\alpha,\alpha}^j \leqslant 2 + R^2 \left(\sum_{|\alpha|\leqslant l-1}  y_{\alpha,\alpha}^j \right) + c+1$.
As a result, $\text{Tr}(M_{d} (g_0y^j)) = \sum_{|\alpha|\leqslant d}  y_{\alpha,\alpha}^j \leqslant ( 3 + c ) \sum_{l=0}^d R^{2l}$, which, together with $\lambda_\text{min} ( M_{d} (g_0y^j) ) \geqslant -\frac{1}{j+1} \geqslant- 1$, yields
$\lambda_\text{max} ( M_{d} (g_0y^j) ) \leqslant ( 3 + c ) \sum_{l=0}^d R^{2l} + c - 1$. Hence for all $j \geqslant j_0$, the spectrum of $M_{d} (g_0y^j)$ is lower bounded by $-1$ and upper bounded by $B_d :=  ( 3 + c ) \sum_{l=0}^d R^{2l} + c - 1 \geqslant 1$. We therefore have
$\sqrt{ \sum_{|\alpha|,|\beta|\leqslant d} |y_{\alpha,\beta}^j|^2 }  \leqslant C_d ~ \| M_{d}(g_0y) \| \leqslant  C_d \times B_d$.
The sequence $(y^j)_{j\geqslant j_0}$ is thus included in a compact set. Hence there exists a subsequence that converges towards a limit $y^\text{lim}$ which satisfies $y^\text{lim}_{0,0} =1$ and the constraints $\lambda_\text{min} ( M_{d-k_i} (g_iy^\text{lim}) ) \geqslant 0, ~ i = 0,\hdots,m$. Therefore $y^\text{lim}$ is a feasible point of the complex moment relaxation of order $d$, which is a contradiction. Strong infeasibility means that the dual feasible set contains an improving ray~\cite[Definition 5.2.2]{klerk-2000}. Moreover, $\inf_{y\in \mathcal{H}_d} L_y(f)$ subject to $y_{0,0} = 1, ~ M_d(g_0y) \succcurlyeq 0, ~\text{and} ~ M_{d-1}(sy) = 0$ is a semidefinite program with a non-empty compact feasible set hence the dual feasible set contains a point $(\lambda,\sigma_0,\sigma_1)$. As result $(\lambda,\sigma_0,\sigma_1,0,\hdots,0)$ is a feasible point of the complex sum-of-squares relaxation of order $d$. Together with the improving ray, this means that $\rho_d^* = + \infty$.
To conclude, $\rho_d^* = \rho_d$ in both cases.
\end{proof}

\begin{proposition}
\label{prop:saddle}
\normalfont
\textit{Assume that complex polynomial optimization problem \eqref{eq:complexPOP} satisfies \eqref{eq:weak} and has a global solution} $z^\text{opt} \in K^\text{opt}$. \textit{In addition, assume that} $(\sigma_0^\text{opt},\hdots,\sigma_m^\text{opt}) \in \Sigma[z]^{m+1}$ \textit{is an optimal solution to the sum-of-squares problem \eqref{eq:sosinf}. 
Then} $(z^\text{opt},\sigma_1^\text{opt},\hdots,\sigma_m^\text{opt})$ \textit{is a saddle point of $\phi : \mathbb{C}^n \times \Sigma[z]^m \longrightarrow \mathbb{R}$ defined by $(z,\sigma) \longmapsto f(z) - \sum_{i=1}^m \sigma_i(z) g_i(z)$.}
\end{proposition}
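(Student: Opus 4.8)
The plan is to verify directly the two saddle-point inequalities
$$\phi(z^\text{opt},\sigma) \leqslant \phi(z^\text{opt},\sigma^\text{opt}) \leqslant \phi(z,\sigma^\text{opt})$$
for every $z \in \mathbb{C}^n$ and every $\sigma=(\sigma_1,\hdots,\sigma_m)\in \Sigma[z]^m$, writing $\sigma^\text{opt}:=(\sigma_1^\text{opt},\hdots,\sigma_m^\text{opt})$. The engine of the argument is the exact sum-of-squares certificate attached to the optimal solution. First I would record that, under assumption \eqref{eq:weak}, the weakened form of D'Angelo's and Putinar's Positivstellensatz due to Putinar and Scheiderer applies, so that repeating verbatim the reasoning of Corollary \ref{cor:convergence} (together with the weak inequality $\rho^*\leqslant f^\text{opt}$ from Proposition \ref{prop:dualconverge}) yields $\rho^* = f^\text{opt}$. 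Since $(\sigma_0^\text{opt},\hdots,\sigma_m^\text{opt})$ is assumed optimal for \eqref{eq:sosinf}, the accompanying value of $\lambda$ equals $\rho^*=f^\text{opt}$, so there is an exact decomposition
$$f - f^\text{opt} = \sigma_0^\text{opt} + \sum_{i=1}^m \sigma_i^\text{opt}\, g_i \qquad \text{on } \mathbb{C}^n,$$
with $\sigma_0^\text{opt}\in \Sigma[z]$ (recall $g_0=1$).

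The next step would be to extract a complementary-slackness relation at the optimizer. Evaluating this decomposition at $z^\text{opt}$ and using $f(z^\text{opt})=f^\text{opt}$ gives $0 = \sigma_0^\text{opt}(z^\text{opt}) + \sum_{i=1}^m \sigma_i^\text{opt}(z^\text{opt})\, g_i(z^\text{opt})$. Every summand is nonnegative: $\sigma_0^\text{opt}(z^\text{opt})\geqslant 0$ and $\sigma_i^\text{opt}(z^\text{opt})\geqslant 0$ because these are sums of squared moduli, while $g_i(z^\text{opt})\geqslant 0$ because $z^\text{opt}\in K$. Hence each vanishes, so that $\sigma_0^\text{opt}(z^\text{opt})=0$ and $\sigma_i^\text{opt}(z^\text{opt})\, g_i(z^\text{opt})=0$ for $i=1,\hdots,m$. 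In particular $\phi(z^\text{opt},\sigma^\text{opt}) = f(z^\text{opt}) - \sum_{i=1}^m \sigma_i^\text{opt}(z^\text{opt})\, g_i(z^\text{opt}) = f^\text{opt}$.

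With this common value in hand, both inequalities become one-line nonnegativity checks. For the right-hand inequality I would rewrite the Lagrangian via the decomposition as $\phi(z,\sigma^\text{opt}) = f(z) - \sum_{i=1}^m \sigma_i^\text{opt}(z)\, g_i(z) = f^\text{opt} + \sigma_0^\text{opt}(z) \geqslant f^\text{opt} = \phi(z^\text{opt},\sigma^\text{opt})$ for every $z\in\mathbb{C}^n$, since $\sigma_0^\text{opt}\geqslant 0$ everywhere. For the left-hand inequality I would compute $\phi(z^\text{opt},\sigma) = f^\text{opt} - \sum_{i=1}^m \sigma_i(z^\text{opt})\, g_i(z^\text{opt}) \leqslant f^\text{opt} = \phi(z^\text{opt},\sigma^\text{opt})$ for every $\sigma\in \Sigma[z]^m$, because each $\sigma_i(z^\text{opt})\, g_i(z^\text{opt})\geqslant 0$ (again $\sigma_i\geqslant 0$ and $z^\text{opt}\in K$). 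These are precisely the saddle-point inequalities, so $(z^\text{opt},\sigma_1^\text{opt},\hdots,\sigma_m^\text{opt})$ is a saddle point of $\phi$.

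The only delicate point is the opening step, namely that $\rho^*=f^\text{opt}$ and that this value is realized by the given certificate; everything downstream is elementary. Attainment of the supremum is granted by hypothesis, so the crux is checking that assumption \eqref{eq:weak} genuinely licenses the Positivstellensatz equality $\rho^*=f^\text{opt}$ in the presence of equality constraints encoded by opposite pairs $g_i=h_j$, $g_{i'}=-h_j$. This is exactly the Putinar–Scheiderer refinement invoked above, so no inequality manipulation beyond mirroring Corollary \ref{cor:convergence} is required, and the argument reduces to the complementary-slackness identity and two sign checks.
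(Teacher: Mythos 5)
Your proof is correct and follows essentially the same route as the paper: extract the exact decomposition $f - f^\text{opt} = \sum_{i=0}^m \sigma_i^\text{opt} g_i$ from optimality of the certificate, deduce complementary slackness $\sigma_i^\text{opt}(z^\text{opt})\,g_i(z^\text{opt}) = 0$ by evaluating at $z^\text{opt}$ and using nonnegativity of every summand, then verify the two saddle-point inequalities by the same two sign checks. The only difference is cosmetic: you spell out why optimality forces $\lambda = \rho^* = f^\text{opt}$ (via the Putinar--Scheiderer refinement under \eqref{eq:weak} and Proposition \ref{prop:dualconverge}), a step the paper's proof asserts in one clause without elaboration.
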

\begin{proof}
The optimality of $(\sigma_0^\text{opt},\hdots,\sigma_m^\text{opt})$ means that $f - f^\text{opt} = \sum_{i=0}^m \sigma_i^\text{opt} g_i$. With $f(z^\text{opt}) - f^\text{opt} = \sum_{i=0}^m \sigma_i^\text{opt}(z^\text{opt}) g_i(z^\text{opt}) = 0$, $\sigma_i^\text{opt}(z^\text{opt}) \geqslant 0$, and $g_i(z^\text{opt}) \geqslant 0$, we have $\sigma_i^\text{opt}(z^\text{opt}) g_i(z^\text{opt}) = 0$ for $i=0,\hdots,m$. It follows that $\phi(z^\text{opt},\sigma) \leqslant \phi(z^\text{opt},\sigma^\text{opt})$ for all $\sigma \in \Sigma[z]$. For all $z \in \mathbb{C}^n$, $\phi(z^\text{opt},\sigma^\text{opt}) \leqslant \phi(z,\sigma^\text{opt})$ because $f(z) - f^\text{opt} - \sum_{i=1}^m \sigma_i^\text{opt}(z) g_i(z) = \sigma_0^\text{opt}(z) \geqslant 0$.
\end{proof}

Given an application $\varphi : \mathbb{C}^n \longrightarrow \mathbb{R}$, define $\tilde{\varphi} : \mathbb{R}^{2n} \longrightarrow \mathbb{R}$ by $(x,y) \longmapsto \varphi(x+\textbf{i}y)$. 
If $\tilde{\varphi}$ is $\mathbb{R}$-differentiable at point $(x,y) \in \mathbb{R}^{2n}$, consider the Wirtinger derivative~\cite{wirtinger-1927} defined by $\nabla \varphi (x+\textbf{i}y):= \frac{1}{2}(\nabla_x \tilde{\varphi} (x,y) - \textbf{i} \nabla_y \tilde{\varphi} (x,y) ) \in \mathbb{C}^{n}$.
\begin{corollary}
\label{cor:KKT}
\normalfont
\textit{With the same assumptions as in} Proposition \ref{prop:saddle}\textit{, we have}
\begin{equation}
\boxed{
\begin{array}{l}
\nabla f (z^\text{opt}) = \sum_{i=1}^m \sigma_i^\text{opt}(z^\text{opt}) \nabla g_i(z^\text{opt}), \\
\sigma_i^\text{opt}(z^\text{opt}), g_i(z^\text{opt}) \geqslant 0, ~~~ i = 1,\hdots,m, \\
\sigma_i^\text{opt}(z^\text{opt})  g_i(z^\text{opt}) = 0, ~~~ i = 1,\hdots,m.
\end{array}
}
\end{equation}
\end{corollary}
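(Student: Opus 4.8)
The plan is to extract the three stated relations from the saddle-point structure of Proposition \ref{prop:saddle} together with the Wirtinger calculus introduced just above the corollary. The last two lines are essentially already contained in the proof of Proposition \ref{prop:saddle}: since each $\sigma_i^\text{opt} \in \Sigma[z]$ is a sum of squared moduli, it is nonnegative on all of $\mathbb{C}^n$, so $\sigma_i^\text{opt}(z^\text{opt}) \geqslant 0$; feasibility $z^\text{opt} \in K$ gives $g_i(z^\text{opt}) \geqslant 0$; and the optimality relation $f - f^\text{opt} = \sum_{i=0}^m \sigma_i^\text{opt} g_i$ evaluated at $z^\text{opt}$ (where $f(z^\text{opt}) = f^\text{opt}$) yields $\sum_{i=0}^m \sigma_i^\text{opt}(z^\text{opt}) g_i(z^\text{opt}) = 0$, a sum of nonnegative terms, whence each $\sigma_i^\text{opt}(z^\text{opt}) g_i(z^\text{opt}) = 0$. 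This supplies both the complementary slackness line and the sign conditions.

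For the stationarity condition I would first observe that the right-hand inequality of the saddle point reads $\phi(z^\text{opt},\sigma^\text{opt}) \leqslant \phi(z,\sigma^\text{opt})$ for every $z \in \mathbb{C}^n$, i.e. $z^\text{opt}$ is an \emph{unconstrained} global minimizer of the real-valued polynomial map $z \mapsto \phi(z,\sigma^\text{opt}) = f(z) - \sum_{i=1}^m \sigma_i^\text{opt}(z) g_i(z)$. Since this map lies in $\mathbb{R}[\bar z,z]$, the associated function $\widetilde{\phi(\cdot,\sigma^\text{opt})} : \mathbb{R}^{2n} \to \mathbb{R}$ is differentiable, so at an interior global minimum both $\nabla_x$ and $\nabla_y$ of it vanish; by the definition of the Wirtinger derivative this is precisely $\nabla[\phi(\cdot,\sigma^\text{opt})](z^\text{opt}) = 0$.

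I would then expand this gradient using that $\nabla$ is $\mathbb{R}$-linear and that the ordinary product rule for partial derivatives transfers to the Wirtinger derivative for products of real-valued functions, giving
$$\nabla f(z^\text{opt}) = \sum_{i=1}^m \Big( \sigma_i^\text{opt}(z^\text{opt}) \nabla g_i(z^\text{opt}) + g_i(z^\text{opt}) \nabla \sigma_i^\text{opt}(z^\text{opt}) \Big).$$
The crux is to annihilate each cross term $g_i(z^\text{opt}) \nabla \sigma_i^\text{opt}(z^\text{opt})$. Here I would split on complementary slackness: if $g_i(z^\text{opt}) = 0$ the term vanishes trivially; otherwise $g_i(z^\text{opt}) > 0$ forces $\sigma_i^\text{opt}(z^\text{opt}) = 0$, and since $\sigma_i^\text{opt} \geqslant 0$ on all of $\mathbb{C}^n$ the point $z^\text{opt}$ is a global minimizer of the differentiable nonnegative polynomial $\sigma_i^\text{opt}$, so $\nabla \sigma_i^\text{opt}(z^\text{opt}) = 0$ and the term again vanishes.

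Dropping the cross terms yields the stated stationarity equation, completing the corollary. I expect this cross-term argument — and the care needed to justify both the product rule for the Wirtinger derivative and the vanishing of the Wirtinger gradient at an interior minimum — to be the only genuine subtlety; everything else is bookkeeping inherited directly from Proposition \ref{prop:saddle}.
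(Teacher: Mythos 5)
Your proof is correct and shares the paper's skeleton --- the right-hand saddle inequality gives unconstrained stationarity of $\phi(\cdot,\sigma^\text{opt})$ at $z^\text{opt}$, then the product rule for the Wirtinger derivative, then annihilation of the cross terms $g_i(z^\text{opt})\nabla\sigma_i^\text{opt}(z^\text{opt})$ --- but your mechanism for killing the cross terms is genuinely different, and in fact more careful than the paper's. The paper asserts outright that $\sigma_i^\text{opt}(z^\text{opt})=0$ for every $i=1,\hdots,m$ (which complementary slackness alone does not give; it only gives the product $\sigma_i^\text{opt}(z^\text{opt})g_i(z^\text{opt})=0$), and then uses an algebraic argument: a sum of squared moduli vanishing at $z^\text{opt}$, restricted to each coordinate line, is divisible by $|z_k-z_k^\text{opt}|^2$, so $x_k^\text{opt}$ and $y_k^\text{opt}$ are roots of multiplicity $2$ and hence $\nabla\sigma_i^\text{opt}(z^\text{opt})=0$. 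You instead split on complementary slackness --- either $g_i(z^\text{opt})=0$ and the term dies trivially, or $\sigma_i^\text{opt}(z^\text{opt})=0$ --- and in the latter case invoke first-order optimality: $z^\text{opt}$ is an unconstrained global minimizer of the nonnegative differentiable function $\sigma_i^\text{opt}$, so its (Wirtinger) gradient vanishes there. Both routes are valid; yours is more elementary (no divisibility argument needed) and your explicit case split repairs the small logical gap in the paper's wording, whereas the paper's divisibility argument yields slightly more structural information about how $\sigma_i^\text{opt}$ vanishes at $z^\text{opt}$.
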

\begin{proof}
$z^\text{opt}$ is a minimizer of $z \in \mathbb{C}^n \longmapsto \phi(z,\sigma^\text{opt})$ thus $\nabla_z \phi(z^\text{opt},\sigma^\text{opt}) = \nabla f (z^\text{opt}) - \sum_{i=1}^m \nabla \sigma_i^\text{opt}(z^\text{opt}) g_i(z^\text{opt}) - \sum_{i=1}^m \sigma_i^\text{opt}(z^\text{opt}) \nabla g_i(z^\text{opt}) = 0$. Consider $1\leqslant i \leqslant m$. Since $\sigma_i^\text{opt}(z^\text{opt})= 0$ and $\sigma_i^\text{opt} \in \Sigma[z]$, it must be that $|z_k-z_k^\text{opt}|^2$ divides $\sigma_{i,k}^\text{opt}: z_k \in \mathbb{C} \longmapsto \sigma_i^\text{opt}(z_1^\text{opt},\hdots,z_{k-1}^\text{opt},z_k,z_{k+1}^\text{opt},\hdots,z_n^\text{opt})$. With $z_k^\text{opt} =: x_k^\text{opt}+\textbf{i}y_k^\text{opt}$, the real number $x_k^\text{opt}$ is a root of multiplicity 2 of $x_k \in \mathbb{R} \longmapsto \sigma_{i,k}^\text{opt} (x_k+\textbf{i}y_k^\text{opt})$, with an analogous remark for $y_k^\text{opt}$. Thus $\nabla \sigma_i^\text{opt}(z^\text{opt}) = 0$ which leads to the desired result.
\end{proof}

\subsection*{Comparison of real and complex hierarchies} 
\label{subsec:Comparison of Real and Complex Hierarchies} 

Similar to Shor relaxation and the second-order conic relaxation, the following notations will be used: POP-$\mathbb{C}$ denotes the complex polynomial optimization problem \eqref{eq:complexPOP}; POP-$\mathbb{R}$ denotes the real polynomial optimization problem after conversion of POP-$\mathbb{C}$ into real numbers; $\text{MSOS}_d$-$\mathbb{C}$ denotes the complex moment/sum-of-squares relaxation of order $d$ applied to POP-$\mathbb{C}$; $\text{CMSOS}_d$-$\mathbb{R}$ denotes the conversion of $\text{MSOS}_d$-$\mathbb{C}$ into real numbers; and $\text{MSOS}_d$-$\mathbb{R}$ denotes the real moment/sum-of-squares relaxation of order $d$ applied to POP-$\mathbb{R}$. Let $d^\text{min}$-$\mathbb{R}$ and $d^\text{min}$-$\mathbb{C}$ respectively denote the minimum orders of the real and complex hierarchies.
Consider the sets
\begin{equation}
\label{eq:realsets}
\begin{array}{rl}
\mathbb{R}[x,y] := & \{  ~  q : \mathbb{R}^{2n} \rightarrow \mathbb{R} ~|~ q(x,y) = \sum_{|\kappa|\leqslant j} q_{\kappa} (x,y)^\kappa,  j \in \mathbb{N}, q_{\kappa} \in \mathbb{R}  \}, \\
\mathbb{R}_d[x,y] := & \{ ~ q : \mathbb{R}^{2n} \rightarrow \mathbb{R} ~|~ q(x,y) = \sum_{|\kappa|\leqslant d} q_{\kappa} (x,y)^\kappa, ~ q_{\kappa} \in \mathbb{R}  ~ \}, \\
\Sigma_{d}[x,y]:= & \{ ~ \sigma : \mathbb{R}^{2n} \rightarrow \mathbb{R} ~|~ \sigma = \sum_{i=1}^r q_i^2, ~ \text{with} ~ r\in \mathbb{N}^*, q_i \in \mathbb{R}_d[x,y] ~ \},
\end{array}
\end{equation}
where $\kappa \in \mathbb{N}^{2n}$ and $(x,y)^\kappa := x_1^{\kappa_1} \hdots x_n^{\kappa_n} y_1^{\kappa_{n+1}} \hdots y_n^{\kappa_{2n}}$. 
\begin{proposition}
\label{prop:compare}
\normalfont
\textit{Under the sphere assumption \eqref{eq:sa}, for all integer $d$ greater than or equal to} $\max \{  \text{$d^\text{min}$-$\mathbb{R}$},\text{$d^\text{min}$-$\mathbb{C}$} \}$, \textit{we have}
\begin{equation}
\label{eq:compare}
\text{val}(\text{MSOS}_d\text{-$\mathbb{C}$}) = \text{val}(\text{CMSOS}_d\text{-$\mathbb{R}$}) \leqslant \text{val}(\text{MSOS}_d\text{-$\mathbb{R}$}).
\end{equation}
\end{proposition}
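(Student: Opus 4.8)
The plan is to prove the equality and the inequality in \eqref{eq:compare} by two independent arguments. For the equality $\text{val}(\text{MSOS}_d\text{-}\mathbb{C}) = \text{val}(\text{CMSOS}_d\text{-}\mathbb{R})$, I would proceed exactly as in the Shor case of Appendix \ref{app:Invariance of Shor Relaxation Bound}. By construction $\text{CMSOS}_d\text{-}\mathbb{R}$ is obtained from the complex moment relaxation \eqref{eq:momentd} by applying the ring homomorphism $\Lambda$ of \eqref{eq:conversion} to each localizing matrix $M_{d-k_i}(g_iy)$ and by identifying the real and imaginary parts of the unknowns $y_{\alpha,\beta}$. Property (1) of $\Lambda$ gives $M_{d-k_i}(g_iy) \succcurlyeq 0 \Leftrightarrow \Lambda(M_{d-k_i}(g_iy)) \succcurlyeq 0$, so feasible points are in bijection, and property (2) shows that the linear objective $L_y(f)$ and the normalization $y_{0,0}=1$ are left invariant. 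Hence both programs share the same optimal value (the ambiguity between moment and sum-of-squares values being removed by the strong duality of Proposition \ref{prop:duality}), and it remains only to compare $\text{MSOS}_d\text{-}\mathbb{C}$ with $\text{MSOS}_d\text{-}\mathbb{R}$.

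For the inequality, I would exhibit a value-preserving injection of the feasible set of $\text{MSOS}_d\text{-}\mathbb{R}$ into that of $\text{MSOS}_d\text{-}\mathbb{C}$; since both are infima, this forces $\text{val}(\text{MSOS}_d\text{-}\mathbb{C}) \leqslant \text{val}(\text{MSOS}_d\text{-}\mathbb{R})$. Concretely, a feasible point of $\text{MSOS}_d\text{-}\mathbb{R}$ is a real moment sequence defining a linear functional $L_{\mathbb{R}}$ on $\mathbb{R}_{2d}[x,y]$ with $L_{\mathbb{R}}(1)=1$, $L_{\mathbb{R}}(q^2)\geqslant 0$ for $q \in \mathbb{R}_d[x,y]$, and $L_{\mathbb{R}}(q^2 \tilde g_i)\geqslant 0$ for the real conversions $\tilde g_i$ of the constraints. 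Writing $z = x+\textbf{i}y$ and extending $L_{\mathbb{R}}$ $\mathbb{C}$-linearly, I set $y_{\alpha,\beta} := L_{\mathbb{R}}(\bar z^\alpha z^\beta)$, which is well defined since $\bar z^\alpha z^\beta$ has real degree $|\alpha|+|\beta|\leqslant 2d$ whenever $|\alpha|,|\beta|\leqslant d$. Then $y \in \mathcal{H}_d$ (because $L_{\mathbb{R}}$ is real on real-valued polynomials, forcing $\overline{y_{\alpha,\beta}}=y_{\beta,\alpha}$), $y_{0,0}=1$, and the objectives agree, $L_y(f)=L_{\mathbb{R}}(f)$. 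The crucial step is the identity $|p|^2 = (\text{Re}\,p)^2 + (\text{Im}\,p)^2$ for $p \in \mathbb{C}_{d-k_i}[z]$, which by Remark \ref{rem:Ly} yields
\[
\vec{p}^H M_{d-k_i}(g_iy)\,\vec{p} = L_{\mathbb{R}}\!\left(|p|^2 g_i\right) = L_{\mathbb{R}}\!\left((\text{Re}\,p)^2 g_i\right) + L_{\mathbb{R}}\!\left((\text{Im}\,p)^2 g_i\right) \geqslant 0,
\]
because $\text{Re}\,p$ and $\text{Im}\,p$ are admissible test polynomials for the real localizing matrices. Taking $g_0=1$ covers the moment matrices, so all constraints of \eqref{eq:momentd} hold and $y$ is feasible for $\text{MSOS}_d\text{-}\mathbb{C}$.

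The hard part will be the degree bookkeeping hidden in the last inequality. I must check that $\text{Re}\,p$ and $\text{Im}\,p$, which have real degree at most $d-k_i$, are genuinely covered by the real localizing matrix $M^{\mathbb{R}}_{d-d_i^{\mathbb{R}}}(\tilde g_i)$, where $d_i^{\mathbb{R}} = \lceil (\deg \tilde g_i)/2 \rceil$. This uses the elementary observation that the real conversion $\tilde g_i$ of $g_i \in \mathbb{R}_{k_i}[\bar z,z]$ has degree at most $2k_i$, so $d_i^{\mathbb{R}} \leqslant k_i$ and hence $d - k_i \leqslant d - d_i^{\mathbb{R}}$; padding coefficient vectors with zeros then lets me read $L_{\mathbb{R}}((\text{Re}\,p)^2 \tilde g_i)$ off the corresponding quadratic form of that matrix. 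The same care is needed for the sphere (or any equality) constraint, whose real conversion has degree $2$ and contributes localizing matrices of order $d-1$; since $d \geqslant \max\{d^{\min}\text{-}\mathbb{R}, d^{\min}\text{-}\mathbb{C}\}$ guarantees both hierarchies are defined, and $d^{\min}\text{-}\mathbb{R} \leqslant d^{\min}\text{-}\mathbb{C}$, every order that appears is nonnegative and consistent. Once these degree inclusions are verified the construction is complete, and combining it with the first paragraph gives \eqref{eq:compare}.
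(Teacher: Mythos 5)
Your proof is correct, but it runs on the primal (moment) side where the paper's own proof runs on the dual (sum-of-squares) side. The paper first invokes strong duality twice — Proposition \ref{prop:duality} for the complex hierarchy and \cite{josz-2015} with the ball constraint replaced by the sphere for the real one — so that it suffices to compare the three SOS programs, and then concludes from the cone inclusion $\Sigma_{d-k_i}[x+\textbf{i}y] \subset \Sigma_{d-k_i}[x,y]$, which follows from $|\tilde{p}|^2 = \tfrac{1}{4}\bigl(\tilde{p}+\overline{\tilde{p}}\bigr)^2 + \tfrac{1}{4}\bigl(\textbf{i}\tilde{p}+\overline{\textbf{i}\tilde{p}}\bigr)^2$: every feasible point of the $\text{CMSOS}_d$-$\mathbb{R}$ SOS program is feasible for the $\text{MSOS}_d$-$\mathbb{R}$ one, which gives the inequality for these sup problems. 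You instead compare the moment relaxations directly: the $\mathbb{C}$-linear extension of a real pseudo-moment functional $L_{\mathbb{R}}$, the Hermitian-ness check, the splitting $L_{\mathbb{R}}(|p|^2 \tilde{g}_i) = L_{\mathbb{R}}((\mathrm{Re}\,p)^2\tilde{g}_i) + L_{\mathbb{R}}((\mathrm{Im}\,p)^2\tilde{g}_i)$, and the degree bound $\lceil \deg \tilde{g}_i /2\rceil \leqslant k_i$ are exactly the dual manifestation of the paper's cone inclusion (both rest on $|p|^2 = (\mathrm{Re}\,p)^2 + (\mathrm{Im}\,p)^2$), and all of your steps check out, including the bookkeeping you flag as the hard part. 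Your route buys something the paper leaves implicit: the injection never uses the sphere constraint, so the inequality between \emph{moment} values holds with no sphere assumption at all — the sphere and the duality results are needed only to make $\mathrm{val}$ unambiguous by identifying moment and SOS values. The paper's route is shorter once duality is in hand and makes visible why the inequality can be strict, since $x_i^2 \in \Sigma_d[x,y]\setminus\Sigma_d[x+\textbf{i}y]$. One small caveat: you cite only Proposition \ref{prop:duality}, which covers $\text{MSOS}_d$-$\mathbb{C}$ and $\text{CMSOS}_d$-$\mathbb{R}$; for $\text{val}(\text{MSOS}_d\text{-}\mathbb{R})$ to denote a single number one also needs the real-hierarchy zero-duality-gap result (the paper's citation of \cite{josz-2015} with the sphere in place of the ball). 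Since you compare moment values consistently throughout, this affects only the reading of $\mathrm{val}$, not the validity of your argument.
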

\begin{proof}
It suffices to compare the optimal values of the real and complex sum-of-squares relaxations. This is due to Proposition \ref{prop:duality} and~\cite{josz-2015} where the ball constraint can be replaced by the sphere constraint to ensure no duality gap. We have
\begin{equation}
\begin{array}{rcll}
\text{val}(\text{POP-$\mathbb{C}$}) 	               & = & \sup_{\lambda \in \mathbb{R}}  & \lambda, \\
       						 		      &    & \mathrm{s.t.}                              &  f(z)- \lambda \geqslant 0, ~\forall z \in K, \\\\
\text{val}(\text{MSOS}_d\text{-$\mathbb{C}$})    & = & \sup_{\lambda,\sigma}               & \lambda, \\
              						              &    & \text{s.t.}                                     &  f-\lambda = \sum_{i=0}^m \sigma_i g_i,\\ 
            							     &    &             				    &  \lambda \in \mathbb{R},~ \sigma_i \in \Sigma_{d-k_i}[z] ,~ i=0, \hdots, m, \\\\
\text{val}(\text{CMSOS}_d\text{-$\mathbb{R}$}) & = & \sup_{\lambda,\sigma}                & \lambda, \\
             						 	     &    & \text{s.t.} 				    &  \tilde{f}-\lambda = \sum_{i=0}^m \sigma_i \tilde{g_i}, \\ 
             							     &    &               				    &  \lambda \in \mathbb{R},~ \sigma_i \in \Sigma_{d-k_i}[x+\textbf{i}y] ,~ i=0, \hdots, m, \\\\
\text{val}(\text{POP-$\mathbb{R}$}) 		    & = & \sup_{\lambda \in \mathbb{R}}    & \lambda,  \\
       							 	    &     & \mathrm{s.t.}                               &  \tilde{f}(x,y)- \lambda \geqslant 0, ~ \forall (x+\textbf{i}y) \in K, \\\\
\text{val}(\text{MSOS}_d\text{-$\mathbb{R}$})  & = & \sup_{\lambda,\sigma}                 & \lambda, \\
              							    &    & \text{s.t.}                                      &  \tilde{f}-\lambda = \sum_{i=0}^m \sigma_i \tilde{g_i}, \\ 
             							   &    &               				    &  \lambda \in \mathbb{R},~ \sigma_i \in \Sigma_{d-k_i}[x,y] ,~ i=0, \hdots, m.
\end{array}
\end{equation}

We now conclude because for all $d\in \mathbb{N}$, $\Sigma_{d}[x+\textbf{i}y] \subset \Sigma_d[x,y]$. Indeed, if $\sigma = \sum_{j=1}^r |p_j|^2$ with $r\in \mathbb{N}^*$ and $p_1,\hdots,p_r \in \mathbb{C}_d[z]$, then $\tilde{\sigma}(x,y) =  \sum_{j=1}^r |\tilde{p}_j(x,y)|^2 = \frac{1}{4} \sum_{j=1}^r \left(\tilde{p}_j(x,y)+\overline{\tilde{p}_j(x,y)}\right)^2 +  \left(\textbf{i} \tilde{p}_j(x,y)+\overline{\textbf{i}\tilde{p}_j(x,y)}\right)^2 \in \Sigma_d[x,y]$. 
\end{proof}

We may suspect the inequality in \eqref{eq:compare} to be strict in some cases because $\Sigma_{d}[x+\textbf{i}y]$ is a strict subset of $\Sigma_d[x,y]$ for all $d \in \mathbb{N}^*$. Indeed, for $i=1,\hdots,n$, we have $x_i^2 = \left(\frac{z_i+\bar{z}_i}{2}\right)^2 = \frac{1}{4} ( z_i^2 + 2|z_i|^2 + \bar{z}_i^2) \in \Sigma_d[x,y] \setminus \Sigma_{d}[x+\textbf{i}y]$. 
According to numerical experiments\footnote{We attempted a formal proof but it is difficult even on such a small example.}, the inequality is strict for \eqref{eq:ex2sphere} in Example \ref{ex:putinar} ($\text{val}(\text{CMSOS}_2\text{-$\mathbb{R}$}) \approx 0.6813$ and $\text{val}(\text{MSOS}_2\text{-$\mathbb{R}$}) \approx 1.0000$). 

Proposition \ref{eq:compare} seems to imply that the real moment/sum-of-squares hierarchy is better than the complex one.
However, the size of the largest semidefinite constraint of $\text{CMSOS}_d$-$\mathbb{R}$, equal to $2(n+d)!/(n!d!)$, is far inferior to that of $\text{MSOS}_d$-$\mathbb{R}$, equal to $(2n+d)!/((2n)!d!)$. For instance, if $n=10$ and $d=3$, the former is 572 and the latter is 1,771.

\begin{proposition}
\label{prop:oscillatory}
\normalfont
\textit{Given $l \in \mathbb{N}$ and $\varphi \in \mathbb{C}_l[\bar{z},z]$, we have}
\begin{equation}
\label{eq:oscillatory}
\forall z \in \mathbb{C}^n,~ \forall \theta \in \mathbb{R}, ~ \varphi(e^{\textbf{i}\theta} z) = \varphi(z) ~~~~ \Longleftrightarrow ~~~~ \forall |\alpha|,|\beta|\leqslant l, ~ |\alpha-\beta|\varphi_{\alpha,\beta} = 0.
\end{equation}
\end{proposition}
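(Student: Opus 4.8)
The plan is to expand $\varphi(e^{\textbf{i}\theta}z)$ monomial by monomial and then compare it with $\varphi(z)$ by invoking the linear independence of the monomials $\bar{z}^\alpha z^\beta$. First I would record the two elementary identities $(e^{\textbf{i}\theta}z)^\beta = e^{\textbf{i}\theta|\beta|}z^\beta$ and $\overline{(e^{\textbf{i}\theta}z)}^{\,\alpha} = e^{-\textbf{i}\theta|\alpha|}\bar{z}^\alpha$, which follow immediately from $z^\beta = z_1^{\beta_1}\cdots z_n^{\beta_n}$ together with $|\beta| = \beta_1+\cdots+\beta_n$. Substituting these into $\varphi(z) = \sum_{|\alpha|,|\beta|\leqslant l}\varphi_{\alpha,\beta}\bar{z}^\alpha z^\beta$ gives
\begin{equation*}
\varphi(e^{\textbf{i}\theta}z) = \sum_{|\alpha|,|\beta|\leqslant l}\varphi_{\alpha,\beta}\, e^{\textbf{i}\theta(|\beta|-|\alpha|)}\,\bar{z}^\alpha z^\beta,
\end{equation*}
so that $\varphi(e^{\textbf{i}\theta}z) - \varphi(z) = \sum_{\alpha,\beta}\varphi_{\alpha,\beta}\bigl(e^{\textbf{i}\theta(|\beta|-|\alpha|)}-1\bigr)\bar{z}^\alpha z^\beta$. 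Throughout I interpret $|\alpha-\beta|$ as the natural extension of the paper's convention $|\gamma|=\gamma_1+\cdots+\gamma_n$ to $\gamma=\alpha-\beta\in\mathbb{Z}^n$, so that $|\alpha-\beta|$ vanishes precisely when $|\alpha|=|\beta|$; thus the right-hand condition says $\varphi_{\alpha,\beta}=0$ whenever $|\alpha|\neq|\beta|$.

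For the implication ($\Leftarrow$) I would simply note that if $|\alpha-\beta|\varphi_{\alpha,\beta}=0$ for all pairs, then every term with $\varphi_{\alpha,\beta}\neq 0$ satisfies $|\beta|-|\alpha|=0$, whence the exponential factor $e^{\textbf{i}\theta(|\beta|-|\alpha|)}$ equals $1$ and the displayed difference is the zero function for every $z$ and every $\theta$. The reverse implication ($\Rightarrow$) is where the real content lies: from invariance the difference function above is identically zero in $z$ for each fixed $\theta$, and I want to deduce that every coefficient $\varphi_{\alpha,\beta}\bigl(e^{\textbf{i}\theta(|\beta|-|\alpha|)}-1\bigr)$ vanishes.

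The hard part will be justifying that a vanishing element of $\mathbb{C}_l[\bar{z},z]$ has vanishing coefficients, i.e. that the monomials $(\bar{z}^\alpha z^\beta)_{\alpha,\beta}$ are linearly independent as functions on $\mathbb{C}^n$. I would establish this by passing to polar coordinates $z_j = r_j e^{\textbf{i}\psi_j}$, so that $\bar{z}^\alpha z^\beta = \bigl(\prod_j r_j^{\alpha_j+\beta_j}\bigr)\, e^{\textbf{i}\sum_j(\beta_j-\alpha_j)\psi_j}$; for fixed radii the distinct characters $e^{\textbf{i}\sum_j(\beta_j-\alpha_j)\psi_j}$ of the torus are orthogonal, and varying the radii separates monomials sharing the same character but differing in total degree. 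With coefficient uniqueness in hand, the vanishing of the difference function forces $\varphi_{\alpha,\beta}\bigl(e^{\textbf{i}\theta(|\beta|-|\alpha|)}-1\bigr)=0$ for every $(\alpha,\beta)$ and every $\theta$. Finally, if $|\alpha|\neq|\beta|$ then $|\beta|-|\alpha|$ is a nonzero integer, so choosing any $\theta$ with $e^{\textbf{i}\theta(|\beta|-|\alpha|)}\neq 1$ forces $\varphi_{\alpha,\beta}=0$, hence $|\alpha-\beta|\varphi_{\alpha,\beta}=0$; whereas if $|\alpha|=|\beta|$ the identity $|\alpha-\beta|\varphi_{\alpha,\beta}=0$ holds trivially. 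This gives the stated equivalence. (An alternative to the final step, avoiding the ad hoc choice of $\theta$, is to view $\theta\mapsto\varphi(e^{\textbf{i}\theta}z)$ as a trigonometric polynomial and use that its nonzero Fourier modes must vanish under invariance; I would keep this in reserve in case the character argument needs streamlining.)
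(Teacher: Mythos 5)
Your proof is correct, and its skeleton matches the paper's: expand $\varphi(e^{\textbf{i}\theta}z)=\sum_{|\alpha|,|\beta|\leqslant l}\varphi_{\alpha,\beta}\,e^{\textbf{i}(|\beta|-|\alpha|)\theta}\,\bar{z}^\alpha z^\beta$, deduce the coefficient-wise identity $\varphi_{\alpha,\beta}\,e^{\textbf{i}(|\beta|-|\alpha|)\theta}=\varphi_{\alpha,\beta}$, and then pick a $\theta$ that kills $\varphi_{\alpha,\beta}$ whenever $|\alpha|\neq|\beta|$ (you also correctly read $|\alpha-\beta|$ as $|\alpha|-|\beta|$, i.e.\ the condition is $|\alpha|=|\beta|$, not $\alpha=\beta$, which is what the paper intends). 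The one place where you genuinely diverge is the justification of the coefficient-matching step. The paper disposes of it in one word, ``polarization'': equality of the two polynomials for all $z$ (where $\bar{z}$ and $z$ are coupled on the diagonal) implies equality of the decoupled polynomials in $\bar{z}^\alpha w^\beta$, whose linear independence is immediate, so coefficients match. You instead prove directly that the monomials $\bar{z}^\alpha z^\beta$ are linearly independent as functions on $\mathbb{C}^n$, via polar coordinates and orthogonality of the torus characters $e^{\textbf{i}\sum_j(\beta_j-\alpha_j)\psi_j}$, with the radial monomials separating pairs $(\alpha,\beta)$ that share a character (note that fixing $\beta-\alpha$ and $\alpha+\beta$ determines $(\alpha,\beta)$, so this separation does work). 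The trade-off: the paper's route is shorter but leans on a polarization principle it cites without proof, while yours is self-contained and elementary at the cost of a few extra lines; your reserve argument (Fourier modes of the trigonometric polynomial $\theta\mapsto\varphi(e^{\textbf{i}\theta}z)$) is a third valid route. Any of these is acceptable; there is no gap.
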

\begin{proof}
($\Longrightarrow$) Notice that $\varphi(e^{\textbf{i}\theta} z) = \sum_{|\alpha|,|\beta|\leqslant l} \varphi_{\alpha,\beta} \overline{(e^{\textbf{i}\theta} z)}^\alpha (e^{\textbf{i}\theta} z)^\beta = \hdots$ \\ $\sum_{|\alpha|,|\beta|\leqslant l} \varphi_{\alpha,\beta} e^{\textbf{i}(|\alpha|-|\beta|)\theta} \bar{z}^\alpha z^\beta$. Polarization implies that for all $z,w\in \mathbb{C}^n$, we have\\ $\sum_{|\alpha|,|\beta|\leqslant l} \varphi_{\alpha,\beta} e^{\textbf{i}(|\alpha|-|\beta|)\theta} \bar{z}^\alpha w^\beta = \sum_{|\alpha|,|\beta|\leqslant l} \varphi_{\alpha,\beta} \bar{z}^\alpha w^\beta$ and hence for all $|\alpha|,|\beta|\leqslant l$, $\varphi_{\alpha,\beta} e^{\textbf{i}(|\alpha|-|\beta|)\theta} = \varphi_{\alpha,\beta}$. If $\varphi_{\alpha,\beta} \neq 0$, then for all $\theta \in \mathbb{R}$, $|\alpha-\beta|\theta \equiv 0 [2\pi]$ and thus $|\alpha-\beta|=0$.
($\Longleftarrow$) Simply compute $\varphi(e^{\textbf{i}\theta} z)$.
\end{proof}

\begin{definition}
\label{def:oscillatory}
Complex polynomial optimization problem \eqref{eq:complexPOP} is said to be oscillatory if $f,g_1,\hdots$, and $g_m$ satisfy either of the two equivalent properties in \eqref{eq:oscillatory}.
\end{definition}

\begin{proposition}
\label{prop:order}
\normalfont
\textit{If complex polynomial optimization problem \eqref{eq:complexPOP} is oscillatory, then} $\text{$d^\text{min}$-$\mathbb{R}$} = \text{$d^\text{min}$-$\mathbb{C}$}$.
\end{proposition}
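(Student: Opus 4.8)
The plan is to reduce the equality of the two minimal orders to an elementary degree count, carried out polynomial by polynomial. Recall that $d^\text{min}\text{-}\mathbb{C}=\max\{k,k_1,\ldots,k_m\}$, where for each $\varphi\in\{f,g_1,\ldots,g_m\}$ the complex degree $k_\varphi$ is the least $d$ with $\varphi\in\mathbb{R}_d[\bar z,z]$, that is $k_\varphi=\max\{\max(|\alpha|,|\beta|)\,:\,\varphi_{\alpha,\beta}\neq 0\}$. On the real side, $d^\text{min}\text{-}\mathbb{R}=\max\{\lceil(\deg\tilde\varphi)/2\rceil\,:\,\varphi\in\{f,g_1,\ldots,g_m\}\}$, where $\tilde\varphi(x,y):=\varphi(x+\textbf{i}y)$ is the real polynomial obtained by conversion. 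Since both maxima run over the same finite family, it suffices to prove, for every oscillatory $\varphi$, the pointwise identity $\lceil(\deg\tilde\varphi)/2\rceil=k_\varphi$.

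First I would pin down $\deg\tilde\varphi$. The passage from $(\bar z,z)$ to $(x,y)$ given by $z=x+\textbf{i}y$, $\bar z=x-\textbf{i}y$ is an invertible linear change of variables, its $2\times 2$ block sending $(x_j,y_j)$ to $(\bar z_j,z_j)$ having determinant $2\textbf{i}\neq 0$. Such a substitution carries the top-degree homogeneous component of $\varphi$, viewed as a polynomial in the independent indeterminates $\bar z,z$, isomorphically onto a nonzero homogeneous component of the same degree in $(x,y)$, which cannot be cancelled by the images of the strictly lower-degree components. Hence no leading-term cancellation occurs and $\deg\tilde\varphi=\max\{|\alpha|+|\beta|\,:\,\varphi_{\alpha,\beta}\neq 0\}$.

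Next I would invoke the oscillatory hypothesis. By Definition \ref{def:oscillatory} together with Proposition \ref{prop:oscillatory}, every nonzero coefficient of $\varphi$ satisfies $|\alpha|=|\beta|$. Therefore each surviving monomial $\bar z^\alpha z^\beta$ has $\max(|\alpha|,|\beta|)=|\alpha|=|\beta|$ and total degree $|\alpha|+|\beta|=2|\alpha|$, an even number. Taking the maximum over the nonzero coefficients gives $\deg\tilde\varphi=2k_\varphi$; since this is even, $\lceil(\deg\tilde\varphi)/2\rceil=k_\varphi$. Taking the maximum over $\varphi\in\{f,g_1,\ldots,g_m\}$ finally yields $d^\text{min}\text{-}\mathbb{R}=\max_\varphi\lceil(\deg\tilde\varphi)/2\rceil=\max_\varphi k_\varphi=d^\text{min}\text{-}\mathbb{C}$.

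The only delicate point is the evenness exploited in the third paragraph. Without the oscillatory assumption one only has $k_\varphi\leqslant\deg\tilde\varphi\leqslant 2k_\varphi$, so the ceiling can drop strictly below $k_\varphi$: the real-valued but unbalanced polynomial $z_1^2+\bar z_1^2=2(x_1^2-y_1^2)$ has complex degree $2$ yet real degree $2$, giving $\lceil 2/2\rceil=1<2$. It is exactly the balance $|\alpha|=|\beta|$ that both forces the maximal total degree to equal $2k_\varphi$ and makes it even, eliminating all slack in the ceiling. I would therefore take care to state the no-cancellation claim cleanly and to make explicit that it is the evenness of $2k_\varphi$ which makes the ceiling exact.
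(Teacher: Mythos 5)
Your proof is correct and follows essentially the same route as the paper's: both reduce the claim to comparing the explicit formulas $d^\text{min}\text{-}\mathbb{C}=\max\{\max(|\alpha|,|\beta|)\}$ and $d^\text{min}\text{-}\mathbb{R}=\max\{\lceil(|\alpha|+|\beta|)/2\rceil\}$ taken over pairs with nonzero coefficients, which coincide precisely because the oscillatory hypothesis forces $|\alpha|=|\beta|$, making each total degree $2|\alpha|$ even. The only difference is that you explicitly justify the no-cancellation identity $\deg\tilde\varphi=\max\{|\alpha|+|\beta| : \varphi_{\alpha,\beta}\neq 0\}$ via the invertible linear substitution $(\bar z,z)\leftrightarrow(x,y)$, a step the paper's one-line proof simply asserts.
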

\begin{proof}
Observe that $d^\text{min}$-$\mathbb{C} = \max \{ |\alpha| , |\beta| ~|~ f_{\alpha,\beta}~g_{1,\alpha,\beta} ~ \hdots ~ g_{m,\alpha,\beta} \neq 0 \}$ and $\text{$d^\text{min}$-$\mathbb{R}$} = \max \{ \lceil (|\alpha| + |\beta|) / 2 \rceil ~|~ f_{\alpha,\beta}~g_{1,\alpha,\beta} ~ \hdots ~ g_{m,\alpha,\beta} \neq 0 \}$ where $\lceil . \rceil$ denotes the ceiling of a real number. Both are equal if the problem is oscillatory.
\end{proof}

\begin{conjecture}
\label{con}
\normalfont
\textit{Under the sphere assumption \eqref{eq:sa}, if complex polynomial optimization problem \eqref{eq:complexPOP} is oscillatory, then for all} $d \geqslant \text{$d^\text{min}$-$\mathbb{R}$} = \text{$d^\text{min}$-$\mathbb{C}$}$, \textit{we have}
\begin{equation}
\label{eq:conjecture}
\text{val}(\text{MSOS}_d\text{-$\mathbb{C}$}) = \text{val}(\text{CMSOS}_d\text{-$\mathbb{R}$}) = \text{val}(\text{MSOS}_d\text{-$\mathbb{R}$}).
\end{equation}
\end{conjecture}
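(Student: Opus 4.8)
The first equality in \eqref{eq:conjecture} is already furnished by Proposition \ref{prop:compare}, and that same proposition gives $\text{val}(\text{CMSOS}_d\text{-}\mathbb{R}) \leqslant \text{val}(\text{MSOS}_d\text{-}\mathbb{R})$. Hence the whole content of the conjecture is the reverse inequality $\text{val}(\text{MSOS}_d\text{-}\mathbb{R}) \leqslant \text{val}(\text{MSOS}_d\text{-}\mathbb{C})$ for oscillatory data. Since the sphere assumption \eqref{eq:sa} grants strong duality in both hierarchies (Proposition \ref{prop:duality} and its real analogue from \cite{josz-2015}), the plan is to argue on the moment (primal) side: I would show that every feasible point of the complex moment relaxation of order $d$ extends to a feasible point of the real moment relaxation of order $d$ with the same objective value. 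This yields $\rho_d^\mathbb{R} \leqslant \rho_d^\mathbb{C}$, because the infimum defining the real problem is then taken over a larger set.

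The first step is symmetrization. Let $T_\theta$ act on functions by $(T_\theta \varphi)(z) := \varphi(e^{\textbf{i}\theta}z)$ and on sequences by $(T_\theta y)_{\alpha,\beta} := e^{\textbf{i}(|\beta|-|\alpha|)\theta}\, y_{\alpha,\beta}$, the latter being the moment sequence of the measure pushed forward by $z \mapsto e^{\textbf{i}\theta}z$. By Proposition \ref{prop:oscillatory} the oscillatory hypothesis means exactly that $f$ and every $g_i$ are fixed by all $T_\theta$; consequently the feasible set and the objective of each relaxation are $T_\theta$-invariant, averaging a feasible sequence over $\theta \in [0,2\pi]$ preserves feasibility (an average of positive semidefinite localizing matrices is positive semidefinite) and preserves the objective value (which involves only balanced moments $y_{\alpha,\beta}$ with $|\alpha|=|\beta|$). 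I may therefore assume throughout that all sequences are phase invariant, i.e. $y_{\alpha,\beta}=0$ whenever $|\alpha|\neq|\beta|$.

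The second step is to pass to the basis of complex monomials $\bar z^\mu z^\nu$. In this basis the real moment and localizing matrices of a phase-invariant sequence block-diagonalize according to the weight $w:=|\nu|-|\mu|$, and a direct computation identifies the principal block built from the holomorphic indices $\{(0,\beta):|\beta|\leqslant d\}$ with the complex moment matrix $M_d(y)$ of Remark \ref{rem:Ly}, with an analogous identification for each $M_{d-k_i}(g_i y)$. Thus the complex relaxation asserts positive semidefiniteness of precisely the holomorphic sub-blocks, whereas the real relaxation asserts it for the full weight-blocks $B_w$, which additionally contain the mixed indices $(\mu,\nu)$ with $|\mu|\geqslant 1$. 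Given a complex-feasible $y$, I would extend it to a phase-invariant real sequence by keeping its balanced entries and setting every unbalanced entry to zero; the objective is unchanged, and it only remains to prove that each full block $B_w$ is positive semidefinite.

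This last point is the crux, and the reason the statement is only conjectured. The extra rows and columns of $B_w$ indexed by mixed monomials must be controlled by the holomorphic data, and the sole additional structure available is the sphere equality $M_{d-1}(sy)=0$ exploited in Lemma \ref{lemma:sphere}, where $s(z)=R^2-\sum_i|z_i|^2$. The plan is to use this relation to substitute $\sum_i|z_i|^2$ by the constant $R^2$, thereby rewriting the mixed-index rows of $B_w$ in terms of holomorphic rows, exhibiting $B_w$ as a compression of the holomorphic block and deducing its positive semidefiniteness; the equality $d^\text{min}\text{-}\mathbb{R}=d^\text{min}\text{-}\mathbb{C}$ from Proposition \ref{prop:order} is what should keep every monomial produced in this reduction within degree $d$. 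Dually, on the sum-of-squares side the same difficulty reappears as the task of turning the phase-average $\sum_k |q^{(k)}|^2$ of a real certificate — a sum of squared moduli of \emph{non-holomorphic}, weight-homogeneous polynomials — into a genuine holomorphic sum of squares of the same degree, modulo the sphere ideal; this is a degree-bounded, Quillen/Catlin--D'Angelo-type denominator question in which the usual factor $(\sum_i|z_i|^2)^r$ collapses to the harmless constant $R^{2r}$ on the sphere. Establishing this degree-controlled reduction — equivalently, that positivity of the holomorphic sub-blocks forces positivity of the mixed blocks at the same order — is the single hard step, the symmetrization and block-diagonalization being routine.
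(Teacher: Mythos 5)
There is nothing in the paper to compare your argument against: the statement you were asked to prove is explicitly labelled a \emph{conjecture}, and the paper offers no proof of it, only the remark that it ``seems to hold numerically'' for the optimal power flow instances of Section \ref{subsec:Numerical Results7}. So the question is whether your proposal closes the conjecture, and it does not --- as you yourself concede. Your reductions are sound as far as they go: Proposition \ref{prop:compare} does give $\text{val}(\text{MSOS}_d\text{-}\mathbb{C}) = \text{val}(\text{CMSOS}_d\text{-}\mathbb{R}) \leqslant \text{val}(\text{MSOS}_d\text{-}\mathbb{R})$, so only the reverse inequality is at stake; phase-averaging is legitimate (each $T_\theta$ acts on moment sequences by a diagonal unitary congruence of the localizing matrices, so the average of feasible points is feasible, and oscillatority via Proposition \ref{prop:oscillatory} guarantees the objective only sees balanced entries $|\alpha|=|\beta|$); and the block-diagonalization by weight $w=|\nu|-|\mu|$, with the holomorphic sub-blocks recovering the complex moment matrices, is a correct bookkeeping of where the two hierarchies differ.

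The genuine gap is the step you flag as ``the crux,'' and it is worth saying precisely why your proposed mechanism for it fails rather than merely being unfinished. The sphere relation $M_{d-1}(sy)=0$ only encodes the single scalar identity $\sum_i |z_i|^2 = R^2$; it lets you eliminate the \emph{full} Hermitian trace $\sum_i \bar{z}_i z_i$, never an individual antiholomorphic factor $\bar{z}_i$. A mixed index such as $(\mu,\nu)=((1,0,\dots),(0,2,0,\dots))$, i.e.\ the monomial $\bar{z}_1 z_2^2$, is not congruent modulo the sphere ideal to any holomorphic polynomial whatsoever, of any degree, so the mixed rows of a weight block $B_w$ cannot in general be rewritten as combinations of holomorphic rows, and positivity of the holomorphic sub-block does not formally propagate to $B_w$. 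Dually, your reformulation is exactly a degree-bounded Quillen/Catlin--D'Angelo question: whether a phase-invariant, degree-$d$ real sum-of-squares certificate can be replaced by a holomorphic sum-of-squares certificate of the \emph{same} order modulo the sphere ideal. The classical results give such representations only after multiplication by $(\sum_i|z_i|^2)^r$ with $r$ uncontrolled, which on the sphere is harmless for the value but raises the relaxation order --- i.e.\ it proves asymptotic agreement of the two hierarchies (which already follows from Corollary \ref{cor:convergence}), not agreement order by order. Since that degree control is precisely the content of the conjecture, your proposal reduces the conjecture to itself; the statement remains open, both in your write-up and in the paper.
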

In Section \ref{subsec:Numerical Results7}, we consider problems for which Conjecture \ref{con} seems to hold numerically. This suggests that for oscillatory problems, the complex hierarchy is more tractable than the real hierarchy at no loss of bound quality.

\subsection*{Exploiting sparsity in real and complex hierarchies}
\label{subsec:Exploiting Sparsity in Real and Complex Hierarchies}
The chordal sparsity technique described in Section~\ref{subsec:Motivation} has been extended to the real hierarchy by Waki~\cite{waki-2006} and may readily be transposed to the complex hierarchy. Each positive semidefinite constraint in~\eqref{eq:momentd} is replaced by a set of positive semidefinite constraints on certain submatrices of $M_{d-k_i}\left(g_i y\right)$. These submatrices are defined by the maximal cliques of a chordal extension of the graph associated with the objective and constraint equations. Equivalently, the sum-of-squares variables $\sigma_i$ in the dual formulation~\eqref{eq:sosd} are restricted to be functions of a subset (defined by the same maximal cliques) of the decision variables $z_1,\hdots,z_n$. These sparse relaxation hierarchies provide potentially lower bounds than their dense counterparts yet retain convergence guarantees~\cite{lasserre_book}. However, further size reduction is often necessary. We propose to selectively apply computationally intensive higher-order constraints in the sparse relaxations. In other words, rather than a single relaxation order applied to all constraints, each constraint has an associated relaxation order. This allows for solving many large-scale problems. 

We now formalize our approach applied to the complex hierarchy.\footnote{See~\cite{mh_sparse_msdp} for the details of this approach as applied to \mbox{$\text{MSOS}_d$-$\mathbb{R}$} in the context of the optimal power flow problem.} Objective function $f$ and constraints $(g_i)_{1 \leqslant i \leqslant m}$ in \eqref{eq:complexPOP} have an associated undirected sparsity graph $\mathcal{G}=\left(\mathcal{N},\mathcal{E}\right)$ with nodes $\mathcal{N} = \left\lbrace 1,\ldots , n\right\rbrace$ corresponding to each variable and edges $\mathcal{E} \subset \mathcal{N}\times \mathcal{N}$ for each pair of variables that appear together in any monomial that has a non-zero coefficient in the objective function or constraints.

Each constraint function $g_i$ has an associated relaxation order $d_i$ so that $d \in \mathbb{N}^m$. When $d_i > 1$, there must exist at least one clique of a chordal extension of $\mathcal{G}$ that contains all variables with non-zero coefficients in $g_i$. To ensure this, define a supergraph $\hat{\mathcal{G}}=(\mathcal{N},\hat{\mathcal{E}})$ where $\hat{\mathcal{E}}$ is composed of $\mathcal{E}$ augmented with edges connecting all variables with non-zero coefficients in $g_i$, not necessarily in the same monomial. For example, $g_1\left(z \right) = \bar{z}_1 z_2 + \bar{z}_2 z_1  + \bar{z}_1 z_3 +  \bar{z}_3 z_1+ \bar{z}_1 z_4 + \bar{z}_4 z_1$ with $d_1 > 1$ implies $\mathcal{E} \supset \left\lbrace\left(1,2\right),\, \left(1,3\right), \left(1,4\right)\right\rbrace$ and $\hat{\mathcal{E}} \supset \left\lbrace\left(1,2\right),\, \left(1,3\right),\, \left(1,4\right),\,\left(2,3\right),\,\left(2,4\right),\,\left(3,4\right)\right\rbrace$.

To exploit sparsity, construct a chordal extension $\mathcal{G}^{\mathrm{ch}}$ of $\hat{\mathcal{G}}$.\footnote{One approach to creating a chordal extension is to use the sparsity pattern of a Cholesky factorization (employing a minimum degree ordering to maintain sparsity) of the Laplacian matrix associated with $\hat{\mathcal{G}}$ plus an identity matrix.} Denote the set of maximally sized cliques of the chordal extension by $\mathcal{C}_1,\ldots,\mathcal{C}_p \subset \mathcal{G}^{\mathrm{ch}}$. By construction of $\hat{\mathcal{G}}$, each constraint function $g_i$ for which $d_i > 1$ has all associated variables contained in at least one clique. For each $g_i$ for which $d_i > 1$, denote as $\mathcal{C}^{\left(i\right)}$ the \emph{minimal covering clique} (i.e., the smallest clique in $\left\lbrace\mathcal{C}_1,\ldots,\mathcal{C}_p\right\rbrace$ that contains all variables in $g_i$). (If not unique, a single clique $\mathcal{C}^{\left(i\right)}$ is chosen arbitrarily among the smallest cliques.) Associate an order $\tilde{d} \in \mathbb{N}^p$ with each clique $\mathcal{C}_\gamma,\, \gamma=1,\ldots,p$ defined such that $\tilde{d}_\gamma$ is the maximum relaxation order $d_i$ among all constraints for which the clique $\mathcal{C}_\gamma$ is the minimal covering clique. If a clique $\mathcal{C}_\gamma$ is not a minimal covering clique for any constraints, then $\tilde{d}_\gamma = 1$. See Appendix~\ref{app:five-bus example} for a small illustrative example.

For all $1\leqslant i \leqslant m$ such that $d_i > 1$, the positive semidefinite constraints $M_{d-k_i}\left(g_iy\right) \succcurlyeq 0$ in the moment hierarchy~\eqref{eq:momentd} are replaced by $N_i\left(g_i y\right) \succcurlyeq 0,\, i=1,\ldots,m$, where $N_i\left(g_i y\right)\left(\alpha,\beta\right) := M_{d_i-k_i}\left(g_iy\right)\left(\alpha,\beta\right)$ such that all non-zero entries of $\alpha$ and $\beta$ correspond to variables in $\mathcal{C}^{\left(i\right)}$. For $i=0$, the positive semidefinite constraint $M_{d}\left(y\right) \succcurlyeq 0$ (recall that $g_0 = 1$ and $k_0 = 0$) is replaced by constraints defined by each maximal clique: $\tilde{N}_\gamma\left(y\right) \succcurlyeq 0,\, \gamma=1,\ldots,p$, where $\tilde{N}_\gamma\left(y\right)\left(\alpha,\beta\right) =: M_{\tilde{d}_\gamma}\left(y\right)\left(\alpha,\beta\right)$ such that all non-zero entries of $\alpha$ and $\beta$ correspond to variables in $\mathcal{C}_\gamma$. 

For the sum-of-squares representation of the hierarchy, the polynomials $\sigma_i \in \Sigma_{d-k_i}[z] ,~ i=1, \hdots, m$ in~\eqref{eq:sosd} are replaced by sums-of-squares polynomials $\omega_i \in \Sigma_{d_i-k_i}[z_{\mathcal{C}^{\left(i\right)}}],\; i=1, \hdots, m$, where $z_{\mathcal{C}^{\left(i\right)}}$ denotes the subset of variables $z$ that are in the clique $\mathcal{C}^{\left(i\right)}$. The polynomial $\sigma_0 \in \Sigma_{d-k_0}[z]$ is replaced by the polynomial $\sum_{\gamma=1}^p \tau_\gamma$ where $\tau_\gamma \in \Sigma_{\tilde{d}_\gamma}[z_{\mathcal{C}_{\gamma}}],\, \gamma=1,\ldots,p$.

The sparse version of the real hierarchy \mbox{$\text{MSOS}_d$-$\mathbb{R}$} converges to the global optimum of a polynomial optimization problem when the constraints include ball constraints on all decision variables $x$ included in each clique: $\sum_{k\in \mathcal{C}_i} x_k^2 \leqslant \left(R_{\mathcal{C}_i}\right)^2 ,\; i=1,\ldots,p$, where $R_{\mathcal{C}_i}$ is the radius of a ball enclosing all decision variables in clique~$\mathcal{C}_i$~\cite{lasserre_book}. A similar result holds for the complex hierarchy \mbox{$\text{MSOS}_d$-$\mathbb{C}$} with sphere constraints enforced for the variables included in each clique. Due to~\eqref{eq:weak}, the sparse version of the complex hierarchy is guaranteed to converge to the global optimum of~\eqref{eq:complexPOP} with increasing relaxation order when the constraints include $(\sum_{k\in \mathcal{C}_i} \left|z_k\right|^2) + \left| z_{n+i} \right|^2 = \left(R_{\mathcal{C}_i}\right)^2 ,\; i=1,\ldots,p$, where $z_{n+i}$ is a slack variable associated with clique~$\mathcal{C}_i$.

Selectively applying the higher-order constraints requires a method for determining the relaxation order $d_i$ for each constraint. We use a heuristic based on ``mismatches'' to the closest rank-one matrix~\cite{mh_sparse_msdp}.  
The idea is to extract the largest eigenvalue $\lambda_1$ and its associated unit-length eigenvector $\eta_1$ from $(y_{\alpha,\beta})_{|\alpha|=|\beta|=1}$, hence defining an ``approximate'' solution $z^{\text{approx}} := \sqrt{\lambda_1}\,\eta_1$ to the polynomial optimization problem. Define  ``mismatches'' $\zeta \in \mathbb{R}$ and $\Delta \in \mathbb{R}^m$ between the solution $y$ to the relaxation and $z^{\text{approx}}$:
\begin{subequations}
\begin{align}
\label{eq:mismatch_f}
\zeta & := \left| f\left(z^{\text{approx}}\right) - L_y(f) \right|,\\
\label{eq:mismatch_g}
\Delta_i & := \left| g_i\left(z^{\text{approx}}\right) - L_y(g_i) \right|, ~~~~~~ i=1,\ldots,m.
\end{align}
\end{subequations}
We use the iteration in Algorithm~\ref{alg:heuristic} to determine relaxation orders $d_i,\, i=1,\ldots,m$. Each iteration solves the moment/sum-of-squares relaxation after increasing the relaxation orders $d_i$ in a manner that is dependent on the largest associated $\Delta_i$ values. Denote $d^{\text{max}} := \max_i \left\lbrace d_i \right\rbrace$.\footnote{Note that $d^{\text{max}}$ is not a specified maximum relaxation order but can change at each iteration.} At each iteration of the algorithm, increment $d_i$ at up to $h$ constraints, where $h$ is a specified parameter, that have the largest mismatches $\Delta_i$ among constraints satisfying two conditions: (1)~$d_i < d^{\text{max}}$ and (2)~$\Delta_i > \epsilon_g$, where $\epsilon_g$ is a specified mismatch tolerance. If no constraints satisfy both of these conditions, increment $d_i$ at up to $h$ constraints with the largest $\Delta_i$ greater than the specified tolerance and increment $d^{\text{max}}$. That is, in order to prevent unnecessarily increasing the size of the matrices, the heuristic avoids incrementing the maximum relaxation order $d^{\text{max}}$ until $d_i = d^{\text{max}}$ at all constraints $g_i$ with mismatch $\Delta_i > \epsilon_g$.

There is a computational trade-off in choosing the value of $h$. Larger values of $h$ likely result in fewer iterations of the algorithm but each iteration is slower if more buses than necessary have high-order relaxations. Smaller values of $h$ result in faster solution at each iteration, but may require more iterations.

The algorithm terminates upon satisfaction of two conditions: First, $\left|\Delta\right|_{\infty} \leqslant \epsilon_g$, where $\left|\;\cdot\;\right|_{\infty}$ denotes the infinity norm (maximum absolute value), which indicates that the iterate is a numerically feasible point of polynomial optimization problem \eqref{eq:complexPOP}. Second, $\zeta \leqslant \epsilon_f$, which indicates global optimality to within a relative tolerance $\epsilon_f$. If the relaxation satisfies the former but not the latter termination condition (which was never observed in practice for the problem in Section \ref{subsec:Numerical Results7}), the algorithm increases $d_i$ at the $h$ constraints with largest mismatch $\Delta_i$ and continues iterating.

The moment/sum-of-squares hierarchy is successively tightened in a manner that preserves computational tractability. For sufficiently small tolerances $\epsilon_f$ and $\epsilon_g$, Algorithm~\ref{alg:heuristic} eventually proceeds to build the complete moment/sum-of-squares hierarchies. Thus, Algorithm~\ref{alg:heuristic} inherits the theoretical convergence guarantees of \mbox{$\text{MSOS}_d$-$\mathbb{C}$}. The same can be said of the real version of Algorithm~\ref{alg:heuristic} applied to \mbox{$\text{MSOS}_d$-$\mathbb{R}$}.

\begin{algorithm}
\caption{Iterative Solution for Sparse Moment/Sum-of-Squares Relaxations}\label{alg:heuristic}
\begin{algorithmic}[1]
\State Set $d_i = 1,\, i=1,\ldots, m$.
\Repeat
\State Solve relaxation with order $d_i$ for constraints $g_i\left(z\right) \geqslant 0,\, i=1,\ldots,m$.
\State Calculate mismatches $\Delta_i,\, i=1,\ldots,m$ using~\eqref{eq:mismatch_g}.
\State Increase entries of $d$ according to the mismatch heuristic.
\Until{$\left|\Delta\right|_\infty < \epsilon_g $ and $\zeta < \epsilon_f $}
\State Extract solution $z^{\text{opt}}$.
\end{algorithmic}
\end{algorithm}

\section{Numerical results} 
\label{subsec:Numerical Results7} 

The optimal power flow problem is an instance of complex polynomial optimization. Since 2006, the power systems literature has been studying the ability of the Shor and second-order conic relaxations to find global solutions~\cite{jabr2006, bai-fujisawa-wang-wei-2008, lavaei-low-2012, dan2013, zhang2013, taylor2013, bukhsh2013, hicss2014, bose2014, low_tutorial, lavaei_allerton2014, andersen2014, coffrin2015, bose-chandy-gayme-low-2015, taylor2015, dan2015, cdc2015, laplacian_obj}. Some relaxations are presented in real numbers~\cite{lavaei-low-2012, dan2013} and some in complex numbers~\cite{zhang2013, bose2014, bose-chandy-gayme-low-2015}. Nevertheless, in all numerical applications, standard solvers such as SeDuMi, SDPT3, and MOSEK are used which currently handle only real numbers. Modeling languages such as YALMIP and CVX do handle inputs in complex numbers, but the data is transformed into real numbers before calling the solver~\cite[Example 4.42]{boyd2009}. We use the European network to illustrate the fact that it is beneficial to relax nonconvex constraints before converting from complex to real numbers. The Shor relaxation, the second-order conic relaxation, and the moment/sum-of-squares hierarchy are considered. 

We consider large test cases representing portions of European electric power systems. They represent Great Britain~(GB)~\cite{gbnetwork} and Poland~(PL)~\cite{matpower} power systems as well as other European systems from the PEGASE project~\cite{pegase,josz-2016}. The test cases were preprocessed to remove low-impedance lines as described in~\cite{cdc2015} in order to improve the solver's numerical convergence, which is a typical procedure in power system analyses.\footnote{Low-impedance lines often model connections between buses in the same physical location.} A \mbox{$1\times 10^{-3}$}~per~unit low-impedance line threshold was used for all test cases except for PEGASE-1354 and PEGASE-2869 which use a \mbox{$3\times 10^{-3}$}~per~unit threshold. The processed data is described in Table~\ref{tab:sizebis}. This table also includes the at-least-locally-optimal objective values obtained from the interior point solver in M{\sc atpower}~\cite{matpower} for the problems after preprocessing. Note that the PEGASE systems specify generation costs that minimize active power losses, so the objective values in both columns are the same.

\begin{table}[ht]
\centering
\caption{Size of Data (After Low-Impedance Line Preprocessing)}
\begin{tabular}{|l|c|c|c|c|}
\hline
\multicolumn{1}{|c|}{\textbf{Test}}  & \multicolumn{1}{c|}{\textbf{Number of}} & \multicolumn{1}{c|}{\textbf{Number of}} & \multicolumn{2}{c|}{M{\sc atpower} \textbf{Solution~\cite{matpower}}}\\ \cline{4-5} 
\multicolumn{1}{|c|}{\textbf{Case}}  & \multicolumn{1}{c|}{\textbf{Complex}} & \multicolumn{1}{c|}{\textbf{Edges}} & \multicolumn{1}{c|}{\textbf{Gen. Cost}} & \multicolumn{1}{c|}{\textbf{Loss Min.}}\\
\multicolumn{1}{|c|}{\textbf{Name}}  & \multicolumn{1}{c|}{\textbf{Variables}} & \multicolumn{1}{c|}{\textbf{in Graph}} & \multicolumn{1}{c|}{\textbf{(\$/hr)}} & \multicolumn{1}{c|}{\textbf{(MW)}}\\
\hline
GB-2224            & 2,053 & \hphantom{1}2,581 & 1,942,260  & \hphantom{1}60,614\\
PL-2383wp        & 2,177 & \hphantom{1}2,651 &   1,868,350 & \hphantom{1}24,991\\
PL-2736sp         & 2,182 & \hphantom{1}2,675 &  1,307,859 & \hphantom{1}18,336\\
PL-2737sop       & 2,183 & \hphantom{1}2,675 &    \hphantom{1,}777,617 & \hphantom{1}11,397\\
PL-2746wop      & 2,189 & \hphantom{1}2,708 &    1,208,257 & \hphantom{1}19,212\\
PL-2746wp        & 2,192 & \hphantom{1}2,686 &   1,631,737 & \hphantom{1}25,269\\
PL-3012wp        & 2,292 & \hphantom{1}2,805 &   2,592,462 & \hphantom{1}27,646\\
PL-3120sp         & 2,314 & \hphantom{1}2,835 &  2,142,720 & \hphantom{1}21,513\\
PEGASE-89      & \hphantom{1,1}70 & \hphantom{11,}185 & \hphantom{1,11}5,819 & \hphantom{11}5,819\\
PEGASE-1354  & \hphantom{1,}983 & \hphantom{1}1,526 &          \hphantom{1,1}74,043 & \hphantom{1}74,043\\
PEGASE-2869  & 2,120 & \hphantom{1}3,487 &         \hphantom{1,}133,945 & 133,945\\
PEGASE-9241  & 7,154 & 12,292 &                    \hphantom{1,}315,749 & 315,749\\
PEGASE-9241R\tablefootnote{\label{pegase9241R}PEGASE-9241 contains negative resistances to account for generators at lower voltage levels. In PEGASE-9241R these are set to 0.} 
		        & 7,154 & 12,292 & \hphantom{1,}315,785 & 315,785\\
\hline
\end{tabular}
\label{tab:sizebis}
\end{table}

Implementations use YALMIP \mbox{2015.06.26}~\cite{yalmip}, Mosek 7.1.0.28, and MATLAB 2013a on a computer with a quad-core 2.70~GHz processor and 16~GB of RAM. The results do not include the typically small formulation times.

\subsection*{Shor relaxation} 
\label{subsubsec:Shor Relaxation NUM} 

Table~\ref{tab:SDP} shows the results of applying \sdpr{} and \sdpc{} to the test cases. For some problems, the Shor relaxation is \emph{exact} and yields the globally optimal decision variables and objective values. To practically identify such problems, solutions for which all power injection mismatches
$S_{k}^{\text{inj mis}}$ (see Section~\ref{subsubsec:Moment/Sum-of-Squares Hierarchy NUM}) are less than a tolerance of 1~MVA are considered exact. These problems are identified with an asterisk (*) in Table~\ref{tab:SDP}. 

The lower bounds in Table~\ref{tab:SDP} suggest that the corresponding M{\sc atpower} solutions in Table~\ref{tab:sizebis} are at least very close to being globally optimal. The gap between the M{\sc atpower} solutions and the lower bounds from \sdpc{} for the generation cost minimizing problems are less than 0.72\% for GB-2224, 0.29\% for the Polish systems, and 0.02\% for the PEGASE systems with the exception of PEGASE-9241. The non-physical negative resistances in PEGASE-9241 result in weaker lower bounds from the relaxations, yielding a gap of 1.64\% for this test case.

\begin{table}[ht]
\centering
\caption{Real and Complex SDP (Generation Cost Minimization)}
\begin{tabular}{|l|c|c|c|c|}
\hline
\multicolumn{1}{|c|}{\textbf{Case}} & \multicolumn{2}{c|}{\textbf{SDP}-$\mathbb{R}$} & \multicolumn{2}{c|}{\textbf{SDP}-$\mathbb{C}$} \\
\cline{2-5}
\multicolumn{1}{|c|}{\textbf{Name}} & \multicolumn{1}{c|}{Val. (\$/hr)} & \multicolumn{1}{c|}{\!\! Time (sec)\!\!} & \multicolumn{1}{c|}{Val. (\$/hr)} & \multicolumn{1}{c|}{\!\!Time (sec)\!\!} \\
\hline
GB-2224       & 1,928,194 &  \hphantom{1}10.9 & 1,928,444 &  \hphantom{11}6.2 \\
PL-2383wp     & 1,862,979 &  \hphantom{1}48.1 & 1,862,985 &  \hphantom{1}23.0 \\
PL-2736sp*    & 1,307,749 &  \hphantom{1}35.7 & 1,307,764 &  \hphantom{1}22.0 \\
PL-2737sop*   &   \hphantom{1,}777,505 &  \hphantom{1}41.7 &   \hphantom{1,}777,539 &  \hphantom{1}19.5 \\
PL-2746wop*   & 1,208,168 &  \hphantom{1}51.1 & 1,208,182 &  \hphantom{1}22.8 \\
PL-2746wp     & 1,631,589 &  \hphantom{1}43.8 & 1,631,655 &  \hphantom{1}20.0 \\
PL-3012wp     & 2,588,249 &  \hphantom{1}52.8 & 2,588,259 &  \hphantom{1}24.3 \\
PL-3120sp     & 2,140,568 &  \hphantom{1}64.4 & 2,140,605 &  \hphantom{1}25.5 \\
PEGASE-89*    &     \hphantom{1,11}5,819 &   \hphantom{11}1.5 &     \hphantom{1,11}5,819 &   \hphantom{11}0.9 \\
PEGASE-1354   &    \hphantom{1,1}74,035 &  \hphantom{1}11.2 &    \hphantom{1,1}74,035 &   \hphantom{11}5.6 \\
PEGASE-2869   &   \hphantom{1,}133,936 &  \hphantom{1}38.2 &   \hphantom{1,}133,936 &  \hphantom{1}20.6 \\
PEGASE-9241   &   \hphantom{1,}310,658 & 369.7 &   \hphantom{1,}310,662 & 136.1 \\
PEGASE-9241R &    \hphantom{1,}315,848 & 317.2 &   \hphantom{1,}315,731 &  \hphantom{1}95.9 \\
\hline
\end{tabular}
\label{tab:SDP}
\end{table}

As shown in Appendices~\ref{app:Invariance of Shor Relaxation Bound} and~\ref{app:Invariance of SDP-R Relaxation Bound}, the optimal objective values for \sdpr{} and \sdpc{} should be identical. With all objective values in Table~\ref{tab:SDP} matching to within $0.037\%$, this is numerically validated.

For these test cases, \sdpc{} is significantly faster (between a factor of 1.60 and 3.31) than \sdpr{}. This suggests that exploiting the isomorphic structure of complex matrices in \sdpc{} is better than eliminating a row and column in \sdpr{}.

\subsection*{Second-order conic relaxation}
\label{subsubsec:Second-Order Conic Relaxation NUM}

Table~\ref{tab:SOCP} shows the results of applying \socpr{} and \socpc{} to the test cases. Unlike the Shor relaxation, the second-order conic relaxation is not exact for any of the test cases. (\socpc{} is generally not exact with the exception of radial systems for which the relaxation is provably exact when certain non-trivial technical conditions are satisfied~\cite{low_tutorial}.)

\begin{table}[ht]
\centering
\caption{Real and Complex SOCP (Generation Cost Minimization)}
\begin{tabular}{|l|c|c|c|c|}
\hline
\multicolumn{1}{|c|}{\textbf{Case}} & \multicolumn{2}{c|}{\textbf{SOCP}-$\mathbb{R}$} & \multicolumn{2}{c|}{\textbf{SOCP}-$\mathbb{C}$} \\
\cline{2-5}
\multicolumn{1}{|c|}{\textbf{Name}} & \multicolumn{1}{c|}{Val. (\$/hr)} & \multicolumn{1}{c|}{\!\! Time (sec)\!\!} & \multicolumn{1}{c|}{Val. (\$/hr)} & \multicolumn{1}{c|}{\!\!Time (sec)\!\!} \\
\hline
GB-2224      & 1,855,393 &  \hphantom{1}3.5 & 1,925,723 &  \hphantom{1}1.4 \\
PL-2383wp    & 1,776,726 &  \hphantom{1}8.5 & 1,849,906 &  \hphantom{1}2.4 \\
PL-2736sp    & 1,278,926 &  \hphantom{1}4.8 & 1,303,958 &  \hphantom{1}1.7 \\
PL-2737sop   &  \hphantom{1,}765,184 &  \hphantom{1}5.5 &   \hphantom{1,}775,672 &  \hphantom{1}1.6 \\
PL-2746wop   & 1,180,352 &  \hphantom{1}5.1 & 1,203,821 &  \hphantom{1}1.7 \\
PL-2746wp    & 1,586,226 &  \hphantom{1}5.5 & 1,626,418 &  \hphantom{1}1.7 \\
PL-3012wp    & 2,499,097 &  \hphantom{1}5.9 & 2,571,422 &  \hphantom{1}2.0 \\
PL-3120sp    & 2,080,418 &  \hphantom{1}6.2 & 2,131,258 &  \hphantom{1}2.2 \\
PEGASE-89    &     \hphantom{1,11}5,744 &  \hphantom{1}0.5 &     \hphantom{1,11}5,810 &  \hphantom{1}0.4 \\
PEGASE-1354  &   \hphantom{1,1}73,102 &  \hphantom{1}3.4 &    \hphantom{1,1}73,999 &  \hphantom{1}1.5 \\
PEGASE-2869  &   \hphantom{1,}132,520 &  \hphantom{1}9.0 &   \hphantom{1,}133,869 &  \hphantom{1}2.7 \\
PEGASE-9241  &   \hphantom{1,}306,050 & 35.3 &   \hphantom{1,}309,309 & 10.0 \\
PEGASE-9241R &   \hphantom{1,}312,682 & 36.7 &   \hphantom{1,}315,411 &  \hphantom{1}5.4 \\
\hline
\end{tabular}
\label{tab:SOCP}
\end{table}

\socpc{} provides better lower bounds and is computationally faster than \socpr{}. Specifically, lower bounds from \socpc{} are between $0.87\%$ and $3.96\%$ larger and solver times are faster by between a factor of $1.24$ and $6.76$ than those from \socpr{}. 


\subsection*{Moment/sum-of-squares hierarchy}
\label{subsubsec:Moment/Sum-of-Squares Hierarchy NUM}

Relaxations from the real 
moment/sum-of-squares hierarchy globally solve a broad class of optimal power flow problems~\cite{pscc2014,cedric_tps,mh_sparse_msdp,ibm_paper}. Previous work uses \mbox{$\text{MSOS}_d$-$\mathbb{R}$} by first converting the complex formulation of the optimal power flow problem to real numbers. 



We next summarize computational aspects of both the real and complex hierarchies. The dense formulations of the hierarchies solve small problems (up to approximately ten buses). Without also selectively applying the higher-order relaxations' constraints (i.e., $d_1=d_2=\ldots = d_m = d$), exploiting network sparsity enables solution of the second-order relaxations for problems with up to approximately 40 buses.

Scaling to larger problems is accomplished by both exploiting network sparsity and selectively applying the computationally intensive higher-order relaxation constraints to specific ``problematic'' buses. To better match the structure of the optimal power flow constraint equations, we use the algorithm in~\cite{mh_sparse_msdp}, which is slightly different than that described in Section~\ref{subsec:Exploiting Sparsity in Real and Complex Hierarchies}. Rather than consider each constraint individually, we use the mismatch in apparent power injections at each bus rather than the active and reactive power injection equations separately. The relaxation orders $d_i$ associated with all constraints at a bus are changed together.

Specifically, mismatches for the active and reactive power injection constraints at bus~$i$, denoted as $P_i^{\text{inj}\,\text{mis}}$ and $Q_i^{\text{inj}\,\text{mis}}$, are calculated using~\eqref{eq:mismatch_g}. Problematic buses are identified as those with large apparent power injection mismatch $S_i^{\text{inj}\,\text{mis}} = | P_i^{\text{inj}\,\text{mis}} + \mathbf{i} Q_i^{\text{inj}\,\text{mis}} |$. Application of the higher-order relaxation's constraints to these problematic buses using the iterative algorithm described in~\cite{mh_sparse_msdp} (cf Section~\ref{subsec:Exploiting Sparsity in Real and Complex Hierarchies}) results in global solutions to many optimal power flow problems and enables computational scaling to systems with thousands of buses~\cite{mh_sparse_msdp,cdc2015}. This section extends this approach to the complex hierarchy.

Tables~\ref{tab:MSOSR} and~\ref{tab:MSOSC} show the results of applying the algorithm from~\cite{mh_sparse_msdp} for both the real and complex hierarchies to several test cases with tolerances $\epsilon_g = 1~\text{MVA}$ and $\epsilon_f = 0.05\%$.\footnote{The algorithm in~\cite{mh_sparse_msdp} has a parameter $h$ specifying the maximum number of buses to increase the relaxation order $d_i$ at each iteration. This parameter is set to two for these examples. Additionally, bounds on the lifted variables $y$ derived from the voltage magnitude limits are enforced to improve numeric convergence.} The optimal objective values in these tables match to at least 0.007\%, which is within the expected solver tolerance. Further, the solutions for both the real and complex hierarchies match the optimal objective values for the loss minimizing problems obtained from M{\sc atpower} shown in Table~\ref{tab:sizebis} to within 0.013\%, providing an additional numerical proof that these solutions are globally optimal. Note, however, that local solvers do not always globally solve optimal power flow problems~\cite{bukhsh2013,mh_sparse_msdp,ferc5}.

The test cases considered in Tables~\ref{tab:MSOSR} and~\ref{tab:MSOSC} minimize active power losses rather than generation costs. Although the moment/sum-of-squares hierarchy solves many small- and medium-size test cases which minimize generation cost, application of the algorithm in~\cite{mh_sparse_msdp} to larger generation-cost-minimizing test cases often requires too many higher-order constraints for tractability. See~\cite{lavaei_allerton2014,cdc2015,laplacian_obj} for related algorithms which often find feasible points that are nearly globally optimal for such problems.

The feasible set of the optimal power flow problem is included is the ball of radius $ \sum_{k\in \mathcal{B}} \left(v_k^{\max}\right)^2$ so a slack variable and a sphere constraint may be added as suggested in Section \ref{subsec:Convergence of the Complex Hierarchy}. In order to preserve sparsity, a slack variable and a sphere constraint may be added \emph{for each maximal clique} of the chordal extension of the network graph. Global convergence is then guaranteed due to \eqref{eq:weak}. However, the sphere constraint tends to introduce numerical convergence challenges in problems with several thousand buses, resulting in the need for higher-order constraints at more buses and correspondingly longer solver times. 

Interestingly, the examples in Table~\ref{tab:MSOSC} converged without the slack variables and sphere constraints, and the results therein correspond to relaxations without sphere constraints. A potential way to account for the success of the complex hierarchy without sphere constraints would be to compute the Hermitian complexity~\cite{putinar-2012} of the ideal generated by the polynomials associated with equality constraints. A step in that direction would be to assess the greatest number of distinct points (possibly infinite) $v^i \in \mathbb{C}^n, 1 \leqslant i \leqslant p,$ such that $(v^i)^H (H_k + \textbf{i}\tilde{H_k}) v^j = -p_k^\text{dem} - \textbf{i}q_k^\text{dem}$ for all buses $k$ not connected to a generator and for all $1\leqslant i,j \leqslant p$. Note that the Hermitian complexity of the ideal generated by $\sum_{i=1}^n |z_i|^2 + \sigma(z)+a$ as defined in \eqref{eq:weak} with $a<0$ is equal to 1.

Despite being unnecessary for convergence of the hierarchies in Table~\ref{tab:MSOSC}, the sphere constraint can tighten the relaxations of some optimal power flow problems. Consider, for instance, the 9-bus example in~\cite{bukhsh2013}. The dense second-order relaxations from the real and complex hierarchies (both with and without the sphere constraint) yield the global optimum of \$3088/hour. Likewise, with second-order constraints enforced at all buses, the sparse versions of the real hierarchy and the complex hierarchy with the sphere constraint yield the global optimum. However, the sparse version of the second-order complex hierarchy without the sphere constraint only provided a lower bound of \$2939/hour. Thus, the sphere constraint tightens the sparse version of the second-order complex hierarchy for this test case. Since the sparse version of the third-order complex hierarchy without the sphere constraint yields the global optimum, the sphere constraint is unnecessary for convergence in this example.

\begin{table}[ht]
\centering
\caption{Real Moment/Sum-of-Squares Hierarchy $\text{MSOS}_d$-$\mathbb{R}$ (Active Power Loss Minimization)}
\begin{tabular}{|l|c|c|c|c|}
\hline
\multicolumn{1}{|c|}{\textbf{Case}} & \textbf{Num.} & \multicolumn{1}{c|}{\textbf{Global Obj.}} & \textbf{Max} $S^\text{mis}$ & \multicolumn{1}{c|}{\textbf{Solver}} \\
\multicolumn{1}{|c|}{\textbf{Name}} & \textbf{Iter.} & \multicolumn{1}{c|}{\textbf{Val. (MW)}} & \textbf{(MVA)} & \multicolumn{1}{c|}{\textbf{Time (sec)}} \\
\hline
PL-2383wp    & 3  & \hphantom{1}24,990 & 0.25 & \hphantom{1,}583.4 \\
PL-2736sp    & 1  & \hphantom{1}18,334 & 0.39 & \hphantom{1,1}44.0 \\
PL-2737sop   & 1  & \hphantom{1}11,397 & 0.45 & \hphantom{1,1}52.4 \\
PL-2746wop   & 2  & \hphantom{1}19,210 & 0.28 & 2,662.4 \\
PL-2746wp    & 1  & \hphantom{1}25,267 & 0.40 & \hphantom{1,1}45.9 \\
PL-3012wp    & 5  & \hphantom{1}27,642 & 1.00 & \hphantom{1,}318.7 \\
PL-3120sp    & 7  & \hphantom{1}21,512 & 0.77 & \hphantom{1,}386.6 \\
PEGASE-1354  & 5  & \hphantom{1}74,043 & 0.85 & \hphantom{1,}406.9 \\ 
PEGASE-2869  & 6 & 133,944 & 0.63 & \hphantom{1,}921.3 \\ 
\hline
\end{tabular}
\label{tab:MSOSR}
\end{table}

\begin{table}[ht]
\centering
\caption{Complex Moment/Sum-of-Squares Hierarchy $\text{MSOS}_d$-$\mathbb{C}$ (Active Power Loss Minimization)}
\begin{tabular}{|l|c|c|c|c|}
\hline
\multicolumn{1}{|c|}{\textbf{Case}} & \textbf{Num.} & \multicolumn{1}{c|}{\textbf{Global Obj.}} & \textbf{Max} $S^\text{mis}$ & \multicolumn{1}{c|}{\textbf{Solver}} \\
\multicolumn{1}{|c|}{\textbf{Name}} & \textbf{Iter.} & \multicolumn{1}{c|}{\textbf{Val. (MW)}} & \textbf{(MVA)} & \multicolumn{1}{c|}{\textbf{Time (sec)}} \\
\hline
PL-2383wp    & 3  & \hphantom{1}24,991 & 0.10 & \hphantom{1,1}53.9 \\
PL-2736sp    & 1  & \hphantom{1}18,335 & 0.11 & \hphantom{1,1}17.8 \\
PL-2737sop   & 1  & \hphantom{1}11,397 & 0.07 & \hphantom{1,1}25.7 \\
PL-2746wop   & 2  & \hphantom{1}19,212 & 0.12 & \hphantom{1,}124.3 \\
PL-2746wp    & 1  & \hphantom{1}25,269 & 0.05 & \hphantom{1,1}18.5 \\
PL-3012wp    & 7  & \hphantom{1}27,644 & 0.91 & \hphantom{1,}141.0 \\ 
PL-3120sp    & 9  & \hphantom{1}21,512 & 0.27 & \hphantom{1,}193.9 \\
PEGASE-1354  & 11  & \hphantom{1}74,042 & 1.00 & 1,132.6 \\ 
PEGASE-2869  & 9 & 133,939 & 0.97 & \hphantom{1,}700.8 \\ 
\hline
\end{tabular}
\label{tab:MSOSC}
\end{table}

Similar to the second-order conic relaxation, the results in Tables~\ref{tab:MSOSR} and~\ref{tab:MSOSC} show that the complex hierarchy generally has computational advantages over the real hierarchy. For all the test cases except PEGASE-1354, \mbox{$\text{MSOS}_d$-$\mathbb{C}$} solves between a factor of 1.31 and 21.42 faster than \mbox{$\text{MSOS}_d$-$\mathbb{R}$}. The most significant computational speed improvements for the complex hierarchy over the real hierarchy are seen for cases (e.g., \mbox{PL-2383wp} and \mbox{PL-2746wop}) where the higher-order constraints account for a large portion of the solver times. The complex hierarchy for these cases has significantly fewer terms in the higher-order constraints than the real hierarchy. 

Observe that several of the test cases (\mbox{PL-3012wp}, \mbox{PL-3120sp}, \mbox{PEGASE-1354}, and \mbox{PEGASE-2869}) require more iterations of the algorithm from~\cite{mh_sparse_msdp} for \mbox{$\text{MSOS}_d$-$\mathbb{C}$} than for \mbox{$\text{MSOS}_d$-$\mathbb{R}$}. Nevertheless, the improved speed per iteration results in faster overall solution times for all of these test cases except for \mbox{PEGASE-1354}, for which six additional iterations result in a factor of 2.78 slower solver time.

Both hierarchies were also applied to a variety of small test cases (less than ten buses) from~\cite{hicss2014,bukhsh2013,lesieutre_molzahn_borden_demarco-allerton2011,iscas2015} for which the first-order relaxations failed to yield the global optima. For all these test cases, the dense versions of both \mbox{$\text{MSOS}_d$-$\mathbb{C}$} and \mbox{$\text{MSOS}_d$-$\mathbb{R}$} converged at the same relaxation order. Section~\ref{subsec:Comparison of Real and Complex Hierarchies} demonstrates that the \mbox{$\text{MSOS}_d$-$\mathbb{R}$} is at least as tight as \mbox{$\text{MSOS}_d$-$\mathbb{C}$}. The results for small problems suggest that the hierarchies have the same tightness for some class of polynomial optimization problems which includes the optimal power flow problem with the sphere constraint (cf Conjecture~\ref{con}). The numerical results for some large test cases have different numbers of iterations between the real and complex hierarchies. Rather than differences in the theoretical tightness of the relaxation hierarchies, we attribute this discrepancy in the number of iterations to numerical convergence inaccuracies; not enforcing the sphere constraint for the sparse complex hierarchy; and, in some cases, the algorithm from~\cite{mh_sparse_msdp} selecting different buses at which to enforce the higher-order constraints.

\section{Conclusion}
\label{sec:Conclusion7}
We construct a complex moment/sum-of-squares hierarchy for complex polynomial optimization and prove convergence toward the global optimum. Theoretical and experimental evidence suggest that relaxing nonconvex constraints before converting from complex to real numbers is better than doing the operations in the opposite order. 
We conclude with the question: is it possible to gain efficiency by transposing convex optimization algorithms from real to complex numbers? \\\\
This chapter contains several appendices that may be found after Chapter \ref{sec:Conclusion and perspectives}.

\chapter{Conclusion and perspectives}
\label{sec:Conclusion and perspectives}

The main challenge that prompted this doctoral project was to be able to provide global solutions to the optimal power flow problem using semidefinite programming when the Shor relaxation fails. Having realized that the Lasserre hierarchy offers a solution to this challenge for small networks, the goal of the dissertation became to apply the Lasserre hierarchy to solve large-scale networks. The main contribution was to adapt the Lasserre hierarchy to the complex structure of our problem to enhance its tractability. This yielded a new general approach, the complex moment/sum-of-squares hierarchy.

In Chapter \ref{sec:Lasserre hierarchy for small-scale networks}, it was shown that the Lasserre hierarchy solves small-scale networks to global optimality. These networks could not be solved using the Shor relaxation. Surprisingly, the hierarchy solves them for low orders, generally the second or third order. However, the second order relaxation can only be applied to about a dozen of variables. With more variables, it becomes intractable. 

In Chapter \ref{sec:Zero duality gap in the Lasserre hierarchy}, it was proven that there is no duality gap at each order of the Lasserre hierarchy provided one the constraints is a ball constraint. This result is relevant because Lasserre proposes to add a redundant ball constraint to bounded feasible sets in order to guarantee convergence of the hierarchy. As a corrolary, we obtained that there is no duality gap at each order of the hierarchy applied to the optimal power flow problem, without having to add a redundant ball constraint. Note that the ball constraint is not needed in the case of our problem of interest due to upper bound constraints on the variables, that is to say upper voltage bounds. The property we've proven is necessary for interior-point solvers to converge to solutions of the semidefinite programs in the Lasserre hierarchy.

In Chapter \ref{sec:Data of European high-voltage transmission network}, new large-scale test cases are presented. They correspond to sections of the European high-voltage transmission network and the entire network. They can be viewed as quadratically-constrained quadratic programs where the variables and data are complex numbers. They consist in sparse problems with several thousand complex variables, with 9,241 variables in the biggest test case. The new data are representative of the size and complexity of real world power systems. They can hence be used to validate new methods and tools, such as those developed in this dissertation.
 
In Chapter \ref{sec:Penalized Lasserre hierarchy for large-scale networks}, the Lasserre hierarchy is applied to large-scale networks by combining it with a penalization approach. As a result, nearly global solutions are found to generation cost minizimation problems, with a guarantee of how far the value is from the global value. For power loss minimization problems, the objective function is convex. In those cases, the Lasserre hierarchy finds the global solution. In all cases, the sparsity of the problem is exploited using the notions of chordal graph and maximal clique, as well as a technique to identify problematic constraints. Higher-orders of the Lasserre hierarchy are then only applied to those constraints, reducing computation time. Moreover, low impedance lines are removed to cope with the inherent bad conditionning of power systems data.

In Chapter \ref{sec:Laplacian matrix gets rid of penalization parameter}, a method for finding nearly global solutions to the optimal power flow problem is proposed. It does so without having to specify a parameter, which a major disadvantage of penalization approaches. It is inspired by successful penalizations of the optimal power flow, which we observed to be linked with Laplacian matrices of the graph of the power network. Minimizing a quadratic form defined by such a Laplacian matrix over the power flow equations promotes low rank solutions. In fact, by iterative update of the weights of the Laplacian matrix, the rank can be reduced to one for many large-scale test cases. To guarantee near global optimality, the original objective function is set as a constraint. It is constrained to be less than or equal to the lower bound obtained by the Shor relaxation, plus a small fraction of it. This is founded because in all practical test cases, the Shor relaxation computes a lower bound of very high quality.

In Chapter \ref{sec:Complex hierarchy for enhanced tractability}, the Lasserre hierarchy is transposed to complex numbers in order to reduce the computional burden when solving polynomial problems with complex data and variables. The motivation for this is that the optimal power flow problem is a special case of complex polynomial optimization. We introduce a complex hierarchy and prove its convergence to the global solution for any complex polynomial problem with a feasible set of known radius. The proof relies on recent developments in algebraic geometry. The global solution to problems with several thousand complex variables is retrieved with the complex hierarchy. Sparsity is exploited by using chordal graphs techniques and a mismatch procedure to identify problematic constraints. 

There are various future research directions as a result of this dissertation. One direction is to enhance the tractability of the complex moment/sum-of-squares hierarchy. A way to accomplish this may be to develop a solver in complex numbers. Interior point solvers involve Cholesky factorizations, and Cholesky factorizations could be carried out on the Hermitian matrices. Another speed-up could come from developing a randomized complex hierarchy. This is based on an idea proposed by Lasserre. Two polynomials are equal to one another with high probability if they are equal on a randomly generated set of points. Thus far, the complex hierarchy is only able to solve optimal power flow problems which minimize active power loss, which is convex function of voltage. By enhancing the tractability of the complex hierarchy, it will hopefully be possible to tackle more general objective functions such as generation cost minimization or minimum deviation from a generation plan.

The complex hierarchy entails a trade-off. It is more tractable than the real hierarchy at a given order, but provides a potentially lower bound. It would interesting to know when the bounds generated by both hierarchies are the same. That would correspond to the cases for which it is certainly advantageous to use the complex hierarchy. In the case of the optimal power flow problem, numerical results show it is advantageous. It would be enviable to better understand why this is so.

Another research direction is to answer the following question. Do the power flow equations possess the Quillen property? In other words, is the complex hierarchy guaranteed to converge without having to add a slack variable and a redundant sphere constraint? Numerical experiments seem to show that this is true, but it is not clear why.

Lastly, transmission system operators are interested in optimization tools that cope with discrete variables. Indeed, there are many decisions which must be made from a finite number of possibilities: unit commitment, tap of phase-shifting transformers, and changes in network topology. The framework of real and complex polynomial optimization encompasses such cases, so real and complex hierarchies are relevant from a theoritical perspective. The Lasserre hierarchy is known to provide the best bounds to hard combinatorial problems, so it makes sense to try to apply real and complex hierarchies to the optimal power flow problem with discrete variables.

\appendix

\chapter{Ring Homomorphism}
\label{app:Ring Homomorphism}
It is shown here that the application $\Lambda$ defined by \eqref{eq:conversion} is a ring homomorphism.\\\\
Let $I_p$ denote the identity matrix of order $p\in \mathbb{N}$. $\Lambda(I_n)= I_{2n}$ and if $Z_1,Z_2 \in \mathbb{C}^{n\times n}$, $\Lambda(Z_1+Z_2) = \Lambda(Z_1) + \Lambda(Z_2)$ and
$$
\begin{array}{rcl}
\Lambda(Z_1) \Lambda(Z_2) & = &
\left(
\begin{array}{cr}
\text{Re}Z_1 & -\text{Im}Z_1 \\
\text{Im}Z_1 & \text{Re}Z_1 \\
\end{array}
\right)
\left(
\begin{array}{cr}
\text{Re}Z_2 & -\text{Im}Z_2 \\
\text{Im}Z_2 & \text{Re}Z_2 \\
\end{array}
\right) \\
 & = &  
\left(
\begin{array}{cr}
\text{Re}Z_1 \text{Re}Z_2 -\text{Im}Z_1\text{Im}Z_2  & - \text{Re}Z_1 \text{Im}Z_2 - \text{Im}Z_1 \text{Re}Z_2 \\
\text{Im}Z_1\text{Re}Z_2 +  \text{Re}Z_1\text{Im}Z_2 & \text{Re}Z_1 \text{Re}Z_2 - \text{Im}Z_1 \text{Im}Z_2  \\
\end{array}
\right) 
\\
 & = &  
\Lambda [ \text{Re}Z_1 \text{Re}Z_2 -\text{Im}Z_1\text{Im}Z_2  + \textbf{i} (\text{Im}Z_1\text{Re}Z_2 +  \text{Re}Z_1\text{Im}Z_2)  ] \\
 & = &  
\Lambda [ (\text{Re}Z_1 +\textbf{i} \text{Im}Z_1)(\text{Re}Z_2 + \textbf{i} \text{Im}Z_2)  ] \\
 & = & \Lambda(Z_1Z_2).
\end{array}
$$

\chapter{Rank-2 Condition}
\label{app:Rank-2 Condition}
It is proven here that a Hermitian matrix $Z$ is positive semidefinite and has rank~1 if and only if $\Lambda(Z)$ is positive semidefinite and has rank~2.\\\\
$(\Longrightarrow)$ Say $Z = z z^H$ where real and imaginary parts are defined by $z = x_1 + \textbf{i} x_2$ and $(x_1,x_2) \neq (0,0)$. Then 
\begin{subequations}
\label{eq:comp1}
\begin{align}
\Lambda(Z) & = 
\left(
\begin{array}{cc}
x_1 x_1^T + x_2 x_2^T & x_1 x_2^T - x_2 x_1^T \\
x_2 x_1^T - x_1 x_2^T & x_1 x_1^T + x_2 x_2^T
\end{array}
\right) \\ \label{eq:comp2}
& = 
\left(
\begin{array}{c}
x_1 \\
x_2 
\end{array}
\right)
\left(
\begin{array}{c}
x_1 \\
x_2 
\end{array}
\right)^T
+
\left(
\begin{array}{r}
-x_2 \\
x_1 
\end{array}
\right)
\left(
\begin{array}{r}
-x_2 \\
x_1 
\end{array}
\right)^T. 
\end{align}
\end{subequations}
The rank of $\Lambda(Z)$ is equal to 2 since $( ~ x_1^T ~ x_2^T ~)^T$ and $(~ (-x_2)^T ~ x_1^T ~ )^T$ are non-zero orthogonal vectors.

$(\Longleftarrow)$ Say $\Lambda(Z) = x x^T + y y^T$ where $x$ and $y$ are non-zero real vectors. Consider the block structure $x = ( ~ x_1^T ~ x_2^T ~ )^T$ and $y = ( ~ y_1^T ~ y_2^T ~ )^T$. For $i = 1, \hdots, n$, it must be that
\begin{subequations}
\begin{gather}
x_{1i}^2 + y_{1i}^2 = x_{2i}^2 + y_{2i}^2, \label{eq:sym} \\
x_{1i} x_{2i} + y_{1i} y_{2i} = 0. \label{eq:asym}
\end{gather}
\end{subequations}
Two cases can occur. The first is that $x_{1i} x_{2i} \neq 0$ in which case there exists a real number $\lambda_i \neq 0$ such that 
\begin{equation}
\left\{
\begin{array}{rcr}
y_{1i} & = & - \lambda_i ~ x_{2i}, \\
y_{2i} & = & \frac{1}{\lambda_i} ~ x_{1i}.
\end{array}
\right.
\end{equation}
Equation \eqref{eq:sym} implies that $(1-\lambda_i^2) x_{1i}^2 = (1-\frac{1}{\lambda_i^2}) x_{2i}^2$ thus $(1-\lambda_i^2)(1-\frac{1}{\lambda_i^2}) \geqslant 0$ and $\lambda_i = \pm 1$. The second case is that $x_{1i} x_{2i} = 0$. Then, according to \eqref{eq:asym}, $y_{1i} y_{2i} = 0$. If either $x_{1i} = y_{1i} = 0$ or $x_{2i} = y_{2i} = 0$, then \eqref{eq:sym} implies that $x_{1i} = x_{2i} = y_{1i} = y_{2i} = 0$. If $x_{1i} = y_{2i} = 0$, then \eqref{eq:sym} implies that $y_{1i} = \pm x_{2i}$. If $x_{2i} = y_{1i} = 0$, then \eqref{eq:sym} implies that $y_{2i} = \pm x_{1i}$.

In any case, there exists $\epsilon_i = \pm 1$ such that
\begin{equation}
\left\{
\begin{array}{rcr}
y_{1i} & = & - \epsilon_i ~ x_{2i}, \\
y_{2i} & = & \epsilon_i ~ x_{1i}.
\end{array}
\right.
\end{equation}
For $i,j=1,\hdots, n$ it must be that 
\begin{subequations}
\begin{gather}
(1-\epsilon_i \epsilon_j)(x_{1i} x_{1j}-x_{2i} x_{2j} ) = 0, \label{eq:sym1} \\
(1-\epsilon_i \epsilon_j)(x_{1j}x_{2i} + x_{1i} x_{2j}) = 0. \label{eq:asym1}
\end{gather}
\end{subequations}
Moreover 
\begin{equation}
\left\{
\begin{array}{rcr}
x_{1i} x_{1j} + y_{1i} y_{1j} & = & x_{1i} x_{1j} + \epsilon_i \epsilon_j x_{2i} x_{2j}, \\
x_{1i} x_{2j} + y_{1i} y_{2j} & = & x_{1i} x_{2j} -\epsilon_i \epsilon_j x_{2i} x_{1j}.
\end{array}
\right.
\end{equation} 
It will now be shown that
\begin{equation}
\left\{
\begin{array}{rcr}
x_{1i} x_{1j} + y_{1i} y_{1j} & = & x_{1i} x_{1j} + x_{2i} x_{2j}, \\
x_{1i} x_{2j} + y_{1i} y_{2j} & = & x_{1i} x_{2j} - x_{2i} x_{1j}.
\end{array}
\right.
\label{eq:conclusion}
\end{equation}
It is obvious if $\epsilon_i \epsilon_j = 1$. If $\epsilon_i \epsilon_j = -1$, then \eqref{eq:sym1}--\eqref{eq:asym1} imply
\begin{subequations}
\begin{gather}
x_{1i} x_{1j}-x_{2i} x_{2j} = 0, \label{eq:sym2}\\
x_{1j}x_{2i} + x_{1i} x_{2j} = 0. \label{eq:asym2}
\end{gather}
\end{subequations}
If $x_{1i} x_{1j} x_{2i} x_{2j} = 0$, it can be seen that \eqref{eq:conclusion} holds. If not, \eqref{eq:sym2} implies that there exists a real number $\mu_{ij} \neq 0$ such that 
\begin{equation}
\left\{
\begin{array}{rcr}
x_{2i} & = & \mu_{ij} ~ x_{1i}, \\
x_{2j} & = & \frac{1}{\mu_{ij}} ~ x_{1j}.
\end{array}
\right.
\end{equation}
Further, \eqref{eq:asym2} implies that $(\mu_{ij} + \frac{1}{\mu_{ij}}) x_{1j} x_{2i} = 0$. This is impossible ($\mu_{ij} + \frac{1}{\mu_{ij}} \neq 0$ and $x_{1j} x_{2i} \neq 0$). Thus,~\eqref{eq:conclusion} holds.

With the left hand side corresponding to~$\Lambda(Z) = x x^T + y y^T$ and the right hand side corresponding to~\eqref{eq:comp2}, equation \eqref{eq:conclusion} implies that $\Lambda(Z)$ is equal to \eqref{eq:comp2}. Since the function $\Lambda$ is injective, it must be that $Z = (x_1+\textbf{i}x_2)(x_1+\textbf{i}x_2)^H$.

\chapter{Invariance of Shor Relaxation Bound}
\label{app:Invariance of Shor Relaxation Bound}

It is shown here that the Shor relaxation bound obtained by relaxing nonconvexities then converting from complex to real numbers is the same as that obtained by converting from complex to real number then relaxing nonconvexities.\\\\
We have val(\csdpr{}) $\geqslant$ val(\sdpr{}) since the feasible set is more tightly constrained due to $\eqref{eq:csdpR4}$. To prove the opposite inequality, define $\tilde{\Lambda}(X) := (A + C)/2 + \textbf{i}(B-B^T)/2$ for all $X \in \mathbb{S}_{2n}$
using the block decomposition in the left hand part of \eqref{eq:csdpR4}.
It is proven here that if $X$ is a feasible point of \sdpr{}, then $\Lambda \circ \tilde{\Lambda}(X)$ is a feasible point of \csdpr{} with same objective value as $X$. Firstly, $\Lambda \circ \tilde{\Lambda}(X)$ satisfies \eqref{eq:csdpR4} because $\tilde{\Lambda}(X)$ is a Hermitian matrix.
Secondly, in order to show that $\Lambda \circ \tilde{\Lambda}(X)$ satisfies \eqref{eq:csdpR3}, notice that if $x = ( ~ x_1^T ~ x_2^T ~ )^T$ then
\begin{equation}\small
\begin{array}{c}
\left(
\begin{array}{r}
x_1 \\
x_2
\end{array}
\right)^T
\left(
\begin{array}{lc}
\hphantom{-}C & -B \\
-B^T & \hphantom{-}A
\end{array}
\right)
\left(
\begin{array}{r}
x_1 \\
x_2
\end{array}
\right) 
\\
= 
\\
\left(
\begin{array}{r}
-x_2 \\
x_1
\end{array}
\right)^T
\left(
\begin{array}{cl}
A & B^T \\
B & C
\end{array}
\right)
\left(
\begin{array}{r}
-x_2 \\
x_1
\end{array}
\right). 
\end{array}
\end{equation}
Hence $\Lambda \circ \tilde{\Lambda}(X)$ is equal to the sum of two positive semidefinite matrices.
Finally, to prove that $\Lambda \circ \tilde{\Lambda}(X)$ satisfies \eqref{eq:csdpR2} and has same objective value as $X$, notice that if $H \in \mathbb{H}_n$ and $Y\in \mathbb{S}_{2n}$, then $\text{Tr} \left[ \Lambda(H) Y \right] = \sum_{1 \leqslant i,j \leqslant 2n} \Lambda(H)_{ij} Y_{ji} = \sum_{1 \leqslant i,j \leqslant 2n} \Lambda(H)_{ij} Y_{ij} =  \sum_{1 \leqslant i,j \leqslant n} \text{Re}(H)_{ij} A_{ij} + \text{Im}(H)_{ij} B_{ij} + (-\text{Im}(H)_{ij}) (B^T)_{ij} + \text{Re}(H)_{ij} C_{ij} = \sum_{1 \leqslant i,j \leqslant n} \text{Re}(H_{ij}) (A + C)_{ij} + \text{Im}(H_{ij})(B - B^T)_{ij} =  2 \sum_{1 \leqslant i,j \leqslant n} \text{Re} [ H_{ij} (\tilde{\Lambda}(Y)_{ij})^H ] = 2 \sum_{1 \leqslant i,j \leqslant n} H_{ij} (\tilde{\Lambda}(Y)_{ij})^H = 2\text{Tr} [ H \tilde{\Lambda}(Y) ]$. Completing the proof, for all $H \in \mathbb{H}_n$, $\text{Tr} [ \Lambda(H) ~ \Lambda \circ \tilde{\Lambda}(X) ] = 2 \text{Tr} [ H \tilde{\Lambda}(X) ] = \text{Tr} \left[ \Lambda(H) X \right]$. 

\chapter{Invariance of SDP-$\mathbb{R}$ Relaxation Bound}
\label{app:Invariance of SDP-R Relaxation Bound}
We consider the semidefinite problem obtained by converting from complex to real numbers then relaxing nonconvexities. It is proven here that setting the phase of one of the variables to zero does not affect the relaxation bound.\\\\
We assume that $X$ is a feasible point of SDP-$\mathbb{R}$ and construct a feasible point of SDP-$\mathbb{R}$ with same objective value and first diagonal entry equal to 0. Consider the eigenvalue decomposition $X = \sum_{k=1}^p x_k x_k^T$ for some $x_k \in \mathbb{R}^{2n}$ and $p\in \mathbb{N}$. For all $\theta \in \mathbb{R}$, define
\begin{equation}
R_\theta := \Lambda [ \cos (\theta) I_n + \textbf{i} \sin (\theta) I_n] =
\left(
\begin{array}{cr}
\cos (\theta) I_n & -\sin (\theta) I_n \\
\sin (\theta) I_n & \cos (\theta) I_n
\end{array}
\right).
\end{equation}
For $k=1,\hdots,p$, define $\theta_k \in \mathbb{R}$ such that $x_{k,n+1} + \textbf{i} x_{k,1} =: \sqrt{x_{k, n+1}^2+ x_{k1}^2} e^{\textbf{i}\theta_k}$. Construct $\tilde{X} := \sum_{k=1}^p (R_{\theta_k} x_k) (R_{\theta_k} x_k)^T \succcurlyeq 0$ whose first diagonal entry is equal to 0. If $H \in \mathbb{H}_n$, then $\text{Tr}(\Lambda(H)\tilde{X}) = \sum_{k=1}^p \text{Tr}[ \Lambda(H) R_{\theta_k} x_k x_k^T R_{\theta_k}^T ] = \sum_{k=1}^p \text{Tr}[ R_{\theta_k}^T \Lambda(H) R_{\theta_k} x_k x_k^T  ] = \sum_{k=1}^p \text{Tr}[ \Lambda\{(\cos (\theta_k) I_n - \textbf{i} \sin (\theta_k) I_n) H (\cos (\theta_k) I_n + \textbf{i} \sin (\theta_k) I_n)\} x_k x_k^T ] = \sum_{k=1}^p \text{Tr}[ \Lambda( H ) x_k x_k^T  ] = \text{Tr}(\Lambda(H)X)$.

\chapter{Discrepancy Between Second-Order Conic Relaxation Bounds}
\label{app:Discrepancy Between Second-Order Conic Relaxation Bounds}
It is shown here that the second-order conic relaxation bound obtained by relaxing nonconvexities then converting from complex to real numbers is different from that obtained by converting from complex to real number then relaxing nonconvexities.\\\\
We have val(\csocpr{}) $\geqslant$ val(\socpr{}) since the feasible set is more tightly constrained.
The opposite inequality between optimal values does not hold, and this can be proven by considering the example QCQP-$\mathbb{C}$ defined by $\inf_{z_1,z_2 \in \mathbb{C}} ~ (1+\textbf{i}) \bar{z}_1 z_2 + (1-\textbf{i}) \bar{z}_2 z_1 ~ \mathrm{s.t.} ~ \bar{z}_1 z_1 \leqslant 1, ~ \overline{z}_2 z_2 \leqslant 1$. \csocpr{} yields the globally optimal value of $-2\sqrt{2}$, while \socpr{} yields $-4$,
as can be seen below.
\begin{equation}
\begin{array}{ccll}
-2\sqrt{2} & = & \inf_{X \in \mathbb{S}_4} & 2X_{12} + 2X_{34} + 2X_{23} - 2X_{14}, \\
& & \mathrm{s.t.} & X_{11} + X_{33} \leqslant 1, ~~~
X_{22} + X_{44} \leqslant 1, \\
& & &  X_{12}^2 + X_{23}^2 \leqslant X_{11} X_{22} ,\\
& & & X_{11} = X_{33}, ~~~
X_{12} = X_{34}, ~~~
X_{22} = X_{44}, \\
& & & X_{13} = X_{24} = 0, ~~~
X_{23} + X_{14} = 0, \\\\\\
-4 & = & \inf_{X \in \mathbb{S}_4} & 2X_{12} + 2X_{34} + 2X_{23} - 2X_{14}, \\
& & \mathrm{s.t.} & X_{11} + X_{33} \leqslant 1, ~~~
X_{22} + X_{44} \leqslant 1, \\
& & &  X_{12}^2 \leqslant X_{11} X_{22}, ~~~
X_{13}^2 \leqslant X_{11} X_{33}, \\
& & & X_{14}^2 \leqslant X_{11} X_{44}, ~~~
X_{23}^2 \leqslant X_{22} X_{33}, \\
& & & X_{24}^2 \leqslant X_{22} X_{44}, ~~~
X_{34}^2 \leqslant X_{33} X_{44}. 
\end{array}
\end{equation}

\chapter{Five-Bus Illustrative Example for Exploiting Sparsity}
\label{app:five-bus example}

To illustrate the selective application of second-order constraints, consider the five-bus optimal power flow problem in~\cite{bukhsh2013} which is an instance of QCQP-$\mathbb{C}$. Let $\text{ind}(\cdot)$ denote the set of indices corresponding to monomials of either the objective $f$ or constraint functions $(g_{i})_{1\leqslant 20}$. We have
\begin{align*}
\text{ind}(f) =\; & \{ (1,1),(1,2),(1,3),(3,5),(4,5),(5,5) \}, \\
\text{ind}(g_{1}) = \text{ind}(g_{2}) =\; & \{ (1,1),(1,2),(1,3)\} & \left[P_1^{\min}, Q_1^{\min}\right], \\
\text{ind}(g_{3}) = \text{ind}(g_{4}) =\; & \{ (1,2),(2,2),(2,3),(2,4) \} & \left[P_2, Q_2\right],\\
\text{ind}(g_{5}) = \text{ind}(g_{6}) =\; & \{ (1,3),(2,3),(3,3),(3,5) \} & \left[P_3,Q_3\right],\\
\text{ind}(g_{7}) = \text{ind}(g_{8}) =\; & \{ (2,4),(4,4),(4,5) \} & \left[P_4,Q_4\right], \\ \addtocounter{equation}{1}\tag{\theequation}\label{eq:5bus}
\text{ind}(g_{9}) = \text{ind}(g_{10}) =\; & \{ (3,5),(4,5),(5,5) \} & \left[P_5^{\min},Q_5^{\min}\right],\\
\text{ind}(g_{11}) = \text{ind}(g_{12}) =\; & \{ (1,1) \} & \left[V_1^{\min},V_1^{\max}\right], \\
\text{ind}(g_{13}) = \text{ind}(g_{14}) =\; & \{ (2,2) \} & \left[V_2^{\min},V_2^{\max}\right], \\
\text{ind}(g_{15}) = \text{ind}(g_{16}) =\; & \{ (3,3) \} & \left[V_3^{\min},V_3^{\max}\right], \\
\text{ind}(g_{17}) = \text{ind}(g_{18}) =\; & \{ (4,4) \} & \left[V_4^{\min},V_4^{\max}\right], \\ 
\text{ind}(g_{19}) = \text{ind}(g_{20}) =\; & \{ (5,5) \} & \left[V_5^{\min},V_5^{\max}\right],
\end{align*}
\noindent where the text in brackets indicates the origin of the constraint: $P_i$ and $Q_i$ for active and reactive power injection equality constraints, $P_i^{\min}$ and $Q_i^{\min}$ for lower limits on active and reactive power injections, and $V_i^{\min}$ and $V_i^{\max}$ for squared voltage magnitude limits at bus~$i$.

For brevity, the sphere constraints discussed in Section~\ref{subsec:Convergence of the Complex Hierarchy} are not enforced in this example. Regardless, the complex hierarchy with $d_i = 1,\; \forall i \in \left\lbrace 1,2,3,4,5,6,11,12,13,14,15,16 \right\rbrace$, $d_i = 2,\; \forall i \in \left\lbrace 7,8,9,10,17,18,19,20 \right\rbrace$ converges to the global solution. The second-order constraints are identified using the maximum power injection mismatch heuristic in~\cite{mh_sparse_msdp}.

The graph $\mathcal{G} = \left(\mathcal{N},\mathcal{E}\right)$ corresponding to~\eqref{eq:5bus} is shown in Fig.~\ref{fig:5bus graph}. The nodes correspond to the complex variables $\mathcal{N} = \left\lbrace 1,\hdots,5\right\rbrace$. Edges $\mathcal{E}$, which are denoted by solid lines in Fig.~\ref{fig:5bus graph}, connect variables that appear in the same monomial in any of the constraint equations or objective function. The supergraph $\hat{\mathcal{G}} = \left(\mathcal{N},\hat{\mathcal{E}}\right)$ has edges $\hat{\mathcal{E}}$ comprised of $\mathcal{E}$ (solid lines in Fig.~\ref{fig:5bus graph}) augmented with edges connecting all variables within each constraint with $d_i > 1$ (dashed lines in Fig.~\ref{fig:5bus graph}). In this case, $\hat{\mathcal{G}}$ is already chordal, so there is no need to form a chordal extension $\mathcal{G}^{\text{ch}}$. 

The maximal cliques of $\hat{\mathcal{G}}$ are $\mathcal{C}_1 = \left\lbrace 1, 2, 3\right\rbrace$ and $\mathcal{C}_2 = \left\lbrace 2, 3, 4, 5\right\rbrace$. Clique $\mathcal{C}_2$ is the minimal covering clique for all second-order constraints $ g_i\left(z\right),\, \forall i \in \left\lbrace 7,8,9,10,17,18,19,20\right\rbrace$. The order associated with $\mathcal{C}_2$ is two ($\tilde{d}_2 = 2$) since the highest order $d_i$ among all constraints for which $\mathcal{C}_2$ is the minimal covering clique is two. Clique $\mathcal{C}_1$ is not the minimal covering clique for any constraints with $d_i > 1$, so $\tilde{d}_1 = 1$. 

The globally optimal objective value obtained from the complex hierarchy specified above is 946.8 with corresponding decision variable $z = (1.0467 + 0.0000\mathbf{i}, 0.9550 - 0.0578\mathbf{i}, 0.9485 - 0.0533\mathbf{i}, 0.7791 + 0.6011\mathbf{i}, 0.7362 + 0.7487\mathbf{i})^T$.

\begin{figure}[H]
  \centering
    \includegraphics[width=.45\textwidth]{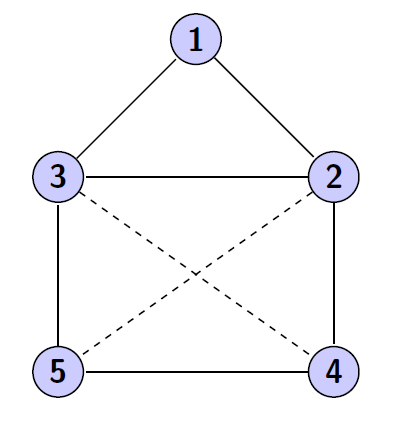}
  \caption{Graph Corresponding to Equations~\eqref{eq:5bus} from Five-Bus System in~\cite{bukhsh2013}}
  \label{fig:5bus graph}
\end{figure}

\chapter{Complex Hierarchy Applied to Optimal Power Flow}
\label{app:Complex Hierarchy Applied to Optimal Power Flow}
We consider an example of power loss minimization. 
The system 
of Figure~\ref{fig:WB2bis} 
links a generator to a load via a line of admittance $g+\textbf{i}b$ while respecting upper voltage constraints.
\begin{figure}[H]
  \centering
    \includegraphics[width=.65\textwidth]{WB2.eps}
  \caption{Two-Bus System}
  \label{fig:WB2bis}
\end{figure}
Minimizing power loss reads
\begin{gather}
\inf_{v_1,v_2 \in \mathbb{C}} ~~ g ~ |v_1|^2 - g ~ \overline{v}_1 v_2 - g ~ \overline{v}_2 v_1 + g ~ |v_2|^2,  \\
\text{subject to} ~~~~~~~~~~~~~~~~~~~~~~~~~~~~~~~~~~~~~~~~~~~~~~~~~~~~~~~~~~~~~~~~~~~~~~~~~~~~~~~~~~~~~~~~~~~~~~~~~~~~~~~~~~~ \notag \\
 -\frac{g-\textbf{i}b}{2} ~ \overline{v}_1 v_2 -\frac{g+\textbf{i}b}{2} ~ \overline{v}_2 v_1 + g ~ |v_2|^2 = -p_2^\text{dem}, \\
~~~~~ \frac{b+\textbf{i}g}{2} ~ \overline{v}_1 v_2 + \frac{b-\textbf{i}g}{2} ~ \overline{v}_2 v_1 -b ~ |v_2|^2 = -q_2^\text{dem}, ~~ \\
|v_1|^2 \leqslant (v_1^{\text{max}})^2, \\
|v_2|^2 \leqslant (v_2^{\text{max}})^2.
\end{gather}
The feasible set is included in the ball defined by $|v_1|^2 + |v_2|^2 \leqslant (v_1^{\text{max}})^2 + (v_2^{\text{max}})^2$. In accordance with Section \ref{subsec:Convergence of the Complex Hierarchy}, let's add a slack variable $v_3 \in \mathbb{C}$ and a constraint
\begin{equation}
|v_1|^2 + |v_2|^2 +   |v_3|^2 = (v_1^{\text{max}})^2 + (v_2^{\text{max}})^2.
\end{equation}
The first and second orders (i.e., $\text{MSOS}_1$-$\mathbb{C}$ and $\text{MSOS}_2$-$\mathbb{C}$) are written below where the notation $y^{\alpha}_{\beta} := y_{\alpha,\beta} ~ (\alpha,\beta \in \mathbb{N}^3)$ is used to save space.
\\
\text{}
\\
\textbf{Example of $\text{MSOS}_1$-$\mathbb{C}$:}
\begin{gather}
\inf_{y} ~ g ~ y^{100}_{100} - g ~ y^{100}_{010} - g ~ y^{010}_{100} + g ~ y^{010}_{010}, \\
\text{subject to} ~~~~~~~~~~~~~~~~~~~~~~~~~~~~~~~~~~~~~~~~~~~~~~~~~~~~~~~~~~~~~~~~~~~~~~~~~~~~~~~~~~~~~~~~~~~~~~~~~~~~~~~~~~~~~~\notag  \\
-\frac{g-\textbf{i}b}{2} ~ y^{100}_{010} -\frac{g+\textbf{i}b}{2} ~ y^{010}_{100} + g ~ y^{010}_{010} = -p_2^\text{dem} ~ y^{000}_{000}, ~~~  \\[0.5em]
\frac{b+\textbf{i}g}{2} ~ y^{100}_{010}  + \frac{b-\textbf{i}g}{2} ~ y^{010}_{100} - b ~ y^{010}_{010} = -q_2^\text{dem} ~ y^{000}_{000}, ~ \\[0.5em]
y^{100}_{100} \leqslant (v_1^{\text{max}})^2 y^{000}_{000}, \\[0.5em]
y^{010}_{010} \leqslant (v_2^{\text{max}})^2 y^{000}_{000}, \\[0.5em]
y^{100}_{100} + y^{010}_{010} + y^{001}_{001}  = \left( (v_1^{\text{max}})^2 + (v_2^{\text{max}})^2 \right)y^{000}_{000}, \\[0.5em]
\left(
\begin{array}{cccc}
y^{000}_{000} & y^{000}_{100} & y^{000}_{010} &  y^{000}_{001} \\[0.5em]
y^{100}_{000} & y^{100}_{100} & y^{100}_{010} & y^{100}_{001} \\[0.5em]
y^{010}_{000} & y^{010}_{100} & y^{010}_{010} & y^{010}_{001} \\[0.5em]
y^{001}_{000} & y^{001}_{100} & y^{001}_{010} & y^{001}_{001}
\end{array}
\right) \succcurlyeq 0, \\[0.5em]
y^{000}_{000} = 1.
\end{gather}
\\
\text{}
\\
\textbf{Example of $\text{MSOS}_2$-$\mathbb{C}$:}
\begin{gather}
\inf_{y} ~ g ~ y^{100}_{100} - g ~ y^{100}_{010} - g ~ y^{010}_{100} + g ~ y^{010}_{010}, \label{eq:2order1}\\
\text{subject to} ~~~~~~~~~~~~~~~~~~~~~~~~~~~~~~~~~~~~~~~~~~~~~~~~~~~~~~~~~~~~~~~~~~~~~~~~~~~~~~~~~~~~~~~~~~~~~~~~~~~~~~~~~~~~~~~~~~~~~~~~~~~~~~~~~~~~~~~~~~~~~~~\notag \\
p_2^\text{dem} \left(
\begin{array}{cccc}
y^{000}_{000} & y^{000}_{100} & y^{000}_{010} &  y^{000}_{001} \\[0.5em]
y^{100}_{000} & y^{100}_{100} & y^{100}_{010} & y^{100}_{001} \\[0.5em]
y^{010}_{000} & y^{010}_{100} & y^{010}_{010} & y^{010}_{001} \\[0.5em]
y^{001}_{000} & y^{001}_{100} & y^{001}_{010} & y^{001}_{001}
\end{array}
\right) \hdots \notag \\[0.25em]
- \frac{g-\textbf{i}b}{2}
\left(
\begin{array}{cccc}
y^{100}_{010} & y^{100}_{110} & y^{100}_{020} &  y^{100}_{011} \\[0.5em]
y^{200}_{010} & y^{200}_{110} & y^{200}_{020} & y^{200}_{011} \\[0.5em]
y^{110}_{010} & y^{110}_{110} & y^{110}_{020} & y^{110}_{011} \\[0.5em]
y^{101}_{010} & y^{101}_{110} & y^{101}_{020} & y^{101}_{011}
\end{array}
\right) \hdots \notag \\[0.25em]
- \frac{g+\textbf{i}b}{2}
\left(
\begin{array}{cccc}
y^{010}_{100} & y^{010}_{200} & y^{010}_{110} &  y^{010}_{101} \\[0.5em]
y^{110}_{100} & y^{110}_{200} & y^{110}_{110} & y^{110}_{101} \\[0.5em]
y^{020}_{100} & y^{020}_{200} & y^{020}_{110} & y^{020}_{101} \\[0.5em]
y^{011}_{100} & y^{011}_{200} & y^{011}_{110} & y^{011}_{101}
\end{array}
\right) \hdots \notag \\[0.25em]
+g
\left(
\begin{array}{cccc}
y^{010}_{010} & y^{010}_{110} & y^{010}_{020} &  y^{010}_{011} \\[0.5em]
y^{110}_{010} & y^{110}_{110} & y^{110}_{020} & y^{110}_{011} \\[0.5em]
y^{020}_{010} & y^{020}_{110} & y^{020}_{020} & y^{020}_{011} \\[0.5em]
y^{011}_{010} & y^{011}_{110} & y^{011}_{020} & y^{011}_{011}
\end{array}
\right) = 0, \\[2em]
q_2^\text{dem} \left(
\begin{array}{cccc}
y^{000}_{000} & y^{000}_{100} & y^{000}_{010} &  y^{000}_{001} \\[0.5em]
y^{100}_{000} & y^{100}_{100} & y^{100}_{010} & y^{100}_{001} \\[0.5em]
y^{010}_{000} & y^{010}_{100} & y^{010}_{010} & y^{010}_{001} \\[0.5em]
y^{001}_{000} & y^{001}_{100} & y^{001}_{010} & y^{001}_{001}
\end{array}
\right) \hdots \notag \\[0.25em]
+\frac{b+\textbf{i}g}{2}
\left(
\begin{array}{cccc}
y^{100}_{010} & y^{100}_{110} & y^{100}_{020} &  y^{100}_{011} \\[0.5em]
y^{200}_{010} & y^{200}_{110} & y^{200}_{020} & y^{200}_{011} \\[0.5em]
y^{110}_{010} & y^{110}_{110} & y^{110}_{020} & y^{110}_{011} \\[0.5em]
y^{101}_{010} & y^{101}_{110} & y^{101}_{020} & y^{101}_{011}
\end{array}
\right) \hdots \notag \\[0.25em]
+\frac{b-\textbf{i}g}{2}
\left(
\begin{array}{cccc}
y^{010}_{100} & y^{010}_{200} & y^{010}_{110} &  y^{010}_{101} \\[0.5em]
y^{110}_{100} & y^{110}_{200} & y^{110}_{110} & y^{110}_{101} \\[0.5em]
y^{020}_{100} & y^{020}_{200} & y^{020}_{110} & y^{020}_{101} \\[0.5em]
y^{011}_{100} & y^{011}_{200} & y^{011}_{110} & y^{011}_{101}
\end{array}
\right) \hdots \notag \\[0.25em]
-b
\left(
\begin{array}{cccc}
y^{010}_{010} & y^{010}_{110} & y^{010}_{020} &  y^{010}_{011} \\[0.5em]
y^{110}_{010} & y^{110}_{110} & y^{110}_{020} & y^{110}_{011} \\[0.5em]
y^{020}_{010} & y^{020}_{110} & y^{020}_{020} & y^{020}_{011} \\[0.5em]
y^{011}_{010} & y^{011}_{110} & y^{011}_{020} & y^{011}_{011}
\end{array}
\right) = 0, \\[2em]
(v_1^{\text{max}})^2 \left(
\begin{array}{cccc}
y^{000}_{000} & y^{000}_{100} & y^{000}_{010} &  y^{000}_{001} \\[0.5em]
y^{100}_{000} & y^{100}_{100} & y^{100}_{010} & y^{100}_{001} \\[0.5em]
y^{010}_{000} & y^{010}_{100} & y^{010}_{010} & y^{010}_{001} \\[0.5em]
y^{001}_{000} & y^{001}_{100} & y^{001}_{010} & y^{001}_{001}
\end{array}
\right) \hdots \notag \\[0.25em]
- 
\left(
\begin{array}{cccc}
y^{100}_{100} & y^{100}_{200} & y^{100}_{110} &  y^{100}_{101} \\[0.5em]
y^{200}_{100} & y^{200}_{200} & y^{200}_{110} & y^{200}_{101} \\[0.5em]
y^{110}_{100} & y^{110}_{200} & y^{110}_{110} & y^{110}_{101} \\[0.5em]
y^{101}_{100} & y^{101}_{200} & y^{101}_{110} & y^{101}_{101}
\end{array}
\right)
\succcurlyeq 0, \\[2em]
(v_2^{\text{max}})^2 \left(
\begin{array}{cccc}
y^{000}_{000} & y^{000}_{100} & y^{000}_{010} &  y^{000}_{001} \\[0.5em]
y^{100}_{000} & y^{100}_{100} & y^{100}_{010} & y^{100}_{001} \\[0.5em]
y^{010}_{000} & y^{010}_{100} & y^{010}_{010} & y^{010}_{001} \\[0.5em]
y^{001}_{000} & y^{001}_{100} & y^{001}_{010} & y^{001}_{001}
\end{array}
\right) \hdots \notag \\[0.25em]
- 
\left(
\begin{array}{cccc}
y^{010}_{010} & y^{010}_{110} & y^{010}_{020} &  y^{010}_{011} \\[0.5em]
y^{110}_{010} & y^{110}_{110} & y^{110}_{020} & y^{110}_{011} \\[0.5em]
y^{020}_{010} & y^{020}_{110} & y^{020}_{020} & y^{020}_{011} \\[0.5em]
y^{011}_{010} & y^{011}_{110} & y^{011}_{020} & y^{011}_{011}
\end{array}
\right)
\succcurlyeq 0, \\[2em]
\left( (v_1^{\text{max}})^2 + (v_2^{\text{max}})^2 \right) \left(
\begin{array}{cccc}
y^{000}_{000} & y^{000}_{100} & y^{000}_{010} &  y^{000}_{001} \\[0.5em]
y^{100}_{000} & y^{100}_{100} & y^{100}_{010} & y^{100}_{001} \\[0.5em]
y^{010}_{000} & y^{010}_{100} & y^{010}_{010} & y^{010}_{001} \\[0.5em]
y^{001}_{000} & y^{001}_{100} & y^{001}_{010} & y^{001}_{001}
\end{array}
\right) \hdots \notag \\[0.25em]
- 
\left(
\begin{array}{cccc}
y^{100}_{100} & y^{100}_{200} & y^{100}_{110} &  y^{100}_{101} \\[0.5em]
y^{200}_{100} & y^{200}_{200} & y^{200}_{110} & y^{200}_{101} \\[0.5em]
y^{110}_{100} & y^{110}_{200} & y^{110}_{110} & y^{110}_{101} \\[0.5em]
y^{101}_{100} & y^{101}_{200} & y^{101}_{110} & y^{101}_{101}
\end{array}
\right) \hdots \notag \\[0.25em]
-
\left(
\begin{array}{cccc}
y^{010}_{010} & y^{010}_{110} & y^{010}_{020} &  y^{010}_{011} \\[0.5em]
y^{110}_{010} & y^{110}_{110} & y^{110}_{020} & y^{110}_{011} \\[0.5em]
y^{020}_{010} & y^{020}_{110} & y^{020}_{020} & y^{020}_{011} \\[0.5em]
y^{011}_{010} & y^{011}_{110} & y^{011}_{020} & y^{011}_{011}
\end{array}
\right) \hdots \notag \\[0.25em]
-
 \left(
\begin{array}{cccc}
y^{001}_{001} & y^{001}_{101} & y^{001}_{011} &  y^{001}_{002} \\[0.5em]
y^{101}_{001} & y^{101}_{101} & y^{101}_{011} & y^{101}_{002} \\[0.5em]
y^{011}_{001} & y^{011}_{101} & y^{011}_{011} & y^{011}_{002} \\[0.5em]
y^{002}_{001} & y^{002}_{101} & y^{002}_{011} & y^{002}_{002}
\end{array}
\right)
= 0, \\[2em]
\left(
\begin{array}{cccccccccc}
y^{000}_{000} & y^{000}_{100} & y^{000}_{010} &  y^{000}_{001} & y^{000}_{200} & y^{000}_{110} & y^{000}_{101} & y^{000}_{020} & y^{000}_{011} & y^{000}_{002} \\[0.5em]
y^{100}_{000} & y^{100}_{100} & y^{100}_{010} & y^{100}_{001} & y^{100}_{200} & y^{100}_{110} & y^{100}_{101} & y^{100}_{020} & y^{100}_{011} & y^{100}_{002} \\[0.5em]
y^{010}_{000} & y^{010}_{100} & y^{010}_{010} & y^{010}_{001} & y^{010}_{200} & y^{010}_{110} & y^{010}_{101} & y^{010}_{020} & y^{010}_{011} & y^{010}_{002} \\[0.5em]
y^{001}_{000} & y^{001}_{100} & y^{001}_{010} & y^{001}_{001} & y^{001}_{200} & y^{001}_{110} & y^{001}_{101} & y^{001}_{020} & y^{001}_{011} & y^{001}_{002} \\[0.5em]
y^{200}_{000} & y^{200}_{100} & y^{200}_{010} &  y^{200}_{001} & y^{200}_{200} & y^{200}_{110} & y^{200}_{101} & y^{200}_{020} & y^{200}_{011} & y^{200}_{002} \\[0.5em]
y^{110}_{000} & y^{110}_{100} & y^{110}_{010} &  y^{110}_{001} & y^{110}_{200} & y^{110}_{110} & y^{110}_{101} & y^{110}_{020} & y^{110}_{011} & y^{110}_{002} \\[0.5em]
y^{101}_{000} & y^{101}_{100} & y^{101}_{010} &  y^{101}_{001} & y^{101}_{200} & y^{101}_{110} & y^{101}_{101} & y^{101}_{020} & y^{101}_{011} & y^{101}_{002} \\[0.5em]
y^{020}_{000} & y^{020}_{100} & y^{020}_{010} &  y^{020}_{001} & y^{020}_{200} & y^{020}_{110} & y^{020}_{101} & y^{020}_{020} & y^{020}_{011} & y^{020}_{002} \\[0.5em]
y^{011}_{000} & y^{011}_{100} & y^{011}_{010} &  y^{011}_{001} & y^{011}_{200} & y^{011}_{110} & y^{011}_{101} & y^{011}_{020} & y^{011}_{011} & y^{011}_{002} \\[0.5em]
y^{002}_{000} & y^{002}_{100} & y^{002}_{010} &  y^{002}_{001} & y^{002}_{200} & y^{002}_{110} & y^{002}_{101} & y^{002}_{020} & y^{002}_{011} & y^{002}_{002}
\end{array}
\right) \succcurlyeq 0, \\[2em]
y^{000}_{000} = 1. \label{eq:2order2}
\end{gather}

\bibliography{mybib}

\begin{thebibliography}{100}

\bibitem{ieee_test_cases}
{\em {Power Systems Test Case Archive}}.

\bibitem{mosek}
{\em {The MOSEK Optimization Toolbox for MATLAB Manual version 7}}.

\bibitem{aittomaki-2009}
{\sc T.~Aittomaki and V.~Koivunen}, {\em {Beampattern Optimization by
  Minimization of Quartic Polynomial}}, IEEE/SP 15th W. Stat. Signal Process.,
  51 (2009), pp.~437--–440.

\bibitem{akhiezer-1965}
{\sc N.~Akhiezer}, {\em The Classical Moment Problem and Some Related Questions
  in Analysis}, Hafner Publ. Co., New York, 1965.

\bibitem{aliprantis-1999}
{\sc C.~Aliprantis and K.~Border}, {\em Infinite Dimensional Analysis}, A
  Hitchhiker's guide, Second Edition, Springer-Verlag Berlin Heidelberg, 1999.

\bibitem{alvarado}
{\sc F.~L. Alvarado and E.~Elkonyal}, {\em {Reduction in Power Systems}}, IEEE
  PES Summer Meeting,  (1977).

\bibitem{andersen2014}
{\sc M.~Andersen, A.~Hansson, and L.~Vandenberghe}, {\em {Reduced-Complexity
  Semidefinite Relaxations of Optimal Power Flow Problems}}, IEEE Trans. Power
  Syst., 29 (2014), pp.~1855--1863.

\bibitem{anderson-1987}
{\sc E.~Anderson and P.~Nash}, {\em Linear Programming in Infinite-Dimensional
  Spaces, Theory and Applications}, Wiley Int. Ser. Disc. Math. Optim., 1987.

\bibitem{anjos-lasserre-2012b}
{\sc M.~F. Anjos and J.~B. Lasserre}, {\em {Introduction to Semidefinite, Conic
  and Polynomial Optimization}}, in Handbook on Semidefinite, Conic and
  Polynomial Optimization, M.~F. Anjos and J.~B. Lasserre, eds., vol.~166 of
  Int. Ser. Oper. Res. \& Manag. Sci., Springer, 2012.

\bibitem{artin-1927}
{\sc E.~Artin}, {\em {\"Uber die Zerlegung Definiter Funktionen in Quadrate}},
  Abhandlungen aus dem Math. Sem. der Univ. Hamburg, 5 (1927), pp.~100--115.

\bibitem{atzmon-1975}
{\sc A.~Atzmon}, {\em {A Moment Problem for Positive Measures on the Unit
  Disc}}, Pacific J. Math., 59 (1975), pp.~317--325.

\bibitem{aubry-2013}
{\sc A.~Aubry, A.~{De Maio}, B.~Jiang, and S.~Zhang}, {\em {Ambiguity Function
  Shaping for Cognitive Radar via Complex Quartic Optimization}}, IEEE Trans.
  Signal Process., 61 (2013), pp.~5603–--5619.

\bibitem{bai-fujisawa-wang-wei-2008}
{\sc X.~Bai, H.~Wei, K.~Fujisawa, and Y.~Wang}, {\em {Semidefinite Programming
  for Optimal Power Flow Problems}}, Int. J. Elec. Power, 30 (2008),
  pp.~383--392.

\bibitem{bandeira-2014}
{\sc A.~Bandeira, N.~Boumal, and A.~Singer}, {\em {Tightness of the Maximum
  Likelihood Semidefinite Relaxation for Angular Synchronization}}, Preprint,
  available at: \url{http://arxiv.org/abs/1411.3272},  (2014).

\bibitem{blanchet}
{\sc L.~Blanchet}, {\em {Dipl\^ome D'Ing\'enieur,Conservatoire National des
  Arts et M\'etiers}},  (1986).

\bibitem{blekherman-parrilo-thomas-2013}
{\sc G.~Blekherman, P.~Parrilo, and R.~Thomas}, {\em Semidefinite Optimization
  and Convex Algebraic Geometry}, MOS-SIAM Ser. Optim., SIAM and MPS,
  Philadelphia, 2013.

\bibitem{bonnans-gilbert-lemarechal-sagastizabal-2006}
{\sc J.~Bonnans, J.~Gilbert, C.~Lemar\'echal, and C.~Sagastiz\'abal}, {\em
  Numerical Optimization -- Theoretical and Practical Aspects {\/\rm (second
  edition)}}, Universitext, Springer Verlag, Berlin, 2006.
\newblock \href{http://www-roc.inria.fr/~gilbert/bgls}{[authors]}
  \href{http://www.springer.com/mathematics/applications/book/978-3-540-35445-1}{[editor]}.

\bibitem{bose-chandy-gayme-low-2015}
{\sc S.~Bose, D.~Gayme, K.~Chandy, and S.~Low}, {\em {Quadratically Constrained
  Quadratic Programs on Acyclic Graphs with Application to Power}}, IEEE Trans.
  Contr. Network Syst.,  (2015).

\bibitem{bose-chandy-gayme-low-2011}
{\sc S.~Bose, D.~Gayme, S.~Low, and K.~Chandy}, {\em Optimal power flow over
  tree networks}, in 49th Annu. Allerton Conf. Commun., Control, Comput., 2011.

\bibitem{bose2014}
{\sc S.~Bose, S.~Low, T.~Teeraratkul, and B.~Hassibi}, {\em {Equivalent
  Relaxations of Optimal Power Flow}}, IEEE Trans. Automat. Control,  (2014),
  p.~99.

\bibitem{boyd2009}
{\sc S.~Boyd and L.~Vandenberghe}, {\em {Convex Optimization}}, Cambridge
  University Press, 2009.

\bibitem{bukhsh2013}
{\sc W.~Bukhsh, A.~Grothey, K.~{McKinnon}, and P.~Trodden}, {\em {Local
  Solutions of the Optimal Power Flow Problem}}, IEEE Trans. Power Syst., 28
  (2013), pp.~4780–--4788.

\bibitem{bukhsh-grothey-mckinnon-trodden-2013b}
{\sc W.~Bukhsh, A.~Grothey, K.~{McKinnon}, and P.~Trodden}, {\em Test case
  archive of optimal power flow {(OPF)} problem with local optima}, 2013.

\bibitem{candes-2013}
{\sc E.~Cand\`es, Y.~C. Eldar, T.~Strohmer, and V.~Voroninski}, {\em {Phase
  Retrieval via Matrix Completion}}, SIAM J. Imaging Sci., 6 (2013),
  pp.~199–--225.

\bibitem{carpentier-1962}
{\sc M.~Carpentier}, {\em {Contribution \`a l'\'Etude du Dispatching
  \'Economique}}, Bull. de la Soc. Fran. des \'Elec., 8 (1962),
  pp.~431–--447.

\bibitem{ferc5}
{\sc A.~Castillo and R.~O'Neill}, {\em {Computational Performance of Solution
  Techniques Applied to the ACOPF (OPF Paper 5)}}, tech. rep., US FERC, Jan.
  2013.

\bibitem{castillo-2013}
\leavevmode\vrule height 2pt depth -1.6pt width 23pt, {\em {Survey of
  Approaches to Solving the ACOPF (OPF Paper 4)}}, tech. rep., US FERC, Mar.
  2013.

\bibitem{catlin-1996}
{\sc D.~Catlin and J.~D’Angelo}, {\em {A Stabilization Theorem for Hermitian
  Forms and Applications to Holomorphic Mappings}}, Math. Res. Lett., 3 (1996),
  pp.~149--–166.

\bibitem{chen-2009}
{\sc C.~Chen and P.~Vaidyanathan}, {\em {MIMO Radar Waveform Optimization With
  Prior Information of the Extended Target and Clutter}}, IEEE Trans. Signal
  Process., 57 (2009), pp.~3533--3544.

\bibitem{coffrin2015}
{\sc C.~Coffrin, H.~Hijazi, and P.~{Van Hentenryck}}, {\em {The QC Relaxation:
  Theoretical and Computational Results on Optimal Power Flow}}, Preprint:
  \rm{\url{http://arxiv.org/abs/1502.07847}},  (2015).

\bibitem{coifman-2008}
{\sc R.~R. Coifman, Y.~Shkolnisky, F.~J. Sigworth, and A.~Singer}, {\em {Graph
  Laplacian Tomography From Unknown Random Projections}}, IEEE Trans. Image
  Process., 17 (2008), pp.~1349--1364.

\bibitem{curto-1996}
{\sc R.~Curto and L.~Fialkow}, {\em {Solution of the Truncated Complex Moment
  Problem for Flat Data}}, Memoirs Amer. Math. Soc., 568 (1996).

\bibitem{curtoquad-2000}
\leavevmode\vrule height 2pt depth -1.6pt width 23pt, {\em {The Quadratic
  Moment Problem for the Unit Circle and Unit Disk}}, Integral Equations
  Operator Theory, 38 (2000), pp.~377--409.

\bibitem{curto-2000}
\leavevmode\vrule height 2pt depth -1.6pt width 23pt, {\em {The Truncated
  Complex K-Moment Problem}}, Trans. Amer. Math. Soc., 353 (2000),
  pp.~2825--2855.

\bibitem{angelo-2002}
{\sc J.~D'Angelo}, {\em {Inequalities from Complex Analysis}}, Carus Math.
  Monogr., MAA, 2002.

\bibitem{angelo-2010}
\leavevmode\vrule height 2pt depth -1.6pt width 23pt, {\em {Hermitian Analogues
  of Hilbert's 17th Problem}}, Adv. Math., 226 (2011), pp.~4607--4637.

\bibitem{angelo-2008}
{\sc J.~D'Angelo and M.~Putinar}, {\em {Polynomial Optimization on
  Odd-Dimensional Spheres}}, in Emerging Applications of Algebraic Geometry,
  Springer New York, 2008.

\bibitem{putinar-2012}
\leavevmode\vrule height 2pt depth -1.6pt width 23pt, {\em {Hermitian
  Complexity of Real Polynomial Ideals}}, Int. J. Math., 23 (2012).

\bibitem{deklerk-laurent-2011b}
{\sc E.~{de Klerk} and M.~Laurent}, {\em {On the {L}asserre Hierarchy of
  Semidefinite Programming Relaxations of Convex Polynomial Optimization
  Problems}}, SIAM J. Optim., 21 (2011), pp.~824--832.

\bibitem{klerk-2000}
{\sc E.~{de Klerk}, T.~Terlaky, and K.~Roos}, {\em {Self-Dual Embeddings}}, in
  Handbook of Semidefinite Programming -- Theory, Algorithms, and Applications,
  H.~Wolkowicz, R.~Saigal, and L.~Vandenberghe, eds., Kluwer Acad. Publ.,
  Boston, 2000.

\bibitem{farivar-low-2013}
{\sc M.~Farivar and S.~Low}, {\em A survey of the optimal power flow
  literature}, IEEE Trans. Power Syst., 28 (2013), pp.~2565--2572.

\bibitem{pegase}
{\sc S.~Fliscounakis, P.~Panciatici, F.~Capitanescu, and L.~Wehenkel}, {\em
  {Contingency Ranking with Respect to Overloads in Very Large Power Systems
  Taking into Account Uncertainty, Preventive and Corrective Actions}}, IEEE
  Trans. Power Syst., 28 (2013), pp.~4909--4917.

\bibitem{fogel-2014}
{\sc F.~Fogel, I.~Waldspurger, and A.~{d'Aspremont}}, {\em {Phase Retrieval for
  Imaging Problems}}, Preprint, available at:
  \url{http://arxiv.org/abs/1304.7735},  (2014).

\bibitem{ibm_paper}
{\sc B.~Ghaddar, J.~Marecek, and M.~Mevissen}, {\em {Optimal Power Flow as a
  Polynomial Optimization Problem}}, IEEE Trans. Power Syst.,  (2015).

\bibitem{biegler-gopalakrishnan-nikovski-raghunathan-2012}
{\sc A.~Gopalakrishnan, A.~Raghunathan, D.~Nikovski, and L.~Biegler}, {\em
  Global optimization of optimal power flow using a branch \& bound algorithm},
  in 50th Annu. Allerton Conf. Commun., Control, Comput., 2012.

\bibitem{grone}
{\sc R.~Grone, C.~Johnson, E.~S\'a, and H.~Wolkowicz}, {\em {Positive Definite
  Completions of Partial Hermitan Matrices}}, Linear Algebra Appl., 58 (1984),
  pp.~109--124.

\bibitem{henrion-2012}
{\sc D.~Henrion and J.~B. Lasserre}, {\em {Inner Approximations for Polynomial
  Matrix Inequalities and Robust Stability Regions}}, {IEEE Trans. Automat.
  Control}, 57 (2012), pp.~1456--1467.

\bibitem{henrion-lasserre-loefberg-2009}
{\sc D.~Henrion, J.~B.~. Lasserre, and J.~L\"ofberg}, {\em {GloptiPoly} 3:
  Moments, optimization and semidefinite programming}, Optim. Method Softw., 24
  (2009), pp.~761--779.

\bibitem{hilling-2010}
{\sc J.~Hilling and A.~Sudbery}, {\em {The Geometric Measure of Multipartite
  Entanglement and the Singular Values of a Hypermatrix}}, J. Math. Phys., 51
  (2010).

\bibitem{huneault-1991}
{\sc M.~Huneault and F.~Galiana}, {\em {A Survey of the Optimal Power Flow
  Literature}}, IEEE Trans. Power Syst., 6 (1991), pp.~762--770.

\bibitem{jabr2006}
{\sc R.~Jabr}, {\em {Radial Distribution Load Flow using Conic Programming}},
  IEEE Trans. Power Syst., 21 (2006), pp.~1458–--1459.

\bibitem{jabr11}
{\sc R.~Jabr}, {\em {Exploiting Sparsity in SDP Relaxations of the OPF
  Problem}}, IEEE Trans. Power Syst., 27 (2012), pp.~1138--1139.

\bibitem{jiang-2014}
{\sc B.~Jiang, Z.~Li, and S.~Zhang}, {\em {Approximation Methods for Complex
  Polynomial Optimization}}, Springer Comput. Optim. Appl., 59 (2014),
  pp.~219--248.

\bibitem{jiang-2015}
\leavevmode\vrule height 2pt depth -1.6pt width 23pt, {\em {Characterizing
  Real-Valued Multivariate Complex Polynomials and Their Symmetric Tensor
  Representations}}, Preprint, available at:
  \url{http://arxiv.org/abs/1501.01058},  (2015).

\bibitem{josz-2016}
{\sc C.~Josz, S.~Fliscounakis, J.~Maeght, and P.~Panciatici}, {\em {AC Power
  Flow Data in MATPOWER and QCQP format: iTesla, RTE Snapshots, and PEGASE}},
  \url{https://arxiv.org/abs/1603.01533},  (2016).

\bibitem{josz-2015}
{\sc C.~Josz and D.~Henrion}, {\em {Strong Duality in Lasserre's Hierarchy for
  Polynomial Optimization}}, Springer Optim. Lett.,  (2015).

\bibitem{cedric_tps}
{\sc C.~Josz, J.~Maeght, P.~Panciatici, and J.~Gilbert}, {\em {Application of
  the Moment-SOS Approach to Global Optimization of the OPF Problem}}, IEEE
  Trans. Power Syst., 30 (2015), pp.~463--470.

\bibitem{kocuk15}
{\sc B.~Kocuk, S.~Dey, and X.~Sun}, {\em {Inexactness of SDP Relaxation and
  Valid Inequalities for Optimal Power Flow}}, {\rm to appear in} IEEE Trans.
  Power Syst.,  (2015).

\bibitem{krivine-1964}
{\sc J.~Krivine}, {\em Anneaux pr\'eordonn\'es}, J. d'Anal. Math., 12 (1964),
  pp.~307--326.

\bibitem{lasserre-2001b}
{\sc J.~B. Lasserre}, {\em {Convergent LMI Relaxations for Nonconvex Quadratic
  Programs}}.

\bibitem{lasserre-2000}
\leavevmode\vrule height 2pt depth -1.6pt width 23pt, {\em {Optimisation
  Globale et Th\'eorie des Moments}}, C.\ R.\ Acad.\ Sci.\ Paris, S\'erie I,
  331 (2000), pp.~929--934.

\bibitem{lasserre-2001}
\leavevmode\vrule height 2pt depth -1.6pt width 23pt, {\em {Global Optimization
  with Polynomials and the Problem of Moments}}, SIAM J. Optim., 11 (2001),
  pp.~796--817.

\bibitem{lasserre-2008b}
\leavevmode\vrule height 2pt depth -1.6pt width 23pt, {\em Convexity in
  semialgebraic geometry and polynomial optimization}, SIAM J. Optim., 19
  (2008), pp.~1995--2014.

\bibitem{lasserre_book}
\leavevmode\vrule height 2pt depth -1.6pt width 23pt, {\em {Moments, Positive
  Polynomials and Their Applications}}, vol.~1, Imperial College Press, 2010.

\bibitem{laurent-2003}
{\sc M.~Laurent}, {\em {A Comparison of Sherali-Adams, Lovasz-Schrijver, and
  Lasserre Relaxations for 0-1 Programming}}, Math. Oper. Res., 28 (2003),
  pp.~470--496.

\bibitem{lavaei-low-2012}
{\sc J.~Lavaei and S.~Low}, {\em {Zero Duality Gap in Optimal Power Flow
  Problem}}, IEEE Trans. Power Syst., 27 (2012), pp.~92--107.

\bibitem{lesieutre_molzahn_borden_demarco-allerton2011}
{\sc B.~Lesieutre, D.~Molzahn, A.~Borden, and C.~DeMarco}, {\em {Examining the
  Limits of the Application of Semidefinite Programming to Power Flow
  Problems}}, in {49th Annu. Allerton Conf. Commun., Control, and Comput.},
  Sept. 2011, pp.~1492--1499.

\bibitem{li-2012}
{\sc Z.~Li, S.~He, and S.~Zhang}, {\em {Approximation Methods for Polynomial
  Optimization: Models, Algorithms, and Applications}}, Comput. Optim. Appl.,
  Springer, New York, 2012.

\bibitem{yalmip}
{\sc J.~L\"{o}fberg}, {\em {YALMIP: A Toolbox for Modeling and Optimization in
  MATLAB}}, in {IEEE Int. Symp. Comput. Aided Contr. Syst. Des.}, 2004,
  pp.~284--289.

\bibitem{low_tutorial}
{\sc S.~Low}, {\em {Convex Relaxation of Optimal Power Flow: Parts I \& II}},
  {IEEE Trans. Control Network Syst.}, 1 (2014), pp.~15--27.

\bibitem{luo-2010}
{\sc Z.~Luo, W.-K. Ma, A.-C. So, Y.~Ye, and S.~Zhang}, {\em {Semidefinite
  Relaxation of Quadratic Optimization Problems}}, IEEE Signal Process. Mag.,
  27 (2010), pp.~20--–34.

\bibitem{lavaei_allerton2014}
{\sc R.~Madani, M.~Ashraphijuo, and J.~Lavaei}, {\em {Promises of Conic
  Relaxation for Contingency-Constrained Optimal Power Flow Problem}}, in {52nd
  Annu. Allerton Conf. Commun., Control, Comput.}, Sept. 2014, pp.~1064--1071.

\bibitem{lavaei_pf}
{\sc R.~Madani, J.~Lavaei, and R.~Baldick}, {\em {Convexification of Power Flow
  Problem over Arbitrary Networks}}, {\rm to appear in} IEEE 54th Ann. Conf.
  Decis. Contr. (CDC),  (2015).

\bibitem{lavaei_mesh}
{\sc R.~Madani, S.~Sojoudi, and J.~Lavaei}, {\em {Convex Relaxation for Optimal
  Power Flow Problem: Mesh Networks}}, IEEE Trans. Power Syst., 30 (2015),
  pp.~199--211.

\bibitem{maricic-2003}
{\sc B.~Maricic, Z.-Q. Luo, and T.~Davidson}, {\em {Blind Constant Modulus
  Equalization via Convex Optimization}}, IEEE Trans. Signal Process., 51
  (2003), pp.~805--–818.

\bibitem{marshall-2008}
{\sc M.~Marshall}, {\em Positive Polynomials and Sums of Squares}, no.~146 in
  Mathematical Surveys and Monographs, American Mathematical Society, 2008.

\bibitem{melacci2011}
{\sc S.~Melacci and M.~Belkin}, {\em {Laplacian Support Vector Machines Trained
  in the Primal}}, {J. Mach. Learn. Res.}, 12 (2011), pp.~1149--1184.

\bibitem{iscas2015}
{\sc D.~Molzahn, S.~Baghsorkhi, and I.~Hiskens}, {\em {Semidefinite Relaxations
  of Equivalent Optimal Power Flow Problems: An Illustrative Example}}, in IEEE
  Int. Symp. Circ. Syst. (ISCAS), May 24-27 2015.

\bibitem{pscc2014}
{\sc D.~Molzahn and I.~Hiskens}, {\em {Moment-Based Relaxation of the Optimal
  Power Flow Problem}}, 18th Power Syst. Comput. Conf. (PSCC),  (2014).

\bibitem{dan2015}
\leavevmode\vrule height 2pt depth -1.6pt width 23pt, {\em {Mixed SDP/SOCP
  Moment Relaxations of the Optimal Power Flow Problem}}, in IEEE Eindhoven
  PowerTech, 29 June--2 July 2015.

\bibitem{mh_sparse_msdp}
\leavevmode\vrule height 2pt depth -1.6pt width 23pt, {\em {Sparsity-Exploiting
  Moment-Based Relaxations of the Optimal Power Flow Problem}}, IEEE Trans.
  Power Syst., 30 (2015), pp.~3168--3180.

\bibitem{dan2013}
{\sc D.~Molzahn, J.~Holzer, B.~Lesieutre, and C.~DeMarco}, {\em {Implementation
  of a Large-Scale Optimal Power Flow Solver Based on Semidefinite
  Programming}}, IEEE Trans. Power Syst., 28 (2013), pp.~3987--3998.

\bibitem{laplacian_obj}
{\sc D.~Molzahn, C.~Josz, I.~Hiskens, and P.~Panciatici}, {\em {A
  Laplacian-Based Approach for Finding Near Globally Optimal Solutions to OPF
  Problems}}, To appear in IEEE Trans. Power Syst.

\bibitem{cdc2015}
\leavevmode\vrule height 2pt depth -1.6pt width 23pt, {\em {Solution of Optimal
  Power Flow Problems using Moment Relaxations Augmented with Objective
  Function Penalization}}, IEEE 54th Ann. Conf. Decis. Contr. (CDC),  (2015).

\bibitem{hicss2014}
{\sc D.~Molzahn, B.~Lesieutre, and C.~DeMarco}, {\em {Investigation of Non-Zero
  Duality Gap Solutions to a Semidefinite Relaxation of the Power Flow
  Equations}}, in 47th Hawaii Int. Conf. Syst. Sci. (HICSS), 6-9 Jan. 2014.

\bibitem{monticelli}
{\sc A.~Monticelli, S.~Dechmann, A.~Garcia, and B.~Stott}, {\em {Real-Time
  External Equivalents for Static Security Analysis}}, IEEE Trans. on Power
  Appar. and Syst.,  (1979).

\bibitem{nie-2014}
{\sc J.~Nie}, {\em {Optimality Conditions and Finite Convergence of Lasserre's
  Hierarchy}}, Math. Program., 146 (2014), pp.~97--121.

\bibitem{pandya-2008}
{\sc K.~Pandya and S.~Joshi}, {\em {A Survey of Optimal Power Flow Methods}},
  J. Theor. Appl. Inf. Tech., 4 (2008), pp.~450--–458.

\bibitem{parrilo-2000b}
{\sc P.~Parrilo}, {\em {Structured Semidefinite Programs and Semialgebraic
  Geometry Methods in Robustness and Optimization}}, PhD thesis, Cal. Inst. of
  Tech., May 2000.

\bibitem{parrilo-2003}
\leavevmode\vrule height 2pt depth -1.6pt width 23pt, {\em {Semidefinite
  Programming Relaxations for Semialgebraic Problems}}, Math. Program., 96
  (2003), pp.~293--320.

\bibitem{prestel-2001}
{\sc A.~Prestel and C.~Delzell}, {\em Positive Polynomials}, Springer Monogr.
  Math., 2001.

\bibitem{putinar-1993}
{\sc M.~Putinar}, {\em {Positive Polynomials on Compact Semi-Algebraic Sets}},
  Indiana Univ. Math. J., 42 (1993), pp.~969--984.

\bibitem{putinar-1998}
\leavevmode\vrule height 2pt depth -1.6pt width 23pt, {\em {A Two-Dimensional
  Moment Problem}}, J. Funct. Anal., 80 (1998), pp.~1--8.

\bibitem{putinar-2006}
\leavevmode\vrule height 2pt depth -1.6pt width 23pt, {\em {On Hermitian
  Polynomial Optimization}}, Arch. Math., 87 (2006), pp.~41--51.

\bibitem{putinar-2013}
{\sc M.~Putinar and C.~Scheiderer}, {\em {Quillen Property of Real Algebraic
  Varieties}}, To appear in Muenster J. Math.

\bibitem{putinar-scheiderer-2012}
\leavevmode\vrule height 2pt depth -1.6pt width 23pt, {\em {Hermitian Algebra
  on the Ellipse}}, Illinois J. Math., 56 (2012), pp.~213--220.

\bibitem{putinar-schmudgen-2008}
{\sc M.~Putinar and K.~Schm\"udgen}, {\em {Multivariate Determinateness}},
  Indiana U. Math. J., 57 (2008), pp.~2931--2968.

\bibitem{quillen-1968}
{\sc D.~Quillen}, {\em {On the Representation of Hermitian Forms as Sums of
  Squares}}, Invent. Math., 5 (1968), pp.~237--242.

\bibitem{rockafellar-1974b}
{\sc R.~Rockafellar}, {\em Conjugate Duality and Optimization}, no.~16 in Reg.
  Conf. Ser. Appl. Math., SIAM, Philadelphia, PA, USA, 1974.

\bibitem{rudin-1987}
{\sc W.~Rudin}, {\em {Real and Complex Analysis}}, Math. Ser., Third Edition,
  McGraw Hill Int. Ed., 1987.

\bibitem{scheiderer-2009}
{\sc C.~Scheiderer}, {\em Positivity and Sums of Squares: A Guide to Recent
  Results}, vol.~149 of IMA Vol. Math. Appl., Springer, New York, 2009.

\bibitem{schmudgen-1991}
{\sc K.~Schm\"udgen}, {\em {The K-Moment Problem for Semi-Algebraic Sets}},
  Math. Ann., 289 (1991), pp.~203--206.

\bibitem{schmudgen-2012}
\leavevmode\vrule height 2pt depth -1.6pt width 23pt, {\em {Around Hilbert's
  17th Problem}}, Doc. Math., Extra Vol. ISMP,  (2012), pp.~433–--438.

\bibitem{schweighofer-2005}
{\sc M.~Schweighofer}, {\em {Optimization of Polynomials on Compact
  Semialgebraic Sets}}, SIAM J. Optim., 15 (2005), pp.~805--825.

\bibitem{shapiro-2000}
{\sc A.~Shapiro and K.~Scheinberg}, {\em {Duality, Optimality Conditions and
  Perturbation Analysis}}, in Handbook of Semidefinite Programming -- Theory,
  Algorithms, and Applications, H.~Wolkowicz, R.~Saigal, and L.~Vandenberghe,
  eds., Kluwer Acad. Publ., Boston, 2000.

\bibitem{shor-1987b}
{\sc N.~Shor}, {\em {Quadratic Optimization Problems}}, Sov. J. Comput. Syst.
  Sci., 25 (1987), pp.~1--11.

\bibitem{PSSEManual}
{\sc {Siemens PTI}}, {\em {Volume II: Program Application Guide}}, Power System
  Simulation for Engineering (PSS/E), 31.0 (2007).

\bibitem{singer-2011}
{\sc A.~Singer}, {\em {Angular Synchronization by Eigenvectors and Semidefinite
  Programming}}, Appl. Comput. Harmon. Anal., 30 (2011), pp.~20--–36.

\bibitem{smola2003}
{\sc A.~J. Smola and R.~Kondor}, {\em {Kernels and Regularization on Graphs}},
  in {Learning Theory and Kernel Machines}, Springer, 2003, pp.~144--158.

\bibitem{lavaei-sojoudi-2012}
{\sc S.~Sojoudi and J.~Lavaei}, {\em {Network Topologies Guaranteeing Zero
  Duality Gap for Optimal Power Flow Problem}}, {\rm Submitted to} IEEE Trans.
  Power Syst.,  (2012).

\bibitem{lavaei-sojoudi-2012b}
\leavevmode\vrule height 2pt depth -1.6pt width 23pt, {\em Physics of power
  networks makes hard optimization problems easy to solve}, in IEEE Pow. \&
  Ener. Soc. (PES) Gen. Meeting, 2012.

\bibitem{sorber-2012}
{\sc L.~Sorber, M.~Barel, and L.~{De Lathauwer}}, {\em {Unconstrained
  Optimization of Real Functions in Complex Variables}}, SIAM J. Optim., 22
  (2012), pp.~879--898.

\bibitem{stengle-1974}
{\sc G.~Stengle}, {\em {A Nullstellensatz and a Positivstellensatz in
  Semialgebraic Geometry}}, Math. Ann., 207 (1974), pp.~87--97.

\bibitem{stochel-2001}
{\sc J.~Stochel}, {\em {Solving the Truncated Moment Problem Solves the Full
  Moment Problem}}, Glasg. Math. J., 43 (2001), pp.~335–--341.

\bibitem{sturm-1999}
{\sc J.~Sturm}, {\em {Using {SeDuMi} 1.02, A {M}atlab Toolbox for Optimization
  over Symmetric Cones}}, Optim. Method Softw., 11 (1999), pp.~625--653.

\bibitem{sturm-zhang.s-2003}
{\sc J.~Sturm and S.~Zhang}, {\em {On Cones of Nonnegative Quadratic
  Functions}}, Math. Oper. Res., 28 (2003), pp.~246--267.

\bibitem{tarjan}
{\sc R.~Tarjan and M.~Yannakakis}, {\em {Simple Linear-Time Algorithms to Test
  Chordality of Graphs, Test Acyclicity of Hypergraphs, and Selectively Reduce
  Acyclic Hypergraphs}}, {SIAM J. Comput.}, 13 (1984), p.~566.

\bibitem{taylor2015}
{\sc J.~Taylor}, {\em {Convex Optimization of Power Systems}}, Cambridge
  University Press, 2015.

\bibitem{taylor2013}
{\sc J.~Taylor and F.~Hover}, {\em {Conic AC Transmission System Planning}},
  IEEE Trans. Power Syst., 28 (2013), pp.~952--959.

\bibitem{taylor2011}
{\sc J.~A. Taylor and F.~S. Hover}, {\em {Laplacians for Flow Networks}}, SIAM
  J. Discrete Math., 25 (2011), pp.~1349--1364.

\bibitem{toker-1998}
{\sc O.~Toker and H.~Ozbay}, {\em {On the Complexity of Purely Complex Mu
  Computation and Related Problems in Multidimensional Systems}}, IEEE Trans.
  Automat. Control, 43 (1998), pp.~409--414.

\bibitem{maria-2005}
{\sc M.~Trnovsk\'a}, {\em {Strong Duality Conditions in Semidefinite
  Programming}}, J. Electr. Eng., 56 (2005), pp.~1--5.

\bibitem{gbnetwork}
{\sc {University of Edinburgh Power Systems Test Case Archive}}, {\em {GB
  Network}}.

\bibitem{vasilescu-2003}
{\sc F.~Vascilescu}, {\em {Spectral Measures and Moment Problems}}, Spectral
  Theory and Its Applications,  (2003), pp.~173--–215.

\bibitem{waki2006}
{\sc H.~Waki, S.~Kim, M.~Kojima, and M.~Muramatsu}, {\em {Sums of Squares and
  Semidefinite Program Relaxations for Polynomial Optimization Problems with
  Structured Sparsity}}, SIAM J. Optimiz., 17 (2006), pp.~218--242.

\bibitem{waki-2006}
{\sc H.~Waki, S.~Kim, M.~Kojima, and M.~Muramatsu}, {\em {Sums of Squares and
  Semidefinite Program Relaxations for Polynomial Optimization Problems with
  Structured Sparsity}}, SIAM J. Optim., 17 (2006), pp.~218--242.

\bibitem{kim-kojima-muramatsu-waki-2008}
\leavevmode\vrule height 2pt depth -1.6pt width 23pt, {\em {Algorithm 883:
  {SparsePOP} -- A Sparse Semidefinite Programming Relaxation of Polynomial
  Optimization Problems}}, 35 (2008).
\newblock \href{http://dx.doi.org/10.1145/1377612.1377619}{[doi]}.

\bibitem{weinberger2006}
{\sc K.~Q. Weinberger, F.~Sha, Q.~Zhu, and L.~K. Saul}, {\em {Graph Laplacian
  Regularization for Large-Scale Semidefinite Programming}}, in {Adv. Neural
  Informat. Process. Syst. (NIPS)}, 2006, pp.~1489--1496.

\bibitem{wirtinger-1927}
{\sc W.~Wirtinger}, {\em {Zur Formalen Theorie der Funktionen von Mehr
  Komplexen Ver\"anderlichen}}, Math. Ann., 97 (1927), pp.~357--375.

\bibitem{tse-zhang-2011}
{\sc B.~Zhang and D.~Tse}, {\em {Geometry of Feasible Injection Region of Power
  Networks}}, in 50th Annu. Allerton Conf. Commun., Control, Comput., 2011.

\bibitem{zhang2013}
\leavevmode\vrule height 2pt depth -1.6pt width 23pt, {\em {Geometry of
  Feasible Injection Region of Power Networks}}, IEEE Trans. Power Syst., 28
  (2013), pp.~788--797.

\bibitem{matpower}
{\sc R.~Zimmerman, C.~Murillo-S{\'a}nchez, and R.~Thomas}, {\em {MATPOWER:
  Steady-State Operations, Planning, and Analysis Tools for Power Systems
  Research and Education}}, IEEE Trans. Power Syst.,  (2011), pp.~1--8.

\end{thebibliography}
\bibliographystyle{siam}

\end{document}